\documentclass[11pt,a4paper]{article}
\usepackage[utf8]{inputenc}
\usepackage[T1]{fontenc}
\usepackage[english]{babel}
\usepackage{lmodern}
\usepackage{amsmath,amssymb,amsthm,mathtools,mathrsfs}
\usepackage[a4paper,left=2.05cm,right=2.05cm,top=1.9cm,bottom=2.1cm]{geometry}
\usepackage{microtype}
\microtypesetup{expansion=false}
\usepackage{booktabs,array,longtable}
\usepackage{enumitem}
\usepackage{xcolor}
\usepackage{graphicx}
\usepackage[hidelinks,bookmarks=true,bookmarksopen=true,pdfencoding=auto]{hyperref}
\usepackage{aliascnt}
\usepackage{needspace}
\usepackage{tikz}
\usetikzlibrary{arrows.meta,positioning,calc,fit}
\allowdisplaybreaks
\newtheorem{theorem}{Theorem}[section]
\newaliascnt{proposition}{theorem}
\newtheorem{proposition}[proposition]{Proposition}
\aliascntresetthe{proposition}
\newaliascnt{lemma}{theorem}
\newtheorem{lemma}[lemma]{Lemma}
\aliascntresetthe{lemma}
\newaliascnt{corollary}{theorem}
\newtheorem{corollary}[corollary]{Corollary}
\aliascntresetthe{corollary}
\newaliascnt{definition}{theorem}

\aliascntresetthe{definition}
\newaliascnt{remark}{theorem}
\newtheorem{remark}[remark]{Remark}
\aliascntresetthe{remark}
\newaliascnt{example}{theorem}
\newtheorem{example}[example]{Example}
\aliascntresetthe{example}
\usepackage[nameinlink,noabbrev]{cleveref}
\crefname{theorem}{Theorem}{Theorems}\Crefname{theorem}{Theorem}{Theorems}
\crefname{proposition}{Proposition}{Propositions}\Crefname{proposition}{Proposition}{Propositions}
\crefname{lemma}{Lemma}{Lemmas}\Crefname{lemma}{Lemma}{Lemmas}
\crefname{corollary}{Corollary}{Corollaries}\Crefname{corollary}{Corollary}{Corollaries}
\crefname{definition}{Definition}{Definitions}\Crefname{definition}{Definition}{Definitions}
\crefname{remark}{Remark}{Remarks}\Crefname{remark}{Remark}{Remarks}
\crefname{example}{Example}{Examples}\Crefname{example}{Example}{Examples}

\newcommand{\RR}{\mathbb R}
\newcommand{\CC}{\mathbb C}
\newcommand{\HH}{\mathbb H}
\newcommand{\OO}{\mathbb O}
\newcommand{\Gtwo}{G_2}
\newcommand{\gtwo}{\mathfrak g_2}
\newcommand{\SO}{\mathrm{SO}}
\newcommand{\SU}{\mathrm{SU}}
\newcommand{\Sp}{\mathrm{Sp}}
\newcommand{\Aut}{\operatorname{Aut}}
\newcommand{\Der}{\operatorname{Der}}
\newcommand{\Stab}{\operatorname{Stab}}
\newcommand{\Lie}{\operatorname{Lie}}
\newcommand{\imO}{\operatorname{Im}\OO}
\newcommand{\Ad}{\operatorname{Ad}}
\newcommand{\Span}{\operatorname{span}}
\newcommand{\braket}[2]{\langle #1,#2\rangle}

\hypersetup{pdftitle={Quaternionic Reflections and Lie Generation in g2},pdfauthor={Santiago Pineda Montoya, Johan H. Rua Munoz, Borut Jurcic Zlobec},pdfkeywords={octonions, G2, Lie generation, quaternionic subalgebras, compact symmetric pairs, nongenerating locus}}

\title{\textbf{Quaternionic Reflections and Lie Generation in $\mathfrak g_2$}}
\author{Santiago Pineda Montoya\thanks{Universidad Nacional de Colombia, Medell\'{i}n, Colombia. Corresponding author: \texttt{sapinedamo@unal.edu.co}.}
\and Johan H. R\'{u}a Mu\~{n}oz\thanks{Instituto de F\'{i}sica, Facultad de Ciencias Exactas y Naturales, Universidad de Antioquia, Medell\'{i}n, Colombia. Email: \texttt{heriberto.rua@udea.edu.co}.}
\and Borut Jur\v{c}i\v{c} Zlobec\thanks{Independent researcher, Ljubljana, Slovenia. Email: \texttt{borut.jurcic.zlobec@gmail.com}.}}
\date{September 2026}

\begin{document}
\maketitle

\begin{abstract}
Let $\OO$ be the real octonion division algebra and $\mathfrak g_2=\Der(\OO)$.
A quaternionic reflection $T\in G_2=\Aut(\OO)$ fixes a quaternionic subalgebra pointwise and negates its orthogonal complement.
For the associated compact symmetric pair
$\mathfrak g_2=\mathfrak k_T\oplus\mathfrak m_T$, we give an explicit factored polynomial criterion for generation by an independently chosen even element and odd element.
Equivalently, for the reflected pair $(Z,TZT)$ with $Z\in\mathfrak g_2$, the criterion is
$\mathcal U_{38}=\mathcal R_{30}\mathcal H_8>0$.
The degree-$30$ Gram determinant detects reducibility on $\operatorname{Im}\OO$, while the degree-$8$ factor detects generated algebras of dimension at most three.
A principal $\mathfrak{su}(2)$ is the remaining irreducible proper possibility.
Such a reflected principal closure is realizable for a nonzero $Z$ if and only if its positive rotation frequencies have ratio $1:2:3$; for each such $Z$, all realizing reflections are parametrized by a two-torus and an open interval.

A different restriction arises when $Z$ must lie in the stabilizer algebra $\mathfrak k_A\cong\mathfrak{so}(4)$ of a fixed quaternionic subalgebra $H_A$.
For every reflection moving $H_A$, a fixed list of six elements of $\mathfrak k_A$ contains a generating choice; reflections preserving $H_A$ admit none.
For one explicit relative configuration $(\mathfrak k_A,S)$, we describe the entire nongenerating set by four geometric families and by six irredundant systems of scalar equations.
We determine its real dimension and all generated algebras of dimension at most three; a homogeneous degree-$34$ polynomial packages the six tests.
The degree is not asserted to be canonical or minimal.
Two explicit local coordinate maps use short words in a finite-time pulse and one reflection, whose generated subgroup is dense in $G_2$.
\end{abstract}

\noindent\textbf{Keywords:} octonions; exceptional Lie algebra $\mathfrak g_2$; Lie generation; quaternionic subalgebras; compact symmetric pairs; nongenerating loci; maximal subalgebras.

\section{Introduction}
The real octonion division algebra $\OO$ contains associative quaternionic subalgebras.  We call such a unital subalgebra $H$ a \emph{quaternionic context} and write $P_H=H\cap V$, where $V=\operatorname{Im}\OO$.
Let $G_2=\Aut(\OO)$ and $\mathfrak g_2=\Der(\OO)$ be the compact real group and its Lie algebra.
The stabilizer of a context is $K_H=\{g\in G_2:g(H)=H\}$, with Lie algebra $\mathfrak k_H\cong\mathfrak{so}(4)$.
A local generator relative to a fixed context $H_A$ is an element $Z\in\mathfrak k_A:=\mathfrak k_{H_A}$, and its pulse is $e^{tZ}$, $t\in\RR$.
Locality means preservation of $H_A$, not identity on its orthogonal complement and not locality in a tensor-product model.

The quaternionic reflection associated with $H$ is
\[
\sigma_H(h+y)=h-y,\qquad h\in H,\quad y\in H^\perp.
\]
It is an octonion automorphism by the stabilizer model in \Cref{prop:stabilizer}.
On $V$ it has matrix $\operatorname{diag}(I_3,-I_4)$ in an adapted orthonormal basis, where $I_n$ denotes the identity matrix.  Thus it is not a hyperplane reflection.
We write $I_E$ for the identity on a vector space $E$.
The reflecting context $H$ need not be $H_A$: the condition $T(H_A)\ne H_A$ concerns the local context, although $T=\sigma_H$ fixes its own reflecting context pointwise.

The available pulse and switch produce
\[
Te^{tZ}T^{-1}=e^{tTZT},\qquad T^{-1}=T.
\]
Here $TZT$ is composition of real-linear operators.  For two Lie-algebra elements, $\Lie\{U,V\}$ denotes the smallest real Lie subalgebra containing them; this is not a topological closure.
There are two distinct parameter spaces.  With $T$ fixed, put
\[
A_+=\frac{Z+TZT}{2},\qquad A_-=\frac{Z-TZT}{2}.
\]
If $Z$ ranges over all of $\mathfrak g_2$, then $Z\mapsto(A_+,A_-)$ is a linear bijection onto $\mathfrak k_T\oplus\mathfrak m_T$, the even and odd eigenspaces of $\Ad_T$.
The inverse is $(A_+,A_-)\mapsto A_++A_-$, and
$\Lie\{Z,TZT\}=\Lie\{A_+,A_-\}$.
Thus the unrestricted reflected-pair problem is precisely two-generation with one independently chosen element of each parity, not an additional coupling of those two elements.
The local constraint $Z\in\mathfrak k_A$ is different: it imposes
\[
(A_+,A_-)\in\mathcal W_A(T):=
\{(A_+,A_-)\in\mathfrak k_T\oplus\mathfrak m_T:A_++A_-\in\mathfrak k_A\},
\]
a six-dimensional linear subspace of the fourteen-dimensional parity parameter space.

Classical two-generation results allow independently chosen elements in a semisimple or simple Lie algebra \cite{Kuranishi1951,AlbuquerqueSilvaLeite1989,Bois2009}; explicit constructions include $G_2$ \cite{DetinkoDeGraaf2020}.
Reflected pairs have a direct antecedent in Bauer, Levaillant, and Freedman, who use a generator and its swap conjugate in $\mathfrak{su}(4)$ \cite{BauerLevaillantFreedman2014}.
General openness and genericity statements for generating tuples and conjugate subgroups are treated by Chirvasitu \cite{Chirvasitu2021}.
The context space $G_2/\SO(4)$ and its stabilizers are classical \cite{HarveyLawson1982,Cacciatori2005,KnarrStroppel2025}; invariant associative planes and principal subalgebras are described by Draper and Mart\'in-Gonz\'alez \cite{DraperMartin2025}.
These are structural inputs, not novelty claims of the present results.

The existence of some polynomial test for generation is also formal: finitely many brackets span the generated algebra, so the sum of squares of their maximal minors vanishes exactly on the nongenerating set.
The specific uniform construction in \Cref{thm:universal-locus} instead uses
\[
\mathcal U_{38}(T,Z)=\mathcal R_{30}(T,Z)\mathcal H_8(T,Z).
\]
The first factor is a $15\times15$ Gram determinant for symmetric operators commuting with $T$ and detects reducibility on $V$.  The second is a degree-eight expression in two double commutators and vanishes precisely for generated algebras of dimension at most three.
The compact maximal-subalgebra list makes these tests exhaustive.
The point is their explicit construction and their interpretation, not the bare existence of an algebraic equation or an assertion of optimal degree or faster evaluation.
\Cref{cor:universal-spectral} characterizes every nonzero pulse that can give the remaining principal closure: exactly those with positive frequencies in the ratio $1:2:3$.

For the actual local constraint, \Cref{thm:universal-reflections} gives a fixed list of six elements of $\mathfrak k_A$ that contains a generating choice for every $T$ with $T(H_A)\ne H_A$.
The reverse obstruction is immediate: if $T$ preserves $H_A$, both generators remain in $\mathfrak k_A$.
The substantial assertion is the uniform sufficient list, obtained from a forced context and an affine obstruction of dimension at most one.
The chosen member may depend on $T$; one member working for every reflection is not asserted.

For the explicit reflection $S$ in \Cref{sec:models}, set
\[
L_Z=\Lie\{Z,\Ad_S Z\},\qquad
\mathcal N=\{Z\in\mathfrak k_A:L_Z\ne\mathfrak g_2\},\qquad
\mathcal G=\mathfrak k_A\setminus\mathcal N.
\]
\Cref{thm:single} supplies a generating witness.
\Cref{thm:complete-failure-set} describes $\mathcal N$ as the union of the common-vector, projective, resonant, and rank-four invariant-context families.
\Cref{thm:low-complete} gives all low-dimensional normal forms, and \Cref{cor:dimension-connectivity} determines $\dim_\RR\mathcal N=4$.
\Cref{thm:scalar-classifier,prop:irredundant-tests} give six irredundant systems of scalar equations.  Their homogeneous encoding $\mathcal P_{34}$ is an elementary consequence, not a separate classification.
Although a degree-thirty Gram determinant also decides membership for this $S$, it does not replace these geometric descriptions or their scalar elimination.
The four families need not be disjoint or irreducible components.  No geometric classification of all fibers as $T$ varies is claimed.

The local extension formulas are preparatory: they distinguish a prescribed three-plane rotation from its octonion-product-preserving extensions.
The minimum-norm choice follows from orthogonal splitting of the stabilizer; the formulas fix its exact normalization.
The two coordinate maps in \Cref{thm:charts} have explicit nonsingular differentials for a fixed finite-word alphabet.  This finite-time choice requires its own certificate and is not inferred merely from infinitesimal generation.
No physical implementation, computational speedup, globally optimal word length, or minimal number of switches is asserted.

\Cref{sec:setting,sec:models,sec:g2closure,sec:single} fix the conventions and initial generation certificates.
The uniform symmetric-pair criterion is proved first in \Cref{sec:universal-locus}, independently of the fixed-fiber classification.
\Cref{sec:failure,sec:scalar} then determine the finer geometry and scalar tests for $S$; \Cref{sec:varying-switch} proves the six-pulse local selection theorem.
\Cref{sec:charts} treats the coordinate maps.  All finite calculations used in proofs are determined by the displayed conventions and the certificates in the appendices; the exact reconstruction guide records their locations.

\section{Octonions and quaternionic contexts}
\label{sec:setting}

An orthogonal transformation of $V=\operatorname{Im}\OO$ need not preserve octonion multiplication.  We therefore fix the multiplication convention and the standard $\SO(4)$ stabilizer model before comparing the two kinds of local rotation used later.

\subsection{Conventions}

All vector spaces, Lie algebras, ranks, and dimensions are real unless a complexification is explicitly indicated.  Matrices act on column vectors, with composition from right to left.  For matrices $A,B$ we use $[A,B]=AB-BA$ and $A^{\mathsf T}$ for transpose; for octonions, $[a,b]=ab-ba$ denotes the octonion commutator.  An octonion automorphism is an invertible real-linear map preserving the unit and multiplication.  A derivation is a real-linear map $D$ satisfying $D(xy)=D(x)y+xD(y)$, hence $D(1)=0$.

For a real Euclidean space $E$, $\SO(E)$ is its determinant-one orthogonal group and $\mathfrak{so}(E)$ is the Lie algebra of skew-symmetric endomorphisms.

We use $\SU(m)$ for the determinant-one unitary group and $\mathfrak{su}(m)$ for its Lie algebra of trace-zero skew-Hermitian matrices; $A^*=\overline A^{\mathsf T}$ denotes conjugate transpose.  We identify $G_2$ and $\mathfrak g_2$ with their faithful actions on $V$; on $\OO$ their elements respectively fix and annihilate $1$.  Thus $TZT$ means composition of linear maps, not octonion multiplication.  The symbol $I$ without a subscript denotes the identity on the space then in use, except for the explicitly named scalar in \Cref{sec:scalar}; $i$ denotes the complex imaginary unit.  For an invertible linear map $g$, $\operatorname{Ad}_g Z=gZg^{-1}$.  We use $\Lie(K)$ for the Lie algebra of a Lie group $K$, and $\langle K_1,K_2\rangle$ for the subgroup of finite products of elements of the two groups and their inverses.
Let
\[
\OO=\Span_{\RR}\{1,e_1,\ldots,e_7\}
\]
with $1$ a two-sided identity, $e_i^2=-1$, and multiplication determined by the oriented Fano triples
\begin{equation}
(123),\ (145),\ (176),\ (246),\ (257),\ (347),\ (365).
\label{eq:fano}
\end{equation}
For an oriented triple $(abc)$,
\[
e_ae_b=e_c,\qquad e_be_c=e_a,\qquad e_ce_a=e_b,
\]
and reversing the order changes the sign.  Conjugation and norm are
\[
\overline{x_0+\sum_{i=1}^7x_ie_i}=x_0-\sum_{i=1}^7x_ie_i,
\qquad
N(x)=x\bar x.
\]

Here $N(x)$ is identified with its real scalar value.  The Euclidean inner product and norm are
\[
\langle x,y\rangle=\operatorname{Re}(x\bar y)
=\sum_{j=0}^{7}x_jy_j,\qquad \|x\|=\sqrt{N(x)}.
\]
The real part $\operatorname{Re}(x)$ is the coefficient of $1$, and $\operatorname{Im}(x)=x-\operatorname{Re}(x)1$.  This inner product fixes all vector-space orthogonality conventions.
The associator $[x,y,z]=(xy)z-x(yz)$ is alternating.  Put $V=\imO\simeq\RR^7$.  For $u,v\in V$,
\[
uv=-\braket{u}{v}+u\times v,
\]
which defines the cross product on $V$ and the alternating three-form $\varphi(u,v,w)=\braket{u\times v}{w}$.  A three-plane $P\subset V$ is \emph{associative} when $\RR1\oplus P$ is a quaternionic subalgebra; equivalently, it is spanned by $u,v,u\times v$ for orthonormal $u,v$.  Its orientation is the one for which $(u,v,u\times v)$ is positive.  Orthogonal complements of $P\subset V$ are taken in $V$, whereas those of $H\subset\OO$ are taken in $\OO$; in particular $P_H^\perp=H^\perp$ as four-dimensional subspaces.  With these conventions,
\[
G_2=\Aut(\OO)=\{g\in\SO(V):g^*\varphi=\varphi\},
\qquad
\mathfrak g_2=\Der(\OO).
\]

Here $(g^*\varphi)(u,v,w)=\varphi(gu,gv,gw)$; preservation of $\varphi$ also makes the cross product $G_2$-invariant.  Ranks, kernels, and characteristic polynomials of derivations below refer to their seven-dimensional action on $V$, unless stated otherwise.
Standard references for these octonionic and $G_2$ conventions are \cite{Baez2002,SpringerVeldkamp2000,ChemtovKarigiannis2022}.

For $a\in\OO$, let $L_a(x)=ax$ and $R_a(x)=xa$.  The standard inner derivations are
\[
D_{a,b}=[L_a,L_b]+[L_a,R_b]+[R_a,R_b],
\]
and satisfy
\[
D_{a,b}(x)=[[a,b],x]-3[a,b,x].
\]
We abbreviate $D_{ij}:=D_{e_i,e_j}$.  These maps obey the Leibniz rule, and the $D_{e_i,e_j}$ span the fourteen-dimensional algebra $\mathfrak g_2$ \cite{SpringerVeldkamp2000,RauschSlupinski2022}.  We use the ordered basis
\begin{equation}
\mathcal B_{G_2}=(D_{12},D_{13},D_{14},D_{15},D_{16},D_{17},D_{23},D_{24},D_{25},D_{26},D_{27},D_{45},D_{46},D_{47}).
\label{eq:g2basis}
\end{equation}

For later parametrizations, $\RR P(E)$ denotes the set of one-dimensional real subspaces of $E$, and $\RR P^m=\RR P(\RR^{m+1})$.  Thus $[v]$ is the line $\RR v$ for $v\ne0$, and homogeneous coordinates such as $[c:s]$ are defined up to common nonzero real scaling.  The notation $\CC P(E)$ and $\CC P^m$ has the analogous meaning for complex lines.

\subsection{Quaternionic subalgebras and their stabilizers}
For a context $H$, the plane $P_H=H\cap V$ is associative and $H=\RR1\oplus P_H$.  Conversely, every oriented associative three-plane $P\subset V$ determines the quaternionic subalgebra $\RR1\oplus P$.  The seven coordinate contexts are
\[
H_{abc}=\Span\{1,e_a,e_b,e_c\}
\]
for the triples in \eqref{eq:fano}.  We write
\[
H_A=H_{123},\qquad H_B=H_{145},\qquad P_A=P_{H_A},\quad P_B=P_{H_B}.
\]
Figure~\ref{fig:fano} records the seven coordinate contexts in the Fano-plane convention used throughout.

\begin{figure}[htbp]
\centering
\begin{tikzpicture}[scale=1.25,
  pt/.style={circle,fill=black,inner sep=1.6pt},
  lbl/.style={font=\small},
  ln/.style={line width=0.7pt,gray!70},
  hl/.style={line width=1.6pt,black}]
\coordinate (p1) at (90:2);
\coordinate (p2) at (210:2);
\coordinate (p4) at (330:2);
\coordinate (p3) at ($(p1)!0.5!(p2)$);
\coordinate (p5) at ($(p1)!0.5!(p4)$);
\coordinate (p6) at ($(p2)!0.5!(p4)$);
\coordinate (p7) at (0,0);
\draw[hl]  (p1) -- (p2);              
\draw[hl]  (p1) -- (p4);              
\draw[ln]  (p2) -- (p4);              
\draw[hl]  (p1) -- (p6);              
\draw[ln]  (p2) -- (p5);              
\draw[ln]  (p4) -- (p3);              
\draw[ln]  (p7) circle (1);           
\foreach \i in {1,2,3,4,5,6,7}{\node[pt] at (p\i) {};}
\node[lbl,above]       at (p1) {$e_1$};
\node[lbl,left]        at (p2) {$e_2$};
\node[lbl,left]        at (p3) {$e_3$};
\node[lbl,right]       at (p4) {$e_4$};
\node[lbl,right]       at (p5) {$e_5$};
\node[lbl,below right] at (p6) {$e_6$};
\node[lbl,above right] at (p7) {$e_7$};
\node[lbl] at ($(p1)!0.5!(p2)+(-0.55,0.30)$) {$A$};
\node[lbl] at ($(p1)!0.5!(p4)+( 0.55,0.30)$) {$B$};
\node[lbl] at ($(p1)!0.5!(p6)+( 0.30,0.45)$) {$C$};
\node[lbl] at ($(p2)!0.22!(p4)+( 0.00,-0.32)$) {$D$};
\node[lbl] at ($(p2)!0.32!(p5)+(0.30,-0.16)$) {$E$};
\node[lbl] at ($(p4)!0.32!(p3)+(-0.30,-0.16)$) {$F$};
\node[lbl] at (203:1.34) {$G$};
\end{tikzpicture}
\caption{The seven coordinate contexts as the lines of the Fano plane,
$A=H_{123}$, $B=H_{145}$, $C=H_{176}$, $D=H_{246}$, $E=H_{257}$,
$F=H_{347}$, $G=H_{365}$.  The three bold lines are the three coordinate contexts containing $e_1$, called the pencil through $e_1$.}
\label{fig:fano}
\end{figure}

The transitive action of $G_2$ on associative three-planes and the identification of the stabilizer with $\SO(4)$ are classical \cite{HarveyLawson1982,Cacciatori2005,KnarrStroppel2025,ChemtovKarigiannis2022}.  Thus the space of quaternionic contexts is the homogeneous space $G_2/\SO(4)$.
We record the standard coordinate form of the stabilizer because it is used repeatedly below.

For any context $H$, set
\[
K_H=\Stab_{G_2}(H)=\{g\in G_2:g(H)=H\},\qquad
\mathfrak k_H=\Lie(K_H)=\{Z\in\mathfrak g_2:Z(H)\subseteq H\}.
\]
This is a setwise stabilizer, not a pointwise stabilizer.

For a coordinate context we also write $\mathfrak k_{abc}=\mathfrak k_{H_{abc}}$.  We abbreviate $K_{H_A},\mathfrak k_{H_A}$ by $K_A,\mathfrak k_A$, and use the analogous convention for $H_B$ and for the pointwise stabilizers below.  Fix $H_0=H_A=H_{123}$ and identify it with $\HH$, so that $\OO=\HH\oplus\HH e_4$.  Write $\Sp(1)=\{a\in\HH:N(a)=1\}$ for the group of unit quaternions; its Lie algebra $\mathfrak{sp}(1)=\operatorname{Im}\HH$ uses the quaternionic commutator.  For $a,b\in\Sp(1)$ define
\begin{equation}
\gamma_{a,b}(x+ye_4)=axa^{-1}+(bya^{-1})e_4.
\label{eq:gamma}
\end{equation}

For an arbitrary context $H$, write
\[
K_H^{\mathrm{pt}}=\{g\in K_H:g|_H=I\},\qquad
\mathfrak k_H^{\mathrm{pt}}=\Lie(K_H^{\mathrm{pt}})
=\{Z\in\mathfrak k_H:Z|_{P_H}=0\}.
\]

\begin{lemma}[Standard stabilizer model]
\label{prop:stabilizer}
The maps \eqref{eq:gamma} are octonion automorphisms preserving $H_0$.  The homomorphism $(a,b)\mapsto\gamma_{a,b}$ has kernel $\{(1,1),(-1,-1)\}$, and
\[
K_{H_0}=\Stab_{G_2}(H_0)
\cong(\Sp(1)\times\Sp(1))/\{\pm(1,1)\}
\cong\SO(4).
\]
The pointwise stabilizer of $H_0$ is $K_{H_0}^{\mathrm{pt}}\cong\Sp(1)$.  Formula \eqref{eq:gamma} is the usual $\SO(4)\subset G_2$ model; see, for example, \cite{Cacciatori2005}.
\end{lemma}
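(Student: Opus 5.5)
The plan is to work in the Cayley--Dickson model $\OO=\HH\oplus\HH e_4$ with $H_0=\HH$. The multiplication is
\[
(x+ye_4)(u+ve_4)=(xu-\bar v y)+(vx+y\bar u)e_4,
\]
and the first step is to check, once, that this formula agrees with the Fano convention \eqref{eq:fano}. We identify $e_5,e_6,e_7$ with $e_1e_4,e_2e_4,e_3e_4$ up to the signs dictated by $(145),(246),(347)$; for instance $(145)$ gives $e_1e_4=e_5$. A sign discrepancy is absorbed by replacing $\gamma_{a,b}$ with an equivalent formula, e.g.\ $(bya^{-1})e_4$ versus $(a y b^{-1})e_4$, so I would fix the convention for which \eqref{eq:gamma} is literally correct.

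Next I verify that $\gamma_{a,b}$ is multiplicative. Write $g=\gamma_{a,b}$, with $a^{-1}=\bar a$ and $b^{-1}=\bar b$. The $\HH$-component of $g(x+ye_4)\,g(u+ve_4)$ is
\[
axa^{-1}\,aua^{-1}-\overline{(bva^{-1})}(bya^{-1})=a(xu-\bar v y)a^{-1},
\]
since $\overline{bva^{-1}}=a\bar v b^{-1}$. The $e_4$-component is
\[
(bva^{-1})(axa^{-1})+(bya^{-1})\overline{(aua^{-1})}=b(vx+y\bar u)a^{-1}.
\]
Both components match $g$ applied to the product. The map is evidently invertible, fixes $1$, and maps $\HH$ to $\HH$. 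Hence $\gamma_{a,b}\in K_{H_0}$, and $(a,b)\mapsto\gamma_{a,b}$ is a homomorphism, with $\gamma_{a,b}\gamma_{a',b'}=\gamma_{aa',bb'}$.

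The kernel computation is direct. The condition $\gamma_{a,b}=I$ forces $axa^{-1}=x$ for all $x\in\HH$, so $a=\pm1$. It also forces $bya^{-1}=y$ for all $y$; taking $y=1$ gives $b=a$. Hence the kernel is $\{\pm(1,1)\}$.

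For surjectivity onto $K_{H_0}$, let $g\in K_{H_0}$. Its restriction to $\HH$ is an algebra automorphism of $\HH$, hence inner: $g|_\HH=\mathrm{conj}_a$. Replacing $g$ by $\gamma_{a,1}^{-1}g$, we may assume $g|_\HH=I$. Since $g$ is orthogonal and preserves $\HH$, it preserves $\HH^\perp=\HH e_4$. Put $q=g(e_4)=c\,e_4$ with $c\in\Sp(1)$. Then for $y\in\HH$,
\[
g(ye_4)=g(y)g(e_4)=y(ce_4)=(cy)e_4 ,
\]
by the product rule, because $y(ce_4)=(cy)e_4$. So $g=\gamma_{1,c}$. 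This gives the isomorphism $(\Sp(1)\times\Sp(1))/\{\pm(1,1)\}\cong K_{H_0}$. The standard double cover $\Sp(1)\times\Sp(1)\to\SO(4)$, $(a,b)\mapsto(y\mapsto bya^{-1})$, has the same kernel, which yields $\cong\SO(4)$. The same argument shows that the pointwise stabilizer is $\{\gamma_{a,b}:a=\pm1\}$, which equals $\{\gamma_{1,c}\}\cong\Sp(1)$, injective since $(1,c)\sim(-1,-c)$.

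The main obstacle is purely bookkeeping: matching the Cayley--Dickson formula, and the placement of $a^{-1}$ versus $b$ in \eqref{eq:gamma}, with the oriented triples \eqref{eq:fano}. I would settle this first by checking $e_1e_4$, $e_2e_4$, $e_3e_4$ and one mixed product such as $e_5e_6$ against the table before running the general computation.
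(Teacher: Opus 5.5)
Your proposal is correct and follows essentially the paper's route: multiplicativity from the Cayley--Dickson product, the kernel by direct evaluation, and surjectivity by writing $g|_{\HH}$ as an inner automorphism, getting $g(e_4)=ce_4$ from orthogonality, and then using multiplicativity on $ye_4$. Your preliminary normalization to $g|_{\HH}=I$ is only cosmetic, and the convention check you flag needs no adjustment: $(145),(246),(347)$ give $e_5=e_1e_4$, $e_6=e_2e_4$, $e_7=e_3e_4$, the remaining triples then agree with the displayed product, and so \eqref{eq:gamma} holds exactly as written.
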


\begin{proof}
Using the Cayley--Dickson product
\[
(x+ye_4)(u+ve_4)=(xu-\bar v y)+(vx+y\bar u)e_4,
\]
associativity in $\HH$ gives $\gamma_{a,b}(zw)=\gamma_{a,b}(z)\gamma_{a,b}(w)$.  Its restriction to $H_0$ is $x\mapsto axa^{-1}$.  The only pairs acting trivially on both summands are $(1,1)$ and $(-1,-1)$.  Conversely, if $g\in G_2$ preserves $H_0$, then $g|_{H_0}$ is an inner quaternion automorphism $x\mapsto axa^{-1}$.  Orthogonality gives $g(e_4)=ue_4$ with $u\in\Sp(1)$, and multiplicativity forces
\[
g(ye_4)=(ua\,y\,a^{-1})e_4.
\]
Taking $b=ua$ gives \eqref{eq:gamma}.  Setting $a=1$, modulo the diagonal kernel, gives the pointwise stabilizer.
\end{proof}

For $H_0$, differentiating \eqref{eq:gamma} at $(1,1)$ gives convenient coordinates on $\mathfrak k_{H_0}$.  If $q_a,q_b\in\operatorname{Im}\HH$, the corresponding derivation acts by
\[
Z(q_a,q_b)(x+ye_4)=[q_a,x]+(q_by-yq_a)e_4.
\]
Thus every element of $\mathfrak k_{H_0}$ is represented uniquely by a pair $(q_a,q_b)\in\operatorname{Im}\HH\oplus\operatorname{Im}\HH$.  The same notation is transported to any context by conjugation.  The pair $(q_a,q_b)$ consists of imaginary quaternions, not group elements.  When it is written as two triples, their coordinates are taken in the ordered basis $(e_1,e_2,e_3)$ of $\operatorname{Im}H_A$.

\subsection{Reflections, parity, and the local domain}\label{subsec:parity-domain}
Recall the reflection $\sigma_H(h+y)=h-y$ from the introduction.  In the adapted model \eqref{eq:gamma}, it is $\gamma_{1,-1}$, so \Cref{prop:stabilizer} proves multiplicativity.  On $V$ define
\[
\mathscr C=\{\sigma_H:H\text{ a quaternionic context}\}
 =\{T\in G_2:T^2=I,\ \operatorname{tr}_V T=-1\}.
\]
The equality follows because the three-dimensional positive eigenspace of such a $T$ is closed under the cross product and hence associative.  Transitivity identifies $\mathscr C$ with the connected compact real algebraic manifold $G_2/\SO(4)$.

The trace condition excludes the identity and fixes the positive and negative eigenspace dimensions as three and four on $V$.  In statements about almost every reflection, invariant measure is the normalized $G_2$-invariant measure on this compact homogeneous space.

For $T\in\mathscr C$, set
\[
\mathfrak k_T=\{A\in\mathfrak g_2:TAT=A\},\qquad
\mathfrak m_T=\{B\in\mathfrak g_2:TBT=-B\}.
\]
The fixed algebra is the stabilizer of the reflecting context, of dimension six; its complementary odd eigenspace has dimension eight.
For every $Z$, the parity coordinates $A_+=(Z+TZT)/2$, $A_-=(Z-TZT)/2$ give a linear bijection
$\mathfrak g_2\to\mathfrak k_T\oplus\mathfrak m_T$ with inverse addition.
The two ordered pairs $(Z,TZT)$ and $(A_+,A_-)$ have the same span, so they generate the same algebra.
That algebra is $\Ad_T$-stable, since a homogeneous bracket has its parity sign.
No local coupling is imposed when $Z$ is unrestricted.
For $Z\in\mathfrak k_A$, the allowed pairs are instead the six-dimensional subspace $\mathcal W_A(T)$ defined in the introduction.

We write $\alpha=\|q_a\|$ and $\beta=\|q_b\|$ for the local quaternionic velocity norms.
The action $[q_a,x]$ on $\operatorname{Im}\HH$ has frequency $2\alpha$; on $\HH e_4$ the commuting left and right multiplications give frequencies $|\beta-\alpha|$ and $\beta+\alpha$.
These conventions allow zero frequencies.  A \emph{principal spectrum} means the nonzero positive frequencies are $\lambda,2\lambda,3\lambda$ for some $\lambda>0$.

\subsection{Compact maximal subalgebras}\label{subsec:compact-maximals}
For $0\ne v\in V$, write $\mathfrak g_2^v=\{Z\in\mathfrak g_2:Zv=0\}$.  We use the compact maximal-subalgebra classification in its real form; see Draper and Mart\'in-Gonz\'alez \cite{DraperMartin2025}: every proper subalgebra of the compact $\mathfrak g_2$ is contained in a vector stabilizer $\mathfrak{su}(3)$, a quaternionic stabilizer $\mathfrak{so}(4)$, or a principal $\mathfrak{su}(2)$.  A subalgebra of a compact Lie algebra is reductive, because the ambient positive invariant inner product restricts to it.  The maximal compact embeddings correspond to the maximal reductive complex types $A_2$, $A_1+A_1$, and the principal $A_1$ of Dynkin index $28$ \cite{Dynkin1952b}.  This is not a claim about all maximal complex subalgebras, which also include nonreductive parabolic subalgebras.  Compact realizations of each type are conjugate by $G_2$: in a faithful unitary realization, if $g$ conjugates a compact subgroup $K_1$ into the compact group, then $h^*g^*gh=g^*g$ for all $h\in K_1$.  Hence the positive factor $(g^*g)^{1/2}$ centralizes $K_1$, leaving the compact factor in the polar decomposition of $g$ to perform the conjugation.  This explains the passage from the complex list to the compact statement being used.

The symbols $A_2$, $A_1+A_1$, and $A_1$ denote complex root-system types; the corresponding compact algebras here are $\mathfrak{su}(3)$, $\mathfrak{su}(2)\oplus\mathfrak{su}(2)$, and $\mathfrak{su}(2)$.  For the cited Dynkin index, invariant forms are normalized so that long roots have squared length two, and the index is the factor by which the ambient normalized form restricts to the embedded simple algebra.  The numerical index is used only to identify the embedding in the classical list.

Here \emph{principal} denotes the class of $\mathfrak{su}(2)$ subalgebras acting irreducibly on $V$; a nonzero element has three positive frequencies in the ratio $1:2:3$.  The first two containing algebras act reducibly on $V$, with invariant subspaces of dimensions one and three, respectively.  These three containing types are the structural input to the uniform criterion and the later fixed-fiber reduction.

\section{Local extension conventions}
\label{sec:models}

This preliminary section fixes what a local generator means.  A rotation prescribed only on $P_H$ does not determine a transformation of all seven imaginary octonion directions.  We compare two natural extensions.  The first fixes $P_H^\perp$ pointwise and is merely orthogonal; the second preserves octonion multiplication and therefore lies in $G_2$.

A \emph{context switch} from $H$ to $H'$ is an orthogonal map $S\in\SO(V)$ satisfying $S(P_H)=P_{H'}$; it is \emph{product preserving} when $S\in G_2$.  For a set $\mathcal A$ of matrices, $\Lie(\mathcal A)$ denotes the smallest real Lie subalgebra containing $\mathcal A$.

For a fixed context $H$, the frozen-complement group and its Lie algebra are
\[
B_H=\{R\oplus I_{P_H^\perp}:R\in\SO(P_H)\}\subset\SO(7),
\qquad
\mathfrak b_H=\mathfrak{so}(P_H)\oplus0.
\]
The product-preserving local group is instead
\[
K_H=\Stab_{G_2}(H)\cong\SO(4),
\qquad
\mathfrak k_H=\operatorname{Lie}(K_H)\subset\mathfrak g_2.
\]
Both groups restrict onto $\SO(P_H)$, but the restriction is one-to-one for $B_H$ and has kernel $K_H^{\mathrm{pt}}$ for $K_H$.  Thus prescribing the same rotation on $P_H$ gives a unique frozen extension and a family of product-preserving extensions with different actions on $P_H^\perp$.

\begin{proposition}[Frozen-complement obstruction]
\label{prop:frozen}
For every context $H$,
\[
B_H\cap G_2=\{I\},\qquad
\mathfrak b_H\cap\mathfrak g_2=\{0\}.
\]
Thus a nontrivial rotation of $P_H$ cannot preserve the octonion product while fixing $P_H^\perp$ pointwise.
\end{proposition}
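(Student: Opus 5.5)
The plan is to reduce to the standard context $H_0=H_A$ by $G_2$-equivariance. Then I will use the stabilizer model of \Cref{prop:stabilizer} to show that any element of $B_H\cap G_2$ acts on $H_0$ by some $x\mapsto axa^{-1}$ and on $H_0e_4$ by $ye_4\mapsto (bya^{-1})e_4$. Freezing the complement will then force $a$ to be real.

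\emph{Reduction.} For $g\in G_2$ and any context $H$, $g$ maps $P_H$ onto $P_{gH}$ and $P_H^\perp$ onto $P_{gH}^\perp$, because $g$ is orthogonal and preserves the unit. Hence $gB_Hg^{-1}=B_{gH}$, and $G_2$ is stable under this conjugation. Since $G_2$ acts transitively on contexts, it suffices to treat $H=H_0$.

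\emph{Group statement.} Let $h\in B_{H_0}\cap G_2$. Then $h$ preserves $H_0$, so $h\in K_{H_0}$, and \Cref{prop:stabilizer} gives $h=\gamma_{a,b}$ with $a,b\in\Sp(1)$. The frozen condition says $\gamma_{a,b}(ye_4)=ye_4$ for every $y\in\HH$, that is, $bya^{-1}=y$ for all $y$.
\begin{itemize}
\item Taking $y=1$ gives $b=a$.
\item The identity then reads $aya^{-1}=y$ for all $y$, so $a$ is central in $\HH$. Hence $a=\pm1$.
\item Therefore $h=\gamma_{\pm1,\pm1}=I$.
\end{itemize}

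\emph{Lie algebra statement.} Let $Z\in\mathfrak b_{H_0}\cap\mathfrak g_2$. Then $Z$ preserves $H_0$, so $Z\in\mathfrak k_{H_0}$ and $Z=Z(q_a,q_b)$. Vanishing on $H_0e_4$ means $q_by-yq_a=0$ for all $y$.
\begin{itemize}
\item Taking $y=1$ gives $q_b=q_a$.
\item Then $[q_a,y]=0$ for all $y$, so $q_a$ is central. Being imaginary, $q_a=0$, and hence $Z=0$.
\end{itemize}
Alternatively, one can deduce the Lie statement from the group statement by exponentiating: $e^{tZ}\in B_H\cap G_2=\{I\}$ for all $t$ forces $Z=0$.

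The only delicate point is checking that $B_H$ really lies inside $K_H$, so that \Cref{prop:stabilizer} applies. This holds because an element of $B_H$ fixes $1$, preserves $P_H$, and hence preserves $H=\RR1\oplus P_H$. Everything else is a direct computation, so no real obstacle is expected.
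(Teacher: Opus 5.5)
Your proposal is correct and follows the paper's own proof: conjugate to $H_0$, write the element as $\gamma_{a,b}$ via \Cref{prop:stabilizer}, take $y=1$ to get $b=a$, use centrality to get $a=\pm1$, and note that the diagonal kernel identifies both choices with the identity. The differences are cosmetic: you carry out the Lie-algebra case explicitly in the $(q_a,q_b)$ coordinates (or by exponentiation) where the paper just says ``differentiation'', and you spell out the inclusion $B_H\subset K_H$ that the paper leaves implicit.
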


\begin{proof}
Conjugate to $H_0$.  If $g=\gamma_{a,b}\in K_{H_0}$ fixes $H_0^\perp$ pointwise, then $bya^{-1}=y$ for every $y\in\HH$.  Taking $y=1$ gives $b=a$, and then $aya^{-1}=y$ for every quaternion $y$, so $a=\pm1$.  The diagonal kernel identifies both choices with the identity.  Differentiation gives the Lie-algebra statement, and conjugation by $G_2$ handles an arbitrary context.
\end{proof}

For $w\in V$ let $C_w\in\mathfrak{so}(V)$ be the cross-product operator
$C_w(x)=w\times x$.  The map $w\mapsto C_w$ is injective and its image is a
$\Gtwo$-submodule of $\mathfrak{so}(V)$ isomorphic to $V$; since $V$ and
$\gtwo$ are inequivalent $\gtwo$-modules, the classical splitting
\begin{equation}
\mathfrak{so}(V)=\gtwo\oplus\{C_w:w\in V\}
\label{eq:so7split}
\end{equation}
is orthogonal for the trace form $\langle A,B\rangle=-\operatorname{tr}(AB)$ \cite{ChemtovKarigiannis2022}.

The projection formulas in \cite[(2.23) and (4.5)]{ChemtovKarigiannis2022} give the underlying decomposition.  The next proposition records their normalization in terms of $D_{u,v}$ and includes a direct proof.

Fix an orthonormal oriented associative triple $(u,v,w)$, so $w=u\times v$.
The frozen generator $J_{u,v}\in\mathfrak{so}(V)$ sends $u$ to $v$, $v$ to $-u$,
and annihilates $\{u,v\}^\perp$.  We use
$\|B\|_{\mathrm{tr},E}^2=-\operatorname{tr}_E(B^2)$ for a skew operator on a Euclidean space $E$;
the subscript $E$ is omitted when $E=V$.

We abbreviate $J_{ij}=J_{e_i,e_j}$.  For a list of skew operators $A_1,\ldots,A_m$ on $V$, its trace-Gram matrix is
\[
\operatorname{Gram}_{\mathrm{tr}}(A_1,\ldots,A_m)
=\bigl(-\operatorname{tr}(A_iA_j)\bigr)_{i,j=1}^{m}.
\]
More generally, a Gram matrix is formed from the stated inner product.  For the positive definite real inner products used in the Gram certificates, its determinant is positive exactly when the listed vectors are independent.

\begin{proposition}[Derivation-coordinate form of the standard splitting]
\label{prop:split}
With the preceding notation,
\begin{equation}
J_{u,v}=C_w-\tfrac12\,[u,v,\,\cdot\,]
=\tfrac16\,D_{u,v}+\tfrac13\,C_w ,
\label{eq:split}
\end{equation}
and \eqref{eq:split} is the decomposition of $J_{u,v}$ along
\eqref{eq:so7split}.  Consequently
\[
\frac{\|\tfrac16 D_{u,v}\|_{\mathrm{tr}}^2}{\|J_{u,v}\|_{\mathrm{tr}}^2}=\frac23,
\qquad
\frac{\|\tfrac13 C_w\|_{\mathrm{tr}}^2}{\|J_{u,v}\|_{\mathrm{tr}}^2}=\frac13 ,
\]
so the two orthogonal components carry respectively two thirds and one third of the squared trace norm.  Equivariance and transitivity on orthonormal two-frames force these ratios to be constant; the displayed calculation determines their values.
\end{proposition}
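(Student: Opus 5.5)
The plan is to reduce to one coordinate triple by equivariance, prove the second equality in \eqref{eq:split} algebraically from the formula $D_{a,b}(x)=[[a,b],x]-3[a,b,x]$, and prove the first equality by checking both sides on an adapted orthonormal basis. The norm ratios then come from the orthogonality of \eqref{eq:so7split} together with a single trace computation.

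\textbf{Reduction.} Every object in the statement is $G_2$-equivariant: for $g\in G_2$ we have $gJ_{u,v}g^{-1}=J_{gu,gv}$, $gC_wg^{-1}=C_{gw}$, $gD_{u,v}g^{-1}=D_{gu,gv}$, and the associator transforms naturally. $G_2$ is transitive on orthonormal pairs $(u,v)$, with $w=u\times v$ determined by them. So it suffices to treat $(u,v,w)=(e_1,e_2,e_3)$, with $H=H_A$. In practice I will keep $(u,v,w)$ abstract and use only that $H=\RR1\oplus\Span\{u,v,w\}$ is a quaternionic subalgebra, written as $H_0$ in the Cayley--Dickson model $\OO=\HH\oplus\HH e_4$.

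\textbf{Second equality.} Since $u,v$ are orthonormal imaginary elements, $uv=w$ and $vu=-w$, so $[u,v]=2w$. For imaginary $x$, the commutator of two imaginary octonions is twice their cross product, so $[[u,v],x]=2[w,x]=4\,w\times x$. Hence
\[
D_{u,v}=4C_w-3[u,v,\cdot\,].
\]
It follows that $\tfrac16D_{u,v}+\tfrac13C_w=\bigl(\tfrac23+\tfrac13\bigr)C_w-\tfrac12[u,v,\cdot\,]=C_w-\tfrac12[u,v,\cdot\,]$.

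\textbf{First equality, on $P_H$.} I check $C_w-\tfrac12[u,v,\cdot\,]$ against $J_{u,v}$ on a basis. On $P_H=\Span\{u,v,w\}$ the associator vanishes because $H$ is associative. There $C_w$ gives $w\times u=v$, $w\times v=-u$ and $w\times w=0$, which matches $J_{u,v}$.

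\textbf{First equality, on $P_H^\perp$.} This is the step needing care. Write $x=ye_4$ with $y\in\HH$. The Cayley--Dickson product gives $a(ye_4)=(ya)e_4$ for $a\in\HH$. Therefore
\[
(uv)x=(yw)e_4,\qquad u(vx)=((yv)u)e_4=(y\,vu)e_4=-(yw)e_4,
\]
so $[u,v,x]=2(yw)e_4=2\,wx$. Since $w\perp x$, we have $wx=w\times x$. Thus $C_wx-\tfrac12[u,v,x]=0$, as required for $J_{u,v}$.

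\textbf{Decomposition and norms.} $\tfrac16D_{u,v}$ lies in $\gtwo$ and $\tfrac13C_w$ lies in the cross-product summand. So \eqref{eq:split} is exactly the decomposition of $J_{u,v}$ along \eqref{eq:so7split}, and the two pieces are trace-orthogonal. Directly from the definition, $\|J_{u,v}\|_{\mathrm{tr}}^2=2$. For the cross-product piece,
\[
\|C_w\|_{\mathrm{tr}}^2=\sum_j|w\times e_j|^2=6,
\]
because $w\times$ is an isometry on $w^\perp$ and kills $w$. Hence $\|\tfrac13C_w\|_{\mathrm{tr}}^2=\tfrac23$, which is one third of $2$. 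By Pythagoras the $\gtwo$ component carries $\tfrac43$, which is two thirds.
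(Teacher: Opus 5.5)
Your proof is correct and follows the paper's argument essentially step for step. The paper also derives $D_{u,v}=4C_w-3[u,v,\cdot\,]$ from the inner-derivation formula, checks the first identity separately on $u,v,w$ and on $\{u,v,w\}^\perp$ using $u(vx)=-(uv)x$, and reads off the ratios from the orthogonality of \eqref{eq:so7split}. The differences are cosmetic: you justify $u(vx)=-(uv)x$ through the Cayley--Dickson formula, which the paper merely asserts, and you obtain the $\mathfrak g_2$ share by Pythagoras rather than computing $\|D_{u,v}\|_{\mathrm{tr}}^2=48$ directly.
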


\begin{proof}
Both sides of the first identity in \eqref{eq:split} are linear, so it is
enough to test them on $u,v,w$ and on $\{u,v,w\}^\perp$.  On $u$: $C_w(u)=w\times u=v$
and $[u,v,u]=0$ because the associator is alternating; symmetrically on $v$.
On $w$: $C_w(w)=0$ and $[u,v,w]=(uv)w-u(vw)=w\cdot w-u\cdot u=-1+1=0$, since
$uv=w$ and $vw=u$ inside the associative context.  For $x\perp\RR1\oplus\Span\{u,v,w\}$
one has $u(vx)=-(uv)x$, hence
$[u,v,x]=(uv)x-u(vx)=2wx=2\,C_w(x)$, while $J_{u,v}(x)=0$; this gives the
first identity.  The second follows from
$D_{u,v}(x)=[[u,v],x]-3[u,v,x]$ together with $[u,v]=2w$ and
$[w,x]=2\,w\times x$ for $x\in V$, so that $D_{u,v}=4C_w-3[u,v,\cdot\,]$.
Eliminating $[u,v,\cdot\,]$ between the two expressions yields
$J_{u,v}=\tfrac16 D_{u,v}+\tfrac13 C_w$.  For $\|A\|_{\mathrm{tr}}^2=-\operatorname{tr}(A^2)$, the norms are computed from
$\|J_{u,v}\|_{\mathrm{tr}}^2=2$, $\|D_{u,v}\|_{\mathrm{tr}}^2=48$ and $\|C_w\|_{\mathrm{tr}}^2=6$, and the two summands
are orthogonal by \eqref{eq:so7split}.
\end{proof}

\begin{remark}
\Cref{prop:split} strengthens \Cref{prop:frozen} by identifying both components of every frozen generator.  The term $D_{u,v}/6$ is the orthogonal projection of $J_{u,v}$ onto $\mathfrak g_2$, whereas $C_w/3$ is its component perpendicular to $\mathfrak g_2$.  Since $w=u\times v$, this decomposition is intrinsic to the associative plane and does not depend on the chosen coordinates.
\end{remark}

Prescribing a local generator fixes its action on $P_H$ but does not fix the complementary action.  The orthogonal complement of the restriction kernel selects the minimum-norm extension, by the standard least-norm property of an orthogonal splitting.  The next calculation gives its normalization in the stabilizer model.

For $J\in\mathfrak{so}(P_H)$ there is a unique $w\in P_H$ with $Jx=w\times x$.
In any quaternionic coordinates adapted to $H$, define the candidate extension
\[
\mathcal L_H(J)=Z(w/2,0).
\]
The proposition proves that this expression is independent of the adapted coordinates by
characterizing it as the unique minimum of the invariant trace norm.

\begin{proposition}[Minimum-norm extension of an arbitrary local generator]
\label{cor:forced-motion}
For every quaternionic context $H$ and every $J\in\mathfrak{so}(P_H)$, the extensions of $J$ in $\mathfrak g_2$ form
\[
\{L\in\mathfrak g_2:L|_{P_H}=J\}
 =\mathcal L_H(J)+\mathfrak k_H^{\mathrm{pt}}.
\]
The distinguished extension $\mathcal L_H(J)$ is the unique extension of minimum trace norm.  More precisely, for every extension $L$,
\begin{align}
 \|L\|_{\mathrm{tr}}^2
 &=\frac32\|J\|_{\mathrm{tr},P_H}^2+
       \|L-\mathcal L_H(J)\|_{\mathrm{tr}}^2,\label{eq:minimum-lift-norm}\\
 \|L|_{H^\perp}\|_{\mathrm{tr},H^\perp}^2
 &\ge\frac12\|J\|_{\mathrm{tr},P_H}^2.\nonumber
\end{align}
Equality in the inequality holds precisely for $L=\mathcal L_H(J)$.  The map $\mathcal L_H$ is linear, preserves Lie brackets, and is equivariant under simultaneous $G_2$-conjugation of $H$ and $J$.

For an orthonormal associative triple $(u,v,w=u\times v)$ spanning $P_H$ and $J=J_{u,v}|_{P_H}$, one has
\[
\mathcal L_H(J)=\frac14D_{u,v},\qquad
\mathcal L_H(J)|_{H^\perp}=-\frac12C_w.
\]
Thus the half-speed complementary motion belongs to the minimum-norm extension, not to every extension with the same local angular velocity.
\end{proposition}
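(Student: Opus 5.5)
By $G_2$-equivariance I would reduce everything to the standard context $H_0=H_A$ and work in the coordinates $Z(q_a,q_b)$ of \Cref{prop:stabilizer}. On $P_{H_0}=\operatorname{Im}\HH$ the derivation $Z(q_a,q_b)$ acts by $x\mapsto[q_a,x]=2\,q_a\times x$. Hence $Z(q_a,q_b)|_{P_H}=J$, with $Jx=w\times x$, holds exactly when $q_a=w/2$, and $q_b$ is unconstrained. The pointwise stabilizer algebra $\mathfrak k_H^{\mathrm{pt}}$ is $\{Z(0,q_b)\}$, obtained by differentiating the $a=1$ slice in \Cref{prop:stabilizer}. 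Every extension lies in $\mathfrak k_H$, since it preserves $P_H$. This gives the coset description $\mathcal L_H(J)+\mathfrak k_H^{\mathrm{pt}}$ with $\mathcal L_H(J)=Z(w/2,0)$.

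The main step is to compute the trace norm as a quadratic form in $(q_a,q_b)$, writing $\alpha=\|q_a\|$ and $\beta=\|q_b\|$.
\begin{itemize}
\item On $\operatorname{Im}\HH$, $\operatorname{ad}q_a$ is a rotation of frequency $2\alpha$, contributing $8\alpha^2$.
\item On $\HH e_4$, the operator is $L_{q_b}-R_{q_a}$ on $\HH\cong\RR^4$. Here $-\operatorname{tr}L_{q_b}^2=4\beta^2$ and $-\operatorname{tr}R_{q_a}^2=4\alpha^2$.
\item The cross term $\operatorname{tr}(L_pR_q)=4\operatorname{Re}p\operatorname{Re}q$ vanishes for imaginary $p,q$.
\end{itemize}
Thus
\[
\|Z(q_a,q_b)\|_{\mathrm{tr}}^2=12\alpha^2+4\beta^2,
\]
and the decomposition $Z(q_a,0)\oplus Z(0,q_b)$ is trace-orthogonal. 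Since $\|J\|_{\mathrm{tr},P_H}^2=2\|w\|^2=8\alpha^2$, the first term $12\alpha^2$ equals $\tfrac32\|J\|^2$, and the second term equals $\|L-\mathcal L_H(J)\|^2$. This proves \eqref{eq:minimum-lift-norm}. The same computation gives the complementary norm $4\alpha^2+4\beta^2\ge4\alpha^2=\tfrac12\|J\|^2$, with equality iff $q_b=0$.

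Uniqueness of the minimizer follows from \eqref{eq:minimum-lift-norm}. Because the minimum-norm characterization is stated purely in terms of $H$, $J$ and the $\Ad$-invariant trace norm, it is independent of the adapted coordinates and is $G_2$-equivariant. Linearity is clear from $J\mapsto w\mapsto Z(w/2,0)$. For brackets, I would note that $[C_w,C_{w'}]|_{P}=C_{w\times w'}$ and $[w/2,w'/2]=(w\times w')/2$. In addition, $q\mapsto Z(q,0)$ is a Lie homomorphism, being the derivative of the subgroup $a\mapsto\gamma_{a,1}$.

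For the explicit formula I would use the identity $D_{u,v}=4C_w-3[u,v,\cdot\,]$ from the proof of \Cref{prop:split}. The associator vanishes on $P_H$, and equals $2C_w$ on $H^\perp$. Hence $\tfrac14D_{u,v}$ restricts to $C_w=J$ on $P_H$ and to $-\tfrac12C_w$ on $H^\perp$. From \Cref{prop:split}, its complementary norm is $\tfrac14(6-2)=1=\tfrac12\|J\|^2$, which is the equality case. By uniqueness of the minimizer, $\tfrac14 D_{u,v}=\mathcal L_H(J)$.

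The only delicate point I expect is the trace bookkeeping on $\HH e_4$. One has to get the right-multiplication sign and the vanishing cross term correct, since these fix the constants $\tfrac32$ and $\tfrac12$.
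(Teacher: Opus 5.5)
Your proposal is correct and follows the paper's proof essentially step for step: you reduce to the stabilizer model, derive $\|Z(q_a,q_b)\|_{\mathrm{tr}}^2=12\|q_a\|^2+4\|q_b\|^2$ from the blocks $[q_a,\cdot\,]$ and $L_{q_b}-R_{q_a}$, and restrict $D_{u,v}=4C_w-3[u,v,\cdot\,]$ to $P_H$ and $H^\perp$. The only differences are cosmetic: you check bracket preservation through the cross-product identity rather than by inverting the restriction isomorphism on the first $\mathfrak{sp}(1)$ factor, and you identify $\tfrac14D_{u,v}$ with $\mathcal L_H(J)$ through the equality case of the norm inequality rather than by comparing it directly with $Z(w/2,0)$.
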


\begin{proof}
Conjugate to the model \eqref{eq:gamma}.  On $P_H$, the derivation $Z(q_a,q_b)$ acts as $2q_a\times x$, so restriction to $J$ fixes $q_a=w/2$ and leaves $q_b$ arbitrary.  On $\HH e_4$ its action is $L_{q_b}-R_{q_a}$.  The traces of $L_{q_b}^2$ and $R_{q_a}^2$ on $\HH$ are $-4\|q_b\|^2$ and $-4\|q_a\|^2$, while $\operatorname{tr}(L_{q_b}R_{q_a})=0$ for imaginary $q_a,q_b$.  The three-dimensional block contributes $8\|q_a\|^2$.  Hence
\[
\|Z(q_a,q_b)\|_{\mathrm{tr}}^2=12\|q_a\|^2+4\|q_b\|^2.
\]
Since $\|J\|_{\mathrm{tr},P_H}^2=2\|w\|^2$, the asserted identity and inequality follow, with equality exactly at $q_b=0$.  The first $\mathfrak{sp}(1)$ factor is a Lie subalgebra, and its restriction to $\mathfrak{so}(P_H)$ is an isomorphism; its inverse is $\mathcal L_H$.  Uniqueness of the minimum and invariance of the trace form prove equivariance, independently of the chosen adapted coordinates.

On $P_H$ the associator vanishes, so $D_{u,v}=4J_{u,v}$.  On $H^\perp$, \Cref{prop:split} gives $D_{u,v}=-2C_w$.  These restrictions identify $D_{u,v}/4$ with $Z(w/2,0)$ and prove the final assertions.
\end{proof}

\begin{example}[Equal local velocities with different complementary actions]
For $H_{123}$, both
\[
L_0=\frac14D_{12},\qquad L_1=\frac13D_{12}+\frac16D_{47}
\]
restrict to $J_{12}$ on $P_A$.  The exponential $e^{tL_0}$ rotates the $(e_4,e_7)$ and $(e_5,e_6)$ planes at half the prescribed angular speed, with opposite orientations.  The second instead satisfies
\[
L_1e_4=L_1e_7=0,\qquad L_1e_5=e_6,\qquad L_1e_6=-e_5.
\]
Both are derivations, so their exponentials preserve multiplication; only $L_0$ minimizes the trace norm.  Neither derivation annihilates the whole complement, consistently with \Cref{prop:frozen}.
\end{example}

An unsigned coordinate switch provides another illustration.  Let it fix $e_1,e_6,e_7$ and exchange $e_2\leftrightarrow e_4$, $e_3\leftrightarrow e_5$.  This map is orthogonal and involutive, but
\[
e_2e_6=-e_4,
\]
so the images of the two sides differ by a sign.  The corrected switch is
\begin{equation}
\begin{aligned}
S(e_1)&=e_1,&S(e_2)&=e_4,&S(e_3)&=e_5,\\
S(e_4)&=e_2,&S(e_5)&=e_3,&S(e_6)&=-e_6,&S(e_7)&=-e_7.
\end{aligned}
\label{eq:switch}
\end{equation}
Checking the seven oriented Fano products shows that $S(xy)=S(x)S(y)$ for basis elements and hence for all $x,y\in\OO$.  Therefore $S\in G_2$ and $S(H_A)=H_B$.

\section{Product-preserving closure from quaternionic stabilizers}
\label{sec:g2closure}

In the product-preserving model each local algebra is six-dimensional, and the closure depends on the relative position of two copies of $\mathfrak{so}(4)$ inside $\mathfrak g_2$.  The isotropy representation of $G_2/\SO(4)$ is irreducible; hence a Lie algebra that properly contains one context stabilizer must be all of $\mathfrak g_2$.

Put $\mathfrak k=\mathfrak k_{H_0}$ and let $\mathfrak m=\mathfrak k^\perp\cap\mathfrak g_2$ for the trace form.  Write $V_m$ for the $(m+1)$-dimensional irreducible complex representation of $\Sp(1)\simeq\SU(2)$, and use $\boxtimes$ for the external tensor product of the two $\Sp(1)$ factors in the stabilizer.

For a real vector space or Lie algebra $E$, write $E_{\CC}=E\otimes_{\RR}\CC$.

Inner products extend complex-bilinearly in the orthogonal-algebra decompositions below.  The external tensor product means $(g_1,g_2)(v\otimes w)=g_1v\otimes g_2w$; $\Lambda^2E$ is the exterior square.  The isotropy representation is the action of $K_{H_0}$ on the tangent space of $G_2/K_{H_0}$ at the identity coset, identified here with $\mathfrak m$.

\begin{lemma}[Isotropy branching and maximality]
\label{lem:maximality}
For $K=K_{H_0}\cong(\Sp(1)_a\times\Sp(1)_b)/\{\pm(1,1)\}$,
\begin{align}
V_{\CC}&\cong(V_2\boxtimes V_0)\oplus(V_1\boxtimes V_1),\label{eq:Vbranch}\\
\mathfrak g_{2,\CC}&\cong(V_2\boxtimes V_0)\oplus(V_0\boxtimes V_2)\oplus(V_3\boxtimes V_1).
\label{eq:gbranch}
\end{align}
Thus
\[
\mathfrak k_{\CC}\cong(V_2\boxtimes V_0)\oplus(V_0\boxtimes V_2),
\qquad
\mathfrak m_{\CC}\cong V_3\boxtimes V_1
\]
and $\mathfrak m$ is irreducible.  Therefore $\mathfrak k\cong\mathfrak{so}(4)$ is a maximal proper subalgebra of $\mathfrak g_2$, in agreement with the classical maximal-subalgebra classification \cite{Dynkin1952b}.
\end{lemma}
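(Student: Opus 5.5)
The plan is to read off both branchings from the explicit stabilizer model of \Cref{prop:stabilizer} and the infinitesimal formula $Z(q_a,q_b)(x+ye_4)=[q_a,x]+(q_by-yq_a)e_4$, and then to deduce maximality from irreducibility of $\mathfrak m$.

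\textbf{Branching of $V$.} On $\operatorname{Im}\HH$ the pair $(a,b)$ acts by $x\mapsto axa^{-1}$. This is the adjoint representation of $\Sp(1)_a$ with $\Sp(1)_b$ acting trivially, so its complexification is $V_2\boxtimes V_0$. On $\HH e_4$ the action is $y\mapsto bya^{-1}$. Identifying $\HH_\CC\cong V_1\otimes V_1$, with left multiplication by $b$ and right multiplication by $a^{-1}$, gives $V_1\boxtimes V_1$. Both factors are self-dual, so the inverse on $a$ does not matter. This proves \eqref{eq:Vbranch}.

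\textbf{Branching of $\mathfrak g_2$.} I would use $\mathfrak g_2\subset\mathfrak{so}(V)\cong\Lambda^2V$ together with the splitting \eqref{eq:so7split}, which gives $\mathfrak g_{2,\CC}\cong\Lambda^2V_\CC\ominus V_\CC$ as $K$-modules. First decompose the exterior square:
\[
\Lambda^2(V_2\boxtimes V_0)=V_2\boxtimes V_0,\qquad
(V_2\boxtimes V_0)\otimes(V_1\boxtimes V_1)=(V_3\oplus V_1)\boxtimes V_1,
\]
\[
\Lambda^2(V_1\boxtimes V_1)=(V_2\boxtimes V_0)\oplus(V_0\boxtimes V_2).
\]
The last identity uses $\Lambda^2(A\otimes B)=S^2A\otimes\Lambda^2B\oplus\Lambda^2A\otimes S^2B$. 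The total is
\[
2(V_2\boxtimes V_0)\oplus(V_0\boxtimes V_2)\oplus(V_3\boxtimes V_1)\oplus(V_1\boxtimes V_1),
\]
of dimension $21$. Subtracting one copy of $V_\CC$ as given by \eqref{eq:Vbranch} leaves \eqref{eq:gbranch}, of dimension $3+3+8=14$. A dimension count cross-checks this.

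The summands $V_2\boxtimes V_0$ and $V_0\boxtimes V_2$ are exactly the complexifications of the two $\mathfrak{sp}(1)$ factors $q_a$ and $q_b$, which span $\mathfrak k$. Since $\mathfrak m$ is the $K$-invariant trace-orthogonal complement of $\mathfrak k$ and the decomposition is multiplicity-free, we get $\mathfrak m_\CC\cong V_3\boxtimes V_1$. This module is irreducible over $\CC$, so $\mathfrak m$ is irreducible over $\RR$: a proper real invariant subspace would complexify to a proper complex invariant subspace. (Here $V_3\boxtimes V_1$ is of real type, being the tensor product of two quaternionic-type modules, so this step is clean.)

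\textbf{Maximality.} Let $\mathfrak l$ be a subalgebra with $\mathfrak k\subsetneq\mathfrak l\subseteq\mathfrak g_2$. Then $\mathfrak l\cap\mathfrak m$ is $\operatorname{ad}\mathfrak k$-invariant, because $[\mathfrak k,\mathfrak m]\subseteq\mathfrak m$ by invariance of the trace form. It is also nonzero, since $\mathfrak l=\mathfrak k\oplus(\mathfrak l\cap\mathfrak m)$. Because $K$ is connected, $\operatorname{ad}\mathfrak k$-invariance is the same as $K$-invariance, so irreducibility forces $\mathfrak l\cap\mathfrak m=\mathfrak m$, and hence $\mathfrak l=\mathfrak g_2$.

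\textbf{Main obstacle.} The step I expect to need the most care is justifying $\mathfrak g_{2,\CC}\cong\Lambda^2V_\CC\ominus V_\CC$ as $K$-modules, that is, that the complement $\{C_w\}$ in \eqref{eq:so7split} is $K$-equivariantly isomorphic to $V$. This holds because $g(w\times x)=gw\times gx$ for $g\in G_2$. An alternative check is to compute the weights of a maximal torus $(e^{i\theta},e^{i\phi})$ directly on the fourteen basis derivations, which avoids the subtraction argument.
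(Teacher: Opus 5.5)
Your proposal is correct and follows the paper's proof essentially step for step. It uses the same reading of \eqref{eq:Vbranch} from the model \eqref{eq:gamma}, the same exterior-square decomposition of $\mathfrak{so}(V_{\CC})$ with the copy of $V_{\CC}$ from \eqref{eq:so7split} removed, and the same maximality argument via $\mathfrak l=\mathfrak k\oplus(\mathfrak l\cap\mathfrak m)$. The extra points you supply are details the paper leaves implicit, and all of them are sound: $K$-equivariance of $w\mapsto C_w$, identification of $\mathfrak k_{\CC}$ with the two $\mathfrak{sp}(1)$ factors, passage from complex to real irreducibility, and connectedness of $K$.
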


\begin{proof}
The action \eqref{eq:gamma} splits
\[
V=\operatorname{Im}H_0\oplus H_0e_4.
\]
The first summand is the adjoint three-dimensional module of $\Sp(1)_a$ and is trivial under $\Sp(1)_b$, giving $V_2\boxtimes V_0$.  On the second, $(a,b)$ acts on the quaternion coefficient by $y\mapsto bya^{-1}$, whose complexification is $V_1\boxtimes V_1$.  This proves \eqref{eq:Vbranch}.

Use $\mathfrak{so}(V_{\CC})\cong\Lambda^2V_{\CC}$ and
\[
\Lambda^2(A\oplus B)=\Lambda^2A\oplus(A\otimes B)\oplus\Lambda^2B.
\]
The tensor-product decomposition gives
\[
\Lambda^2(V_2)=V_2,
\quad
V_2\otimes V_1=V_3\oplus V_1,
\quad
\Lambda^2(V_1\boxtimes V_1)=(V_2\boxtimes V_0)\oplus(V_0\boxtimes V_2).
\]
Consequently
\[
\mathfrak{so}(V_{\CC})\cong
2(V_2\boxtimes V_0)\oplus(V_0\boxtimes V_2)
\oplus(V_3\boxtimes V_1)\oplus(V_1\boxtimes V_1).
\]
Removing the direct summand $V_{\CC}$ in \eqref{eq:so7split} yields \eqref{eq:gbranch}.  If $\mathfrak k\subseteq\mathfrak h\subseteq\mathfrak g_2$, the orthogonal splitting $\mathfrak g_2=\mathfrak k\oplus\mathfrak m$ gives $\mathfrak h=\mathfrak k\oplus W$ with $W$ a $\mathfrak k$-submodule of the irreducible module $\mathfrak m$.  Hence $W=0$ or $W=\mathfrak m$, proving maximality.
\end{proof}

\begin{lemma}[The stabilizer determines the context]
\label{lem:context-determined}
If $\mathfrak k_H=\mathfrak k_{H'}$, then $H=H'$.
\end{lemma}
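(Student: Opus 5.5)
The plan is to recover $P_H$ intrinsically from $\mathfrak k_H$ as the unique three-dimensional $\mathfrak k_H$-invariant subspace of $V$ carrying a particular representation type. Then equality of the stabilizer algebras forces equality of the associative planes, and hence of the contexts $H=\RR1\oplus P_H$.

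By transitivity of $G_2$ on contexts and equivariance ($\mathfrak k_{gH}=\Ad_g\mathfrak k_H$), we may take $H=H_0$. By \eqref{eq:Vbranch}, the representation of $\mathfrak k_{H_0}$ on $V_{\CC}$ splits as $(V_2\boxtimes V_0)\oplus(V_1\boxtimes V_1)$. These two summands are irreducible and mutually inequivalent. Hence the only $\mathfrak k_{H_0}$-invariant real subspaces of $V$ are $0$, $P_{H_0}$, $H_0^\perp$ and $V$. The same statement then holds for an arbitrary $H$: the $\mathfrak k_H$-invariant subspaces of $V$ are exactly $0$, $P_H$, $P_H^\perp$ and $V$. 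Among these, $P_H$ is the unique invariant subspace of dimension three.

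Now suppose $\mathfrak k_H=\mathfrak k_{H'}=:\mathfrak k$. Then $P_{H'}$ is $\mathfrak k_{H'}$-invariant, hence $\mathfrak k$-invariant, and it has dimension three. By the uniqueness just established, $P_{H'}=P_H$, and therefore $H'=\RR1\oplus P_{H'}=\RR1\oplus P_H=H$.

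The only point that needs care is the real reduction: a real invariant subspace complexifies to a complex invariant subspace, so it must be a sum of isotypic pieces. Since the two summands are inequivalent, every complex invariant subspace is $0$, either summand, or the whole space, and the claim follows. An alternative direct route avoids representation theory: the pointwise stabilizer $\mathfrak k_H^{\mathrm{pt}}$ is contained in $\mathfrak k$, and in the model its elements $Z(0,q_b)$ act as $y\mapsto q_by$ on $H^\perp$. This action has no nonzero fixed vector there, and it vanishes exactly on $P_H$. I would use the branching argument as the main proof, since it uses only \eqref{eq:Vbranch}, and I do not expect any step to present a genuine obstacle.
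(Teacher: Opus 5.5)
Your branching argument is correct and is essentially the paper's proof. The paper conjugates to $H_0$, notes that $\operatorname{Im}H_0$ and $H_0e_4$ are irreducible, inequivalent real $K_{H_0}$-modules of dimensions three and four, concludes that $\operatorname{Im}H_0$ is the unique invariant three-plane, and hence that it equals $P_{H'}$. Your side route through $\mathfrak k_H^{\mathrm{pt}}$ also works (the paper uses it in \Cref{lem:nonfixed-context-bound}), but it needs irreducibility of left multiplication by $\operatorname{Im}\HH$ on $H^\perp$, not merely the absence of nonzero fixed vectors there.
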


\begin{proof}
Conjugate so that $H=H_0$.  Under $K_{H_0}$, the decomposition
\[
V=\operatorname{Im}H_0\oplus H_0e_4
\]
consists of irreducible inequivalent real modules of dimensions three and four.  Therefore $\operatorname{Im}H_0$ is the unique invariant three-dimensional subspace.  If $\mathfrak k_{H'}=\mathfrak k_{H_0}$, the associative plane $P_{H'}$ is invariant and must equal $P_{H_0}$; hence $H'=H_0$.
\end{proof}

\begin{proposition}[Two-context product-preserving closure]
\label{thm:two-context}
For any two distinct contexts $H\ne H'$,
\[
\Lie(\mathfrak k_H\cup\mathfrak k_{H'})=\mathfrak g_2,
\qquad
\langle K_H,K_{H'}\rangle=G_2.
\]
\end{proposition}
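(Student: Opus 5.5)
The plan is to derive the Lie-algebra statement from maximality, and then pass to the groups by a connectedness argument.

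Put $\mathfrak h=\Lie(\mathfrak k_H\cup\mathfrak k_{H'})$. Then $\mathfrak k_H\subseteq\mathfrak h\subseteq\mathfrak g_2$. By \Cref{lem:maximality}, transported from $H_0$ to $H$ by conjugation in $G_2$ (which acts transitively on contexts), $\mathfrak k_H$ is a maximal proper subalgebra. Hence either $\mathfrak h=\mathfrak k_H$ or $\mathfrak h=\mathfrak g_2$.

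In the first case $\mathfrak k_{H'}\subseteq\mathfrak k_H$. Both algebras have dimension six, being isomorphic to $\mathfrak{so}(4)$, so $\mathfrak k_{H'}=\mathfrak k_H$. \Cref{lem:context-determined} then gives $H=H'$, contradicting the hypothesis. Therefore $\mathfrak h=\mathfrak g_2$.

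For the group statement, I note that $K_H\cong\SO(4)$ is connected by \Cref{prop:stabilizer}. It is therefore generated by $\exp(\mathfrak k_H)$, and likewise $K_{H'}$ is generated by $\exp(\mathfrak k_{H'})$. By the standard theorem that a subgroup generated by connected Lie subgroups is an arcwise-connected, hence analytic, subgroup (Yamabe), $\langle K_H,K_{H'}\rangle$ is the connected Lie subgroup whose Lie algebra is the smallest subalgebra containing $\mathfrak k_H$ and $\mathfrak k_{H'}$, namely $\mathfrak h=\mathfrak g_2$. The connected subgroup with Lie algebra $\mathfrak g_2$ is the identity component of $G_2$, and $G_2$ is connected. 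Hence $\langle K_H,K_{H'}\rangle=G_2$.

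An alternative to Yamabe is available. Products $g_1g_2\cdots g_m$ of elements alternating between the two groups give smooth maps $K_H\times K_{H'}\times\cdots\to G_2$, and for sufficiently long words one of these is a submersion at some point. Its image then contains an open set, so the generated subgroup is open, and since $G_2$ is connected it is everything.

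The main step needing care is this passage from the algebra to the group. I expect to cite Yamabe's theorem for it, because an abstractly generated subgroup need not be closed a priori. The algebraic part is essentially immediate from the two preceding lemmas; the only small point there is the dimension count ensuring that containment of the stabilizers forces equality.
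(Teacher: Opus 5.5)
Your proof is correct. The Lie-algebra half coincides with the paper's: both combine \Cref{lem:context-determined} with the maximality in \Cref{lem:maximality}. You also make explicit the dimension count that the paper uses tacitly when it says the generated algebra strictly contains $\mathfrak k_H$: both stabilizers are six-dimensional, so $\mathfrak k_{H'}\subseteq\mathfrak k_H$ forces equality.

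The group half is where you differ. You invoke Yamabe's theorem, equivalently the textbook fact that the subgroup generated by connected Lie subgroups is the analytic subgroup of the generated Lie algebra. This is valid, and it is why you need, and correctly check, that $K_H\cong\SO(4)$ is connected.

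The paper proves the needed openness directly. Put $\Gamma=\langle K_H,K_{H'}\rangle$ and let $E$ be the span of all $\Ad_gU$ with $g\in\Gamma$ and $U\in\mathfrak k_H\cup\mathfrak k_{H'}$. This space is $\Ad_\Gamma$-invariant. Differentiating along the one-parameter subgroups contained in $\Gamma$ makes it $\operatorname{ad}_U$-invariant, so $E=\mathfrak g_2$. Fourteen such conjugates forming a basis then give a product of conjugated one-parameter groups lying in $\Gamma$ with invertible differential at the origin. The inverse-function theorem yields an identity neighborhood inside $\Gamma$, so $\Gamma$ is open and equals $G_2$.

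Your citation is shorter but imports a nontrivial structural theorem. The paper's argument is elementary and needs no connectedness of the stabilizers. It is also the same product-of-conjugated-exponentials device reused later in \Cref{thm:single} and \Cref{thm:charts}.

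Your sketched alternative is a less specific form of the paper's argument. As written, its key step (that some sufficiently long word map is a submersion somewhere) is only asserted. The invariant span $E$, combined with conjugated one-parameter groups, is exactly what supplies that step.
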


\begin{proof}
By \Cref{lem:context-determined}, $\mathfrak k_{H'}\ne\mathfrak k_H$.  The generated algebra strictly contains $\mathfrak k_H$, so maximality from \Cref{lem:maximality} forces it to be $\mathfrak g_2$.  The passage to the group is the usual Lie-rank argument \cite{JurdjevicSussmann1972}.  To make openness explicit, let $\Gamma=\langle K_H,K_{H'}\rangle$ and let $E$ be the span of all $\Ad_g U$ with $g\in\Gamma$ and $U\in\mathfrak k_H\cup\mathfrak k_{H'}$.  This space is $\Ad_\Gamma$-invariant.  Differentiating conjugation by the one-parameter groups shows that it contains the Lie algebra generated by the two local algebras, hence $E=\mathfrak g_2$.  Select fourteen such conjugates forming a basis.  The product of their one-parameter groups lies in $\Gamma$ and has invertible differential at zero.  The inverse-function theorem therefore gives an identity neighborhood in $\Gamma$.  Thus $\Gamma$ is open and equals the connected group $G_2$.
\end{proof}

Conjugation by a product-preserving switch carries the stabilizer of one context to the stabilizer of its image, giving the following immediate consequence.

\begin{corollary}
\label{cor:stabilizer-switch}
Let $H$ be a quaternionic context and let $S\in G_2$ satisfy $S(H)\neq H$.  Then
\[
\Lie\!\left(\mathfrak k_H\cup\Ad_S\mathfrak k_H\right)=\mathfrak g_2,
\qquad
\langle K_H,S\rangle=G_2.
\]
Thus one complete local context-control group plus one product-preserving context switch is sufficient for global $G_2$ control.
\end{corollary}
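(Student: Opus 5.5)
The corollary follows by applying \Cref{thm:two-context} to the pair of contexts $H$ and $H':=S(H)$. First, $H'$ is again a quaternionic context: $S\in G_2$ preserves the unit and the multiplication, so $S(H)$ is a four-dimensional unital subalgebra isomorphic to $H$. By hypothesis $H'\ne H$.

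Next, identify $\Ad_S\mathfrak k_H$ with $\mathfrak k_{H'}$. If $Z\in\mathfrak k_H$, then $SZS^{-1}\in\mathfrak g_2$ and
\[
SZS^{-1}(H')=SZ(H)\subseteq S(H)=H'.
\]
The symmetric argument with $S^{-1}$ gives the reverse inclusion. Hence $\Ad_S\mathfrak k_H=\mathfrak k_{H'}$, and at the group level $SK_HS^{-1}=K_{H'}$. The Lie-algebra statement is then exactly the first equality of \Cref{thm:two-context}.

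For the group statement, $\langle K_H,S\rangle$ contains both $K_H$ and $SK_HS^{-1}=K_{H'}$. It therefore contains $\langle K_H,K_{H'}\rangle$, which equals $G_2$ by \Cref{thm:two-context}. The reverse inclusion $\langle K_H,S\rangle\subseteq G_2$ is trivial.

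There is no substantial obstacle. The only point needing care is the identification $\Ad_S\mathfrak k_H=\mathfrak k_{S(H)}$, which uses that $S$ is product preserving, so that $S(H)$ is a context and conjugation preserves $\mathfrak g_2$. The closing sentence of the corollary is an interpretation of the group equality rather than a separate claim.
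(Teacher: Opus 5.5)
Your proof is correct and follows the paper's own proof. The paper likewise identifies $SK_HS^{-1}=K_{S(H)}$ and $\Ad_S\mathfrak k_H=\mathfrak k_{S(H)}$, applies \Cref{thm:two-context} to the distinct pair $H,S(H)$, and notes that $\langle K_H,S\rangle$ contains both stabilizers.
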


\begin{proof}
Conjugation gives $SK_HS^{-1}=K_{S(H)}$ and
$\Ad_S\mathfrak k_H=\mathfrak k_{S(H)}$.  Since $S(H)\neq H$, \Cref{thm:two-context} applies.  The group generated by $K_H$ and $S$ contains both $K_H$ and $K_{S(H)}$, hence all of $G_2$.
\end{proof}

Thus \Cref{cor:stabilizer-switch} reduces the continuously available directions to one six-dimensional local stabilizer and one fixed switch.  The next section reduces the continuous control further to one direction.

For the motivating pair the enlargement beyond a single stabilizer can also be seen directly.  An explicit basis is
\begin{align}
\mathfrak k_A&=\Span\{D_{12},D_{13},D_{23},D_{45},D_{46},D_{47}\},\\
\mathfrak k_B&=\Span\{D_{14},D_{15},D_{23},D_{26},D_{27},D_{45}\}.
\end{align}
Their intersection is $\Span\{D_{23},D_{45}\}$ and their sum has dimension ten.  The four identities
\begin{align}
[D_{12},D_{14}]&=-2D_{17}+4D_{24},\\
[D_{12},D_{15}]&= 2D_{16}+4D_{25},\\
[D_{12},D_{26}]&=-4D_{16}-2D_{25},\\
[D_{12},D_{27}]&=-4D_{17}+2D_{24}
\end{align}
generate the four missing directions.  These commutators exhibit directly the four directions missing from $\mathfrak k_A+\mathfrak k_B$ and provide a coordinate check of \Cref{thm:two-context} for the fixed pair.

\section{One pulse and one fixed switch}
\label{sec:single}

The preceding result allows all six directions in a local stabilizer.  We now impose the stronger restriction that only one continuously tunable direction is available and that the second generator is its conjugate by the fixed switch.

Fix the signed switch $S$ in \eqref{eq:switch} and define
\begin{equation}
X=D_{46}+D_{47}\in\mathfrak k_A,
\qquad
Y=\Ad_SX=-(D_{26}+D_{27})\in\mathfrak k_B.
\label{eq:XY}
\end{equation}

For this pair let $V_0=\Span\{X,Y\}$ and $V_{r+1}=V_r+[V_r,V_r]$.
We also fix bracket words by the following recursion for $(X,Y)$.
For another ordered pair $(U,V)$, $W_i(U,V)$ denotes the same word with
$X,Y$ replaced by $U,V$; $W_i$ without arguments means $W_i(X,Y)$.
\begin{align*}
W_1&=X,&W_2&=Y,&W_3&=[W_1,W_2],\\
W_4&=[W_1,W_3],&W_5&=[W_2,W_3],&W_6&=[W_1,W_4],\\
W_7&=[W_1,W_5],&W_8&=[W_2,W_5],&W_9&=[W_3,W_4],\\
W_{10}&=[W_3,W_5],&W_{11}&=[W_4,W_5],&W_{12}&=[W_1,W_9],\\
W_{13}&=[W_3,W_9],&W_{14}&=[W_3,W_{10}].
\end{align*}

\begin{theorem}[Constrained single-switch generation]
\label{thm:single}
The pair in \eqref{eq:XY} satisfies
\[
\Lie\{X,Y\}=\mathfrak g_2.
\]
The successive dimensions of the closure filtration are
\[
2\longrightarrow3\longrightarrow5\longrightarrow11\longrightarrow14\longrightarrow14.
\]
Consequently finite products of the pulse $e^{tX}$, the switch $S$, and their inverses reach every element of the connected group $G_2$.
\end{theorem}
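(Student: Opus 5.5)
The plan has two parts: an explicit bracket computation, which also certifies the filtration dimensions, and then a structural cross-check plus the passage to the group.

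First, $Y=\Ad_SX$. Since $S$ is an automorphism, $\Ad_S D_{a,b}=D_{Sa,Sb}$. From \eqref{eq:switch} we have $S e_4=e_2$, $Se_6=-e_6$ and $Se_7=-e_7$. Hence $\Ad_S D_{46}=-D_{26}$ and $\Ad_S D_{47}=-D_{27}$, which confirms \eqref{eq:XY}. Membership $X\in\mathfrak k_A$ holds because $D_{46},D_{47}$ lie in the displayed basis of $\mathfrak k_A$. Membership $Y\in\mathfrak k_B=\Ad_S\mathfrak k_A$ follows by conjugation.

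Next, I would write every $D_{ij}$ as a $7\times7$ skew matrix on $V$ using $D_{a,b}(x)=[[a,b],x]-3[a,b,x]$ and the Fano table \eqref{eq:fano}. I would then compute the words $W_1,\dots,W_{14}$ recursively and express each in the basis $\mathcal B_{G_2}$ of \eqref{eq:g2basis}. The bracket depth of $W_i$ determines the filtration level at which it first appears:
\begin{itemize}
\item $W_3$ lies in $V_1$;
\item $W_4,W_5$ lie in $V_2$;
\item $W_6,\dots,W_{11}$ lie in $V_3$;
\item $W_{12},W_{13},W_{14}$ lie in $V_4$.
\end{itemize}
Ranks of the cumulative coordinate matrices then give lower bounds $3,5,11,14$ on $\dim V_1,\dots,V_4$. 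The final rank $14$ is certified by a nonzero $14\times14$ determinant, or equivalently a positive trace-Gram determinant, of $W_1,\dots,W_{14}$. That gives $\Lie\{X,Y\}=\mathfrak g_2$ and the stabilization $14\to14$.

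For the exact intermediate dimensions I also need upper bounds. These come from spanning $V_{r+1}$ by the brackets of a basis of $V_r$ and computing the rank of all those brackets, not only of the chosen words. This means checking the following:
\begin{itemize}
\item $[X,Y]\ne0$, so $\dim V_1=3$;
\item $V_2=V_1+[V_1,V_1]$ is spanned by $V_1$ together with $[X,W_3]$ and $[Y,W_3]$, so $\dim V_2=5$;
\item for $V_3$, all $\binom52=10$ brackets of a basis of $V_2$, together with $V_2$, have rank exactly $11$.
\end{itemize}
I expect this last check to be the main obstacle. It is pure bookkeeping, but the relations among $D_{ij}$ (they span $\mathfrak g_2$ but are not independent) must be handled consistently. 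I would do every computation in matrix form on $V$ and only then project to the basis \eqref{eq:g2basis}.

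As a structural cross-check, independent of the coordinate ranks, one could argue via the maximal-subalgebra list in \Cref{subsec:compact-maximals}. Dimension $11$ already exceeds $8=\dim\mathfrak{su}(3)$, $6$ and $3$, and a compact subalgebra properly containing $V_3$ with dimension above $8$ must be $\mathfrak g_2$. This gives full generation once $\dim V_3=11$ is verified.

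Finally, for the group statement I would copy the argument of \Cref{thm:two-context}. Let $\Gamma=\langle e^{\RR X},S\rangle$. It contains $e^{tY}=Se^{tX}S^{-1}$, and the span $E$ of its $\Ad_\Gamma$-conjugates of $X$ and $Y$ is $\Ad_\Gamma$-invariant. Differentiating conjugations by the one-parameter groups shows that $E$ is closed under brackets with $X$ and $Y$, hence $E\supseteq\Lie\{X,Y\}=\mathfrak g_2$. I would then choose fourteen conjugates forming a basis. The product of their one-parameter groups has invertible differential at $0$, so by the inverse function theorem $\Gamma$ contains an identity neighbourhood. Thus $\Gamma$ is open, hence closed, and equals the connected group $G_2$.
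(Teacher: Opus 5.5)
Your proposal is correct and follows essentially the paper's route: the fourteen-word determinant $2^{88}3^9$, the placement of the words in the filtration, and the inverse-function argument from \Cref{thm:two-context} for the group. The step you flag as the main obstacle needs no separate rank computation: the ten brackets of $W_1,\dots,W_5$ are exactly $W_3,\dots,W_{11}$ together with $[W_2,W_4]$, which equals $W_7$ by the Jacobi identity, so $V_3=\Span\{W_1,\dots,W_{11}\}$ and its dimension, $11$, is read off from the same determinant.
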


\begin{proof}
All fourteen words belong to $\mathfrak g_2$ because $X,Y\in\mathfrak g_2$.
Their coordinate columns in the ordered basis \eqref{eq:g2basis} are printed in
Table~\ref{tab:brackets}, in Appendix~\ref{app:single-brackets}.  The determinant of that matrix is
\begin{equation}
6091593448313534987709865525248=2^{88}3^9\ne0.
\label{eq:bracketdet}
\end{equation}
Thus the fourteen words form a basis of $\mathfrak g_2$.

The intermediate dimensions follow from the same certificate.  By the recursion and the Jacobi identity
$[W_2,W_4]=[W_1,W_5]=W_7$,
\[
V_1=\Span\{W_1,W_2,W_3\},\qquad
V_2=\Span\{W_1,\ldots,W_5\},\qquad
V_3=\Span\{W_1,\ldots,W_{11}\}.
\]
Every bracket between the five generators of $V_2$ is in the displayed span for $V_3$,
and every listed $W_i$ with $i\le11$ occurs in that step.  The remaining words
$W_{12},W_{13},W_{14}$ belong to $V_4$, so $V_4=\mathfrak g_2$ and $V_5=V_4$.
Independence of the prefixes of the fourteen-word basis gives the stated dimensions.

Finally, $Se^{tX}S^{-1}=e^{tY}$, so the pulse--switch group contains both
one-parameter subgroups.  Their Lie algebra is $\mathfrak g_2$.  Applying the
product-map argument in the proof of \Cref{thm:two-context} gives an open identity
neighborhood in that group: choose fourteen conjugated infinitesimal directions
spanning $\mathfrak g_2$ and apply the inverse-function theorem to their product of
exponentials.  An open subgroup of the connected group $G_2$ is $G_2$ itself.
This is exact generation by finite products, not only density
\cite{JurdjevicSussmann1972,SilvaLeiteCrouch1988,AgrachevSachkov2004}.
\end{proof}

The constraint in \Cref{thm:single} is the fixed relation $Y=\Ad_S X$, not merely the existence of two independent generators.

Set $n=(e_2+e_3)/\sqrt2$, and define
\[
a(t)=\cos(\sqrt2t)-n\sin(\sqrt2t),\qquad
b(t)=\cos(3\sqrt2t)+n\sin(3\sqrt2t).
\]

\begin{proposition}[Closed form and period of the generating pulse]
\label{prop:pulse}
The pulse has the stabilizer form
\begin{equation}
e^{tX}=\gamma_{a(t),b(t)}.
\label{eq:pulse}
\end{equation}
The infinitesimal generator $X$ has characteristic polynomial on $V$
\[
\chi_X(\lambda)=\lambda(\lambda^2+8)^2(\lambda^2+32),
\]
which realizes the canonical $\mathfrak g_2$ frequency pattern $\{0,\pm ia,\pm ib,\pm i(a+b)\}$ with $a=b=2\sqrt2$ \cite{ChemtovKarigiannis2022}.  The one-parameter group $e^{tX}$ has minimum positive period $T_X=\pi/\sqrt2$.
\end{proposition}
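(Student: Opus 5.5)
The plan is to identify $X$ in the stabilizer coordinates $(q_a,q_b)$ of $\mathfrak k_A$ and then exponentiate inside the model \eqref{eq:gamma}. By \Cref{prop:stabilizer} and the differentiated formula
\[
Z(q_a,q_b)(x+ye_4)=[q_a,x]+(q_by-yq_a)e_4,
\]
every element of $\mathfrak k_A$ is $Z(q_a,q_b)$ for a unique pair. I will therefore evaluate $X=D_{46}+D_{47}$ on the basis vectors. First I apply it to $e_1,e_2,e_3$, using $D_{a,b}(x)=[[a,b],x]-3[a,b,x]$. Then I apply it to $e_4$. Here the Cayley--Dickson product needs $e_5,e_6,e_7$ written as $e_1e_4$, $\pm e_2e_4$, $\pm e_3e_4$, with signs read off \eqref{eq:fano}. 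From the $\operatorname{Im}\HH$ block one reads $q_a$, and from $X(e_4)=(q_b-q_a)e_4$ one reads $q_b$. The definitions of $a(t),b(t)$ suggest $q_a=-\sqrt2\,n=-(e_2+e_3)$ and $q_b=3\sqrt2\,n=3(e_2+e_3)$. I will confirm these on one more vector, such as $e_5$, to rule out sign errors.

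Next, the map $(q_a,q_b)\mapsto Z(q_a,q_b)$ is the differential at $(1,1)$ of the homomorphism $(a,b)\mapsto\gamma_{a,b}$. Hence $e^{tZ(q_a,q_b)}=\gamma_{\exp(tq_a),\exp(tq_b)}$. Since $n$ is a unit imaginary quaternion, $\exp(\theta n)=\cos\theta+n\sin\theta$, which gives exactly $a(t)$ and $b(t)$ and proves \eqref{eq:pulse}.

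For the spectrum I use the frequency conventions of \Cref{subsec:parity-domain}, with $\alpha=\|q_a\|=\sqrt2$ and $\beta=\|q_b\|=3\sqrt2$. On $\operatorname{Im}\HH$ the operator $\operatorname{ad}_{q_a}$ has eigenvalues $0,\pm2i\alpha=\pm2\sqrt2 i$. On $\HH e_4$, the commuting operators $L_{q_b}$ and $-R_{q_a}$ have frequencies $\beta-\alpha=2\sqrt2$ and $\beta+\alpha=4\sqrt2$. The resulting characteristic polynomial is $\lambda(\lambda^2+8)^2(\lambda^2+32)$, which matches the pattern $\{0,\pm ia,\pm ib,\pm i(a+b)\}$ with $a=b=2\sqrt2$.

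For the period, note that all frequencies are integer multiples of $2\sqrt2$. So $e^{tX}=I$ on $V$ exactly when $2\sqrt2\,t\in2\pi\mathbb Z$, i.e. $t\in(\pi/\sqrt2)\mathbb Z$, giving minimal period $\pi/\sqrt2$. As a cross-check in the group model: at $t=\pi/\sqrt2$ we get $a=b=-1$, and $(-1,-1)$ lies in the kernel of $(a,b)\mapsto\gamma_{a,b}$. At any smaller $t$ the factor $a(t)$ is not $\pm1$, or the pair is not $\pm(1,1)$. The main obstacle is the sign bookkeeping in identifying $q_a$ and $q_b$ under the paper's Fano orientation. I will check it by comparing $X$ with $Z(q_a,q_b)$ on all seven basis vectors rather than trusting a single evaluation.
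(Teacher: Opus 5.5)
Your proposal is correct and follows essentially the paper's proof. You identify $X=Z(q_a,q_b)$ with $q_a=-(e_2+e_3)$ and $q_b=3(e_2+e_3)$, exponentiate through the homomorphism $(a,b)\mapsto\gamma_{a,b}$, and read the spectrum and period from the frequencies $2\sqrt2,2\sqrt2,4\sqrt2$ together with $(a,b)=(-1,-1)$ at $t=\pi/\sqrt2$. Your planned check on all seven basis vectors does confirm these values under the paper's Fano convention (with $e_5=e_1e_4$, $e_6=e_2e_4$, $e_7=e_3e_4$); it spells out the substitution the paper leaves implicit.
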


\begin{proof}
Differentiate \eqref{eq:gamma}.  Imaginary velocities $(q_a,q_b)$ act as
\[
x\mapsto q_ax-xq_a,
\qquad
y\mapsto q_by-yq_a.
\]
Choosing $q_a=-(e_2+e_3)$ and $q_b=3(e_2+e_3)$ gives $X$, proving \eqref{eq:pulse}.  At $T_X$, $(a,b)=(-1,-1)$, the diagonal kernel element of \eqref{eq:gamma}, so the pulse returns to the identity.  The frequencies $2\sqrt2,2\sqrt2,4\sqrt2$ show that no smaller positive time makes all rotation blocks trivial, proving minimality of the period.
\end{proof}

\section{An explicit factorization for symmetric-pair generation}\label{sec:universal-locus}
Fix $T\in\mathscr C$.  By \Cref{subsec:parity-domain}, the pair $(Z,TZT)$ is equivalent to independently chosen $A_+\in\mathfrak k_T$, $A_-\in\mathfrak m_T$, with $Z=A_++A_-$.
We construct a factored test for this parity-restricted two-generator problem on the full algebra.  The local domain $\mathfrak k_A$ is not used in its main proof.

Polynomial definability itself is standard.  Starting from $E_1=\Span\{Z,TZT\}$ and setting
$E_{r+1}=E_r+[Z,E_r]+[TZT,E_r]$, stabilization gives the generated algebra, and strict growth cannot continue past dimension fourteen.
Thus finitely many words of length at most fourteen suffice.  If $\Delta_j$ are all their maximal coordinate minors, then $\sum_j\Delta_j^2=0$ is an equation for failure over the reals.
The construction below replaces that unrestricted collection with a fifteen-dimensional symmetric Gram determinant and a degree-eight double-commutator expression.
Its value is the explicit factorization and the separation of the reducible and principal mechanisms; no optimal-degree or runtime claim is made.
Commutant methods themselves are classical \cite{ZeierSchulteHerbrueggen2011}.

Throughout this section, matrices carry the Frobenius inner product
\[
\langle U,V\rangle_F=\operatorname{tr}(U^{\mathsf T}V).
\]
Its restriction to skew-symmetric matrices is the trace metric already used, whereas on symmetric matrices it is $\operatorname{tr}(UV)$.  For $T\in\mathscr C$ put
\[
\mathscr S_0=\{Q\in\operatorname{End}(V):Q^{\mathsf T}=Q,\ \operatorname{tr}Q=0\},
\qquad
\mathscr E_T=\{Q\in\mathscr S_0:TQT=Q\}.
\]
Since the eigenspaces of $T$ have dimensions three and four,
\[
\dim\mathscr S_0=27,\qquad \dim\mathscr E_T=6+10-1=15.
\]
For an algebra of skew-symmetric matrices, the orthogonal complement of every invariant subspace is also invariant.  We call such a representation \emph{reducible} when it has a nonzero proper invariant subspace.

The \emph{commutant} of a family of operators is the set of operators commuting with every member of that family.  The space $\mathscr E_T$ consists of symmetric trace-zero operators fixed by conjugation with $T$; equivalently, its elements commute with $T$.  A polynomial on $\mathscr C\times\mathfrak g_2$ means the restriction of a real polynomial in matrix entries.  All degrees stated for $\mathcal R_{30}$, $\mathcal H_8$, and $\mathcal U_{38}$ are degrees in $Z$, with $T$ held fixed, not total degrees in both variables.

\subsection{A symmetric commutant detects reducibility}

A proper invariant subspace gives an orthogonal projector that commutes with
both generators.  The next lemma averages that projector under the involution
without losing its non-scalar part.  This replaces a search over invariant
subspaces by linear equations for a symmetric matrix.

\begin{lemma}[Odd-dimensional involution test]
\label{lem:odd-commutant}
Let $V$ be an odd-dimensional real Euclidean space, $T$ an orthogonal involution, and $Z\in\mathfrak{so}(V)$.  Then $\Lie\{Z,TZT\}$ acts reducibly on $V$ if and only if there is a nonzero traceless symmetric operator $Q$ such that
\[
[Q,T]=[Q,Z]=0.
\]
\end{lemma}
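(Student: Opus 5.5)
The plan is to prove the two implications separately. The reverse implication is elementary. The forward implication rests on averaging an orthogonal projector over the group $\{I,T\}$; odd dimension is used only to show that the average is not scalar.

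For the reverse direction, suppose $Q\ne0$ is traceless symmetric with $[Q,T]=[Q,Z]=0$. Since $Q$ commutes with $T$, we get $Q(TZT)=TQZT=TZQT=(TZT)Q$. So $Q$ commutes with both generators, and therefore with every bracket word in them, that is, with all of $\Lie\{Z,TZT\}$. A nonzero traceless operator cannot be a scalar multiple of $I$. Hence the symmetric operator $Q$ has at least two distinct real eigenvalues. Any of its eigenspaces is then a nonzero proper subspace invariant under the generated algebra, which proves reducibility.

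For the forward direction, let $W$ be a nonzero proper invariant subspace of $L=\Lie\{Z,TZT\}$. As noted before the lemma, $W^\perp$ is also invariant, because $L$ consists of skew operators. Hence the orthogonal projector $P$ onto $W$ commutes with $L$. The algebra $L$ is $\Ad_T$-stable, since $\Ad_T$ swaps the two generators. It follows that $TPT$ also commutes with $L$: for $U\in L$ we have $TUT\in L$, so $TPT\,U=T\,P\,(TUT)\,T=T\,(TUT)\,P\,T=U\,TPT$. Put $Q_0=P+TPT$. This operator is symmetric, commutes with $T$ because $T^2=I$, and commutes with $Z$. Then set $Q=Q_0-\tfrac{\operatorname{tr}Q_0}{\dim V}I$. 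This $Q$ is traceless and symmetric and satisfies both commutation relations. It remains to show $Q\ne0$, that is, that $Q_0$ is not a scalar.

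That non-scalar claim is the only real obstacle, and it is where odd dimension enters. Suppose $P+TPT=cI$. Then $TPT=cI-P$, and $TPT$ is itself an orthogonal projector. Squaring gives $c^2I-2cP+P=cI-P$, that is, $(c^2-c)I=(2c-2)P$. If $c\ne1$, then $P$ would be a scalar multiple of $I$, which is impossible for a projector onto a nonzero proper subspace. So $c=1$. Taking traces of $P+TPT=I$ then gives $2\dim W=\dim V$, which contradicts the assumption that $\dim V$ is odd. Therefore $Q_0$ is not scalar, $Q\ne0$, and the lemma follows.
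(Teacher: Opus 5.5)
Your proof is correct and follows the paper's route: average the orthogonal projector of an invariant subspace over $\{I,T\}$ and then remove the trace. The only difference is the non-scalar step. The paper first arranges $\dim W\le(\dim V-1)/2$ and notes that $P+TPT$ has trace $2\dim W>0$ but rank at most $2\dim W<\dim V$. Your idempotent computation forces $c=1$ and hence $2\dim W=\dim V$, which avoids replacing $W$ by its complement; both arguments use odd dimension exactly once.
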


\begin{proof}
Write $\mathfrak l=\Lie\{Z,TZT\}$.  If such a $Q$ exists, it also commutes with $TZT$.  A nonzero traceless symmetric operator is not scalar, and any one of its proper eigenspaces is invariant under both generators.

Conversely, suppose $\mathfrak l$ is reducible.  Choose an invariant subspace $W$ with $1\le d:=\dim W\le(\dim V-1)/2$, replacing $W$ by its orthogonal complement if necessary.  Let $P_W$ be its orthogonal projector.  It commutes with both generators.  The algebra $\mathfrak l$ is stable under $\Ad_T$, so $TP_WT$ also commutes with both.  Set
\[
Q=P_W+TP_WT-\frac{2d}{\dim V}I.
\]
Then $Q$ is symmetric, traceless, and commutes with $T$ and $Z$.  It is nonzero: $P_W+TP_WT$ has positive trace $2d$ and rank at most $2d<\dim V$, and hence cannot be a scalar operator.  The rank inequality is the only point at which odd dimension is used.
\end{proof}

For $T\in\mathscr C$ and $Z\in\mathfrak g_2$, choose any Frobenius-orthonormal basis $Q_1,\ldots,Q_{15}$ of $\mathscr E_T$ and define
\begin{equation}
\mathcal R_{30}(T,Z)
=\det\bigl(\langle[Z,Q_i],[Z,Q_j]\rangle_F\bigr)_{i,j=1}^{15}.
\label{eq:R30-gram}
\end{equation}
This definition is independent of that choice of orthonormal basis.  With an arbitrary basis, divide the determinant on the right by $\det(\langle Q_i,Q_j\rangle_F)$.

For a formula not requiring an eigenspace basis, define operators on $\mathscr S_0$ by
\[
\Pi_T^{\pm}(Q)=\tfrac12(Q\pm TQT),\qquad
\mathsf A_Z(Q)=[Z,Q].
\]

\begin{proposition}[A degree-thirty reducibility polynomial]
\label{prop:reducibility-polynomial}
The function $\mathcal R_{30}(T,Z)$ is nonnegative, is homogeneous of degree thirty in $Z$, and satisfies
\[
\mathcal R_{30}(T,Z)=0
\quad\Longleftrightarrow\quad
\Lie\{Z,TZT\}\text{ acts reducibly on }V.
\]
It is polynomial in the entries of $(T,Z)$ on $\mathscr C\times\mathfrak g_2$, without choosing an eigenspace frame.  The same function has the frame-free expression
\begin{equation}
\mathcal R_{30}(T,Z)
=\det_{\mathscr S_0}
\bigl(\Pi_T^- -\Pi_T^+\mathsf A_Z^2\Pi_T^+\bigr).
\label{eq:R30-framefree}
\end{equation}
Both formulas are invariant under simultaneous orthogonal conjugation of the matrix data.
\end{proposition}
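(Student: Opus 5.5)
The plan is to identify the Gram matrix in \eqref{eq:R30-gram} with the restriction of a positive semidefinite operator, and then read off the kernel condition through \Cref{lem:odd-commutant}. First I record the elementary structure of $\mathsf A_Z$ and $\Pi_T^\pm$ on $\mathscr S_0$.

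\textbf{Preservation.} For $Z$ skew and $Q$ symmetric traceless,
\[
[Z,Q]^{\mathsf T}=Q^{\mathsf T}Z^{\mathsf T}-Z^{\mathsf T}Q^{\mathsf T}=-QZ+ZQ=[Z,Q],
\]
and $\operatorname{tr}[Z,Q]=0$. Hence $\mathsf A_Z$ maps $\mathscr S_0$ to itself.

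\textbf{Skewness.} $\mathsf A_Z$ is skew for the Frobenius form, since
\[
\operatorname{tr}([Z,Q]R)=-\operatorname{tr}(Q[Z,R]).
\]
Consequently $-\mathsf A_Z^2=\mathsf A_Z^{\mathsf T}\mathsf A_Z$ is positive semidefinite on $\mathscr S_0$.

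\textbf{The projectors.} Because $T$ is an orthogonal involution, $Q\mapsto TQT$ is a Frobenius-isometric involution of $\mathscr S_0$. Therefore $\Pi_T^\pm$ are complementary orthogonal projectors, with $\operatorname{im}\Pi_T^+=\mathscr E_T$ of dimension $15$ and $\operatorname{im}\Pi_T^-$ of dimension $12$.

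\textbf{Gram formula.} Choose an orthonormal basis $Q_1,\dots,Q_{15}$ of $\mathscr E_T$. The Gram matrix $\bigl(\langle[Z,Q_i],[Z,Q_j]\rangle_F\bigr)$ is the matrix of $\Pi_T^+\mathsf A_Z^{\mathsf T}\mathsf A_Z\Pi_T^+$ restricted to $\mathscr E_T$. It is a Gram matrix, hence positive semidefinite, so its determinant is nonnegative. Changing to another orthonormal basis multiplies the determinant by the square of an orthogonal determinant, which is $1$. For an arbitrary basis one divides by $\det(\langle Q_i,Q_j\rangle_F)$, by the usual change-of-basis rule for Gram determinants.

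\textbf{Degree.} Each entry of the Gram matrix is homogeneous quadratic in $Z$, so $\mathcal R_{30}$ is homogeneous of degree $2\cdot15=30$ in $Z$.

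\textbf{Kernel criterion.} The determinant vanishes exactly when the vectors $[Z,Q_i]$ are linearly dependent. This holds exactly when some nonzero $Q\in\mathscr E_T$ satisfies $[Z,Q]=0$. Such a $Q$ is traceless symmetric, commutes with $T$, and commutes with $Z$. Since $\dim V=7$ is odd, \Cref{lem:odd-commutant} says this happens precisely when $\Lie\{Z,TZT\}$ acts reducibly on $V$.

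\textbf{Frame-free formula.} Decompose $\mathscr S_0=\operatorname{im}\Pi_T^+\oplus\operatorname{im}\Pi_T^-$ orthogonally. The operator $M=\Pi_T^- -\Pi_T^+\mathsf A_Z^2\Pi_T^+$ is block diagonal with respect to this decomposition.
\begin{itemize}
\item On the odd block it is the identity, because $\Pi_T^+$ kills that block.
\item On $\mathscr E_T$ it equals $\Pi_T^+\mathsf A_Z^{\mathsf T}\mathsf A_Z\Pi_T^+$. Its matrix in the basis $Q_i$ is exactly the Gram matrix, because $\Pi_T^+$ is an orthogonal projector onto $\mathscr E_T$.
\end{itemize}
Hence $\det_{\mathscr S_0}M=1\cdot\mathcal R_{30}(T,Z)$, which proves \eqref{eq:R30-framefree}.

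\textbf{Polynomiality and invariance.} Write $M$ in a fixed basis of $\mathscr S_0$ that does not depend on $T$. Its entries are polynomial in the entries of $T$ and $Z$, so $\det M$ is a polynomial in $(T,Z)$ on $\mathscr C\times\mathfrak g_2$. For orthogonal $g$, conjugation $Q\mapsto gQg^{\mathsf T}$ is a Frobenius isometry of $\mathscr S_0$ and intertwines $\Pi^\pm_T,\mathsf A_Z$ with $\Pi^\pm_{gTg^{\mathsf T}},\mathsf A_{gZg^{\mathsf T}}$. Therefore both formulas are invariant under simultaneous conjugation.

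The only step needing care is the block-diagonal identification in the frame-free formula: one must check that $\Pi_T^+\mathsf A_Z^2\Pi_T^+$ has image in $\mathscr E_T$, so there are no off-diagonal terms. This holds by construction, because the operator is sandwiched between $\Pi_T^+$ factors. Everything else follows directly from \Cref{lem:odd-commutant}.
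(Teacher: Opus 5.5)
Your proposal is correct and follows essentially the same route as the paper: the Gram matrix of $Q\mapsto[Z,Q]$ on $\mathscr E_T$, its kernel identified with reducibility via \Cref{lem:odd-commutant}, degree thirty from the quadratic entries, and the frame-free formula from the splitting $\mathscr S_0=\mathscr E_T\oplus\operatorname{im}\Pi_T^-$, on which the operator is the Gram operator on the first summand and the identity on the second. You spell out more of the routine checks than the paper does, namely preservation of $\mathscr S_0$, Frobenius skewness of $\mathsf A_Z$, and the absence of off-diagonal blocks, which the paper states in a line each.
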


\begin{proof}
The matrix in \eqref{eq:R30-gram} is the Gram matrix of the linear map
\[
\mathscr E_T\longrightarrow\mathscr S_0,\qquad Q\longmapsto[Z,Q].
\]
Its determinant is nonnegative and vanishes exactly when the map has a nonzero kernel.  \Cref{lem:odd-commutant} identifies that kernel condition with reducibility.  Each Gram entry is quadratic in $Z$, giving degree thirty.

The operator $\mathsf A_Z$ preserves $\mathscr S_0$ and is skew-adjoint for the Frobenius metric.  The orthogonal splitting $\mathscr S_0=\mathscr E_T\oplus\operatorname{im}\Pi_T^-$ has dimensions $15+12$.  Relative to it, the operator in \eqref{eq:R30-framefree} is the Gram operator of $\mathsf A_Z|_{\mathscr E_T}$ on the first summand and the identity on the second.  Its determinant is therefore \eqref{eq:R30-gram}.  The entries of $\Pi_T^\pm$ and $\mathsf A_Z$ are polynomial in $T,Z$, establishing the frame-free assertion.  Homogeneity of degree thirty is asserted on the reflection variety, where these are complementary projections.  Conjugation is an isometry of all the spaces and maps involved.
\end{proof}

The fifteen-dimensional formula is convenient for evaluation; the twenty-seven-dimensional formula shows that the coefficients vary polynomially with the reflection.  An explicit basis recipe and its normalization are given in Appendix~\ref{app:universal-certificates}.  Neither formula involves computing a generated Lie algebra.

\subsection{A degree-eight test for the low-dimensional branch}

The determinant $\mathcal R_{30}$ excludes reducible closures, but an irreducible
proper closure can still be a principal $\mathfrak{su}(2)$.  The compact maximal
list therefore leaves a question of dimension at most three.  That question
has an exact answer using only two double commutators.

For $T\in\mathscr C$ and $Z\in\mathfrak g_2$, put
\[
A_+=\tfrac12(Z+TZT),\qquad A_-=\tfrac12(Z-TZT),\qquad C=[A_+,A_-],
\]
\[
C_+=[A_+,C],\qquad C_-=[A_-,C].
\]
Define
\begin{equation}
\mathcal H_8(T,Z)
=\|A_-\|_F^2\|C_+\|_F^2
 +\|A_+\|_F^2\|C_-\|_F^2-2\|C\|_F^4.
\label{eq:H8}
\end{equation}

An invariant inner product on a real Lie algebra satisfies $\langle[X,Y],Z\rangle=\langle X,[Y,Z]\rangle$, equivalently every adjoint map is skew-adjoint.  An orthogonal involution preserves that inner product and has square equal to the identity.

More generally, if $\sigma$ is an orthogonal involution of a real Lie algebra
with a positive definite invariant inner product, use
$A_\pm=(Z\pm\sigma Z)/2$, $C=[A_+,A_-]$, and $C_\pm=[A_\pm,C]$ in the same
formula, and denote the resulting scalar by $\mathcal H_8(\sigma,Z)$.

\begin{lemma}[An exact small-closure criterion]
\label{lem:H8}
Let a finite-dimensional real Lie algebra have a positive definite invariant inner product and an orthogonal involutive automorphism $\sigma$.  For every $Z$ in that algebra, the expression $\mathcal H_8(\sigma,Z)$ defined above is nonnegative and vanishes if and only if
\[
\dim\Lie\{Z,\sigma Z\}\le3.
\]
If $C=0$, the closure is the abelian span of $A_+,A_-$.  If $C\ne0$ and $\mathcal H_8=0$, the closure is exactly $\Span\{A_+,A_-,C\}\cong\mathfrak{su}(2)$.
\end{lemma}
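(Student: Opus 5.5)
The plan is to derive the two inequalities hidden in $\mathcal H_8$ from invariance and Cauchy--Schwarz, and then to read the equality case as closure of $\Span\{A_+,A_-,C\}$ under brackets.

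\emph{Step 1: two identities from invariance.} Write $\langle\cdot,\cdot\rangle$ for the invariant inner product. Skew-adjointness of the adjoint maps gives
\[
\|C\|^2=\langle[A_+,A_-],C\rangle=\langle A_+,[A_-,C]\rangle=\langle A_+,C_-\rangle.
\]
In the same way,
\[
\|C\|^2=-\langle[A_-,A_+],C\rangle=-\langle A_-,[A_+,C]\rangle=-\langle A_-,C_+\rangle .
\]
Cauchy--Schwarz then yields $\|C\|^4\le\|A_+\|^2\|C_-\|^2$ and $\|C\|^4\le\|A_-\|^2\|C_+\|^2$. Adding these two inequalities gives $\mathcal H_8\ge0$. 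Moreover, $\mathcal H_8=0$ holds exactly when both Cauchy--Schwarz inequalities are equalities.

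\emph{Step 2: parity bookkeeping.} Since $\sigma$ is an orthogonal involutive automorphism, its $\pm1$ eigenspaces are orthogonal. A bracket of homogeneous elements has the product parity. Hence $A_+$ and $C_-$ are even, while $A_-$, $C$ and $C_+$ are odd. Invariance also gives
\[
\langle C,A_-\rangle=\langle A_+,[A_-,A_-]\rangle=0
\quad\text{and}\quad
\langle C_+,C\rangle=\langle A_+,[C,C]\rangle=0 .
\]
Consequently, if $C\ne0$, then $A_+$, $A_-$ and $C$ are nonzero and mutually orthogonal. In particular they are linearly independent.

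\emph{Step 3: the case analysis.}

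\begin{enumerate}[label=(\roman*)]
\item If $C=0$, then $C_\pm=0$ and $\mathcal H_8=0$. The closure is the abelian span of $A_+,A_-$, which has dimension at most two. This is the first alternative in the statement.

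\item Suppose $C\ne0$ and $\mathcal H_8=0$. Equality in Cauchy--Schwarz forces $C_+\in\RR A_-$ and $C_-\in\RR A_+$. Then every bracket among $A_+$, $A_-$ and $C$ lies in $\Span\{A_+,A_-,C\}$, so this three-dimensional space is a subalgebra. Since it contains $A_\pm$, it is the closure. It is nonabelian and carries a positive definite invariant form, hence it is compact semisimple, and therefore isomorphic to $\mathfrak{su}(2)$.

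\item Conversely, suppose $\dim\Lie\{Z,\sigma Z\}\le3$ and $C\ne0$. By Step 2 the closure is exactly $\Span\{A_+,A_-,C\}$. The odd element $C_+$ must lie in the odd part $\Span\{A_-,C\}$, and $\langle C_+,C\rangle=0$, so $C_+\in\RR A_-$. Likewise the even element $C_-$ lies in $\RR A_+$. Both Cauchy--Schwarz inequalities are then equalities, and $\mathcal H_8=0$.
\end{enumerate}

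\emph{Expected main obstacle.} I expect the converse direction in (iii) to be the delicate step. It needs two facts at once: the parity splitting, to place $C_\pm$ in the correct eigenspace, and the orthogonality $\langle C_+,C\rangle=0$, to eliminate the $C$-component of $C_+$. Both facts come only from invariance of the form and the automorphism property of $\sigma$. The identification of the closure with $\mathfrak{su}(2)$ is standard once the three-dimensional subalgebra is known to be nonabelian with a positive invariant form.
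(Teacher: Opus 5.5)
Your proof is correct and follows essentially the same route as the paper. The paper writes $\mathcal H_8$ as $\det\operatorname{Gram}(A_-,C_+)+\det\operatorname{Gram}(A_+,C_-)$, which is your pair of Cauchy--Schwarz inequalities, and then uses the same parity and invariance relations for both the equality case and its converse; your explicit use of $\langle C_+,C\rangle=0$ simply spells out what the paper compresses into ``parity and invariance force the two displayed relations.''
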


\begin{proof}
The vectors $A_+,A_-$ are orthogonal by parity.  Invariance gives
\[
\langle C,A_+\rangle=\langle C,A_-\rangle=0,
\quad
\langle C_+,A_-\rangle=-\|C\|^2,
\quad
\langle C_-,A_+\rangle=\|C\|^2.
\]
Thus $\mathcal H_8$ is the sum of the two nonnegative Gram determinants
\[
\det\operatorname{Gram}(A_-,C_+)
+\det\operatorname{Gram}(A_+,C_-).
\]
If $C=0$, both determinants vanish and the closure is abelian.  If $C\ne0$, then $A_+,A_-,C$ are nonzero and pairwise orthogonal.  Equality in the two Cauchy--Schwarz inequalities is equivalent to
\[
C_+=-\frac{\|C\|^2}{\|A_-\|^2}A_-,\qquad
C_-=\frac{\|C\|^2}{\|A_+\|^2}A_+.
\]
These relations close their span under brackets and give the compact three-dimensional simple algebra.  Conversely, if the closure has dimension three and $C\ne0$, this orthogonal triple spans it.  Parity and invariance force the two displayed relations.  This proves both directions, including every case in which one of the vectors vanishes.
\end{proof}

\subsection{A uniform exact equation for all reflections}

The two tests now exhaust the compact maximal-subalgebra alternatives:
reducible closures are detected by $\mathcal R_{30}$, and every remaining
proper closure has dimension at most three.  Their product consequently gives
one equation for the nongenerating set, including degenerate pulses.

For a quaternionic reflection $T\in\mathscr C$ and every $Z\in\mathfrak g_2$, set
\begin{equation}
\mathcal U_{38}(T,Z)=\mathcal R_{30}(T,Z)\mathcal H_8(T,Z).
\label{eq:U38}
\end{equation}

For a fixed local context $H_A$, also define the failed-pulse fiber
\[
\mathcal N_T=\{Z\in\mathfrak k_A:\Lie\{Z,TZT\}\ne\mathfrak g_2\}.
\]

\begin{theorem}[Reducibility--small-algebra factorization]
\label{thm:universal-locus}
The explicitly defined product $\mathcal U_{38}=\mathcal R_{30}\mathcal H_8$ is a nonnegative polynomial on $\mathscr C\times\mathfrak g_2$, homogeneous of degree thirty-eight in the pulse.  It gives the equivalence
\begin{equation}
\Lie\{Z,TZT\}=\mathfrak g_2
\quad\Longleftrightarrow\quad
\mathcal U_{38}(T,Z)>0.
\label{eq:universal-exact}
\end{equation}
In particular, for any prescribed local context and any prescribed reflection, the complete failed-pulse locus is
\begin{equation}
\mathcal N_T=\{Z\in\mathfrak k_A:\mathcal U_{38}(T,Z)=0\}.
\label{eq:NT-exact}
\end{equation}
There are exactly three mutually exclusive outcomes:
\begin{enumerate}[label=\textnormal{(\roman*)},leftmargin=8mm]
\item $\mathcal R_{30}=0$: the generated algebra acts reducibly and is proper;
\item $\mathcal R_{30}>0$ and $\mathcal H_8=0$: the generated algebra is a principal $\mathfrak{su}(2)$;
\item $\mathcal R_{30}>0$ and $\mathcal H_8>0$: the generated algebra is $\mathfrak g_2$.
\end{enumerate}
The criterion is invariant under simultaneous $G_2$ conjugation.
\end{theorem}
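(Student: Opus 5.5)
The proof assembles \Cref{prop:reducibility-polynomial}, \Cref{lem:H8}, and the compact maximal-subalgebra list of \Cref{subsec:compact-maximals}.

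\textbf{Polynomiality and homogeneity.} By \Cref{prop:reducibility-polynomial}, $\mathcal R_{30}$ is nonnegative, polynomial on $\mathscr C\times\mathfrak g_2$ via the frame-free formula \eqref{eq:R30-framefree}, and homogeneous of degree thirty in $Z$. The factor $\mathcal H_8$ is also polynomial in the entries of $(T,Z)$. Indeed, $A_\pm$ are linear in $Z$ with coefficients polynomial in $T$, so $C$ is quadratic and $C_\pm$ are cubic in $Z$. Each of the three terms in \eqref{eq:H8} is therefore homogeneous of degree $2+6=8$ in $Z$. To apply \Cref{lem:H8} with $\sigma=\Ad_T$, we need a positive definite invariant inner product on $\mathfrak g_2$. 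We take the Frobenius (trace) metric on skew matrices: it is $\Ad$-invariant, and $\Ad_T$ is an orthogonal involutive automorphism since $T$ is orthogonal with $T^2=I$. The lemma then gives $\mathcal H_8\ge0$. Hence $\mathcal U_{38}\ge0$, with degree $38$ in $Z$.

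\textbf{The equivalence.} Write $\mathfrak l=\Lie\{Z,TZT\}$. If $\mathfrak l=\mathfrak g_2$, then $\mathfrak l$ acts irreducibly on $V$, because $G_2$ is transitive on the unit sphere of $V$. So $\mathcal R_{30}>0$ by \Cref{prop:reducibility-polynomial}. Also $\dim\mathfrak l=14>3$, so $\mathcal H_8>0$ by \Cref{lem:H8}.

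Conversely, suppose $\mathfrak l\ne\mathfrak g_2$. By the compact classification, $\mathfrak l$ lies in a vector stabilizer $\mathfrak{su}(3)$, a quaternionic stabilizer $\mathfrak{so}(4)$, or a principal $\mathfrak{su}(2)$.
\begin{itemize}
\item In the first two cases $\mathfrak l$ preserves a line or an associative three-plane. Hence it is reducible and $\mathcal R_{30}=0$.
\item In the third case $\dim\mathfrak l\le3$, so $\mathcal H_8=0$.
\end{itemize}
Either way $\mathcal U_{38}=0$, which proves \eqref{eq:universal-exact}. The fiber description \eqref{eq:NT-exact} is the restriction of this equivalence to $Z\in\mathfrak k_A$.

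\textbf{The trichotomy.} Outcome (iii) is the equivalence just proved. Outcome (i) is reducibility, which forces $\mathfrak l$ to be proper because $\mathfrak g_2$ is irreducible on $V$.

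For outcome (ii), assume $\mathcal R_{30}>0$ and $\mathcal H_8=0$. Then $\dim\mathfrak l\le3$ and $\mathfrak l$ acts irreducibly on the seven-dimensional space $V$. If $C=0$, \Cref{lem:H8} says $\mathfrak l$ is abelian, and a real abelian Lie algebra of skew operators in odd dimension $7$ has a common kernel vector. That contradicts irreducibility, so $C\ne0$. The lemma then gives $\mathfrak l\cong\mathfrak{su}(2)$ acting irreducibly on $V$, and by definition this is a principal $\mathfrak{su}(2)$.

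The three outcomes are mutually exclusive by their sign conditions, and they are exhaustive since both factors are nonnegative.

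\textbf{Invariance.} For $g\in G_2$, replace $(T,Z)$ by $(gTg^{-1},gZg^{-1})$. Conjugation is an isometry for the Frobenius metric, commutes with brackets, and maps $\mathscr E_T$ isometrically onto $\mathscr E_{gTg^{-1}}$. Hence both factors are unchanged; for $\mathcal R_{30}$ this is the last assertion of \Cref{prop:reducibility-polynomial}.

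The only step that is not pure bookkeeping is ruling out the abelian case in (ii), and the odd-dimension kernel argument handles it. Apart from that, the argument rests on using the classification exactly as stated in \Cref{subsec:compact-maximals}.
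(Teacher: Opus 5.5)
Your proof is correct and follows the paper's argument. It combines \Cref{prop:reducibility-polynomial} and \Cref{lem:H8} as detectors, uses the compact maximal-subalgebra list for exhaustiveness, and uses the odd-dimensional common-kernel argument to rule out an abelian closure in outcome (ii). The only difference is in (ii): you read off $\mathfrak l\cong\mathfrak{su}(2)$ directly from \Cref{lem:H8} and invoke the definition of principal, whereas the paper first places the irreducible closure inside a principal $\mathfrak{su}(2)$ via the maximal list and then excludes its proper (abelian) subalgebras.
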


\begin{proof}
Fix $T\in\mathscr C$ and $Z\in\mathfrak g_2$, and write
$L=\Lie\{Z,TZT\}$.  By \Cref{prop:reducibility-polynomial,lem:H8},
\[
\mathcal R_{30}=0\ \Longleftrightarrow\ L\text{ is reducible on }V,
\qquad
\mathcal H_8=0\ \Longleftrightarrow\ \dim L\le3.
\]
Both factors are nonnegative.  If $L=\mathfrak g_2$, its seven-dimensional
representation is irreducible and its dimension is fourteen; hence both
factors are positive.

Conversely, suppose $L$ is proper and choose a maximal proper compact
subalgebra containing it.  The list in \Cref{subsec:compact-maximals} gives
three possibilities.  A vector stabilizer preserves a line in $V$, and a
quaternionic stabilizer preserves the splitting $3\oplus4$; containment in
either makes $L$ reducible and forces $\mathcal R_{30}=0$.  A principal
$\mathfrak{su}(2)$ has dimension three, so containment in it forces
$\mathcal H_8=0$.  Thus every proper closure makes the product vanish,
proving \eqref{eq:universal-exact} and its restriction
\eqref{eq:NT-exact}.

This reasoning also proves the three disjoint outcomes.  When
$\mathcal R_{30}>0$ and $\mathcal H_8=0$, the closure is irreducible.  It cannot
be contained in either reducible maximal type, so it lies in a principal
$\mathfrak{su}(2)$.  A proper subalgebra of $\mathfrak{su}(2)$ is abelian;
an abelian algebra of skew-symmetric operators in odd dimension has a common
fixed vector.  Thus $L$ equals that principal algebra.

By \Cref{prop:reducibility-polynomial} and the explicit bracket formula
\eqref{eq:H8}, the factors are polynomials on
$\mathscr C\times\mathfrak g_2$ of respective degrees thirty and eight in
$Z$.  Their product is homogeneous of degree thirty-eight.  Simultaneous
conjugation preserves the Frobenius metric, the brackets, and the spaces
$\mathscr E_T$, proving invariance.

\end{proof}

The formulas specify $\mathcal N_T$ for \emph{each} reflection, including exceptional relative positions and pulses with zero or repeated frequencies.  They are not a generic-only test and do not require choosing an invariant vector, a quaternionic context, or an iterated Lie-closure basis.  This is a uniform determinantal description; it does not assert that the irreducible components or orbit-type strata of $\mathcal N_T$ are independent of $T$.
The degree thirty-eight describes this explicit factorization only. There is no canonical bracket-minor polynomial against which it is compared, and no optimal-degree or complexity bound is asserted.

\subsection{The principal obstruction actually occurs}

The remaining irreducible proper outcome must be realized explicitly.  The following principal triple puts its first generator in the prescribed local algebra.

Define
\begin{align*}
K_{\rm p}&=\tfrac56D_{23}+\tfrac23D_{45}=Z(e_1,2e_1),\\
P_{\rm p}&=\tfrac{\sqrt6}{4}D_{14}
 +\tfrac{\sqrt{10}}6\bigl(\tfrac12D_{17}+D_{24}\bigr),\\
Q_{\rm p}&=\tfrac{\sqrt6}{4}D_{15}
 +\tfrac{\sqrt{10}}6\bigl(\tfrac12D_{16}-D_{25}\bigr).
\end{align*}
Put $N_{\rm p}=(K_{\rm p}+P_{\rm p})/\sqrt2$ and
\begin{equation}
T_*=I+\frac{136}{45}N_{\rm p}^2+\frac{10}{9}N_{\rm p}^4+\frac4{45}N_{\rm p}^6.
\label{eq:principal-reflection}
\end{equation}

For a triple with the cyclic commutator relations below, the quadratic Casimir in its action on $V$ is the operator $-(K_{\rm p}^2+P_{\rm p}^2+Q_{\rm p}^2)$.  On complexification, a spin-$j$ irreducible representation has dimension $2j+1$ and Casimir eigenvalue $j(j+1)$ in this normalization.  These conventions specify the irreducibility test in the proof.

\begin{proposition}[A moving reflection with principal closure]
\label{prop:principal-moving}
These derivations satisfy
\begin{equation}
[K_{\rm p},P_{\rm p}]=Q_{\rm p},\quad [P_{\rm p},Q_{\rm p}]=K_{\rm p},\quad [Q_{\rm p},K_{\rm p}]=P_{\rm p},
\qquad -(K_{\rm p}^2+P_{\rm p}^2+Q_{\rm p}^2)=12I_V.
\label{eq:principal-triple}
\end{equation}
Then $T_*\in\mathscr C$, $T_*(H_A)\ne H_A$, and
\[
T_*K_{\rm p}T_*=P_{\rm p},\qquad
\Lie\{K_{\rm p},T_*K_{\rm p}T_*\}=\Span\{K_{\rm p},P_{\rm p},Q_{\rm p}\}
\]
is a principal $\mathfrak{su}(2)$.  In particular,
\[
\mathcal R_{30}(T_*,K_{\rm p})>0,\qquad \mathcal H_8(T_*,K_{\rm p})=0.
\]
\end{proposition}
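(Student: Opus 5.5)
The plan is to reduce everything to the $\mathfrak{su}(2)$ representation theory of the triple, so that almost all assertions follow from \eqref{eq:principal-triple} plus a few finite checks.

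\textbf{Step 1: the triple.} I would first verify \eqref{eq:principal-triple}. The bracket relations are a finite check in the basis \eqref{eq:g2basis}, using the matrices of the $D_{ij}$ on $V$. For the Casimir, it is cleaner to compute $\chi_{K_{\rm p}}$ on $V$ from the stabilizer model. Here $K_{\rm p}=Z(e_1,2e_1)$, so $\alpha=1$ and $\beta=2$. The frequencies are then $2\alpha=2$ on $P_A$, $\beta-\alpha=1$ and $\beta+\alpha=3$ on $H_Ae_4$, and $0$ on $e_1$. The spectrum $\{0,\pm i,\pm2i,\pm3i\}$ consists of the weights of the spin-$3$ representation, with $K_{\rm p}$ acting as a weight operator. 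Given the bracket relations, $V_{\CC}$ is a representation of $\mathfrak{su}(2)$, and this weight multiset forces it to be the irreducible spin-$3$ module. Schur's lemma then gives Casimir $3\cdot4=12$. So the triple spans an irreducibly acting, that is principal, $\mathfrak{su}(2)$.

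\textbf{Step 2: $T_*$ is a reflection swapping $K_{\rm p}$ and $P_{\rm p}$.} $N_{\rm p}$ is $K_{\rm p}$ rotated by $\pi/4$ inside the triple, so it has the same spectrum $\{0,\pm i,\pm2i,\pm3i\}$. On the eigenspace where $N_{\rm p}^2=-m^2$, the polynomial \eqref{eq:principal-reflection} takes the value
\[
1-\tfrac{136}{45}m^2+\tfrac{10}{9}m^4-\tfrac{4}{45}m^6 .
\]
Evaluating at $m=0,1,2,3$ gives $1,-1,1,-1$. Hence $T_*=e^{\pi N_{\rm p}}$. This is an orthogonal involution with trace $1-2+2-2=-1$. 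Being the exponential of a derivation, it lies in $G_2$, so $T_*\in\mathscr C$. In $SU(2)$, conjugation by $e^{\pi N}$ with $N=(K+P)/\sqrt2$ is the rotation by $\pi$ about the unit axis $(K+P)/\sqrt2$. That rotation swaps $K$ and $P$ and sends $Q\mapsto -Q$, so $T_*K_{\rm p}T_*=P_{\rm p}$. If one prefers to avoid the group argument, this step and the values of $T_*$ can be confirmed by direct matrix computation.

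\textbf{Step 3: closure, the two polynomials, and $T_*(H_A)\neq H_A$.} $K_{\rm p}$ and $P_{\rm p}$ generate $Q_{\rm p}$ and hence the whole triple, which is closed under brackets. So $\Lie\{K_{\rm p},T_*K_{\rm p}T_*\}$ is the principal algebra. It acts irreducibly, so $\mathcal R_{30}>0$ by \Cref{prop:reducibility-polynomial}. It has dimension three, so $\mathcal H_8=0$ by \Cref{lem:H8}.

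For the last assertion I argue by contradiction. If $T_*$ preserved $H_A$, then $T_*$ would normalize $\mathfrak k_A$. Then $P_{\rm p}=T_*K_{\rm p}T_*\in\mathfrak k_A$, and the whole triple would lie in $\mathfrak k_A$. But $\mathfrak k_A$ preserves $P_A$, contradicting irreducibility. (Alternatively, $P_{\rm p}$ has a $D_{14}$ component outside $\mathfrak k_A$.)

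\textbf{Main obstacle.} The only genuinely computational point is the bracket relations of the triple in Step 1, with the specific coefficients $\sqrt6/4$ and $\sqrt{10}/6$. I would check them by explicit $7\times7$ matrices, or equivalently by verifying the spin-$3$ lowering structure.
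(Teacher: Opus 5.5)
Your proposal is correct and follows the paper's proof essentially step for step. The shared steps are: the identification $T_*=\exp(\pi N_{\rm p})$ by evaluating the polynomial on the eigenspaces of $N_{\rm p}^2$; the half-turn in the adjoint triple exchanging $K_{\rm p}$ and $P_{\rm p}$; the observation $P_{\rm p}\notin\mathfrak k_A$; and the appeal to \Cref{prop:reducibility-polynomial} and \Cref{lem:H8}.

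The only variation is in Step 1. You derive irreducibility from the spectrum $\{0,\pm i,\pm2i,\pm3i\}$ of $K_{\rm p}=Z(e_1,2e_1)$, since each spin-three weight occurs once, and then get the Casimir value $12$ from Schur's lemma. The paper instead checks $-(K_{\rm p}^2+P_{\rm p}^2+Q_{\rm p}^2)=12I_V$ by direct substitution and reads irreducibility off from that eigenvalue together with $\dim V=7$. Both are valid, and yours spares one matrix computation.
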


\begin{proof}
The identities in \eqref{eq:principal-triple} follow by substituting the fixed Fano derivations.  They give a compact $\mathfrak{su}(2)$ representation whose Casimir is $12I$.  In the normalization of the displayed brackets, the Casimir on a spin-$j$ irreducible module is $j(j+1)$.  Thus only spin three occurs; since $\dim V=7$, the representation is irreducible, and the subalgebra is principal.  Explicit principal triples in compact $\mathfrak g_2$ are classical; see, for example, Draper and Mart\'in-Gonz\'alez \cite{DraperMartin2025}. The displayed normalization is chosen to put $K_{\rm p}$ in the specified local algebra.

The element $N_{\rm p}$ is conjugate to $K_{\rm p}$ inside this compact subgroup, so its spectrum is $0,\pm i,\pm2i,\pm3i$.  The polynomial on the right of \eqref{eq:principal-reflection} takes values $1,-1,1,-1$ on the eigenspaces of $N_{\rm p}^2$ with eigenvalues $0,-1,-4,-9$.  Consequently it equals $\exp(\pi N_{\rm p})$, is an involution in $G_2$, and has trace $-1$ on $V$.  In the adjoint three-dimensional algebra it is the half-turn about the axis $K_{\rm p}+P_{\rm p}$, hence sends $K_{\rm p}$ to $P_{\rm p}$.  The latter does not belong to $\mathfrak k_A$, so $T_*$ cannot preserve $H_A$.  The two generators span the indicated principal algebra together with their bracket.  \Cref{prop:reducibility-polynomial} and \Cref{lem:H8} give the final two assertions.  A sparse matrix for $T_*$ is printed in Appendix~\ref{app:universal-certificates}.
\end{proof}

The example belongs to a continuous family.  For
\[
T_\theta=\exp\bigl(\pi(\cos\theta K_{\rm p}+\sin\theta P_{\rm p})\bigr),
\]
all $T_\theta$ are quaternionic reflections and
\[
T_\theta K_{\rm p}T_\theta=\cos(2\theta)K_{\rm p}+\sin(2\theta)P_{\rm p}.
\]
Whenever $\sin(2\theta)\ne0$, the reflection moves $H_A$ and the constrained closure is principal.  Thus the degree-eight factor in the universal criterion cannot be discarded.  This family also explains why three distinct frequencies, even with the principal ratio, do not determine the generated algebra: the relative reflection still matters.

\subsection{Exactly which pulses admit a principal reflected closure}
The example gives the sharp spectral boundary, using the classical vector stabilizer $\SU(3)$ and its standard action on the orthogonal complex three-space \cite{DraperMartin2025,Baez2002}.

\begin{corollary}[Spectral realizability of the principal obstruction]\label{cor:universal-spectral}
For a nonzero $Z\in\mathfrak g_2$, the following conditions are equivalent:
\begin{enumerate}[label=\textnormal{(\roman*)},leftmargin=9mm]
\item the positive rotation frequencies of $Z$ have ratio $1:2:3$;
\item $Z$ belongs to a principal $\mathfrak{su}(2)$ subalgebra of $\mathfrak g_2$;
\item there is $T\in\mathscr C$ such that $\Lie\{Z,TZT\}$ is a principal $\mathfrak{su}(2)$.
\end{enumerate}
For $Z=Z(q_a,q_b)\in\mathfrak k_A$ with $\alpha>0$, these conditions are equivalent to
\[
\frac{\beta}{\alpha}\in\left\{2,5,\frac13\right\}.
\]
Every reflection realizing (iii) for such a local $Z$ moves $H_A$.
Outside the principal spectrum, including $Z=0$, generation for any $T\in\mathscr C$ is equivalent to $\mathcal R_{30}(T,Z)>0$.
\end{corollary}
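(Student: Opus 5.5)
I would prove the cycle (i)$\Rightarrow$(ii)$\Rightarrow$(iii)$\Rightarrow$(i) and then specialize to $\mathfrak k_A$.

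\emph{(iii)$\Rightarrow$(i).} If $\Lie\{Z,TZT\}$ is a principal $\mathfrak{su}(2)$, then $Z$ is a nonzero element of an irreducible spin-three representation on $V$. Its frequencies are therefore $\lambda,2\lambda,3\lambda$, by the remark in \Cref{subsec:compact-maximals}.

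\emph{(ii)$\Rightarrow$(iii).} If $Z$ lies in a principal $\mathfrak{su}(2)=\mathfrak p$, I would copy the construction of \Cref{prop:principal-moving}. All principal subalgebras are $G_2$-conjugate, so after rescaling and conjugating we may take $\mathfrak p=\Span\{K_{\rm p},P_{\rm p},Q_{\rm p}\}$. Inside the compact $\SO(3)$ or $\SU(2)$ of $\mathfrak p$, $Z$ is further conjugate to a multiple of $K_{\rm p}$. Transporting $T_*$ by the conjugating element gives a reflection in $\mathscr C$, since $\mathscr C$ is conjugation-invariant. This reflection maps $Z$ to a non-proportional element of $\mathfrak p$, so the closure is all of $\mathfrak p$.

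\emph{(i)$\Rightarrow$(ii).} Frequencies determine the conjugacy class of $Z$ under $G_2$, because $Z$ lies in a Cartan subalgebra and the Weyl group acts on frequency triples $\{a,b,a+b\}$ up to sign. Take a frequency triple $\lambda,2\lambda,3\lambda$. The element $\lambda K_{\rm p}$ has the same triple, since $K_{\rm p}$ has spectrum $0,\pm i,\pm2i,\pm3i$. Hence $Z$ is $G_2$-conjugate to $\lambda K_{\rm p}$. I expect this to be the main step needing care: I would justify "equal spectra on $V$ implies $G_2$-conjugate" by noting that the multiset $\{\pm a,\pm b,\pm(a+b)\}$ determines the Weyl orbit of the weight.

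For $Z=Z(q_a,q_b)\in\mathfrak k_A$, the frequencies are $2\alpha,|\beta-\alpha|,\beta+\alpha$. I would check which values of $\beta/\alpha$ make the positive frequencies proportional to $1:2:3$. The sum identity $\beta+\alpha=2\alpha+|\beta-\alpha|$ holds exactly when $\beta\ge\alpha$, and then the largest frequency is the sum of the other two. This gives two cases:
\begin{itemize}[leftmargin=8mm]
\item[] $(2\alpha,\beta-\alpha)$ in ratio $1:2$ or $2:1$, giving $\beta=5\alpha$ or $\beta=2\alpha$;
\item[] $\beta<\alpha$, where the largest frequency is $2\alpha=(\alpha-\beta)+(\alpha+\beta)$, and the ratio $1:2$ gives $\beta=\alpha/3$.
\end{itemize}
A zero frequency occurs only at $\beta=\alpha$, and the resulting pattern cannot be $1:2:3$. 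This yields exactly $\{2,5,1/3\}$.

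Any realizing reflection must move $H_A$. Otherwise $TZT\in\mathfrak k_A$, so the closure lies in $\mathfrak k_A$ and is reducible, which contradicts irreducibility of a principal $\mathfrak{su}(2)$.

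Finally, suppose $Z$ does not have principal spectrum, including the case $Z=0$. Then by (iii)$\Rightarrow$(i), outcome (ii) of \Cref{thm:universal-locus} cannot occur. In that case $\mathcal H_8=0$ forces a closure of dimension at most three that is not principal, hence reducible, so $\mathcal R_{30}=0$. Therefore generation is equivalent to $\mathcal R_{30}>0$.
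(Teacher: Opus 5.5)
Your proof is correct, and most of it follows the paper's proof: spin three for the easy implications, transport of $T_*$ by a conjugating element, the computation of $\beta/\alpha$, the $\mathfrak k_A$-reducibility argument for moving $H_A$, and the reduction to outcomes (i) and (iii) of \Cref{thm:universal-locus}.

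The genuine difference is how you show that $Z$ is conjugate to a multiple of $K_{\rm p}$. You invoke the general fact that the spectrum on $V$ determines the adjoint orbit: conjugate into a maximal torus, then use that the order-twelve Weyl group acts on the short-root values $(t_1,t_2,t_3)$, $t_1+t_2+t_3=0$, by permutations and an overall sign. The paper deliberately avoids this general inverse-spectral claim. It takes a unit kernel vector $v$, moves it to $e_1$ by transitivity on $S^6$, and diagonalizes $Z|_{v^\perp}$ inside the vector stabilizer $\SU(3)$. There the trace-zero condition forces the signed triple to be a permutation of $\pm\lambda(1,2,-3)$. Permutations are realized in $\SU(3)$, and the overall sign is absorbed into the scalar $c$ in $Z=c\Ad_gK_{\rm p}$ rather than into a Weyl element.

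Both versions rest on the same arithmetic fact: a zero-sum signed triple is determined up to permutation and global sign by its absolute values, because the largest must equal the sum of the other two with the opposite sign. Indeed the diagonal torus of that $\SU(3)$ is a maximal torus of $G_2$, so the two arguments are closely related. Your version is shorter and gives the stronger statement for every spectrum. The cost is importing the torus-conjugacy theorem and the identification of the nonzero weights of $V$ with the short roots. The paper's version needs only the $\SU(3)$ model and unitary diagonalization, and only on the principal spectral cone.

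One small redundancy: your separate (ii)$\Rightarrow$(iii) step, which uses conjugacy of principal subalgebras and transitivity of $S_{\rm p}$ on spheres in $\mathfrak s_{\rm p}$, is valid. It is not needed, though, since your (i)$\Rightarrow$(ii) argument already produces $Z=\lambda\Ad_gK_{\rm p}$, which gives (iii) directly.
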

\begin{proof}
A nonzero element of the irreducible spin-three representation has positive frequencies in the ratio $1:2:3$, so (iii) implies (ii) and (ii) implies (i).
To prove the converse, choose a unit vector $v\in\ker Z$.  Under (i) the kernel is a line.  The vector stabilizer $\mathfrak g_2^v\cong\mathfrak{su}(3)$ acts on $v^\perp$ as a complex three-space with complex structure $x\mapsto v\times x$.
The skew-Hermitian operator $Z|_{v^\perp}$ has eigenvalues $it_1,it_2,it_3$, where
\[
t_1+t_2+t_3=0,\qquad \{|t_1|,|t_2|,|t_3|\}=\{\lambda,2\lambda,3\lambda\}.
\]
The only signed triples with these absolute values and zero sum are permutations of $\lambda(1,2,-3)$ and its negative.
Conjugate $v$ to $e_1$ using $G_2$ and diagonalize in its $\SU(3)$ stabilizer.  Unitary diagonalization may be taken in $\SU(3)$ by changing the scalar phase of the diagonalizing matrix.
The element $K_{\rm p}=Z(e_1,2e_1)$ has the signed triple $(2,1,-3)$ in this complex structure.
It follows that $Z=c\Ad_g K_{\rm p}$ for some $g\in G_2$ and $c\ne0$.
Taking $T=gT_*g^{-1}$ and using \Cref{prop:principal-moving} gives
\[
TZT=c\Ad_g P_{\rm p},\qquad
\Lie\{Z,TZT\}=\Ad_g\Span\{K_{\rm p},P_{\rm p},Q_{\rm p}\},
\]
which proves (iii).  This argument needs conjugacy only on the principal spectral cone, not a general inverse-spectral claim.

For local $Z$ with $\alpha>0$, divide the frequencies $2\alpha,|\beta-\alpha|,\beta+\alpha$ by $\alpha$ and put $r=\beta/\alpha$.
For $r>1$ their signed complex triple is $(2,r-1,-r-1)$; the ratio $1:2:3$ gives $r=2$ or $r=5$.
For $0\le r<1$, the largest frequency is $2$, and $1-r$, $1+r$ must be $2/3$, $4/3$, giving $r=1/3$.
The cases $r=1$ or $\alpha=0$ have a zero frequency and are not principal.
If a realizing $T$ preserved $H_A$, its generated principal algebra would lie in $\mathfrak k_A$, which preserves the proper subspace $P_A$.  This contradicts irreducibility.
Finally, without a principal spectrum the irreducible proper case of \Cref{thm:universal-locus} is impossible, leaving exactly the asserted reducibility test.
\end{proof}

The existential statement describes the projection of the principal-pair locus onto the pulse variable.  It does not say that every reflection works for such a pulse.  The following subsection describes the realizing-reflection fiber itself.
For a local pulse the projection is the nonzero part of
\[
(\beta^2-4\alpha^2)(\beta^2-25\alpha^2)(9\beta^2-\alpha^2)=0.
\]

\subsection{All reflections realizing a fixed principal pulse}
The preceding corollary describes the projection onto the pulse variable.  We now fix a nonzero pulse with principal spectrum and determine every realizing reflection.
Put
\[
\mathfrak s_{\rm p}=\Span\{K_{\rm p},P_{\rm p},Q_{\rm p}\},\qquad
C_{\rm p}=\{u\in G_2:\Ad_uK_{\rm p}=K_{\rm p}\},
\]
and let $S_{\rm p}\subset G_2$ be the connected subgroup with Lie algebra $\mathfrak s_{\rm p}$.
Its spin-three action identifies it with $\SO(3)$.
For a Lie subalgebra $\mathfrak s$, its normalizer is $N_{G_2}(\mathfrak s)=\{u\in G_2:\Ad_u\mathfrak s=\mathfrak s\}$.
For any fixed $Z$, define
\[
\mathscr P_Z=\{T\in\mathscr C:\Lie\{Z,TZT\}\text{ is principal}\}.
\]
For $Z$ with principal spectrum choose, once and for all, $c\ne0$ and $g\in G_2$ with $Z=c\Ad_gK_{\rm p}$, as in \Cref{cor:universal-spectral}.
The family $T_\theta$ is the one defined after \Cref{prop:principal-moving}.

\begin{proposition}[The realizing-reflection fiber]\label{prop:principal-reflection-fiber}
The centralizer $C_{\rm p}$ is a two-dimensional torus.  For the fixed choice $Z=c\Ad_gK_{\rm p}$,
\[
\mathscr P_Z
 =\{g uT_\theta u^{-1}g^{-1}:u\in C_{\rm p},\quad 0<\theta<\pi/2\}.
\]
Every reflection on the right has a unique pair $(u,\theta)$.  This parametrization is a real-analytic diffeomorphism onto $\mathscr P_Z$ with the subspace topology inherited from $\mathscr C$.  The fiber is therefore an embedded real-analytic submanifold of dimension three and codimension five in the eight-dimensional reflection manifold $\mathscr C$.
If $Z\in\mathfrak k_A$, all these reflections move $H_A$.
\end{proposition}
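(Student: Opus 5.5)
The plan is to reduce to $Z=K_{\rm p}$ and then classify realizing reflections by the principal algebra they generate. First, since $\Lie\{cZ,T(cZ)T\}=\Lie\{Z,TZT\}$ and $\Ad_g$ carries $\mathscr P_{K_{\rm p}}$ onto $\mathscr P_{\Ad_gK_{\rm p}}$ via $T\mapsto gTg^{-1}$, it suffices to treat $Z=K_{\rm p}$.

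For the centralizer, $K_{\rm p}$ has frequencies $1,2,3$. In the $\mathfrak{su}(3)$ picture of \Cref{cor:universal-spectral} it has three distinct signed eigenvalues, so it is a regular element. Hence its centralizer in $\mathfrak g_2$ is a Cartan subalgebra, of dimension two. In a compact connected group the centralizer of a torus (here the closure of $\exp\RR K_{\rm p}$) is connected, and for a regular element it is a maximal torus. So $C_{\rm p}$ is a two-torus.

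Now fix $T\in\mathscr P_{K_{\rm p}}$ and let $L=\Lie\{K_{\rm p},TK_{\rm p}T\}$. Then $L$ is principal, $\Ad_T$-stable, and contains $K_{\rm p}$. All principal subalgebras are $G_2$-conjugate, so $L=\Ad_h\mathfrak s_{\rm p}$ for some $h\in G_2$. The element $\Ad_h^{-1}K_{\rm p}\in\mathfrak s_{\rm p}$ has the same frequencies as $K_{\rm p}$, hence the same $\mathfrak{so}(3)$-norm. It is therefore $S_{\rm p}$-conjugate to $K_{\rm p}$ or to $-K_{\rm p}$, and since $-K_{\rm p}$ is itself $S_{\rm p}$-conjugate to $K_{\rm p}$, we may take $\Ad_hK_{\rm p}=K_{\rm p}$. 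This gives $h=u\in C_{\rm p}$, and replacing $T$ by $u^{-1}Tu$ lets us assume that $T$ preserves $\mathfrak s_{\rm p}$.

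The restriction of $\Ad_T$ to $\mathfrak s_{\rm p}\cong\mathfrak{so}(3)$ is an involutive automorphism. It is not the identity, since otherwise $L=\RR K_{\rm p}$. So it is the half-turn about some unit axis $N\in\mathfrak s_{\rm p}$. The spin-three spectrum is integral, so $\exp(\pi N)$ is an involution of trace $-1$, exactly as in \Cref{prop:principal-moving}, and it induces the same half-turn. Thus $T\exp(\pi N)^{-1}$ lies in $G_2$ and centralizes $\mathfrak s_{\rm p}$. Since $V$ is irreducible and real of odd dimension, Schur's lemma leaves only $\pm I$, and $-I\notin G_2$. Therefore $T=\exp(\pi N)$.

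The generated algebra is principal exactly when the rotated vector $TK_{\rm p}T$ is not $\pm K_{\rm p}$, that is, when $N$ is neither parallel nor perpendicular to $K_{\rm p}$. Next I use the circle $\exp(\RR K_{\rm p})\subset C_{\rm p}\cap S_{\rm p}$, which rotates the $(P_{\rm p},Q_{\rm p})$-plane, together with the identity $\exp(\pi N)=\exp(-\pi N)$. These move $N$ to $\cos\theta K_{\rm p}+\sin\theta P_{\rm p}$ with $0<\theta<\pi/2$, which proves the displayed equality; the reverse inclusion is the family computation after \Cref{prop:principal-moving}.

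For uniqueness, suppose $uT_\theta u^{-1}=u'T_{\theta'}u'^{-1}$. Both generate the same algebra with $K_{\rm p}$, so $w=u^{-1}u'\in C_{\rm p}$ normalizes $\mathfrak s_{\rm p}$. Then $\Ad_w$ fixes $K_{\rm p}$ and maps the axis of $T_{\theta'}$ to $\pm$ the axis of $T_\theta$. Both axes have positive $K_{\rm p}$- and $P_{\rm p}$-components, so $\theta=\theta'$ and $\Ad_w$ is the identity on $\mathfrak s_{\rm p}$. By Schur again $w=I$.

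The step I expect to be the main obstacle is the embedding claim. The map $(u,\theta)\mapsto uT_\theta u^{-1}$ is real-analytic and injective. I would show it is an immersion by computing its differential at $(I,\theta)$. There the $\theta$-derivative lies in $\Ad$ of $\mathfrak s_{\rm p}$-directions. The $u$-derivatives are $[U,T_\theta]$ for $U$ in the Cartan subalgebra $\mathfrak c$ of $C_{\rm p}$, and these vanish only if $U$ commutes with $T_\theta$, which I would exclude using $\mathfrak c\cap\mathfrak k_{T_\theta}=0$.

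For a homeomorphism onto the image, I would construct a continuous inverse on $\mathscr P_Z$. From $T$ one recovers the generated algebra, hence $u$ modulo nothing by the uniqueness argument. I would make this continuous by recovering the axis $N$ as the $\Ad_T$-fixed unit line in the algebra, which varies continuously because the algebra is spanned by $K_{\rm p}$, $TK_{\rm p}T$ and their bracket, and the algebra depends continuously on $T$. Dimension count gives $3$, with codimension $8-3=5$.

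Finally, the statement for $Z\in\mathfrak k_A$ is the last assertion of \Cref{cor:universal-spectral}.
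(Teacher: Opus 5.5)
Your classification argument is correct and is essentially the paper's. The steps match: reduce to $K_{\rm p}$, use $C_{\rm p}$ to move the generated algebra onto $\mathfrak s_{\rm p}$, use Schur (trivial centralizer of $\mathfrak s_{\rm p}$) to force $T=\exp(\pi N)$, normalize the axis with $\exp(\RR K_{\rm p})$, and get uniqueness because the quotient centralizes $\mathfrak s_{\rm p}$. Your proof that $C_{\rm p}$ is a two-torus, via regularity and connectedness of centralizers of tori, is a valid alternative to the paper's route (the paper excludes $e_1\mapsto-e_1$ explicitly and then takes the diagonal torus of $\SU(3)$). Regularity in $\mathfrak g_2$, not just in $\mathfrak g_2^{e_1}$, needs the remark that anything commuting with $K_{\rm p}$ preserves, hence annihilates, $\ker K_{\rm p}=\RR e_1$.

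The gaps are in the embedding step. First, your immersion argument only treats the torus directions. Injectivity of the differential at $(I,\theta)$ also requires that $\partial_\theta T_\theta$ not lie in the span of the $[U,T_\theta]$, $U\in\mathfrak c$. Saying the $\theta$-derivative comes from $\mathfrak s_{\rm p}$ does not separate it, because $U=K_{\rm p}\in\mathfrak c$ also gives a left-trivialized tangent vector $\Ad_{T_\theta}K_{\rm p}-K_{\rm p}$ lying in $\mathfrak s_{\rm p}$. Also, $\mathfrak c\cap\mathfrak k_{T_\theta}=0$ is just a restatement of what is needed. It does follow from your Schur argument: such a $U$ commutes with $K_{\rm p}$ and $T_\theta K_{\rm p}T_\theta$, hence with $\mathfrak s_{\rm p}$, so $U=0$. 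The missing idea is an orbit invariant that detects $\theta$. The paper uses
$\chi(T)=\langle K_{\rm p},TK_{\rm p}T\rangle_{\rm tr}/\|K_{\rm p}\|_{\rm tr}^2$. It is constant on $C_{\rm p}$-orbits and equals $\cos2\theta$ at $T_\theta$, so its differential kills the torus directions and equals $-2\sin2\theta\ne0$ on $\partial_\theta$.

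Second, the continuous inverse is not established. The generated algebra $\Ad_u\mathfrak s_{\rm p}$ determines $u$ only modulo the circle $C_{\rm p}\cap N_{G_2}(\mathfrak s_{\rm p})=\exp(\RR K_{\rm p})$, so ``hence $u$ modulo nothing'' is wrong as stated. Adding the axis restores injectivity, but you give no argument that $u$ depends continuously on the pair (algebra, axis). The paper closes this with the same $\chi$. Since $\chi$ recovers $\theta$ continuously, every point of $\mathscr P_{K_{\rm p}}$ has a neighborhood whose preimage lies in $C_{\rm p}\times[\theta_1,\theta_2]$, which is compact. A continuous injection of a compact space into the Hausdorff space $\mathscr C$ is a homeomorphism onto its image. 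Together with the immersion, this gives the embedding with analytic inverse, and the dimension count $3$ and codimension $5$ then follow.
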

\begin{proof}
Scaling and conjugation reduce the proof to $Z=K_{\rm p}$.
First compute its group centralizer.  The kernel line is $\RR e_1$.  An element centralizing $K_{\rm p}$ must preserve this line.
If it sent $e_1$ to $-e_1$, its restriction to $e_1^\perp$ would be conjugate-linear for $C_{e_1}$, since it preserves the cross product.
Commutation with $K_{\rm p}$ would then identify the signed complex spectrum $(2,1,-3)$ with its negative, which is impossible.
It therefore fixes $e_1$ and lies in its $\SU(3)$ stabilizer.
There $K_{\rm p}$ has three distinct complex eigenvalues, so its centralizer is the diagonal determinant-one unitary torus.  Hence $C_{\rm p}\cong(S^1)^2$.

The group centralizer of $\mathfrak s_{\rm p}$ in $G_2$ is trivial.  Indeed, its real seven-dimensional action is irreducible.
A commuting symmetric operator is scalar, and a commuting skew-symmetric operator in odd dimension has a nonzero kernel and must vanish by irreducibility.
Thus the real commutant consists of scalars; its orthogonal elements are $\pm I_7$, of which only $I_7$ lies in $\SO(7)$.
Every automorphism of $\mathfrak{su}(2)$ is inner and is realized by $S_{\rm p}$, so
\[
N_{G_2}(\mathfrak s_{\rm p})=S_{\rm p}.
\]
Every principal algebra containing $K_{\rm p}$ is a $C_{\rm p}$-conjugate of $\mathfrak s_{\rm p}$.
To see this, choose $h$ carrying $\mathfrak s_{\rm p}$ to such an algebra, using the compact conjugacy in \Cref{subsec:compact-maximals}.
Then $\Ad_{h^{-1}}K_{\rm p}\in\mathfrak s_{\rm p}$ has the same trace norm as $K_{\rm p}$.
Rotations in $S_{\rm p}$ are transitive on that sphere, so $\Ad_vK_{\rm p}=\Ad_{h^{-1}}K_{\rm p}$ for some $v\in S_{\rm p}$.
The element $hv$ centralizes $K_{\rm p}$ and carries $\mathfrak s_{\rm p}$ to the prescribed algebra.

If $T\in\mathscr P_{K_{\rm p}}$, it normalizes its generated principal algebra: conjugation by $T$ interchanges the two generators.
Choose $u\in C_{\rm p}$ with $\Lie\{K_{\rm p},TK_{\rm p}T\}=\Ad_u\mathfrak s_{\rm p}$, as in the preceding paragraph.
Then $u^{-1}Tu\in N_{G_2}(\mathfrak s_{\rm p})=S_{\rm p}$ is a nonidentity involution.
It is therefore a half-turn about an unoriented axis in $S_{\rm p}\cong\SO(3)$.  Write its unit axis in the normalization $(K_{\rm p},P_{\rm p},Q_{\rm p})$ as
$\xi_1K_{\rm p}+\xi_2P_{\rm p}+\xi_3Q_{\rm p}$, where $\sum\xi_j^2=1$.
Its conjugate of $K_{\rm p}$ is independent of $K_{\rm p}$ exactly when $0<|\xi_1|<1$.
Choose the sign of the axis so that $\xi_1>0$ and rotate its transverse direction by $\exp(tK_{\rm p})\in C_{\rm p}$.
The axis becomes $\cos\theta K_{\rm p}+\sin\theta P_{\rm p}$ with $0<\theta<\pi/2$.
Conversely, every such half-turn generates $\mathfrak s_{\rm p}$ with $K_{\rm p}$ and is a quaternionic reflection, by the spin-three weights.
This proves the asserted equality of sets.

The angle is recovered from
\[
\frac{\langle K_{\rm p},TK_{\rm p}T\rangle_{\rm tr}}{\|K_{\rm p}\|_{\rm tr}^2}=\cos(2\theta).
\]
If two centralizer elements give the same reflection at this angle, their quotient commutes with $K_{\rm p}$ and $T_\theta$.
It therefore commutes with $K_{\rm p}$ and $T_\theta K_{\rm p}T_\theta$, hence with all of $\mathfrak s_{\rm p}$, and is the identity.
The parametrization is thus injective.  Its torus-orbit differential is injective by the same stabilizer argument; the angle has nonzero derivative in the displayed scalar on the open interval.
It is an analytic immersion.  On every compact subinterval the domain is compact, and the displayed scalar recovers $\theta$ continuously in the subspace topology of $\mathscr C$.  It follows that the map is an embedding with analytic inverse onto its image.  Its codimension is $\dim\mathscr C-3=8-3=5$.
The last assertion follows from \Cref{cor:universal-spectral}.
\end{proof}

The set $\mathscr P_Z$ is defined intrinsically by $Z$ and does not depend on the choice of $(c,g)$; only the coordinates on it do.
For fixed $c$, replacing $g$ by $gv$ with $v\in C_{\rm p}$ changes the coordinate of the same reflection from $u$ to $v^{-1}u$, with $\theta$ unchanged.
Arbitrary admissible choices of $(c,g)$ still give the same set, including choices with the opposite sign of $c$.

The single curve $T_\theta$ does not by itself exhaust the fiber: it fixes one plane of axes in one principal subgroup.  Centralizer conjugation supplies the other axes and the other principal subgroups containing the pulse.
For example, conjugating $T_*$ by $u_0=\exp(\pi K_{\rm p}/2)$ gives the reflection $T_{\rm az}$ carrying $K_{\rm p}$ to $Q_{\rm p}$, not to a vector in $\Span\{K_{\rm p},P_{\rm p}\}$.  Its explicit matrix is printed in Appendix~\ref{app:universal-certificates}.
The result concerns only the principal branch; it does not classify all reducible failure fibers for arbitrary relative positions.

\paragraph{Dimension in both variables.}
These fibers fit into the principal-pair locus
\[
\mathscr M_{\rm p}
 =\{(T,Z)\in\mathscr C\times\mathfrak g_2:
       \Lie\{Z,TZT\}\text{ is principal}\}.
\]
The zero pulse is excluded by this definition.  In fact the map
\[
\begin{split}
\Phi:G_2\times(0,\infty)\times(0,\pi/2)&\longrightarrow\mathscr M_{\rm p},\\
(g,r,\theta)&\longmapsto
       (gT_\theta g^{-1},\ r\Ad_g K_{\rm p})
\end{split}
\]
is a real-analytic diffeomorphism onto the locus with its subspace topology.
To obtain a positive coefficient $r$ in \Cref{cor:universal-spectral}, absorb a negative sign by the half-turn $\exp(\pi P_{\rm p})$, whose adjoint sends $K_{\rm p}$ to $-K_{\rm p}$.
Then \Cref{prop:principal-reflection-fiber} gives surjectivity of $\Phi$.
For a pair in its image, the two parameters are recovered by
\[
r=\frac{\|Z\|_{\rm tr}}{\|K_{\rm p}\|_{\rm tr}},\qquad
\chi(T,Z):=\frac{\langle Z,TZT\rangle_{\rm tr}}{\|Z\|_{\rm tr}^2}
                 =\cos(2\theta).
\]
Once $r,\theta$ are fixed, two choices of $g$ differ by an element commuting with $K_{\rm p}$ and $T_\theta$, hence by the identity as proved above.
The differential is injective for the same reason: the differentials of $r$ and $\chi$ first eliminate the two parameter directions, after which a zero orbit tangent centralizes $K_{\rm p}$ and $T_\theta$.
The derivative of $\cos(2\theta)$ is nonzero on the open interval.
Compactness of $G_2$, together with the continuous recovery of $r,\theta$, makes this injective analytic immersion an embedding with analytic inverse.
Consequently
\[
\dim_{\RR}\mathscr M_{\rm p}=14+1+1=16,
\qquad
\operatorname{codim}_{\mathscr C\times\mathfrak g_2}\mathscr M_{\rm p}
       =22-16=6.
\]
For comparison, its projection onto the pulse variable is
$(0,\infty)\times G_2/C_{\rm p}$, of dimension $1+14-2=13$;
the fibers have dimension three, as in \Cref{prop:principal-reflection-fiber}.
The full pair locus is not a single orbit of simultaneous $G_2$ conjugation: $r$ and $\chi$ are two independent invariants.
For each fixed $(r,\theta)$ it is instead one free $G_2$ orbit.
These statements concern the full pulse domain $\mathfrak g_2$, not an additional restriction to $\mathfrak k_A$.

\section{The fixed involution and the nongenerating locus}
\label{sec:failure}

Because the second generator is $\Ad_SZ$, every generated algebra is invariant under $\Ad_S$.  This symmetry strongly restricts its possible proper over-algebras.  We first record the elementary involution argument and then identify the corresponding $6+8$ symmetric decomposition of $\mathfrak g_2$.

In this section a context is $S$-invariant when $S(H)=H$, not necessarily when $S|_H=I_H$.  A Lie subalgebra is $\operatorname{Ad}_S$-stable when conjugation by $S$ maps it to itself.  The words invariant and stable below refer to these setwise conditions.

\subsection{Symmetric-pair reduction and genericity}
We record the elementary parity observation from \Cref{subsec:parity-domain} in the general notation used for the fixed-fiber calculations; compactness is not needed.

For an involutive automorphism $\sigma$ of a real Lie algebra and an element $Z$, write
\[
Z^+=\tfrac12(Z+\sigma Z),\qquad Z^-=\tfrac12(Z-\sigma Z).
\]

\begin{lemma}[Generation under an involution]
\label{lem:general-symmetric-pair}
Let $\mathfrak g$ be a finite-dimensional real Lie algebra and $\sigma$ an involutive automorphism.  For every $Z\in\mathfrak g$,
\[
\Lie\{Z,\sigma Z\}=\Lie\{Z^+,Z^-\},
\]
and this Lie algebra is $\sigma$-stable.
\end{lemma}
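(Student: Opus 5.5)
The plan is a double inclusion between the two generated algebras, followed by a parity argument for $\sigma$-stability.

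First I would check that the two spans agree. Since $\sigma$ is linear, $Z^\pm=\tfrac12(Z\pm\sigma Z)$ lie in $\Span\{Z,\sigma Z\}$. Conversely $Z=Z^++Z^-$ and $\sigma Z=Z^+-Z^-$, using $\sigma^2=I$. Hence
\[
\Span\{Z,\sigma Z\}=\Span\{Z^+,Z^-\}.
\]
The smallest Lie subalgebra containing a set equals the smallest one containing its linear span. Therefore the two generated algebras coincide. This step needs only linearity and $\sigma^2=I$; it does not use the automorphism property, finite dimension, or compactness.

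For $\sigma$-stability, set $\mathfrak l=\Lie\{Z,\sigma Z\}$. I would use the automorphism property: $\sigma(\mathfrak l)$ is a Lie subalgebra, because $\sigma[U,W]=[\sigma U,\sigma W]$. It contains $\sigma Z$ and $\sigma(\sigma Z)=Z$. Minimality then gives $\mathfrak l\subseteq\sigma(\mathfrak l)$. Applying $\sigma$ once more yields $\sigma(\mathfrak l)\subseteq\sigma^2(\mathfrak l)=\mathfrak l$, so $\sigma(\mathfrak l)=\mathfrak l$.

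Alternatively, one can argue via parity, as the paper does in \Cref{subsec:parity-domain}. The algebra $\mathfrak l$ is spanned by iterated brackets of $Z^+$ and $Z^-$. Each such bracket is a $\sigma$-eigenvector with sign $(-1)^{\#Z^-}$, since $\sigma$ is an automorphism. So $\sigma$ maps the spanning set into $\mathfrak l$.

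I do not expect a genuine obstacle. The only point needing care is to state explicitly that $\Lie$ means the algebraic subalgebra generated, not a closure. Finite dimension is not needed for either part.
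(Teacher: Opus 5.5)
Your proposal is correct and follows the paper's proof: equality of the two generated algebras comes from equality of the spans, and $\sigma$-stability from the parity of homogeneous brackets (your alternative), while your primary argument $\mathfrak l\subseteq\sigma(\mathfrak l)\subseteq\sigma^2(\mathfrak l)=\mathfrak l$ via minimality is an equally valid variant. One small correction: $Z=Z^++Z^-$ and $\sigma Z=Z^+-Z^-$ hold by linearity alone, so $\sigma^2=I$ is not needed for the span equality; it is needed only to make $Z^\pm$ eigenvectors and in your minimality step.
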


\begin{proof}
The pairs $(Z,\sigma Z)$ and $(Z^+,Z^-)$ have the same linear span, so they generate the same Lie algebra.  Since $\sigma Z^+=Z^+$ and $\sigma Z^-=-Z^-$, every homogeneous bracket is carried to itself up to its parity sign, proving stability.
\end{proof}

The later maximal-subalgebra argument chooses an ordinary maximal proper subalgebra and intersects it with its image under the involution.

Let
\[
H_S=\RR1\oplus\Span\left\{e_1,\frac{e_2+e_4}{\sqrt2},\frac{e_3+e_5}{\sqrt2}\right\}.
\]
The displayed imaginary frame is associative, and the fixed switch satisfies
\[
S|_{H_S}=I,\qquad S|_{H_S^\perp}=-I.
\]
Thus $S$ is the orthogonal reflection that is $+I$ on the quaternionic subalgebra $H_S$ and $-I$ on $H_S^\perp$.

For the fixed involution, use $\sigma=\Ad_S$ in the preceding notation and set
$\mathfrak m_{H_S}=\{B\in\mathfrak g_2:\Ad_SB=-B\}$.
In the bracket relations of the following proposition, $\mathfrak k$ and $\mathfrak m$ abbreviate $\mathfrak k_{H_S}$ and $\mathfrak m_{H_S}$, respectively, rather than the subspaces associated with $H_0$ in \Cref{sec:g2closure}.  The projections $Z^+$ and $Z^-$ are onto these two eigenspaces of $\operatorname{Ad}_S$ on $\mathfrak g_2$, not onto the eigenspaces of $S$ on $V$.

\begin{proposition}[Symmetric-pair form of the constrained problem]
\label{prop:symmetric-switch}
The involution $\sigma=\Ad_S$ gives the symmetric decomposition
\[
\gtwo=\mathfrak k_{H_S}\oplus\mathfrak m_{H_S},
\qquad \dim\mathfrak k_{H_S}=6,
\quad \dim\mathfrak m_{H_S}=8,
\]
where $\mathfrak k_{H_S}$ and $\mathfrak m_{H_S}$ are the $+1$ and $-1$ eigenspaces of $\Ad_S$, respectively.  They satisfy
\[
[\mathfrak k,\mathfrak k]\subseteq\mathfrak k,
\qquad [\mathfrak k,\mathfrak m]\subseteq\mathfrak m,
\qquad [\mathfrak m,\mathfrak m]\subseteq\mathfrak k.
\]
For every $Z\in\mathfrak g_2$,
\[
\Lie\{Z,\Ad_SZ\}=\Lie\{Z^+,Z^-\},
\]
and this Lie algebra is $\Ad_S$-stable.
\end{proposition}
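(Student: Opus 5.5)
The plan is to derive the statement from two facts already available: $S$ is the quaternionic reflection $\sigma_{H_S}$, and the generated algebra is controlled by \Cref{lem:general-symmetric-pair}.

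\emph{Step 1: $S$ is the reflection of $H_S$.} From \eqref{eq:switch}, $S$ fixes $e_1$, $(e_2+e_4)/\sqrt2$ and $(e_3+e_5)/\sqrt2$. It negates $(e_2-e_4)/\sqrt2$, $(e_3-e_5)/\sqrt2$, $e_6$ and $e_7$. Hence $S=\sigma_{H_S}$ and $S\in\mathscr C$, with $\operatorname{tr}_VS=-1$. I also check that the fixed frame is associative: $e_1\times\frac{e_2+e_4}{\sqrt2}=\frac{e_3+e_5}{\sqrt2}$, using $e_1e_2=e_3$ and $e_1e_4=e_5$.

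\emph{Step 2: identify the $+1$ eigenspace and its dimension.} Since $S^2=I$, the map $\sigma=\Ad_S$ is an involutive Lie algebra automorphism of $\mathfrak g_2$. Its $+1$ eigenspace is $\{Z:SZ=ZS\}$, the derivations commuting with $S$. These are exactly the derivations preserving both eigenspaces of $S$, namely $P_{H_S}$ and $H_S^\perp$. Since derivations are skew, preserving $P_{H_S}$ already forces preserving its orthogonal complement. This eigenspace is therefore $\mathfrak k_{H_S}$, which has dimension $6$ by \Cref{prop:stabilizer}.

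\emph{Step 3: the $-1$ eigenspace and the bracket relations.} The complementary $-1$ eigenspace has dimension $14-6=8$. For the dimension count, one could alternatively use the branching in \Cref{lem:maximality}: $\Ad_{\gamma_{1,-1}}$ acts trivially on $V_2\boxtimes V_0\oplus V_0\boxtimes V_2$ and by $-1$ on $V_3\boxtimes V_1$, because $(1,-1)$ acts by $(-1)^{m_b}$ on the $b$-factor. The bracket inclusions follow because $\sigma[X,Y]=[\sigma X,\sigma Y]$, so parities multiply.

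\emph{Step 4: the generated algebra.} The final two assertions are \Cref{lem:general-symmetric-pair} applied with $\sigma=\Ad_S$.

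The only substantive point is Step 2: identifying the fixed algebra with the setwise stabilizer $\mathfrak k_{H_S}$, rather than the pointwise stabilizer. The eigenspace argument above handles it. I do not expect a real obstacle beyond verifying the frame arithmetic in Step 1.
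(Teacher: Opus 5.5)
Your proposal is correct and takes essentially the paper's route. You verify that $S$ is $+I$ on the associative frame of $H_S$ and $-I$ on $H_S^\perp$, identify the fixed algebra of $\Ad_S$ with the setwise stabilizer $\mathfrak k_{H_S}$ (your commuting-with-$S$ and skew-symmetry argument spells out the paper's one-line ``hence''), and then use multiplicativity of parities and \Cref{lem:general-symmetric-pair}. One point to state explicitly: $\dim\mathfrak k_{H_S}=6$ relies on the classical transitivity of $G_2$ on contexts, or on an explicit conjugation of $H_S$ to $H_0$, because \Cref{prop:stabilizer} is stated only for $H_0$.
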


\begin{proof}
The three displayed imaginary generators of $H_S$ form an oriented quaternionic triple.  Applying the signed permutation $S$ to this frame and to its orthogonal complement gives the stated $\pm1$ eigenspaces.  Hence the fixed algebra of $\Ad_S$ is $\mathfrak k_{H_S}\cong\mathfrak{so}(4)$.  The symmetric-pair bracket relations follow from the eigenspace decomposition of an involutive Lie-algebra automorphism.  The final closure identity and its $\Ad_S$-stability are the specialization of \Cref{lem:general-symmetric-pair}.
\end{proof}

A subset of a real coordinate space is real algebraic when it is a common zero set of real polynomials; a real Zariski-open set is the complement of such a set.  The same terminology on a real algebraic manifold means the relative topology.  Full Lebesgue measure means that the complement has measure zero.

For $Z\in\mathfrak k_A$, let $M_{58}(Z)$ be the coordinate matrix whose columns are
$W_i(Z,\Ad_SZ)$ in \eqref{eq:g2basis}, and define $\Delta_{58}(Z)=\det M_{58}(Z)$.

\begin{remark}[Zariski-open success]
\label{thm:generic}
The polynomial $\Delta_{58}$ is homogeneous of degree $58$ and nonzero at $X$.
Its nonvanishing locus is a nonempty real Zariski-open subset of the generating set
$\mathcal G$; in particular, $\mathcal G$ has full Lebesgue measure in $\mathfrak k_A$.
\end{remark}

\begin{proof}
The recursion for the fourteen words gives their degrees in the coordinates of $Z$:
\[
(1,1,2,3,3,4,4,4,5,5,6,6,7,7).
\]
Their sum is $58$, so every term in $\det M_{58}(Z)$ has degree $58$.
By \eqref{eq:bracketdet}, $\Delta_{58}(X)=2^{88}3^9\ne0$.
Whenever $\Delta_{58}(Z)\ne0$, the fourteen columns span $\mathfrak g_2$ and
therefore $Z,\Ad_SZ$ generate it.  The complement of this sufficient open set
is the zero set of a nonzero real polynomial and has measure zero.
This proves the full-measure claim without asserting that every zero of
$\Delta_{58}$ is nongenerating.
\end{proof}

\subsection{Vector-stabilizer failures}
A \emph{common fixed vector} for the generated infinitesimal action means a nonzero $w\in V$ annihilated by both generators: $Zw=(\operatorname{Ad}_S Z)w=0$.  Equivalently, all their exponential flows fix $w$.  This differs from a subspace merely being preserved setwise.

For $0\ne w\in V$, write
\[
\mathfrak g_2^w=\{A\in\mathfrak g_2:Aw=0\}.
\]
After rescaling $w$ to unit length this is the standard maximal stabilizer $\mathfrak{su}(3)$.  The $\Ad_S$-stability from \Cref{prop:symmetric-switch} turns containment in such a stabilizer into a common-kernel condition.

For $Z=Z(q_a,q_b)\in\mathfrak k_A$, write $\alpha=\|q_a\|$ and $\beta=\|q_b\|$.

The rotation frequencies of a skew operator on $V$ are the three nonnegative numbers $\omega_j$, counted with multiplicity, for which its complex spectrum is $0,\pm i\omega_1,\pm i\omega_2,\pm i\omega_3$.  Below, \emph{spectrally regular} means that these three numbers are positive and pairwise distinct; this is the frequency condition used here, not an unstated regularity assumption.  A rank stratum is the subset on which the rank of this seven-dimensional operator is fixed.
For each $0\ne w\in V$, define the local annihilator
\[
\mathcal N_w=\{Z\in\mathfrak k_A:Zw=0\}.
\]
We shall use the fixed witness
\begin{equation}
Z_*=D_{23}+2D_{13}-4D_{45}-4D_{46}.
\label{eq:su3-witness}
\end{equation}

\begin{proposition}[Complete common-vector criterion and the rank-six slice]
\label{prop:failure}
Let $Z\in\mathfrak k_A$.  The following conditions are equivalent:
\begin{enumerate}[label=\textnormal{(\roman*)},leftmargin=9mm]
\item $\Lie\{Z,\Ad_SZ\}\subseteq\gtwo^w$ for some $0\ne w\in V$;
\item $\ker Z\cap S(\ker Z)\ne\{0\}$;
\item $\ker Z$ contains a nonzero vector in $P_{H_S}$ or a nonzero vector in $H_S^\perp$.
\end{enumerate}
Its three rotation frequencies, allowing zeros, are
\[
2\alpha,\qquad |\beta-\alpha|,\qquad \beta+\alpha.
\]
On the rank-six stratum $\alpha\ne0$, $\beta\ne\alpha$, one has $\ker Z=\RR q_a\subset P_A$, and
\[
\left\{Z\in\mathfrak k_A:\operatorname{rank}Z=6,
\ \Lie\{Z,\Ad_SZ\}\subseteq\gtwo^w\text{ for some }w\ne0\right\}
=\mathcal N_{e_1}\cap\{\operatorname{rank}Z=6\},
\]
where
\[
\mathcal N_{e_1}
=\Span\left\{D_{23},D_{45},D_{46}-\tfrac12D_{13},D_{47}+\tfrac12D_{12}\right\}.
\]
A nonempty Zariski-open subset of this four-plane generates the full maximal stabilizer $\gtwo^{e_1}\cong\mathfrak{su}(3)$.  The element $Z_*$ satisfies
$\Lie\{Z_*,\Ad_SZ_*\}=\mathfrak g_2^{e_1}$.
\end{proposition}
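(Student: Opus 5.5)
The plan has three parts: the equivalences (i)--(iii), the frequencies and rank-six slice, and the explicit $\mathfrak{su}(3)$ statements.

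\textbf{Equivalences.} For (i)$\Rightarrow$(ii), if $\Lie\{Z,\Ad_SZ\}$ annihilates $w\ne0$, then $Zw=0$ and $SZSw=0$. The second condition means $Sw\in\ker Z$, so $w\in\ker Z\cap S\ker Z$ (using $S^{-1}=S$). For (ii)$\Rightarrow$(i), take $w$ in the intersection; then $Zw=0$ and $Z(Sw)=0$, hence $(\Ad_SZ)w=SZSw=0$. Since $\mathfrak g_2^w$ is a subalgebra containing both generators, it contains their closure. For (ii)$\Leftrightarrow$(iii), note that $\ker Z\cap S\ker Z$ is $S$-invariant, because $S$ is an involution. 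A nonzero $S$-invariant subspace contains a nonzero eigenvector of $S$, lying in $P_{H_S}$ (eigenvalue $+1$) or $H_S^\perp$ (eigenvalue $-1$). Conversely, an eigenvector $w$ of $S$ in $\ker Z$ satisfies $Sw=\pm w\in\ker Z$.

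\textbf{Frequencies and rank six.} The frequencies come from the stabilizer coordinates. On $\operatorname{Im}\HH$ the operator $x\mapsto[q_a,x]$ rotates $q_a^\perp$ at speed $2\alpha$ and kills $q_a$. On $\HH e_4$ the operator $y\mapsto q_by-yq_a$ is a sum of commuting left and right multiplications, with eigenvalues $\pm i\beta\pm i\alpha$, giving frequencies $|\beta-\alpha|$ and $\beta+\alpha$, each with multiplicity two on the real four-space. When $\alpha\ne0$ and $\beta\ne\alpha$, the four-block is invertible and the three-block has kernel $\RR q_a$, so $\ker Z=\RR q_a\subset P_A$. By (ii), a common vector then exists iff $Sq_a\in\RR q_a$, i.e. $q_a$ is an eigenvector of $S$ inside $P_A=\Span\{e_1,e_2,e_3\}$. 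Since $S$ fixes $e_1$ and sends $e_2,e_3$ to $e_4,e_5\notin P_A$, this forces $q_a\in\RR e_1$, equivalently $Ze_1=0$. Hence the rank-six failure set is $\mathcal N_{e_1}\cap\{\operatorname{rank}Z=6\}$. To identify $\mathcal N_{e_1}$, impose $Ze_1=0$, i.e. $q_a\parallel e_1$ with $q_b$ free. This is a $1+3=4$ dimensional space, and I will verify that the four listed Fano combinations annihilate $e_1$ and are independent in $\mathfrak k_A$, using the basis of $\mathfrak k_A$ from \Cref{sec:g2closure}.

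\textbf{The $\mathfrak{su}(3)$ assertions.} These are where the main work lies. First check directly that $Z_*\in\mathcal N_{e_1}$, i.e. $Z_*=-4(D_{46}-\tfrac12D_{13})+D_{23}-4D_{45}$. Then $\Lie\{Z_*,\Ad_SZ_*\}\subseteq\mathfrak g_2^{e_1}$ by (i), since $Se_1=e_1$. For equality, use a bracket certificate as in \Cref{thm:single}: compute words $W_i(Z_*,\Ad_SZ_*)$ and exhibit eight of them whose coordinate matrix in $\mathfrak g_2$ has rank eight. Because $\dim\mathfrak g_2^{e_1}=8$, this gives equality. That rank condition is the nonvanishing of some $8\times8$ minor polynomial on $\mathcal N_{e_1}$. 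It is nonzero at $Z_*$, so its nonvanishing locus is a nonempty Zariski-open subset of the four-plane on which the closure is the full $\mathfrak{su}(3)$. The main obstacle is this explicit bracket computation. As an alternative, one could argue that the closure is an $\Ad_S$-stable subalgebra of $\mathfrak{su}(3)$ that is not contained in a proper maximal subalgebra of $\mathfrak{su}(3)$, but I would first try the direct minor certificate.
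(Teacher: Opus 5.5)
Your proposal is correct and follows the paper's proof essentially step by step. The shared steps are the kernel/$S$-invariance argument for (i)--(iii), the stabilizer-model spectrum giving $\ker Z=\RR q_a$ on rank six, the reduction to $Ze_1=0$ because $S$ moves $e_2,e_3$ out of $P_A$, and an explicit eight-word bracket certificate for $Z_*$. The paper's certificate (Appendix~\ref{app:failure-certificate}) uses exactly the words $W_1,\dots,W_7,W_9$ evaluated at $(Z_*,\Ad_SZ_*)$, with determinant $2^{30}3^{20}\cdot 11$ in a basis of $\mathfrak g_2^{e_1}$, and the Zariski-open claim follows from it just as you describe.
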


\begin{proof}
Since $S=S^{-1}$,
\[
(\Ad_SZ)w=0\iff Z(Sw)=0\iff w\in S(\ker Z),
\]
which proves the equivalence of the first two conditions.  The common-kernel space is $S$-invariant and therefore contains an eigenvector of the involution, proving the third condition.  The differential of \eqref{eq:gamma} yields
\[
\chi_Z(\lambda)=\lambda(\lambda^2+4\alpha^2)
(\lambda^2+(\beta-\alpha)^2)(\lambda^2+(\beta+\alpha)^2).
\]
On rank six the unique kernel is $\RR q_a$.  Because
\[
P_A\cap P_{H_S}=\RR e_1,
\qquad P_A\cap H_S^\perp=0,
\]
the criterion reduces to $Ze_1=0$.  If
\[
Z=x_0D_{12}+x_1D_{13}+x_2D_{23}+x_3D_{45}+x_4D_{46}+x_5D_{47},
\]
then $Ze_1=0$ is exactly $x_5=2x_0$ and $x_4=-2x_1$, giving the displayed basis.  The full-$\mathfrak{su}(3)$ witness and determinant are recorded in Appendix~\ref{app:failure-certificate}.
\end{proof}

For the rank-four common-vector calculation, fix
$Z'=D_{12}-3D_{13}+2D_{47}$.

\begin{proposition}[A ruled rank-four vector-stabilizer component]
\label{prop:rankfour-failure}
For $0\ne w=ae_6+be_7$, the linear space $\mathcal N_w$ has dimension three and is spanned by
\begin{align*}
&D_{45}-D_{23},\\
&3abD_{12}-(2a^2-b^2)D_{13}+(a^2+b^2)D_{46},\\
&-(a^2-2b^2)D_{12}-3abD_{13}+(a^2+b^2)D_{47}.
\end{align*}
The union over $[a:b]\in\RR P^1$ is a ruled four-dimensional cone of common-vector failures.  At $a=b=1$, the witness $Z'$ has
\[
\ker Z'=\Span\{e_2,e_4+e_5,e_6+e_7\},
\qquad Z'e_1=-12e_3,
\]
and
\[
\Lie\{Z',\Ad_SZ'\}=\gtwo^{e_6+e_7}\cong\mathfrak{su}(3).
\]
\end{proposition}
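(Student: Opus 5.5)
The plan is to compute $\mathcal N_w$ directly in the stabilizer model of \Cref{prop:stabilizer}, translate the result into the Fano basis, and then treat the witness $Z'$ by combining \Cref{prop:failure} with a finite bracket certificate.

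\textbf{Step 1: dimension of $\mathcal N_w$.} Since $w=ae_6+be_7$ lies in $H_A^\perp=\HH e_4$, write $w=ye_4$ with $0\ne y\in\HH$. From the derivative of \eqref{eq:gamma}, $Z(q_a,q_b)$ acts on $\HH e_4$ by $ye_4\mapsto(q_by-yq_a)e_4$. So $Zw=0$ is the condition $q_b=yq_ay^{-1}$. This leaves $q_a\in\operatorname{Im}\HH$ free and determines $q_b$, and conjugation by $y$ preserves $\operatorname{Im}\HH$. Hence $\mathcal N_w$ is the graph of a linear isomorphism $\operatorname{Im}\HH\to\operatorname{Im}\HH$ and has dimension exactly three.

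\textbf{Step 2: the displayed basis.} Rather than translating quaternion coordinates, I will write $Z=x_0D_{12}+x_1D_{13}+x_2D_{23}+x_3D_{45}+x_4D_{46}+x_5D_{47}$, as in the proof of \Cref{prop:failure}. The four $H_A^\perp$-components of $Z(ae_6+be_7)$ are linear in $x$ with coefficients linear in $(a,b)$. Step 1 shows this system has rank three. It then suffices to substitute each of the three displayed vectors and check that it is annihilated. The vectors are independent: the first is the only one with $D_{23}$- and $D_{45}$-components, and the other two have independent $D_{46}$- and $D_{47}$-components, because $a^2+b^2\ne0$. The coefficients are homogeneous quadratics in $(a,b)$, so the plane depends only on $[a:b]$.

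\textbf{Step 3: the ruled cone.} Consider the incidence set $\{([a:b],Z):Z\in\mathcal N_{ae_6+be_7}\}$, a three-plane bundle over $\RR P^1$ of dimension four. To show its image in $\mathfrak k_A$ is four-dimensional, I will check that two distinct points of $\RR P^1$ give planes meeting in a proper subspace. An element lying in both satisfies $yq_ay^{-1}=y'q_ay'^{-1}$, so $q_a$ commutes with $y^{-1}y'$, which is non-real. This forces $q_a$ onto a line, so distinct planes meet in dimension at most one. (The line spanned by $D_{45}-D_{23}$, which lies in every plane, is a natural candidate for this intersection.) The map from the incidence set is therefore generically finite-to-one, and the union has dimension four. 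Each $\mathcal N_w$ is contained in the common-vector locus by \Cref{prop:failure}(ii), because $S(e_6)=-e_6$ and $S(e_7)=-e_7$ put $w$ in $H_S^\perp$. Indeed $S(\ker Z)$ then contains $w$ whenever $Zw=0$.

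\textbf{Step 4: the witness $Z'$ (main obstacle).} The coefficients of $Z'$ agree with the basis at $a=b=1$ via the combination $\tfrac12(\text{third vector})+\dots$. Concretely, I will verify $Z'(e_6+e_7)=0$, $Z'e_2=0$, $Z'(e_4+e_5)=0$ and $Z'e_1=-12e_3$ directly from the Fano products, and use rank four to get the full kernel. By \Cref{prop:failure}, $\Lie\{Z',\Ad_SZ'\}\subseteq\mathfrak g_2^{e_6+e_7}$, which has dimension eight.

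The real work is the reverse inclusion. I will form the words $W_1,\dots,W_8$ evaluated at $(Z',\Ad_SZ')$, or another short bracket list chosen from the recursion, and exhibit a nonzero $8\times8$ minor (or a positive trace-Gram determinant) in the basis \eqref{eq:g2basis}. This parallels the certificate for $Z_*$ in Appendix~\ref{app:failure-certificate}. If the first eight recursion words turn out to be dependent, I would replace some of them with other length-four and length-five brackets until the rank reaches eight.
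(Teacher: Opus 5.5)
Your proposal is correct and follows essentially the paper's route: rank three for $Z\mapsto Zw$ (which you obtain more conceptually from $q_b=yq_ay^{-1}$), verification of the three generators by substitution, a four-dimensional incidence space with finite fibers over the cone, and an eight-word bracket certificate, where your first choice $W_1,\ldots,W_8$ is exactly the paper's list $F_1,\ldots,F_8$, whose determinant in $\mathcal B_{e_6+e_7}$ is $2^{27}3^{20}$. The one real variation is the lower bound for the cone's dimension: your observation that any two distinct planes meet exactly in $\RR(D_{45}-D_{23})$ replaces the paper's local uniqueness of the kernel line in $\Span\{e_6,e_7\}$ near $Z'$; note also that $Z'$ is exactly the third generator at $a=b=1$, not a combination involving half of it.
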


\begin{proof}
In the ordered basis $(D_{12},D_{13},D_{23},D_{45},D_{46},D_{47})$, the map $Z\mapsto Zw$ has exact rank three for $(a,b)\ne(0,0)$.  Solving it gives the three displayed generators.  The incidence space over $\mathbb{RP}^1$ has dimension four, giving an upper bound for the dimension of its image.  At the displayed element $Z'$, its kernel intersects $\Span\{e_6,e_7\}$ in exactly $\RR(e_6+e_7)$.  This uniqueness persists on a relatively open part of the incidence space because the restriction to that two-plane has rank one there.  The projection to the derivation therefore has zero-dimensional fibers on that open part, proving that its image has dimension four.  The stated kernel and $Z'e_1$ follow by substitution, while the eight-dimensional closure is certified by the determinant in Appendix~\ref{app:projective-failure-certificates}.
\end{proof}

The positive and negative eigenvectors used in the incidence description are parametrized by
\[
w_+(a,b,c)=ae_1+b(e_2+e_4)+c(e_3+e_5),
\]
\[
w_-(a,b,r,t)=a(e_2-e_4)+b(e_3-e_5)+re_6+te_7.
\]
Let $Z_+(a,b,c)$ and $Z_-(a,b,r,t)$ denote the homogeneous cubic expressions
specified in Appendix~\ref{app:rankfour-parametrization}.
Their annihilator properties are proved below; the formulas fix the chosen representatives.

\begin{proposition}[Complete incidence parametrization of rank-four vector failures]
\label{prop:rankfour-complete}
Let $Z\in\mathfrak k_A$ be nonzero of rank four.  Then
\[
\Lie\{Z,\Ad_SZ\}\subseteq\gtwo^w
\quad\text{for some }w\ne0
\]
if and only if $Z$ belongs to at least one of the following four explicitly parametrized families:
\begin{enumerate}[label=\textnormal{(\alph*)},leftmargin=9mm]
\item $\mathcal N_{e_1}\cap\{\operatorname{rank}Z=4\}$;
\item the cubic line $\RR Z_+(a,b,c)$ over an eigenline
\[
[w_+(a,b,c)]\in\RR P(P_{H_S}),
\qquad (b,c)\ne(0,0);
\]
\item the cubic line $\RR Z_-(a,b,r,t)$ over an eigenline
\[
[w_-(a,b,r,t)]\in\RR P(H_S^\perp),
\qquad (a,b)\ne(0,0);
\]
\item the three-plane $\mathcal N_{re_6+te_7}$ from \Cref{prop:rankfour-failure}, for $[r:t]\in\RR P^1$.
\end{enumerate}
The explicit homogeneous cubic generators $Z_+$ and $Z_-$ are printed in Appendix~\ref{app:rankfour-parametrization}.  Away from the exceptional eigenlines $[e_1]$ and $\RR P\Span\{e_6,e_7\}$, the annihilator fiber in $\mathfrak k_A$ is one-dimensional.  Consequently the entire rank-four common-vector mechanism is closed by this incidence parametrization.
\end{proposition}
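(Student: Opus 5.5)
The plan is to reduce to the common-kernel criterion of \Cref{prop:failure} and then sort the possible common eigenvectors by type. By that proposition, for rank-four $Z\in\mathfrak k_A$ a common fixed vector exists if and only if $\ker Z$ contains a nonzero eigenvector $w$ of $S$. Such a $w$ lies either in $P_{H_S}$ or in $H_S^\perp$. So the failure set on the rank-four stratum is
\[
\bigcup_{[w]\in\RR P(P_{H_S})\cup\RR P(H_S^\perp)}\mathcal N_w\cap\{\operatorname{rank}Z=4\}.
\]
The whole proof then reduces to computing the linear annihilator $\mathcal N_w$ for each eigenline and matching it with (a)--(d).

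First I identify the rank-four stratum. From the frequencies $2\alpha,|\beta-\alpha|,\beta+\alpha$, rank four with $Z\ne0$ means exactly one zero frequency pair. This happens either when $\alpha=0\ne\beta$ or when $\beta=\alpha\ne0$. In the first case $\ker Z=P_A$. In the second case $\ker Z$ is the three-dimensional space spanned by $q_a$ and the two-plane $\{ye_4:q_ay=yq_a\}$, which is $(\RR1\oplus\RR q_a)e_4$ when $q_b=q_a$.

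Next, for each eigenvector $w$ I solve the linear system $Zw=0$ in the six coordinates $(q_a,q_b)$, writing $Z(q_a,q_b)w=[q_a,x]+(q_by-yq_a)e_4$ for $w=x+ye_4$. The first equation forces $q_a\parallel x$ when $x\ne0$. The second is a $3\times3$ linear system for $q_b$ given $q_a$. For $w=w_+(a,b,c)$ with $(b,c)\ne(0,0)$, and for $w=w_-(a,b,r,t)$ with $(a,b)\ne(0,0)$, both $x$ and $y$ are nonzero. In that case $y\mapsto q_by-yq_a$ has kernel of dimension one, so $\mathcal N_w$ is a line. By Cramer's rule the line is spanned by a homogeneous cubic in the coordinates of $w$, and I expect this cubic to be the printed $Z_+$ or $Z_-$; the check is direct substitution. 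The exceptional eigenlines are then treated separately. For $w=e_1$ we get the four-plane $\mathcal N_{e_1}$ of \Cref{prop:failure}, giving family (a). For $w\in\Span\{e_6,e_7\}$, which means $x=0$ and $y\in\HH$ with no real part in $P_{H_S}$ coordinates, the computation of \Cref{prop:rankfour-failure} gives the three-plane, giving family (d). The parametrizations $w_\pm$ cover all eigenlines: $P_{H_S}$ is spanned by $e_1,e_2+e_4,e_3+e_5$, and $H_S^\perp$ by $e_2-e_4,e_3-e_5,e_6,e_7$. The conditions $(b,c)\ne0$ and $(a,b)\ne0$ remove exactly the exceptional lines.

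The converse is immediate. Each listed element annihilates an $S$-eigenvector, so \Cref{prop:failure} gives containment in $\gtwo^w$. Intersecting with the rank-four condition then gives the statement; the families are not required to lie entirely in the stratum. The one-dimensionality assertion away from exceptional lines is the rank computation above.

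The main obstacle is step two for generic $w_\pm$. I must verify that the linear map $(q_a,q_b)\mapsto Zw$ has rank exactly five on all non-exceptional eigenlines, not merely generically, and that the cubic representatives never vanish there. I would first try the quaternionic argument: $q_a$ is determined up to scale by $x$, and $q_b=(yq_a)y^{-1}$ modulo the constraint that $q_b$ be imaginary. If that argument fails at isolated lines, I would fall back on a direct rank check of the $7\times6$ matrix, using that its $5\times5$ minors have no common real zero outside the exceptional lines.
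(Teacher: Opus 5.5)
Your proposal is correct and is essentially the paper's proof: you reduce by \Cref{prop:failure} to an $S$-eigenvector in $\ker Z$, solve $[q_a,x]=0$, $q_by=yq_a$ blockwise, and treat the exceptional lines $[e_1]$ and $\RR P\Span\{e_6,e_7\}$ separately via $\mathcal N_{e_1}$ and \Cref{prop:rankfour-failure}. The quaternionic argument you list first already settles your ``main obstacle'' on every non-exceptional line, with no minor computation needed. There $x\ne0$ forces $q_a\in\RR x$, and $y\ne0$ then fixes $q_b=yq_ay^{-1}$, which is automatically imaginary; also, in your rank-stratum paragraph the kernel condition should read $q_by=yq_a$, though that paragraph is never used.
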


\begin{proof}
Write an $S$-eigenvector as $w=p+ye_4$, with $p\in P_A$ and quaternion coefficient $y\in H_A$.  Since every $Z\in\mathfrak k_A$ is block diagonal for $P_A\oplus H_A^\perp$, the equation $Zw=0$ is equivalent, in the quaternionic coordinates of \eqref{eq:gamma}, to
\[
[q_a,p]=0,
\qquad q_by=yq_a.
\]
If $p\ne0$ and $y\ne0$, the first equation forces $q_a$ to be a scalar multiple of $p$, and the second then determines $q_b=yq_ay^{-1}$; the annihilator is therefore a line.  For a positive eigenvector, $y=0$ occurs only on $\RR e_1$, giving the four-plane $\mathcal N_{e_1}$.  For a negative eigenvector, $p=0$ occurs exactly on $\Span\{e_6,e_7\}$, where $q_a$ is free and the annihilator has dimension three.  Substitution gives the cubic generators in Appendix~\ref{app:rankfour-parametrization}.

Every nonzero solution outside the $e_1$ fiber has a kernel vector with nonzero components in both invariant blocks, or a nonzero kernel in the four-dimensional normal block.  Skew symmetry then forces a kernel of dimension at least three.  Since a nonzero element of $\mathfrak g_2$ cannot have rank two \cite{ChemtovKarigiannis2022}, it has rank four.  Conversely, \Cref{prop:failure} says that every rank-four vector failure contains an $S$-eigenvector, which lies in one of the four cases above.
\end{proof}

\begin{remark}[Associative kernel reduction]
The kernel of every rank-four element of $\mathfrak g_2$ is an associative three-plane \cite{ChemtovKarigiannis2022}.  Writing $P_Z=\ker Z$, the proposition can equivalently be viewed as a complete parametrization of those associative kernel planes arising from $\mathfrak k_A$ that satisfy $P_Z\cap S(P_Z)\ne0$.  The invariant-context mechanism outside this locus is classified in \Cref{thm:rankfour-context-complete}, and \Cref{thm:low-complete} treats all low-dimensional closures.
\end{remark}

\subsection{Quaternionic-stabilizer failures}
We first classify all $S$-invariant quaternionic subalgebras.  Only a lower-dimensional part of this four-dimensional family meets the fixed local algebra $\mathfrak k_A$ in generators that produce proper closures of dimension greater than three.

The following description is the complex-line form of the invariant-plane family described by Draper and Mart\'in-Gonz\'alez \cite{DraperMartin2025}.  Its short proof fixes the parametrization used in the pulse classification.

Let $P_{H_S}$ and $Q_{H_S}=H_S^\perp$ be the $+1$ and $-1$ eigenspaces of $S$ in $V$.  

\begin{lemma}[Invariant associative planes]
\label{thm:invariant-contexts}
An associative three-plane $R\subset V$ is $S$-invariant if and only if either
\[
R=P_{H_S},
\]
or
\[
R=\RR m\oplus\Pi,
\]
where $0\ne m\in P_{H_S}$ and $\Pi\subset Q_{H_S}$ is a complex line for the complex structure $C_m/\lVert m\rVert$.  Hence the nontrivial family is a $\CC P^1$-bundle over $\RR P^2$ and has real dimension four.
\end{lemma}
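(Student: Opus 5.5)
Since $S$ is an involution, an $S$-invariant subspace $R$ splits as $R=(R\cap P_{H_S})\oplus(R\cap Q_{H_S})$. With $\dim R=3$, $\dim P_{H_S}=3$ and $\dim Q_{H_S}=4$, the possible dimension pairs are $(3,0),(2,1),(1,2),(0,3)$. I will eliminate $(2,1)$ and $(0,3)$ using two product facts: since $H_S$ is a quaternion subalgebra, $P_{H_S}\times P_{H_S}\subseteq P_{H_S}$, $P_{H_S}\times Q_{H_S}\subseteq Q_{H_S}$, and $Q_{H_S}\times Q_{H_S}\subseteq P_{H_S}$; these are the $\mathbb Z_2$-grading of $\OO=H_S\oplus H_S^\perp$, equivalently $S$-equivariance of the cross product. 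Associativity of $R$ means $R$ is closed under $\times$.

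The case $(3,0)$ gives $R=P_{H_S}$. For $(2,1)$, take orthonormal $u,v\in R\cap P_{H_S}$. Then $u\times v$ is a unit vector in $P_{H_S}\cap R$ orthogonal to $u$ and $v$, which forces $\dim(R\cap P_{H_S})=3$, a contradiction. For $(0,3)$, take orthonormal $x,y\in R\subset Q_{H_S}$. Then $x\times y\ne0$ lies in $R\cap P_{H_S}=0$, also a contradiction.

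The remaining case is $(1,2)$. Write $R=\RR m\oplus\Pi$ with $m\in P_{H_S}$ a unit vector and $\Pi\subset Q_{H_S}$ a plane. Closure of $R$ gives $C_m\Pi\subseteq R\cap Q_{H_S}=\Pi$. On $m^\perp$ we have $C_m^2=-I$ (from $m\times(m\times x)=-x$ for unit $m$ and $x\perp m$), and $C_m$ preserves $Q_{H_S}$. So $\Pi$ is a $C_m$-complex line in $Q_{H_S}\cong\CC^2$.

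Conversely, given such $m$ and $\Pi$, pick a unit $x\in\Pi$. Then $(m,x,m\times x)$ is orthonormal, spans $R$, and is an associative frame in the sense of \Cref{sec:setting}. Hence $R$ is associative, and it is $S$-invariant because both summands are eigenspaces.

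For the parameter count, the data consist of the line $[m]\in\RR P(P_{H_S})\cong\RR P^2$, recovered as $R\cap P_{H_S}$, together with a complex line in $(Q_{H_S},C_m)$, which gives $\CC P^1$. Replacing $m$ by $-m$ replaces the complex structure by its conjugate, which has the same complex lines. So the fiber depends only on $[m]$, and local triviality comes from a local section $m$ over $\RR P^2$. The dimension is $2+2=4$. I expect the only delicate point to be keeping the ambiguity $m\mapsto -m$ in the bundle description well defined, which the conjugation remark settles.
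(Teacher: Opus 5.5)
Your proposal is correct and follows the paper's proof essentially step by step. Both arguments use the same ingredients: the eigenspace splitting $R=(R\cap P_{H_S})\oplus(R\cap Q_{H_S})$; the elimination of $(0,3)$ via $S$-equivariance of the cross product and of $(2,1)$ via closure of $P_{H_S}$; the identification of the $(1,2)$ case with $C_m$-complex lines; and the converse via the explicit frame $(m,x,m\times x)$. Your remark that $m\mapsto -m$ conjugates the complex structure without changing its complex lines is a small clarification of the bundle description, which the paper leaves implicit.
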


\begin{proof}
Write $P=P_{H_S}$ and $Q=H_S^\perp$.  If an associative three-plane $R$ is $S$-invariant, then
\[
R=(R\cap P)\oplus(R\cap Q).
\]
Let the two dimensions be $(r_+,r_-)$, so $r_++r_-=3$.  The case $(3,0)$ gives $R=P$.  The case $(0,3)$ is impossible: if $x,y\in R\subset Q$ are independent, multiplicativity of $S$ gives
\[
S(x\times y)=Sx\times Sy=(-x)\times(-y)=x\times y.
\]
Thus the nonzero vector $x\times y$ lies in $P$, whereas associativity of $R$ requires it to lie in $R\subset Q$.  The case $(2,1)$ is also impossible: two independent vectors of the associative plane $P$ have cross product in $P$ and generate its third quaternionic direction, so an associative plane containing them cannot have only a two-dimensional intersection with $P$.

It remains to consider $(1,2)$.  Write $R\cap P=\RR m$ and $\Pi=R\cap Q$.  For $x\in\Pi$, associativity of $R$ implies $m\times x\in\Pi$.  Since $C_m^2=-\lVert m\rVert^2 I$ on $Q$, the operator $C_m/\lVert m\rVert$ is a complex structure on $Q$, and $\Pi$ is a complex line.  Conversely, if $\Pi$ is such a complex line, choose a unit $x\in\Pi$ and set $y=(m/\lVert m\rVert)\times x$.  Then $(m/\lVert m\rVert,x,y)$ is an oriented associative triple, so $\RR m\oplus\Pi$ is associative.  The positive line is parametrized by $\RR P^2$ and, for fixed $m$, the complex lines in the complex two-space $(Q,C_m/\lVert m\rVert)$ form $\CC P^1$.
\end{proof}

\begin{lemma}[Intersections of distinct context stabilizers]
\label{lem:nonfixed-context-bound}
For any two distinct quaternionic contexts $H,H'$, one has
\[
\dim\bigl(\mathfrak k_H\cap\mathfrak k_{H'}\bigr)\le3.
\]
The bound is sharp.
\end{lemma}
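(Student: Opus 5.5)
The plan is to view $\mathfrak h:=\mathfrak k_H\cap\mathfrak k_{H'}$ as a subalgebra of $\mathfrak k_H\cong\mathfrak{sp}(1)_a\oplus\mathfrak{sp}(1)_b$ and to rule out dimensions $4$, $5$ and $6$. First I would conjugate so that $H=H_0$ and use the model \eqref{eq:gamma}. By \Cref{lem:context-determined}, $\dim\mathfrak h=6$ is impossible. Since $\mathfrak h$ is a subalgebra of a compact algebra, it is reductive, so $\mathfrak h=[\mathfrak h,\mathfrak h]\oplus\mathfrak z(\mathfrak h)$. The rank of $\mathfrak{so}(4)$ is two. If $\mathfrak h$ had dimension $4$ or $5$, its semisimple part could not be zero, which would make $\mathfrak h$ abelian of dimension at most $2$. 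It also could not be all of $\mathfrak{so}(4)$. Hence the semisimple part is a single $\mathfrak{su}(2)$, and the center has dimension $\dim\mathfrak h-3\in\{1,2\}$. Dimension $5$ then violates the rank bound, so only $\dim\mathfrak h=4$ remains, with $\mathfrak h=\mathfrak s\oplus\RR c$ and $\mathfrak s\cong\mathfrak{su}(2)$.

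Next I would classify $\mathfrak s$. An $\mathfrak{su}(2)$ inside $\mathfrak{sp}(1)_a\oplus\mathfrak{sp}(1)_b$ is either one of the two ideals or the graph of an isomorphism $\mathfrak{sp}(1)_a\to\mathfrak{sp}(1)_b$. The centralizer of a graph in $\mathfrak{so}(4)$ is zero, since its two components would have to centralize $\mathfrak{sp}(1)$ in each factor. A graph is therefore incompatible with the nonzero central element $c$, so $\mathfrak s$ equals $\mathfrak{sp}(1)_a$ or $\mathfrak{sp}(1)_b$. I would then show that either ideal, if contained in $\mathfrak k_{H'}$, forces $P_{H'}=P_{H_0}$. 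Indeed, $\mathfrak k_{H'}$ preserves $P_{H'}$, so $P_{H'}$ is a three-dimensional $\mathfrak s$-invariant subspace of $V=\operatorname{Im}H_0\oplus H_0e_4$.

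For $\mathfrak{sp}(1)_b$ (the pointwise algebra $\mathfrak k_{H_0}^{\mathrm{pt}}$), the action is trivial on $\operatorname{Im}H_0$ and is left multiplication on $H_0e_4$. Left multiplication is real-irreducible of dimension $4$, so every invariant subspace has the form $W\oplus0$ or $W\oplus H_0e_4$. The only three-dimensional one is $\operatorname{Im}H_0$. For $\mathfrak{sp}(1)_a$, the action is adjoint on $\operatorname{Im}H_0$ and is $y\mapsto -yq_a$ on $H_0e_4$; these are irreducible and inequivalent real modules of dimensions $3$ and $4$. The unique invariant three-plane is again $\operatorname{Im}H_0$. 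In both cases $H'=H_0$, a contradiction, which gives $\dim\mathfrak h\le3$.

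For sharpness I would use the diagonal algebra $\mathfrak d=\{Z(q,q)\}\subset\mathfrak k_A$. It annihilates $e_4$, so $\mathfrak d\subset\mathfrak g_2^{e_4}\cong\mathfrak{su}(3)$, where it acts as a real form $\mathfrak{so}(3)$ on the complex three-space $(e_4^\perp,C_{e_4})$. Take $g=\omega I$ on $e_4^\perp$ (with $g e_4=e_4$), where $\omega=e^{2\pi i/3}$; this is a central element of the stabilizer $\SU(3)$ and hence lies in $G_2$. It commutes with $\mathfrak d$, so $\mathfrak d\subset\mathfrak k_A\cap\mathfrak k_{g(H_A)}$. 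Since $C_{e_4}e_1\in H_Ae_4$, the plane $P_A$ is totally real for $C_{e_4}$, and therefore $\omega P_A\ne P_A$. Thus $g(H_A)\ne H_A$ and the bound $3$ is attained.

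The main obstacle is the sharpness step, specifically verifying that $\omega I$ really lies in $G_2$ (it is central in the $\SU(3)$ stabilizer) and that $P_A$ is totally real. I would first confirm both directly on the basis $e_1,e_2,e_3$ using the Fano table.
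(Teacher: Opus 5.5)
Your proof is correct, but it takes a different route from the paper in both halves.

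\textbf{Upper bound.} The paper argues in Goursat style. With $p_i$ the projections of $\mathfrak h$ onto the two $\mathfrak{su}(2)$ factors and $\mathfrak n_i$ its intersections with the factors, it uses $p_1(\mathfrak h)/\mathfrak n_1\simeq p_2(\mathfrak h)/\mathfrak n_2$. Since subalgebras of $\mathfrak{su}(2)$ have dimension $0,1,3$, this shows that $\dim\mathfrak h\ge4$ forces $\mathfrak h$ to contain a full simple factor. You reach the same intermediate conclusion differently: the reductive decomposition, the rank bound $2$, and the fact that a graph subalgebra has zero centralizer in $\mathfrak{so}(4)$. Your route needs compactness (abelian subalgebras have dimension at most the rank) and the list of small compact algebras. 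The paper's needs only simplicity of $\mathfrak{su}(2)$ and the dimensions of its subalgebras. In exchange, yours isolates $\mathfrak{su}(2)\oplus\RR$ as the only possible four-dimensional shape. From there both proofs use the same uniqueness of the invariant three-plane $\operatorname{Im}H_0$ for each factor.

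\textbf{Sharpness.} The paper gives an explicit coordinate pair $H=\exp(\pi D_{25}/12)H_A$, $H'=S(H)$, and verifies it by substitution with a printed basis of the intersection. That specific form matters later: it shows that the case $S(H)\ne H$ in \Cref{prop:maximal-reduction} can genuinely leave a three-dimensional intersection. Your example $H'=g(H_A)$ uses $g$ the central element $\omega I$ of $\Stab_{G_2}(e_4)\cong\SU(3)$, which centralizes the diagonal algebra $\{Z(q,q)\}$. It is conceptual and computation-free and suffices for the lemma as stated, though it is not of the form $(H,S(H))$.

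The point you flag as the main obstacle is easy to settle. In a unitary basis of $(e_4^\perp,C_{e_4})$, $g=\exp\bigl(\tfrac{2\pi}{3}W\bigr)$ with $W=\operatorname{diag}(i,i,-2i)\in\mathfrak{su}(3)=\mathfrak g_2^{e_4}$, so $g\in G_2$. Moreover, $g(H_A)\ne H_A$ already follows from one vector:
\[
ge_1=-\tfrac12e_1+\tfrac{\sqrt3}{2}\,e_4\times e_1=-\tfrac12e_1-\tfrac{\sqrt3}{2}e_5\notin P_A,
\]
so full total reality of $P_A$ is not needed.
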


\begin{proof}
Set $\mathfrak h=\mathfrak k_H\cap\mathfrak k_{H'}$ and identify $\mathfrak k_H=\mathfrak s_1\oplus\mathfrak s_2$ with $\mathfrak s_i\cong\mathfrak{su}(2)$.  Let $p_i$ be the two coordinate projections and put $\mathfrak n_1=\mathfrak h\cap(\mathfrak s_1\oplus0)$ and $\mathfrak n_2=\mathfrak h\cap(0\oplus\mathfrak s_2)$.  The spaces $p_i(\mathfrak h)$ are subalgebras of $\mathfrak{su}(2)$, so their dimensions are $0$, $1$, or $3$, and $\mathfrak n_i$ is an ideal in $p_i(\mathfrak h)$.  Moreover, pairing the two coordinates of elements of $\mathfrak h$ induces an isomorphism
\[
p_1(\mathfrak h)/\mathfrak n_1\simeq p_2(\mathfrak h)/\mathfrak n_2.
\]
If $\dim\mathfrak h\ge4$, this relation and the simplicity of $\mathfrak{su}(2)$ force one of the $\mathfrak n_i$ to be a full three-dimensional factor.  Indeed, if both projections are onto and both kernels vanish, $\mathfrak h$ is the graph of an isomorphism and has dimension three; all remaining possibilities of dimension at least four contain a full factor.  Thus $\mathfrak h$ contains one of the two simple factors of $\mathfrak k_H$.

The branching in \Cref{lem:maximality} recovers $P_H$ from either factor: the second factor fixes $P_H$ pointwise, while the first has $P_H$ as its unique three-dimensional invariant summand.  In either case $P_H$ is the only invariant three-dimensional subspace for that fixed factor: for the pointwise factor the other real summand is irreducible of dimension four, and for the first factor the summands of dimensions three and four are irreducible and inequivalent.  Since $P_{H'}$ is also invariant under the factor contained in $\mathfrak h$, it must equal $P_H$, forcing $H'=H$.  The contrapositive gives $\dim\mathfrak h\le3$.  For sharpness, take
\[
H=\exp(\pi D_{25}/12)H_A,
\]
whose associative frame may be taken as
\[
\frac{\sqrt3e_1+e_6}{2},\qquad
\frac{e_2+\sqrt3e_5}{2},\qquad
\frac{\sqrt3e_3-e_4}{2}.
\]
Direct substitution gives $S(H)\ne H$ and
\[
\mathfrak k_H\cap\mathfrak k_{S(H)}
=\Span\left\{
\tfrac12D_{17}+D_{24},\,-D_{15}+D_{26},\,D_{13}+D_{46}
\right\},
\]
which has dimension three.
\end{proof}

Write $q_a=ae_1+ce_2+se_3$.  For parameters with $n=c^2+s^2>0$, set
\[
\zeta=c+is,\qquad r=\operatorname{Re}(\zeta^3)=c^3-3cs^2,
\qquad t=\operatorname{Im}(\zeta^3)=3c^2s-s^3.
\]
The projective family and its two candidate intersection generators are
\begin{equation}
H_{[c:s]}=\RR1\oplus\Span\{ce_2+se_3,ce_4+se_5,(c^2-s^2)e_6+2cs\,e_7\},
\qquad[c:s]\in\RR P^1,
\label{eq:projective-context-family}
\end{equation}
\begin{align}
\mathsf P_{c,s}&=snD_{12}+rD_{46}+tD_{47},\notag\\
\mathsf Q_{c,s}&=cnD_{13}+rD_{46}+tD_{47}.
\label{eq:projective-intersection-basis}
\end{align}
For the resonant family, define
\begin{align*}
u_{a,c,s}&=2ae_1+c(e_2+e_4)+s(e_3+e_5),\\
v_{c,s}&=c(e_2-e_4)+s(e_3-e_5),\\
h_{a,c,s}&=-as(e_2-e_4)+ac(e_3-e_5)+(s^2-c^2)e_6-2cs\,e_7,
\end{align*}
\begin{equation}
H_{a;c,s}=\RR1\oplus\Span\{u_{a,c,s},v_{c,s},h_{a,c,s}\}.
\label{eq:resonant-context-family}
\end{equation}
The homogeneous pulse polynomial is
\[
\widehat Z_{a;c,s}
=2c^2sD_{12}-c(c^2-s^2)D_{13}
 +an(D_{23}+D_{45})+rD_{46}+tD_{47}.
\]
These subspaces are shown below to be quaternionic subalgebras.  To state their
full-generation certificates, use the following six-word pattern for an element $U$:
\[
F_1(U)=U,\quad F_2(U)=\Ad_SU,\quad F_3(U)=[F_1(U),F_2(U)],
\]
\[
F_4(U)=[F_1(U),F_3(U)],\quad
F_5(U)=[F_2(U),F_3(U)],\quad
F_6(U)=[F_1(U),F_4(U)].
\]
Arguments will be suppressed when the pulse is specified.

\begin{theorem}[Rank-six quaternionic-stabilizer failures]
\label{thm:visible-contexts}
Let $Z\in\mathfrak k_A$ have rank six and suppose
$L_Z\subseteq\mathfrak k_H$ with $\dim L_Z>3$ for a quaternionic context $H$.
Then $S(H)=H$.  If $Z\notin\mathcal N_{e_1}$, so that $n>0$, precisely the following cases occur.
\begin{enumerate}[label=\textnormal{(\roman*)},leftmargin=9mm]
\item If $a=0$, then $H=H_{[c:s]}$ and
\[
\mathfrak k_A\cap\mathfrak k_{H_{[c:s]}}
 =\Span\{\mathsf P_{c,s},\mathsf Q_{c,s}\}.
\]
\item If $a\ne0$, then $H=H_{a;c,s}$ and the second quaternionic coordinate is forced to be
\begin{equation}
q_b=3\left(ae_1+\frac r n e_2+\frac t n e_3\right).
\label{eq:resonant-qb}
\end{equation}
Equivalently,
\begin{equation}
nZ=\widehat Z_{a;c,s}.
\label{eq:resonant-pulse}
\end{equation}
The pulse $\widehat Z_{a;c,s}$ satisfies $\beta=3\alpha$ and generates the full stabilizer:
\[
\Lie\{\widehat Z_{a;c,s},\Ad_S\widehat Z_{a;c,s}\}
 =\mathfrak k_{H_{a;c,s}}\cong\mathfrak{so}(4).
\]
Its six-word trace-Gram certificate is
\begin{equation}
\det\bigl(-\operatorname{tr}(F_i(\widehat Z)F_j(\widehat Z))\bigr)_{i,j=1}^{6}
 =2^{46}3^{11}n^{34}(a^2+n)^2(2a^2+n)^6\ne0.
\label{eq:resonant-so4-gram}
\end{equation}
\end{enumerate}
Conversely, every pulse in the projective intersection of (i) and every homogeneous pulse
$\widehat Z_{a;c,s}$ preserves the stated $S$-invariant context.
The intersection formula is valid at every projective point, and the homogeneous certificate
also holds at $a=0$.
\end{theorem}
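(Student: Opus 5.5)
First I show that $S(H)=H$.  Since $L_Z$ is $\Ad_S$-stable (\Cref{prop:symmetric-switch}), $L_Z\subseteq\mathfrak k_H\cap\Ad_S\mathfrak k_H=\mathfrak k_H\cap\mathfrak k_{S(H)}$.  If $S(H)\ne H$, \Cref{lem:nonfixed-context-bound} would give $\dim L_Z\le3$, contradicting the hypothesis.  Hence $P_H$ is one of the $S$-invariant associative planes in \Cref{thm:invariant-contexts}.  Moreover $Z\in\mathfrak k_H$, and $Z$ has rank six, so its kernel is $\RR q_a\subset P_A$.  Any $Z$-invariant subspace of odd dimension three must meet that kernel.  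Thus $q_a\in P_H$, so the distinguished line of $P_H$ must be visible from $q_a$.  The case $P_H=P_{H_S}$ would force $q_a\in P_A\cap P_{H_S}=\RR e_1$.  It would also force the context $H_S$ to be preserved by $Z$.  I expect a direct check to show this only occurs inside $\mathcal N_{e_1}$ or with $\dim L_Z\le 3$; this is excluded by $n>0$.  Hence $P_H=\RR m\oplus\Pi$ with $m\in P_{H_S}$ and $\Pi\subset H_S^\perp$ a $C_m$-complex line.  Write $q_a=ae_1+ce_2+se_3$, and decompose $q_a=q_a^++q_a^-$ along $P_{H_S}\oplus H_S^\perp$.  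Here $q_a^+=ae_1+\tfrac{c}{2}(e_2+e_4)+\tfrac{s}{2}(e_3+e_5)$ and $q_a^-=\tfrac{c}{2}(e_2-e_4)+\tfrac{s}{2}(e_3-e_5)$.  The condition $q_a\in R$ forces $m\parallel q_a^+\propto u_{a,c,s}$ and $q_a^-\in\Pi$, so $\Pi=\Span\{v_{c,s},\,u\times v\}$.  A direct cross-product computation should give $u\times v\propto h_{a,c,s}$, identifying $H=H_{a;c,s}$.  At $a=0$ this plane should coincide, after the $C_m$-rotation, with $H_{[c:s]}$.  I will verify this coincidence by comparing spans.

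The second step determines $q_b$.  Given $H$, the condition $Z\in\mathfrak k_H$ is linear in the six coordinates of $Z$.  I compute $\mathfrak k_A\cap\mathfrak k_H$ by imposing $Z(P_H)\subseteq P_H$, which amounts to four scalar equations per basis vector of $P_H$; only a few are independent.  For $a=0$, I expect the solution space to be two-dimensional and spanned by $\mathsf P_{c,s},\mathsf Q_{c,s}$.  Substitution checks that both preserve $H_{[c:s]}$, and a rank count shows that no other direction does.  This rank count should hold at every projective point, including $c=0$ or $s=0$, because $n>0$.  For $a\ne0$, the $q_a$-component is already fixed by the kernel condition.  The invariance equations on $h_{a,c,s}$ should then determine $q_b$ uniquely as $3(ae_1+\tfrac rn e_2+\tfrac tn e_3)$.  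That is exactly \eqref{eq:resonant-pulse} after clearing the denominator $n$.  From this formula, $\|q_b\|^2=9(a^2+(r^2+t^2)/n^2)=9(a^2+n)=9\alpha^2$, using $|\zeta^3|^2=n^3$.  This gives $\beta=3\alpha$.

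The third step is the converse and the full-generation claim.  Preservation of the context follows from the same linear equations read backwards.  For the closure, the six words $F_1,\dots,F_6$ of $\widehat Z_{a;c,s}$ lie in $\mathfrak k_{H_{a;c,s}}$, since $S(H_{a;c,s})=H_{a;c,s}$ and $\mathfrak k_H$ is a subalgebra.  The closure therefore equals that six-dimensional stabilizer exactly when the trace-Gram determinant \eqref{eq:resonant-so4-gram} is nonzero.  The main obstacle is computing this determinant as the factored polynomial $2^{46}3^{11}n^{34}(a^2+n)^2(2a^2+n)^6$.  I would first use $\SO(2)$-equivariance in the $(c,s)$ variables.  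Conjugation by the torus $\exp(\theta D_{23})$-type elements commuting with $S$ should rotate $\zeta$ by a phase, reducing the computation to $s=0$.  On that slice, $\widehat Z$ is a polynomial in $(a,c)$, and the Gram entries can be computed symbolically.  Homogeneity then restores the power of $n$.  Positivity is immediate since $n>0$, and the factor $n^{34}$ vanishes only at $n=0$, so the certificate is also valid at $a=0$.
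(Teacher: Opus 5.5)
Your proposal is correct and follows essentially the paper's proof. The route is the same throughout: $\Ad_S$-stability with the intersection bound forces $S(H)=H$; the odd-dimensional kernel argument puts $q_a$, hence $q_a^\pm=(q_a\pm Sq_a)/2=u/2,\ v/2$, in $P_H$; the linear invariance system gives the two-dimensional intersection for $a=0$ and the unique $q_b$ for $a\ne0$; and the Gram certificate is reduced to a slice by circle equivariance and homogeneity. Two small corrections: the case $P_H=P_{H_S}$ needs no further check, since $q_a\in\RR e_1$ already means $n=0$; and the circle must be $g_\theta=\exp(\theta(D_{23}+D_{45})/2)$, because $D_{23}$ by itself is not fixed by $\Ad_S$.
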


\begin{proof}
Because $L_Z$ is $\Ad_S$-stable,
\[
L_Z\subseteq\mathfrak k_H\cap\mathfrak k_{S(H)}.
\]
The hypothesis $\dim L_Z>3$ and \Cref{lem:nonfixed-context-bound} force $S(H)=H$.  The possible invariant associative planes are therefore exactly those described in \Cref{thm:invariant-contexts}.  Since $Z$ has rank six, its kernel is the line $\RR q_a$.  The restriction of a skew operator to the invariant odd-dimensional plane $P_H$ has a zero eigenvalue, so $q_a\in P_H$.  Hence $S(q_a)\in P_H$ as well.  Outside $\mathcal N_{e_1}$ these two vectors are independent and, by alternativity, determine the unique quaternionic context generated by them.  This argument uses no spectral-separation hypothesis.

If $a=0$, the two vectors are $ce_2+se_3$ and $ce_4+se_5$; their cross product is $(c^2-s^2)e_6+2cs\,e_7$, giving \eqref{eq:projective-context-family}.  Imposing invariance on a general element of $\mathfrak k_A$ gives the $12\times6$ system printed in Appendix~\ref{app:projective-failure-certificates}; its row reduction there proves rank four for every $n>0$.  The displayed vectors solve it.  In the invariant trace metric,
\begin{align*}
\langle\mathsf P,\mathsf P\rangle_{\mathrm{tr}}&=48n(c^4+3s^4),\\
\langle\mathsf Q,\mathsf Q\rangle_{\mathrm{tr}}&=48n(3c^4+s^4),\\
\langle\mathsf P,\mathsf Q\rangle_{\mathrm{tr}}&=24n(3c^4-2c^2s^2+3s^4),
\end{align*}
and
\[
\det\operatorname{Gram}_{\mathrm{tr}}(\mathsf P,\mathsf Q)
=2^6 3^3 n^6>0.
\]
They therefore span the full kernel of the constraint system.  The polynomial $n^4(c^4+3c^2s^2+s^4)$ is only the coordinate Gram determinant in the nonorthonormal displayed derivation basis.

Now suppose $a\ne0$.  The sum and difference of $q_a$ and $S(q_a)$ are $u_{a,c,s}$ and $v_{c,s}$.  Direct multiplication gives
\[
u_{a,c,s}\times v_{c,s}=2h_{a,c,s},
\quad
v_{c,s}\times h_{a,c,s}=n\,u_{a,c,s},
\quad
h_{a,c,s}\times u_{a,c,s}=(2a^2+n)v_{c,s}.
\]
Thus \eqref{eq:resonant-context-family} is associative and is fixed by $S$, with $S(u)=u$, $S(v)=-v$, and $S(h)=-h$.

Write $q_b=b_1e_1+b_2e_2+b_3e_3$.  Substitution in the invariance equations for this forced context first yields
\[
b_1=3a,
\]
and then
\[
\begin{pmatrix}c^2-s^2&2cs\\2cs&-(c^2-s^2)\end{pmatrix}
\begin{pmatrix}b_2\\b_3\end{pmatrix}
=
\begin{pmatrix}3cn\\-3sn\end{pmatrix}.
\]
The determinant is $-n^2$, so the solution is unique and equals \eqref{eq:resonant-qb}.  Inverting the linear $(q_a,q_b)$ coordinate map gives \eqref{eq:resonant-pulse}.  In particular,
\[
\alpha^2=a^2+n,
\qquad
\beta^2=9(a^2+n),
\]
for $Z=\widehat Z/n$, and hence $\beta=3\alpha$.

For the converse, the polynomial representative satisfies
\[
\widehat Z u=-2nh,
\qquad
\widehat Z v=2nh,
\qquad
\widehat Z h=n^2u-n(2a^2+n)v.
\]
It therefore preserves $H_{a;c,s}$, and so does its $S$-conjugate.  The normalized Schur-pivot calculation and equivariance in Appendix~\ref{app:projective-failure-certificates} give \eqref{eq:resonant-so4-gram}.  The six words are independent and lie in the six-dimensional stabilizer $\mathfrak k_{H_{a;c,s}}$, proving equality.  At $a=0$, the context reduces to $H_{[c:s]}$ and
\[
\widehat Z_{0;c,s}
=\frac{2c^2}{n}\mathsf P_{c,s}
 -\frac{c^2-s^2}{n}\mathsf Q_{c,s},
\]
so the resonant cone attaches to the projective component along a distinguished pulse line.
\end{proof}

The projective endpoints $[1:0]$ and $[0:1]$ give $H_{246}$ and $H_{365}$;
write $H_\star=H_{[1:1]}$ for the noncoordinate member.  The remaining coordinate
fixed context satisfies
\[
\mathfrak k_A\cap\mathfrak k_{176}=\Span\{D_{23},D_{45}\}\subset\mathcal N_{e_1}.
\]
Individual intersection generators may have rank below six.  For example,
$\mathsf P_{c,s}$ has $\alpha=0$ at $[c:s]=[\sqrt3:1]$, and
$\mathsf Q_{c,s}$ has $\alpha=0$ at $[c:s]=[1:\sqrt3]$.
These are points of $\mathcal N_{e_1}$, not exceptions to the two-dimensional intersection formula.

\begin{corollary}[Resonant rank-six stabilizer cone]
\label{cor:resonant-cone}
The homogeneous family $\widehat Z_{a;c,s}$, with $n>0$, has real affine dimension
three and projective dimension two.  Its members with $a\ne0$ lie outside
$x_2=x_3=0$ and generate full quaternionic stabilizers on the rank-six resonance
$\beta=3\alpha$.
\end{corollary}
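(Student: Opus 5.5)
The statement has three parts: the dimension count, the placement outside $x_2=x_3=0$, and the generation claim. I would deduce all three from the explicit formula for $\widehat Z_{a;c,s}$ together with \Cref{thm:visible-contexts}(ii).

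\emph{Dimension.} First treat $(a,c,s)\mapsto\widehat Z_{a;c,s}$ as a map from the open set $\{n>0\}\subset\RR^3$ into $\mathfrak k_A$. It is polynomial and homogeneous of degree three. To bound its dimension from below, I would recover the parameters from the image up to finitely many choices. By \eqref{eq:resonant-pulse}, $\widehat Z/n$ has first quaternionic coordinate $q_a=ae_1+ce_2+se_3$. Hence $q_a$ is determined by the line $\RR\widehat Z$ only up to scale, and the scale is fixed once $\widehat Z$ itself is fixed. Concretely, the coefficient pair $(r,t)$ on $D_{46},D_{47}$ determines $\zeta^3$, so $\zeta$ is determined up to a cube root of unity. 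The remaining coefficients $2c^2s$ and $-c(c^2-s^2)$ then select the root, and the coefficient $an$ gives $a$. So the map is generically finite-to-one, in fact injective. Its image is therefore a three-dimensional semialgebraic cone. Homogeneity under $(a,c,s)\mapsto\lambda(a,c,s)$ scales $\widehat Z$ by $\lambda^3$, so the cone is invariant under positive scaling. The projectivization consequently has dimension two.

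As a cross-check, I would verify that the Jacobian has rank three at one explicit point, say $(a,c,s)=(1,1,0)$. This gives the immersion and hence the real affine dimension three.

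\emph{Placement.} In the coordinates of \Cref{prop:failure}, the coefficients $x_2,x_3$ are those of $D_{23},D_{45}$. For $\widehat Z$ both equal $an$, which is nonzero when $a\ne0$ and $n>0$. For the rank-six and resonance claim, note that $\widehat Z/n$ has $\alpha^2=a^2+n>0$ and $\beta=3\alpha\ne\alpha$. So it lies on the rank-six stratum $\alpha\ne0$, $\beta\ne\alpha$, and resonance holds by \Cref{thm:visible-contexts}(ii).

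\emph{Generation.} The full-stabilizer claim is exactly the generation statement of \Cref{thm:visible-contexts}(ii). Its justification is the Gram certificate \eqref{eq:resonant-so4-gram}, which is nonzero for $n>0$.

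\emph{Main obstacle.} The only real difficulty is confirming injectivity, or at least generic finiteness, of the parametrization. That step is what supports the word ``affine dimension three'' rather than a mere upper bound. I would settle it with the explicit rank-three Jacobian computation above.
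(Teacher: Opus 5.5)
Your proposal is correct and follows essentially the same route as the paper: you recover $\zeta$ from $x_4+ix_5=\zeta^3$ up to finitely many choices and $a$ from $x_2=x_3=an$, use degree-three homogeneity for the projective count, and cite \Cref{thm:visible-contexts} for the resonance and the Gram-certified generation. Your stronger injectivity claim is valid (indeed $q_a(\widehat Z)=n(a,c,s)$ recovers all three parameters), and so is the Jacobian check at $(1,1,0)$, but neither is needed, since finite fibers already give dimension three.
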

\begin{proof}
Every coefficient of $\widehat Z_{a;c,s}$ is homogeneous of degree three in $(a,c,s)$.
The last two coefficients obey $x_4+ix_5=(c+is)^3$, so a nonzero image has only
finitely many choices of $(c,s)$.  Once they are fixed, $x_2=x_3=an$ determines $a$.
Thus the three-dimensional parameter domain has finite fibers and its image has
real dimension three.  Nonzero real scaling acts by scaling all parameters, so
projectivization has dimension two.  If $a\ne0$ then $x_2=x_3\ne0$; the generation
and resonance claims follow from \Cref{thm:visible-contexts}.
\end{proof}

Write
\[
Z=x_0D_{12}+x_1D_{13}+x_2D_{23}+x_3D_{45}+x_4D_{46}+x_5D_{47}
\]
For complex scalars, a superscript $\ast$ denotes complex conjugation. Introduce
\[
w=x_0+ix_1,
\qquad z=x_4+ix_5,
\qquad z^\ast=x_4-ix_5,
\qquad \omega=z-2iw.
\]
Here $z^\ast$ denotes the complex conjugate of $z$.

\begin{corollary}[Projective quartic stabilizer cone]
\label{cor:quartic-cone}
$\mathcal N_{e_1}=\{\omega=0\}$.  In the four-plane $x_2=x_3=0$, the union of the projective context-intersection planes is exactly
\begin{equation}
\operatorname{Im}(z^\ast\omega^3)=0.
\label{eq:so4-quartic-cone}
\end{equation}
It is a real homogeneous quartic cone of dimension three.  Its singular locus is $\omega=0$, a real two-plane in the ambient four-plane.
\end{corollary}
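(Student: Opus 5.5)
The plan is to work entirely in the coordinates $(x_0,\dots,x_5)$ and the complex variables $w,z,\omega$, and to reduce everything to the observation that on each projective intersection plane $\omega$ is a real multiple of $\zeta=c+is$ while $z$ is a real multiple of $\zeta^3$.

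\emph{Step 1: $\mathcal N_{e_1}=\{\omega=0\}$.} I would expand
\[
\omega=z-2iw=(x_4+2x_1)+i(x_5-2x_0).
\]
The proof of \Cref{prop:failure} showed that $Ze_1=0$ is exactly the pair of equations $x_5=2x_0$ and $x_4=-2x_1$. These say precisely that $\omega=0$, so the first claim is immediate.

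\emph{Step 2: each plane lies in the quartic.} Take $Z=\lambda\mathsf P_{c,s}+\mu\mathsf Q_{c,s}$ from \eqref{eq:projective-intersection-basis}, with $x_2=x_3=0$. Then
\[
w=n(\lambda s+i\mu c),\qquad z=(\lambda+\mu)\zeta^3 .
\]
Using $n=\zeta\bar\zeta$, I compute $\omega$ on the two basis vectors. For $\mathsf P$ one gets $\omega=\zeta^3-2isn=\zeta(c^2-3s^2)$, and for $\mathsf Q$ one gets $\omega=\zeta^3+2cn=\zeta(3c^2-s^2)$. Hence
\[
z=A\zeta^3,\qquad \omega=B\zeta,
\qquad
\begin{pmatrix}A\\ B\end{pmatrix}=
\begin{pmatrix}1&1\\ c^2-3s^2&3c^2-s^2\end{pmatrix}
\begin{pmatrix}\lambda\\ \mu\end{pmatrix},
\]
with $A,B$ real. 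The matrix has determinant $2n>0$. It follows that $z^\ast\omega^3=AB^3|\zeta|^6$ is real, so every plane lies in \eqref{eq:so4-quartic-cone}. Moreover the plane for $[c:s]$ is exactly $\{(z,\omega)=(A\zeta^3,B\zeta):A,B\in\RR\}$.

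\emph{Step 3: the converse.} The change of variables $(w,z)\mapsto(z,\omega)$ is a real-linear isomorphism of the four-plane, so points may be described by $(z,\omega)$. Suppose $\operatorname{Im}(z^\ast\omega^3)=0$.
\begin{itemize}
\item If $\omega\ne0$, I choose $\zeta=\omega/|\omega|$. This choice is determined up to sign, and replacing $\zeta$ by $-\zeta$ negates $\zeta^3$, so it does not matter. The reality of $z^\ast\omega^3$ is equivalent to $z\bar\zeta^3\in\RR$, that is, to $z\in\RR\zeta^3$.
\item If $\omega=0$, I choose $\zeta$ to be any cube root of the direction of $z$ (any $\zeta$ works if $z=0$).
\end{itemize}
In both cases the point has the form $(A\zeta^3,B\zeta)$ with $A,B$ real, so it lies on the plane for $[c:s]=[\operatorname{Re}\zeta:\operatorname{Im}\zeta]$. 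This proves equality of the two sets.

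\emph{Step 4: dimension and singular locus.} The map $(A,B,[\zeta])\mapsto(A\zeta^3,B\zeta)$ is injective wherever $B\ne0$, so the cone has dimension three. Homogeneity is clear: $F=\operatorname{Im}(\bar z\omega^3)$ has degree one in $z$ and degree three in $\omega$, hence total degree four.

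For the singular locus I would compute the Wirtinger derivatives of $F$:
\[
\partial F/\partial z \text{ and } \partial F/\partial\bar z \text{ are multiples of } \omega^3,\bar\omega^3,\qquad
\partial F/\partial\omega \text{ and } \partial F/\partial\bar\omega \text{ are multiples of } \bar z\omega^2,\ z\bar\omega^2 .
\]
Thus $\nabla F=0$ forces $\omega=0$, and conversely $\nabla F$ vanishes on $\{\omega=0\}$. That set is contained in the cone and is a real two-plane.

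\emph{Main obstacle.} The step needing most care is confirming that $\{\omega=0\}$ is genuinely singular on the real variety, and not merely a point where the defining polynomial fails to be reduced. I would check that a neighbourhood of any point with $\omega=0$ and $z\ne0$ is a union of several smooth sheets. Near such a point the three cube-root branches give three local planes $(A\zeta_k^3,B\zeta_k)$ meeting along $\omega=0$, so no tangent three-space exists there.
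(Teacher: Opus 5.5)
Your proof is correct and follows the paper's own route. Like the paper, you write $z=\mu\zeta^3$ and $\omega=\lambda\zeta$ on each plane, read $[\zeta]$ off the phase of $\omega$ (or of a cube root of $z$ when $\omega=0$), invert the same $2\times2$ coefficient matrix of determinant $\pm2n$, and locate the singular locus from the gradient; you do the last step more cleanly in the independent coordinates $(z,\omega)$, and you add a dimension count that the paper omits.

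One correction to your final paragraph, which in any case goes beyond what the paper proves. Near a point with $\omega=0$, $z\neq0$, the three local branches are the three-dimensional sheets $\omega\in\RR\,\zeta_k(z)$, with distinct tangent spaces $\CC\times\RR\zeta_k(z_0)$. They are not the fixed two-planes $\{(A\zeta_k^3,B\zeta_k)\}$, which meet only in the line $\RR\zeta_k^3\times\{0\}$ rather than along $\omega=0$.
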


\begin{proof}
For the basis in \eqref{eq:projective-intersection-basis}, one has
\[
z=\mu\zeta^3,
\qquad \omega=\lambda\zeta
\]
with real $\lambda,\mu$, so \eqref{eq:so4-quartic-cone} follows.  Conversely, if $\omega\ne0$, choose a projective representative $\zeta$ in its real direction.  The quartic equation forces $z$ to have the phase of $\zeta^3$ up to sign.  For a pulse $\xi\mathsf P_{c,s}+\eta\mathsf Q_{c,s}$, its two real coefficients satisfy
\[
\begin{pmatrix}\lambda\\\mu\end{pmatrix}
=\begin{pmatrix}c^2-3s^2&3c^2-s^2\\1&1\end{pmatrix}
\begin{pmatrix}\xi\\\eta\end{pmatrix}.
\]
The coefficient matrix has determinant $-2(c^2+s^2)\ne0$, so the required
combination exists.  When $\omega=0$ and $z\ne0$, choose $\zeta\ne0$ with the phase of a cube root of $z$, and take $\lambda=0$ and real $\mu$ satisfying $z=\mu\zeta^3$.  The same invertible coefficient map supplies the required linear combination.  If $z=\omega=0$, the element is zero.  This proves the converse also on the singular locus.

For the singular locus put $F=\operatorname{Im}(z^\ast\omega^3)$.  Direct differentiation in the original real coordinates gives
\[
\frac{\partial F}{\partial x_0}=-6\operatorname{Re}(z^\ast\omega^2),
\qquad
\frac{\partial F}{\partial x_1}=6\operatorname{Im}(z^\ast\omega^2).
\]
Thus a singular point satisfies $z^\ast\omega^2=0$.  If $\omega=0$, all first derivatives vanish.  If instead $z=0$ and $\omega\ne0$, then
\[
\left.\frac{\partial F}{\partial x_4}\right|_{z=0}=\operatorname{Im}(\omega^3),
\qquad
\left.\frac{\partial F}{\partial x_5}\right|_{z=0}=-\operatorname{Re}(\omega^3),
\]
which cannot both vanish.  Hence the singular locus is exactly $\omega=0$.
\end{proof}

Choose a nonzero representative $(c,s)$ of $[c:s]\in\RR P^1$, and set
\[
Z_{[c:s]}=sD_{12}-cD_{13}.
\]
The pulse expression depends on the representative, but its generated algebra does not.
We also define a spectrally regular choice
\begin{equation}
Z^{\mathrm{reg}}_{[c:s]}
 =(3c^2-5s^2)\mathsf P_{c,s}+(c^2+9s^2)\mathsf Q_{c,s}.
\label{eq:uniform-regular-pulse}
\end{equation}
Below $F_i=F_i(Z_{[c:s]})$ and $G_i=F_i(Z^{\mathrm{reg}}_{[c:s]})$
use the six-word pattern preceding \Cref{thm:visible-contexts}.

\begin{proposition}[Uniform generation of the projective stabilizers]
\label{prop:uniform-so4}
\[
Z_{[c:s]}\in\mathfrak k_A\cap\mathfrak k_{H_{[c:s]}},
\qquad
\Ad_SZ_{[c:s]}=sD_{14}-cD_{15},
\]
and
\[
\Lie\{Z_{[c:s]},\Ad_SZ_{[c:s]}\}=\mathfrak k_{H_{[c:s]}}\cong\mathfrak{so}(4).
\]
The closure ranks are uniformly
\[
2\longrightarrow3\longrightarrow5\longrightarrow6\longrightarrow6,
\]
and the invariant trace-Gram determinant is
\begin{equation}
\det\bigl(-\operatorname{tr}(F_iF_j)\bigr)_{i,j=1}^6
=2^{46}3^{11}(c^2+s^2)^{14}\ne0.
\label{eq:uniform-so4-gram}
\end{equation}
The pulse $Z^{\mathrm{reg}}_{[c:s]}$ has stabilizer parameters
\[
\alpha=6(c^2+s^2)^{5/2},
\qquad
\beta=12(c^2+s^2)^{5/2}=2\alpha,
\]
so the pulse is rank six with three pairwise distinct positive frequencies for every projective point.  For the same six bracket pattern its trace-Gram determinant is
\begin{equation}
\det\bigl(-\operatorname{tr}(G_iG_j)\bigr)_{i,j=1}^6
=2^{46}3^{39}(c^2+s^2)^{70}\ne0,
\label{eq:uniform-regular-gram}
\end{equation}
and hence it also generates the full $\mathfrak k_{H_{[c:s]}}$ uniformly.
\end{proposition}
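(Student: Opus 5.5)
The plan is to verify the membership and conjugation claims directly, then certify generation by the six-word Gram determinant, and finally compute the spectrum of the regular pulse.

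\emph{Step 1: membership and the conjugate.} In quaternionic coordinates, $Z_{[c:s]}=sD_{12}-cD_{13}$ is $Z(q_a,q_b)$ with $q_a$ proportional to $ce_2+se_3$ and $q_b$ determined by the $D_{12},D_{13}$ coordinates. Hence its kernel contains $ce_2+se_3\in P_{H_{[c:s]}}$. To show $Z_{[c:s]}\in\mathfrak k_{H_{[c:s]}}$ I will not redo the $12\times6$ system. Instead I will write $Z_{[c:s]}$ as an explicit combination of $\mathsf P_{c,s},\mathsf Q_{c,s}$ from \eqref{eq:projective-intersection-basis}. The $D_{46},D_{47}$ coefficients must cancel, so the combination is proportional to $\mathsf P-\mathsf Q=n(sD_{12}-cD_{13})$. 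Then \Cref{thm:visible-contexts}(i) gives the membership. The formula $\Ad_SZ_{[c:s]}=sD_{14}-cD_{15}$ follows from $\Ad_S D_{ij}=D_{S e_i,S e_j}$, which holds by naturality of $D_{a,b}$ under automorphisms, together with \eqref{eq:switch}: $e_1\mapsto e_1$, $e_2\mapsto e_4$, $e_3\mapsto e_5$. Since $H_{[c:s]}$ is $S$-invariant, $\Ad_S$ preserves $\mathfrak k_{H_{[c:s]}}$, so both generators and all words $F_i$ lie in this six-dimensional algebra.

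\emph{Step 2: generation.} It suffices to show the six words $F_1,\dots,F_6$ are independent. Then they span $\mathfrak k_{H_{[c:s]}}$, and because $\Lie\{Z,\Ad_SZ\}\subseteq\mathfrak k_{H_{[c:s]}}$ we get equality. For the Gram determinant \eqref{eq:uniform-so4-gram} I will reduce to one point by equivariance. The rotation $g_\theta=\exp(\theta Z(e_1,e_1))$ is the element $\gamma_{a,a}$ with $a=e^{\theta e_1/2}$. It commutes with $S$ because both act compatibly on $e_1$ and on the pairs $(e_2,e_4)$, $(e_3,e_5)$; this is to be checked from the $\gamma$ model. It rotates $(c,s)$ in the $e_2e_3$ plane, and simultaneously in the $e_4e_5$ plane. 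Conjugating by it therefore maps $Z_{[c:s]}$ to a rescaled $Z_{[c':s']}$ and preserves the trace form. The degree bookkeeping then gives the following.
\begin{itemize}
\item The words have degrees $1,1,2,3,3,4$ in the pulse, a total of $14$.
\item The Gram determinant is homogeneous of degree $28$ in $(c,s)$.
\item Rotation invariance forces it to be a constant times $(c^2+s^2)^{14}$.
\end{itemize}
The constant $2^{46}3^{11}$ comes from the single evaluation at $[1:0]$, that is, $Z=-D_{13}$, using explicit brackets. The closure ranks $2,3,5,6,6$ follow from independence of the prefixes as in \Cref{thm:single}, checking that $V_2=\Span\{F_1,\dots,F_5\}$ and $F_6\in V_3$.

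\emph{Step 3: the regular pulse.} Compute the $(q_a,q_b)$ coordinates of $Z^{\mathrm{reg}}$ from \eqref{eq:uniform-regular-pulse} and read off $\alpha,\beta$. By the same rotation equivariance it is enough to work at $c=1,s=0$ up to the homogeneity factor $(c^2+s^2)^{5/2}$. There $\mathsf P=rD_{46}$ and $\mathsf Q=D_{13}+D_{46}$, giving $Z^{\mathrm{reg}}=3D_{46}+D_{13}+D_{46}$, whose norms yield $\alpha=6$ and $\beta=12$. The frequencies $2\alpha,\beta-\alpha,\beta+\alpha$ are then $12,6,18$, which are positive and distinct.

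The main obstacle I expect is checking that $Z^{\mathrm{reg}}$ really is rotation-equivariant. The coefficients $3c^2-5s^2$ and $c^2+9s^2$ are not rotation-invariant, so equivariance may fail and a direct symbolic computation of the second determinant \eqref{eq:uniform-regular-gram} may be needed. In that case I would check that it is a polynomial of degree $140$ divisible by the claimed power, using the Schur-pivot reduction of Appendix~\ref{app:projective-failure-certificates}.
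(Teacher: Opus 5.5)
Your outline is the paper's: you get membership from $\mathsf P_{c,s}-\mathsf Q_{c,s}=nZ_{[c:s]}$ and $\Ad_S$ by naturality, obtain equality from six independent words in a six-dimensional algebra, and reduce to $[1:0]$ by an $S$-commuting circle plus homogeneity. However, the circle you chose does not work. $Z(e_1,e_1)=\tfrac23D_{23}+\tfrac13D_{45}$ integrates to $\gamma_{a,a}$. This acts on the coefficient $y$ of $ye_4$ by conjugation, so it fixes $e_4$ and $e_5=e_1e_4$ and rotates $(e_6,e_7)$, not $(e_4,e_5)$. Moreover $\Ad_S$ sends it to $\tfrac13D_{23}+\tfrac23D_{45}$, so it does not commute with $S$. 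Conjugating by it therefore does not carry the words $F_i$ at $[c:s]$ to those at the rotated point, and the check you deferred to the $\gamma$ model fails.

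The element with the properties you describe is the residual circle $\Xi=\tfrac12(D_{23}+D_{45})=Z(\tfrac12e_1,\tfrac32e_1)$, that is, $\gamma_{a,a^3}$ with $a=e^{\theta e_1/2}$. It is $\Ad_S$-fixed because $\Ad_SD_{23}=D_{45}$, and it rotates $(e_2,e_3)$ and $(e_4,e_5)$ at unit rate. Since $Z_{[c:s]}=Z(2(ce_2+se_3),0)$, it sends $Z_{[c:s]}$ exactly to $Z_{[c':s']}$ with the rotated representative. With this substitution your degree-$28$ argument goes through. So does the uniform rank ladder, since the leading principal Gram minors transform the same way.

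For the regular pulse, the equivariance you doubt does hold, and you need it; your fallback of showing a degree-$140$ polynomial is divisible by $n^{70}$ has no mechanism. From \eqref{eq:uniform-regular-pulse} one has $x_0=(3c^2-5s^2)sn$, $x_1=(c^2+9s^2)cn$, $x_2=x_3=0$ and $x_4+ix_5=4n\zeta^3$. In the coordinates \eqref{eq:terminal-qa-qb} this gives $a_1=b_1=0$, $q=2i(x_0+ix_1)-(x_4+ix_5)=-6n^2\zeta$ and $p=12n\zeta^3$. These have weights one and three with rotation-invariant prefactors. Hence $Z^{\mathrm{reg}}$ at $\varrho(\cos\theta,\sin\theta)$ equals $\varrho^5\Ad_{g_\theta}(D_{13}+4D_{46})$.

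This gives $\alpha=|q|=6n^{5/2}$ and $\beta=|p|=12n^{5/2}$ directly. It also reduces \eqref{eq:uniform-regular-gram} to a single evaluation at $D_{13}+4D_{46}$, multiplied by $\varrho^{5\cdot28}=n^{70}$; the constant $2^{46}3^{39}$ comes from the regular-pulse Schur pivots. The paper proceeds exactly this way, via $[T_0,U]=-s\,\partial_cU+c\,\partial_sU$ for all three pulse families. Its invariants $\mu=u+v=4n$ and $\lambda=6n^2$ explain why the non-invariant coefficients $3c^2-5s^2$ and $c^2+9s^2$ still produce an equivariant pulse.
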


\begin{proof}
The identity
\[
\mathsf P_{c,s}-\mathsf Q_{c,s}=(c^2+s^2)Z_{[c:s]}
\]
places the simple pulse in the stated intersection.  The switch-conjugate formula follows from the exact action of $S$ on the derivation basis.  The principal trace-Gram minors printed in Appendix~\ref{app:projective-failure-certificates}, of orders $2,3,5,6$, are nonzero for $n>0$ and give the rank ladder and \eqref{eq:uniform-so4-gram}.  The six words lie in the six-dimensional stabilizer, so this certificate proves equality of the generated algebra with that stabilizer.

More generally, for $U=u\mathsf P_{c,s}+v\mathsf Q_{c,s}$ one finds
\[
\alpha=|\lambda|\sqrt n,
\qquad
\beta=3|\mu|n^{3/2},
\]
where $n=c^2+s^2$, $\mu=u+v$, and
\[
\lambda=u(c^2-3s^2)+v(3c^2-s^2).
\]
For the coefficients in \eqref{eq:uniform-regular-pulse}, $\mu=4n$ and $\lambda=6n^2$, giving $\beta=2\alpha$.  The regular-pulse Schur pivots and homogeneity in Appendix~\ref{app:projective-failure-certificates} give \eqref{eq:uniform-regular-gram}.
\end{proof}

The regular pulse in \eqref{eq:uniform-regular-pulse} has $\beta=2\alpha$, so its three frequencies have ratio $1:2:3$.  That numerical spectrum alone does not identify a containing principal algebra: \Cref{prop:uniform-so4} proves that this pulse generates the full quaternionic stabilizer with the fixed switch.  The principal-exclusion statement in \Cref{cor:no-principal} is about the generated algebra, not about the spectrum of a single pulse.  For varying reflections, principal closures do occur, as shown in \Cref{prop:principal-moving}.

\subsection{Maximal-type reduction and the low-dimensional branch}
The compact classification recalled in \Cref{subsec:compact-maximals} gives the following reduction for the fixed reflection.

\begin{proposition}[Maximal-container reduction]
\label{prop:maximal-reduction}
Let
\[
L_Z=\Lie\{Z,\Ad_SZ\}\subsetneq\gtwo.
\]
Then one of the following holds:
\begin{enumerate}[label=\textnormal{(\roman*)},leftmargin=9mm]
\item $L_Z\subseteq\gtwo^w\cong\mathfrak{su}(3)$ for an $S$-eigenvector $w$;
\item $L_Z\subseteq\mathfrak k_H\cong\mathfrak{so}(4)$ for an $S$-invariant quaternionic context $H$;
\item $\dim L_Z\le3$.
\end{enumerate}
Consequently the maximal-container type classification of proper constrained closures with dimension greater than three is complete: they are of vector or invariant-context type.  The intersection bound is sharp, as shown in \Cref{lem:nonfixed-context-bound}.
\end{proposition}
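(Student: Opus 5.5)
The plan is to start from the compact maximal-subalgebra list of \Cref{subsec:compact-maximals} and then use the $\Ad_S$-stability of $L_Z$ from \Cref{prop:symmetric-switch} to upgrade an arbitrary container to an $S$-compatible one. Assume $L_Z$ is proper and $\dim L_Z>3$. Choose a maximal proper subalgebra $\mathfrak h\supseteq L_Z$. By the classification, $\mathfrak h$ is a vector stabilizer $\gtwo^{w}$, a quaternionic stabilizer $\mathfrak k_H$, or a principal $\mathfrak{su}(2)$. The principal case has dimension three, so it cannot contain $L_Z$. Since $L_Z$ is $\Ad_S$-stable, we also have $L_Z=\Ad_SL_Z\subseteq\Ad_S\mathfrak h$, and hence $L_Z\subseteq\mathfrak h\cap\Ad_S\mathfrak h$.

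\emph{Quaternionic case.} Here $\Ad_S\mathfrak k_H=\mathfrak k_{S(H)}$. If $S(H)\ne H$, then \Cref{lem:nonfixed-context-bound} gives $\dim(\mathfrak k_H\cap\mathfrak k_{S(H)})\le3$, contradicting $\dim L_Z>3$. Hence $S(H)=H$, which is alternative (ii).

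\emph{Vector case.} Here $\Ad_S\gtwo^w=\gtwo^{Sw}$, so $L_Z$ annihilates both $w$ and $Sw$. This gives a nonzero common kernel. That kernel is $S$-invariant because $L_Z$ is $\Ad_S$-stable: if $Aw'=0$ for all $A\in L_Z$, then $A(Sw')=S(\Ad_SA)w'=0$. An $S$-invariant nonzero subspace contains an eigenvector of the involution $S$. Taking that eigenvector as the new $w$ yields alternative (i), exactly as in the proof of \Cref{prop:failure}(iii). This step needs no dimension hypothesis.

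If $\dim L_Z\le3$ we are in (iii) directly, so the trichotomy holds. The consequence sentence restates the case $\dim L_Z>3$. The sharpness remark cites the three-dimensional intersection example already exhibited in \Cref{lem:nonfixed-context-bound}: it shows that the argument in the quaternionic case genuinely needs $\dim L_Z>3$, since a non-invariant $H$ can still have a three-dimensional $\mathfrak k_H\cap\mathfrak k_{S(H)}$.

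The main obstacle is the quaternionic case. It rests entirely on the intersection bound of \Cref{lem:nonfixed-context-bound} and on the identity $\Ad_S\mathfrak k_H=\mathfrak k_{S(H)}$, which follows from $S\in G_2$. Everything else is a direct combination of stability and the maximal list.
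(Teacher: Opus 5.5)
Your proof is correct and essentially follows the paper's argument: the compact maximal list, $\Ad_S$-stability giving containment in $\mathfrak h\cap\Ad_S\mathfrak h$, the intersection bound of \Cref{lem:nonfixed-context-bound} in the quaternionic case, and the dimension count in the principal case. The only difference is the vector case. The paper splits on whether $Sw\parallel w$ and sends the non-parallel case to (iii), using that the stabilizer of two independent vectors is three-dimensional. You instead take an $S$-eigenvector in the $S$-invariant common kernel (already one of $w\pm Sw$ is a nonzero such vector), which lands in (i) with no dimension count.
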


\begin{proof}
By the compact maximal-subalgebra classification, a proper closure is contained in a maximal subalgebra of vector-stabilizer, context-stabilizer, or principal-$\mathfrak{su}(2)$ type \cite{Dynkin1952b,DraperMartin2025}.  Because $L_Z$ is $\Ad_S$-stable, vector containment in $\gtwo^w$ also gives containment in $\gtwo^{Sw}$.  If $Sw\not\parallel w$, their intersection is the stabilizer of two independent vectors and has dimension three; otherwise (i) holds.

Likewise, containment in $\mathfrak k_H$ gives containment in $\mathfrak k_H\cap\mathfrak k_{S(H)}$.  If $S(H)\ne H$, \Cref{lem:nonfixed-context-bound} gives dimension at most three; otherwise (ii) holds.  A principal $\mathfrak{su}(2)$ is itself three-dimensional, so it belongs to case (iii).
\end{proof}

\subsection{The complete low-dimensional locus}
The maximal-container reduction leaves a finite-dimensional question which can be solved without a large projective elimination.  A circle of symmetries preserves both the local control domain and the fixed switch.  It removes one angular variable and turns the three-dimensional closure condition into two elementary real normal forms.

Write the local coordinates as
\begin{equation}
 q_a=(a_1,a_2,a_3)=(2x_2-x_3,-2x_1-x_4,2x_0-x_5),\qquad
 q_b=(b_1,b_2,b_3)=3(x_3,x_4,x_5),
 \label{eq:terminal-qa-qb}
\end{equation}
and set $q=a_2+ia_3$, $p=b_2+ib_3$.  In particular $q=-\omega$ and $p=3z$ in the notation of \Cref{cor:quartic-cone}.  To avoid confusion, $p$ and $q$ here are complex coordinates, not quaternionic contexts.  The inverse of \eqref{eq:terminal-qa-qb} is
\[
(x_0,x_1,x_2,x_3,x_4,x_5)
=\left(\frac{a_3}{2}+\frac{b_3}{6},
-\frac{a_2}{2}-\frac{b_2}{6},\frac{a_1}{2}+\frac{b_1}{6},
\frac{b_1}{3},\frac{b_2}{3},\frac{b_3}{3}\right).
\]
  Let
\[
 \Xi=\frac12(D_{23}+D_{45}),\qquad g_\theta=\exp(\theta\Xi).
\]
The transformation $\Ad_{g_\theta}$ preserves $\mathfrak k_A$ and commutes with $\Ad_S$.  Direct differentiation of \eqref{eq:terminal-qa-qb} gives
\begin{equation}
 (a_1,b_1,q,p)\longmapsto(a_1,b_1,e^{i\theta}q,e^{3i\theta}p).
 \label{eq:residual-circle}
\end{equation}
This is a symmetry of the constrained closure problem, not an additional available control.

The weights one and three mean exactly the factors $e^{i\theta}$ and $e^{3i\theta}$ in \eqref{eq:residual-circle}.  A normal form below is taken up to this circle action and nonzero real rescaling of $Z$; these operations conjugate the generated algebra or leave it unchanged.  Scalar coordinates $a,b,c,d$ in a normalized pair such as $q_a=(a,1,0)$, $q_b=(d,b,c)$ are local to that normal-form calculation.

\begin{theorem}[Complete classification below dimension four]
\label{thm:low-complete}
Let $Z\in\mathfrak k_A$ and $L_Z=\Lie\{Z,\Ad_SZ\}$.
\begin{enumerate}[label=\textnormal{(\roman*)},leftmargin=9mm]
\item The closure is abelian if and only if
\[
 x_0=x_1=x_4=x_5=0.
\]
On this plane $Z=x_2D_{23}+x_3D_{45}$.  Its closure has dimension zero at the origin, dimension one on the two nonzero lines $x_2=x_3$ and $x_2=-x_3$, and dimension two elsewhere.
\item Outside this plane, $\dim L_Z\le3$ if and only if $a_1=b_1=0$ and either
\[
 q=0,\quad p\ne0,
\]
or
\begin{equation}
 q\ne0,\qquad |q|^2p=\eta q^3,
 \qquad \eta\in\{1,-1,-3\}.
 \label{eq:three-small-families}
\end{equation}
Every closure in (ii) is isomorphic to $\mathfrak{su}(2)$ and has dimension exactly three.
\end{enumerate}
The dimensions of the common fixed spaces in $V$ are, respectively, one for $q=0$, and three, one, zero for $\eta=1,-1,-3$.
\end{theorem}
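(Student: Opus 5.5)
We work in the parity coordinates $Z^\pm=\tfrac12(Z\pm\Ad_SZ)$ of \Cref{prop:symmetric-switch} and apply \Cref{lem:H8} with $\sigma=\Ad_S$: the closure is abelian iff $C=[Z^+,Z^-]=0$, and otherwise $\dim L_Z\le3$ iff the two Cauchy--Schwarz equalities hold, namely $[Z^+,C]\parallel Z^-$ and $[Z^-,C]\parallel Z^+$. To cut down the computation, we first use the residual circle \eqref{eq:residual-circle}, which preserves $\mathfrak k_A$ and commutes with $\Ad_S$. It lets us rotate $q$ to be real and nonnegative and then rescale. In the main chart we normalize $q_a=(a,1,0)$ and $q_b=(d,b,c)$; when $q=0$ we use the separate chart $q_a=(a_1,0,0)$ with $p$ rotated to be real.

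\emph{Part (i).} We compute $C$ as a bilinear expression in the six coordinates $x_j$, using the action of $S$ on the basis \eqref{eq:g2basis}. The plane $x_0=x_1=x_4=x_5=0$ is spanned by $D_{23},D_{45}$. From the description of $S$ one gets $\Ad_SD_{23}=D_{45}$, so $Z^+\propto(x_2+x_3)(D_{23}+D_{45})$ and $Z^-\propto(x_2-x_3)(D_{23}-D_{45})$. These commute, and the claimed dimensions $0,1,2$ follow from which of $Z^\pm$ vanish. For the converse we show that $C=0$ forces $x_0=x_1=x_4=x_5=0$. Expanding $C$ in components, we expect the terms linear in $(w,z)$ with coefficients $x_2\pm x_3$ to force $w=z=0$, together with the quadratic terms; wherever the coefficients degenerate we check directly.

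\emph{Part (ii).} Off the plane, $C\neq0$, and the two proportionality conditions become polynomial equations in the normalized chart. The plan has three steps:
\begin{enumerate}[label=\textnormal{(\alph*)},leftmargin=9mm]
\item Show that the components of these conditions along the invariant direction $\Xi$, together with the $e_1$-directions, force $a_1=b_1=0$. The intended argument is that a nonzero weight-zero part breaks the three-dimensional closure, which we check by examining the coefficient of $\Xi$ in $[Z^\pm,C]$.
\item With $a_1=b_1=0$, the remaining equations become a system in $(q,p)$ that is equivariant under weights $1$ and $3$. After normalizing $q=1$ it becomes polynomial in $p=b+ic$. Equivariance forces the relation $|q|^2p=\eta q^3$, and solving the reduced system should produce a cubic in $\eta$ with roots $1,-1,-3$.
\item Treat the case $q=0$, $p\ne0$ directly. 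Here $Z$ lies in the pointwise-stabilizer factor together with $\Xi$-type terms, and $L_Z$ is visibly an $\mathfrak{su}(2)$.
\end{enumerate}
In all these cases $C\ne0$, and \Cref{lem:H8} gives $L_Z=\Span\{Z^+,Z^-,C\}\cong\mathfrak{su}(2)$ with dimension exactly three.

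\emph{Common fixed spaces.} We compute the common kernel of the three spanning derivations on $V$ for each representative: $q=0$, $p=1$, and $q=1$, $p=\eta$. The $\mathfrak{su}(2)$ acts on $V$ as a sum of irreducibles. The fixed-space dimension is then read off from the multiplicity of the trivial summand, which we expect to be $3,1,0$ for $\eta=1,-1,-3$; the value $0$ should correspond to a principal or $V_1\oplus V_1\oplus$-type decomposition with no trivial part. For $q=0$ the expected answer is one. A Casimir computation as in \Cref{prop:principal-moving} can confirm each case.

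The main obstacle is step (b): deriving the exact $\eta$-cubic, and proving that no further solutions exist when $b_1$ or $a_1$ is nonzero. If the direct elimination becomes unwieldy, the fallback is to use the $\mathcal H_8$ sum-of-Gram-determinants form. Its two terms vanish separately, and imposing the circle normalization before expanding each $2\times2$ Gram determinant should keep the degree low.
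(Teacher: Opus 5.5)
Your framework is the paper's own: the parity parts $A,B$ of $Z$, the Cauchy--Schwarz form of \Cref{lem:H8} (with $W=[A,B]$, your $C$, three-dimensional closure iff $[A,W]=-kB$ and $[B,W]=lA$ with $k,l>0$), the residual circle to reach $q_a=(a,1,0)$, $q_b=(d,b,c)$, a separate chart for $q=0$, and a kernel/Casimir readout of the fixed spaces. The gap is that the substance of the theorem, the ``only if'' half of (ii), is an explicit elimination. You replace it with expectations, and one of them is false.

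\textbf{Step (c).} As written it asserts that for $q=0$, $p\ne0$ the closure is an $\mathfrak{su}(2)$ even with $\Xi$-type (weight-zero) terms present. The theorem says the opposite: $a_1=b_1=0$ is forced on this branch as well. For instance $-D_{13}+3D_{23}+2D_{46}$ (that is, $6Z(q_a,q_b)$ with $q_a=e_1$, $q_b=e_2$) has $q=0$, $p=6$, $a_1=6$, and $\dim L_Z>3$. Working in $\mathfrak g_2^{e_1}\cong\mathfrak{su}(3)$, one checks directly that $[Z,[Z,\Ad_SZ]]$ leaves $\Span\{Z,\Ad_SZ,[Z,\Ad_SZ]\}$. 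The paper gets this necessity in the chart $q_a=(a,0,0)$, $q_b=(d,1,0)$. With $U=[A,W]+kB$ and $V_0=[B,W]-lA$, the entry $(V_0)_{25}=-3a$ forces $a=0$, and the remaining equations give $d(d^2+1)=0$. If you meant step (a) to cover this branch, then (a) is precisely what is unproved.

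\textbf{The chart $q\ne0$.} Two further claims do not hold as stated.
\begin{itemize}
\item Equivariance cannot make $\eta$ real. After normalizing $q=1$ you still have $p=b+ic$, and $c=0$ must come from an equation ($U_{12}=3c$).
\item Step (a) is not supported. In the actual elimination the weight-zero coordinates are removed only after $c=0$, and through a $b$-dependent factor. The entries $U_{12}$, $(V_0)_{25}=-3\{a(b^2+c^2+2)-d\}$ and $U_{16}=-2\{3ab+(b+2)d\}$ give $c=0$, $d=a(b^2+2)$ and $a(b+1)(b^2+b+4)=0$. So on $b=-1$, which is exactly the admissible value $\eta=-1$, the line $d=3a$ survives, and a further entry $U_{37}=16a^2$ is needed to eliminate it. A heuristic that ``a nonzero weight-zero part breaks closure'' does not address this degenerate branch.
\end{itemize}
Only after this do the equations reduce to $k=l=(3b^2+6b+7)/2$ and $(b-1)(b+1)(b+3)=0$, and each root then needs a sufficiency check.

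\textbf{Part (i).} Your route could work, but the case split on degenerate coefficients $x_2\pm x_3$ is unnecessary. The single combination $W_{24}+W_{35}=12(x_0^2-x_0x_5+x_5^2+x_1^2+x_1x_4+x_4^2)$ is positive definite in $(x_0,x_1,x_4,x_5)$, so $W=0$ forces those four coordinates to vanish at once. Some such explicit identity is needed in both parts.
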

\begin{proof}
Put $A=(Z+\Ad_SZ)/2$, $B=(Z-\Ad_SZ)/2$, and $W=[A,B]$.  Substitution of the matrices of $A$ and $B$ gives
\[
 W_{24}+W_{35}=12(x_0^2-x_0x_5+x_5^2+x_1^2+x_1x_4+x_4^2).
\]
The quadratic form is positive definite in $(x_0,x_1,x_4,x_5)$.  Thus $W=0$ forces those four coordinates to vanish; conversely $D_{23}$ and $D_{45}$ commute and are exchanged by $\Ad_S$.  This proves (i).

If $W\ne0$, the three vectors $A,B,W$ are nonzero and pairwise trace-orthogonal.  A three-dimensional closure must therefore equal their span.  Parity under $\Ad_S$ and invariance of the trace metric show that this is equivalent to
\[
[A,W]=-kB,\qquad [B,W]=lA
\]
for real $k,l>0$.  Conversely these relations close the span and give a compact simple three-dimensional algebra.

When $q\ne0$, use \eqref{eq:residual-circle} and a nonzero scalar rescaling to normalize
\[
 q_a=(a,1,0),\qquad q_b=(d,b,c).
\]
Using the normalized matrix of $Z$ printed in Appendix~\ref{app:terminal-certificates}, the matrices $U=[A,W]+kB$ and $V_0=[B,W]-lA$ have entries
\[
 U_{12}=3c,\quad (V_0)_{25}=-3\{a(b^2+c^2+2)-d\},\quad
 U_{16}=-2\{3ab+(b+2)d\}.
\]
Their vanishing implies $c=0$, $d=a(b^2+2)$, and
\[
 a(b+1)(b^2+b+4)=0.
\]
The final factor is strictly positive for real $b$.  If $b=-1$, then $d=3a$ and $U_{37}=16a^2$, so $a=d=0$ in all cases.  The remaining equations give
\[
 k=l=\frac{3b^2+6b+7}{2},\qquad (b-1)(b+1)(b+3)=0.
\]
Substitution verifies sufficiency for all three roots.  Undoing the normalization gives \eqref{eq:three-small-families}.

When $q=0$ but $p\ne0$, normalize instead to $q_a=(a,0,0)$, $q_b=(d,1,0)$.  Now $(V_0)_{25}=-3a$ forces $a=0$, and the remaining equations imply $d(d^2+1)=0$, hence $d=0$.  Substitution again gives a nonabelian three-dimensional closure.  The stated fixed-space dimensions follow from the kernels of the two normalized generators; their trace norms and Casimir spectra are recorded in Appendix~\ref{app:terminal-certificates}.  All these reductions use real equations; no assertion about a complex projective solution set is needed.
\end{proof}

\begin{corollary}[Exclusion of the principal component]
\label{cor:no-principal}
No nonzero $Z\in\mathfrak k_A$ has $L_Z$ contained in a principal $\mathfrak{su}(2)$ of $\mathfrak g_2$.
\end{corollary}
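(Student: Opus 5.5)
The plan is to reduce to \Cref{thm:low-complete}. A principal $\mathfrak{su}(2)$ is three-dimensional, so if $L_Z$ lies in one then $\dim L_Z\le3$. Let $Z\ne0$ and suppose $L_Z$ is contained in a principal algebra $\mathfrak s$. Every nonzero element of $\mathfrak s$ acts on $V$ with trivial kernel apart from a single line: its spectrum is $0,\pm i\lambda,\pm2i\lambda,\pm3i\lambda$ with $\lambda>0$. Hence every nonzero element of $L_Z$ has rank six.

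In case (i) of \Cref{thm:low-complete}, $Z=x_2D_{23}+x_3D_{45}$. In quaternionic coordinates \eqref{eq:terminal-qa-qb} this gives $q_a=(2x_2-x_3,0,0)$ and $q_b=(3x_3,0,0)$, so $\alpha=|2x_2-x_3|$ and $\beta=3|x_3|$. The three frequencies $2\alpha,|\beta-\alpha|,\beta+\alpha$ from \Cref{prop:failure} have ratio $1:2:3$ only when $\beta/\alpha\in\{2,5,1/3\}$, by the computation in \Cref{cor:universal-spectral}. Even then, the abelian closure in case (i) has dimension at most two. A nonzero abelian subalgebra of a principal $\mathfrak{su}(2)$ is one-dimensional, so $L_Z=\RR Z$, which forces $\Ad_SZ=\pm Z$, i.e.\ $x_2=\pm x_3$. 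Then $q_a=(x_3,0,0)$ or $(-3x_3,0,0)$ with $q_b=(3x_3,0,0)$, giving $\beta/\alpha=3$ or $1$. Neither ratio is principal, so this case is excluded.

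In case (ii), $L_Z\cong\mathfrak{su}(2)$ is three-dimensional. If it lies in $\mathfrak s$, it equals $\mathfrak s$, so it acts irreducibly on $V$ and has no common fixed vector. \Cref{thm:low-complete} states that the common fixed spaces have dimensions $1,3,1$ for $q=0$ and $\eta=1,-1$, which excludes those families. The remaining family, $\eta=-3$, has zero-dimensional fixed space, so it needs a separate argument. For it I would compute the representation type directly. With $a_1=b_1=0$ and $|q|^2p=-3q^3$, normalize by the circle action to $q_a=(0,1,0)$, $q_b=(0,-3,0)$. Then $\alpha=1$ and $\beta=3$, so the frequencies are $2,2,4$, not in ratio $1:2:3$. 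A principal element must have three distinct frequencies, so $Z\notin\mathfrak s$, a contradiction. (Equivalently, the Casimir recorded in Appendix~\ref{app:terminal-certificates} is not scalar with spin three.)

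The main obstacle is exactly this $\eta=-3$ family, since fixed-vector counting alone does not exclude it. The frequency check should settle it: the normal form from the proof of \Cref{thm:low-complete} with $b=-3$, $a=c=d=0$ gives $q_b=(0,-3,0)$, so $\beta/\alpha=3\notin\{2,5,1/3\}$. It remains to confirm that the $\eta=1$ and $\eta=-1$ cases give $\beta/\alpha=1$, which already forces rank four and is consistent with their fixed spaces. With that, all cases are covered uniformly by \Cref{cor:universal-spectral}'s ratio list, without using fixed spaces at all. This simpler route works for every branch: in each low-dimensional family $\beta/\alpha=|\eta|$ or $\alpha=0$, and $|\eta|\in\{1,3\}$ is never principal.
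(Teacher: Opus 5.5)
Your proposal is correct and is essentially the paper's argument: you reduce by dimension to \Cref{thm:low-complete}, rule out two-dimensional abelian closures structurally, and exclude every remaining case (the lines $x_2=\pm x_3$ and the nonabelian families) because the pulse has rank four or frequencies $1:1:2$ (your observation that $\beta/\alpha=|\eta|$ or $\alpha=0$), whereas a nonzero principal element has frequencies $1:2:3$. The fixed-space argument you try first for the $q=0$ and $\eta=\pm1$ families is valid but, as you note, unnecessary once the spectral check is applied uniformly.
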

\begin{proof}
A principal $\mathfrak{su}(2)$ acts irreducibly on $V$ and every nonzero element has three distinct positive frequencies in the ratio $1:2:3$.  The nonabelian cases of \Cref{thm:low-complete} instead have rank four, or rank six with frequencies $1:1:2$.  The nonzero one-dimensional cases in (i) have the same alternatives.  An abelian two-dimensional algebra cannot lie in $\mathfrak{su}(2)$, and a closure of dimension greater than three cannot lie there either.
\end{proof}

\subsection{Rank-four quaternionic-stabilizer failures}
The rank-four common-vector locus was already parametrized in \Cref{prop:rankfour-complete}.  Outside that locus, the invariant-context mechanism is nonempty but considerably more rigid than a general four-parameter family of invariant contexts.  A normalized pulse determines its candidate context from the positive part of the symmetric decomposition, even though its own kernel is three-dimensional.

For $0<\tau<1$, put
\begin{equation}
 D_\tau=\tau^2+3,\qquad
 \kappa^2=\frac{(1-\tau)D_\tau}{\tau},\qquad
 h=-\frac{\tau(\tau+1)}{D_\tau}\kappa,
 \label{eq:rankfour-parameter}
\end{equation}
where either real sign of $\kappa$ is allowed.  The sign is not an additional discrete modulus: modulo the residual circle and nonzero rescaling,
\[
E_{\tau,-\kappa}=-\Ad_{g_\pi}E_{\tau,\kappa}.
\]
Define
\begin{align}
 E_{\tau,\kappa}={}&(\tau-3)D_\tau D_{13}
 +(\tau^2-\tau+6)\kappa D_{23}\notag\\
 &+(2\tau^2+\tau+3)\kappa D_{45}
 -2\tau D_\tau D_{46},
 \label{eq:rankfour-new-pulse}\\
 m={}&\frac\kappa2e_1+e_2+e_4,\qquad
 v=h(e_3-e_5)-(\tau+1)e_6,\qquad r=m\times v.
 \label{eq:rankfour-new-frame}
\end{align}
Set $H_{\tau,\kappa}=\RR1\oplus\Span\{m,v,r\}$.
The notation $E_{\tau,\kappa}$ denotes a derivation, not a matrix unit.

The symbol $D_\tau$ is a positive scalar polynomial, unlike the derivations $D_{ij}$.  The parameters $\tau,\kappa$ and the frame $m,v,r$ in this subsection do not redefine the scalar coordinates or the frame parameters used in the rank-six family.

\begin{theorem}[Complete rank-four invariant-context mechanism]
\label{thm:rankfour-context-complete}
Every $E_{\tau,\kappa}$ has rank four, no common kernel with its switch conjugate, and
\[
 \Lie\{E_{\tau,\kappa},\Ad_SE_{\tau,\kappa}\}
 =\mathfrak k_{H_{\tau,\kappa}}\cong\mathfrak{so}(4).
\]
Conversely, if $Z\in\mathfrak k_A$ has rank four, $\ker Z\cap S\ker Z=0$, and $L_Z$ is contained in an $S$-invariant context stabilizer, then
\[
 Z=\lambda\Ad_{g_\theta}E_{\tau,\kappa}
\]
for some $\lambda\ne0$, $\theta\in\RR$, and parameters satisfying \eqref{eq:rankfour-parameter}.  Thus the entire mechanism is a three-real-parameter family after allowing rescaling and the residual circle.
\end{theorem}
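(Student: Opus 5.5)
The proof has two halves: a direct verification for the family $E_{\tau,\kappa}$, and a converse that reduces, via the forced context, to a small real system solved in a normal form for the residual circle.

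\emph{Direct half.} Substitute \eqref{eq:rankfour-new-pulse} into the quaternionic coordinates \eqref{eq:terminal-qa-qb}. The $D_{12}$ and $D_{47}$ coefficients vanish, so $x_0=x_5=0$ and $a_3=b_3=0$. One should then find $\alpha=\beta$, which is the rank-four condition by the frequency list $2\alpha,|\beta-\alpha|,\beta+\alpha$. The relation $\kappa^2=(1-\tau)D_\tau/\tau$ is exactly what should force $\|q_a\|^2=\|q_b\|^2$, and this is the first identity I would check symbolically. The kernel is then the associative plane $\RR q_a\oplus\{ye_4: q_by=yq_a\}$.

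The condition $\ker E\cap S\ker E=0$ follows from \Cref{prop:failure}(iii): I would show that $\ker E$ meets neither $P_{H_S}$ nor $H_S^\perp$. Here $q_a\not\parallel e_1$ because $a_2=-2x_1-x_4\ne0$ on $0<\tau<1$, and $q_a$ has no component in $H_S^\perp$. On the $H_A^\perp$ part, the solutions of $q_by=yq_a$ form a two-plane that must avoid both eigenspaces, which reduces to a $2\times2$ determinant in $\tau$.

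Next I would verify that $H_{\tau,\kappa}$ is associative and $S$-invariant. Here $m\in P_{H_S}$, since $m=\tfrac\kappa2e_1+\sqrt2\cdot\tfrac{e_2+e_4}{\sqrt2}$, while $v\in H_S^\perp$. By \Cref{thm:invariant-contexts} it suffices that $v,\ m\times v$ span a $C_m$-complex line in $H_S^\perp$, and this is automatic for $v\in H_S^\perp$. I would then check $E v,\ E m,\ E r\in\Span\{m,v,r\}$ by direct multiplication; the value of $h$ is chosen precisely to make this hold. Since $\Ad_S E$ preserves $S(H)=H$, both generators lie in $\mathfrak k_{H_{\tau,\kappa}}$. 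Equality with the full stabilizer then follows from a six-word trace-Gram certificate $\det(-\operatorname{tr}(F_iF_j))$, as in \eqref{eq:resonant-so4-gram}, nonvanishing on $0<\tau<1$. If that is awkward, an alternative is to show that $L$ has dimension $>3$ and is not contained in any vector stabilizer, and then use that the only proper subalgebras of $\mathfrak{so}(4)$ of dimension $>3$ are the whole algebra.

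\emph{Converse.} Let $Z$ have rank four, so $\alpha=\beta>0$, $\ker Z\cap S\ker Z=0$, and $L_Z\subseteq\mathfrak k_H$ with $S(H)=H$. By \Cref{thm:invariant-contexts}, either $H=H_S$ or $H=\RR1\oplus\RR m\oplus\Pi$ with $m\in P_{H_S}$.

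\begin{enumerate}[label=\textnormal{(\alph*)},leftmargin=9mm]
\item \emph{Case $H=H_S$.} Then $Z\in\mathfrak k_A\cap\mathfrak k_{H_S}$, and I expect this intersection to lie in $\Span\{D_{23},D_{45}\}$-type elements. Those have a kernel vector in $P_A\cap P_{H_S}=\RR e_1$ or violate rank four, so this case is excluded.
\item \emph{Case $H=\RR1\oplus\RR m\oplus\Pi$.} Write $Z^+=(Z+\Ad_SZ)/2\in\mathfrak k_{H_S}$. Because $Z^+$ and $Z^-$ preserve $P_H$ and hence $\RR m=P_H\cap P_{H_S}$, the operator $Z^+$ must annihilate $m$. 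In quaternionic terms, $Z^+|_{P_{H_S}}$ is a rotation, so $m$ spans its axis whenever $Z^+|_{P_{H_S}}\ne0$. This is the sense in which the positive part determines the context: $m$ is a polynomial function of $Z$, and $\Pi$ is then the complex line in $(H_S^\perp,C_m)$ fixed by requiring $Z^-m\in\Pi$ (using $[\mathfrak k,\mathfrak m]$ parity, $Z^-m\in H_S^\perp$).
\end{enumerate}

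Normalize by the circle \eqref{eq:residual-circle} and rescaling, say $q=a_2+ia_3$ real positive. The invariance conditions $Z(P_H)\subseteq P_H$ together with $\alpha=\beta$ then become a small real polynomial system in $(a_1,b_1,a_2,p)$. I would solve it to obtain $p$ real, with parameters matching $E_{\tau,\kappa}$ after introducing $\tau$ as a ratio of coordinates, presumably $\tau=-x_4/(2x_1)$-type. The constraint $\kappa^2=(1-\tau)D_\tau/\tau$ should emerge, and positivity of $\kappa^2$ with the kernel-transversality condition should force $0<\tau<1$. The degenerate subcases need separate treatment: $Z^+|_{P_{H_S}}=0$, and $q=0$. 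In these either the common-kernel condition fails or $\dim L_Z\le3$, using \Cref{thm:low-complete}.

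\emph{Main obstacle.} I expect the hardest step to be this elimination in the converse: showing the invariance system has no solutions beyond the one-parameter curve, up to symmetry. I would attempt it first with Gröbner-style elimination after the circle normalization, mirroring the $U_{12},(V_0)_{25}$ pivot approach of \Cref{thm:low-complete}.
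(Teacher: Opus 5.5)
Your plan follows the paper's proof in both halves. For the family you substitute and use a six-word trace-Gram certificate. For the converse you use circle normalization, the kernel line $m$ of $Z^+|_{P_{H_S}}$, and the plane $\Pi$ forced by $v=Z^-m$. However, the converse stops exactly where its content lies: the elimination is announced but not carried out. It is also much shorter than a Gr\"obner computation.

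Normalize to $q_a=(a,1,0)$, $q_b=(d,b,c)$.
\begin{itemize}
\item The matrix of $Z^+$ on the frame $(e_1,e_2+e_4,e_3+e_5)$ has the constant entries $2$ and $-1$. So your degenerate subcase $Z^+|_{P_{H_S}}=0$ never occurs.
\item Its kernel is spanned by $m=\tfrac{a+d}{2}e_1+e_2+e_4$, and $v=Z^-m=(a-d)(e_3-e_5)+(b-1)e_6+ce_7$.
\item You should state that $v\ne0$: otherwise $Zm=0$ with $Sm=m$, giving a common kernel vector. This is what makes $\Pi=\Span\{v,m\times v\}$ well defined.
\item For necessity it suffices to impose $Z^-v\parallel m$, since parity puts $Z^-v$ in $P_{H_S}$ and $P_H\cap P_{H_S}=\RR m$.
\item The $(e_3+e_5)$-coefficient of $Z^-v$ is $c$, so $c=0$.
\item Put $h=a-d$, $k=a+d$. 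Rank four reads $hk=b^2-1$, and the remaining component of the parallelism then reduces to $(b^2+3)h+b(b-1)k=0$.
\item If $b=1$, then $h=0$ and $v=0$. The value $b=0$ is inconsistent. Otherwise $k^2=-(b+1)(b^2+3)/b$, which forces $-1\le b<0$.
\item At $b=-1$ you get the $\eta=-1$ common-vector form. Hence $b=-\tau$ with $0<\tau<1$ (so $\tau=-b$, not a ratio like $-x_4/(2x_1)$). Solving for $a,d$ and multiplying by $6D_\tau$ gives $E_{\tau,\kappa}$.
\end{itemize}
The case $H=H_S$ needs no computation: $Z$ preserves $P_A\cap P_{H_S}=\RR e_1$, hence kills the $S$-fixed vector $e_1$.

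Two auxiliary claims in the direct half need repair.

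First, your fallback for full closure is false. The algebra $\mathfrak k_H\cong\mathfrak{su}(2)\oplus\mathfrak{su}(2)$ contains the proper four-dimensional subalgebra $\{Z(q_a,tq_b):q_a\in\operatorname{Im}\HH,\ t\in\RR\}$ for a fixed $q_b\ne0$. This subalgebra has no common fixed vector in $V$. So dimension $>3$ together with no containing vector stabilizer does not give $\mathfrak k_{H_{\tau,\kappa}}$, and the trace-Gram certificate is genuinely needed.

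Second, the common-kernel test cannot be phrased as the plane $K=\{ye_4:q_by=yq_a\}$ avoiding both eigenspaces of $S$. Since $K\subset H_A^\perp$, it meets $P_{H_S}$ only in $0$ anyway. A kernel vector in $P_{H_S}$ has coupled $P_A$- and $H_A^\perp$-components, and the correct test is $a_2e_4+a_3e_5\notin K$. For $E_{\tau,\kappa}$, where $a_3=0$, this means $q_b\ne q_a$. It is simpler to prove this assertion last, as the paper does: once the closure is all of $\mathfrak k_{H_{\tau,\kappa}}$, its $3\oplus4$ action on $V$ has no fixed vector.
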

\begin{proof}
Outside the common-vector locus, $q\ne0$: otherwise $Ze_1=0$.  Normalize to $q_a=(a,1,0)$ and $q_b=(d,b,c)$ using \eqref{eq:residual-circle}.  Rank four is equivalent to
\begin{equation}
 d^2+b^2+c^2=a^2+1.
 \label{eq:rankfour-sphere}
\end{equation}
Write $A=(Z+\Ad_SZ)/2$ and $B=(Z-\Ad_SZ)/2$.  On the positive eigenspace of $S$, in the ordered frame $(e_1,e_2+e_4,e_3+e_5)$, the even part $A$ has matrix
\[
 \begin{pmatrix}0&0&2\\0&0&-(a+d)/2\\-1&(a+d)/2&0\end{pmatrix}.
\]
The frame is not orthonormal: its Gram matrix is $\operatorname{diag}(1,2,2)$.  The unique kernel line is generated by $m=(a+d)e_1/2+e_2+e_4$.  Every invariant context must have this positive line; the exceptional all-positive context would give a common fixed vector and is excluded.  Its negative plane contains
\[
 v=Bm=(a-d)(e_3-e_5)+(b-1)e_6+ce_7.
\]
This vector is nonzero, since otherwise $m$ would be a common kernel vector.  Associativity forces the entire context to be $\RR1+\Span\{m,v,m\times v\}$.

Preservation by $B$ requires $Bv\parallel m$.  Since $m$ has zero $e_3$ component and $(Bv)_3=c$, its coordinate equations give $c=0$.  With $h=a-d$, $k=a+d$, the remaining equation and \eqref{eq:rankfour-sphere} reduce to
\[
hk=b^2-1,\qquad (b^2+3)h+b(b-1)k=0.
\]
If $b=1$, these equations force $h=0$, so $v=0$.  The value $b=0$ is impossible.  For $b\ne0,1$ they imply
\[
 k^2=-\frac{(b+1)(b^2+3)}b.
\]
Real solutions have $-1\le b<0$.  At $b=-1$, $k=h=0$, and \Cref{thm:low-complete} gives a common-vector closure.  The remaining interval is exactly $b=-\tau$, $0<\tau<1$.  Solving for $a,d$ yields
\[
 a=\frac{3-\tau}{2D_\tau}\kappa,\qquad
 d=\frac{2\tau^2+\tau+3}{2D_\tau}\kappa.
\]
Multiplying the normalized pulse by $6D_\tau$ gives \eqref{eq:rankfour-new-pulse}.  This proves necessity without eliminating any complex variables.

For sufficiency, use the even and odd parts $A_E,B_E$ of $E_{\tau,\kappa}$.  Exact substitution into \eqref{eq:rankfour-new-frame} gives
\[
 A_Em=0,\quad B_Em=6D_\tau v,\quad
 A_Ev=-\frac{12D_\tau}{\tau+1}r,\quad
 B_Ev=-24\tau(\tau+1)m.
\]
The derivation rule then supplies the action on $r=m\times v$.  Hence both generators preserve the same associative plane.  They have rank four by \eqref{eq:rankfour-sphere}.  To prove full six-dimensional closure, take
\[
 F=(A_E,B_E,[A_E,B_E],[A_E,[A_E,B_E]],
 [B_E,[A_E,B_E]],[A_E,[A_E,[A_E,B_E]]]).
\]
Their trace-Gram determinant is
\begin{equation}
 \det\operatorname{Gram}_{\rm tr}(F)=
 \frac{2^{32}3^{31}(3-\tau)^{10}(1-\tau)^6(1+\tau)^{10}
 D_\tau^{24}(\tau^2-2\tau+3)^4}{\tau^6}>0.
 \label{eq:rankfour-new-gram}
\end{equation}
A short block-factorized certificate is given in Appendix~\ref{app:terminal-certificates}.  Thus the closure is the full stabilizer.  Its $3\oplus4$ representation on $V$ has no fixed vector, proving the common-kernel assertion as well.
\end{proof}

For example, at $\tau=1/2$ and $\kappa=\sqrt{13}/2$, an integral-quadratic rescaling gives
\[
Z_{\rm new}=-65D_{13}+23\sqrt{13}D_{23}
             +16\sqrt{13}D_{45}-26D_{46}.
\]
It satisfies $\alpha^2=\beta^2=36036$, has rank four and no common vector with $\Ad_SZ_{\rm new}$, and generates a full $\mathfrak{so}(4)$.  This is an explicit rank-four quaternionic-stabilizer failure without a common fixed vector.
The open interval $0<\tau<1$ is forced by the real non-common-vector branch rather than chosen by convention.  As $\tau\to1^-$ one has $\kappa\to0$ and
\[
E_{\tau,\kappa}\longrightarrow-8(D_{13}+D_{46}),
\]
which is the $\eta=-1$ common-vector representative from the low-dimensional classification.  As $\tau\to0^+$, $|\kappa|\to\infty$ in the normalization \eqref{eq:rankfour-parameter}.

We now assemble the four failure families.  Define
\begin{align*}
\mathcal F_{\rm vec}&=\{Z\in\mathfrak k_A:\ker Z\cap S(\ker Z)\ne\{0\}\},\\
\mathcal F_{\rm proj}&=\bigcup_{[c:s]\in\RR P^1}
 (\mathfrak k_A\cap\mathfrak k_{H_{[c:s]}}),\\
\mathcal F_{\rm res}&=\{\widehat Z_{a;c,s}:a,c,s\in\RR,\ c^2+s^2>0\}\cup\{0\},\\
\mathcal F_{\rm rf}&=\{\lambda\Ad_{g_\theta}E_{\tau,\kappa}:\lambda\in\RR\setminus\{0\},\
\theta\in\RR,\ 0<\tau<1,\
\kappa^2=(1-\tau)(\tau^2+3)/\tau\}.
\end{align*}
The common-vector set is described in \Cref{prop:failure,prop:rankfour-complete},
the projective set in \Cref{cor:quartic-cone}, the resonant set in
\Cref{cor:resonant-cone}, and the last set in \Cref{thm:rankfour-context-complete}.
These families can overlap and are not asserted to be irreducible algebraic components.

\begin{theorem}[Exact nongenerating locus]
\label{thm:complete-failure-set}
For the fixed pair $(\mathfrak k_A,S)$, the complete nongenerating locus is
\begin{equation}
\{Z\in\mathfrak k_A:L_Z\ne\mathfrak g_2\}
 =\mathcal F_{\rm vec}\cup\mathcal F_{\rm proj}\cup\mathcal F_{\rm res}\cup\mathcal F_{\rm rf}.
\label{eq:complete-failure-set}
\end{equation}
\end{theorem}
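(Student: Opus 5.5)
We prove the two inclusions separately. The inclusion of the right-hand side in the nongenerating locus is direct from earlier results. For $Z\in\mathcal F_{\rm vec}$, \Cref{prop:failure} (ii)$\Rightarrow$(i) puts $L_Z$ inside some $\gtwo^w$, which is proper. For $Z\in\mathcal F_{\rm proj}$, $Z$ lies in $\mathfrak k_{H_{[c:s]}}$ with $H_{[c:s]}$ an $S$-invariant context, so $\Ad_SZ$ lies there too and $L_Z\subseteq\mathfrak k_{H_{[c:s]}}$. For $Z\in\mathcal F_{\rm res}$ and $Z\in\mathcal F_{\rm rf}$, the converse parts of \Cref{thm:visible-contexts} and \Cref{thm:rankfour-context-complete} give containment in $\mathfrak k_{H_{a;c,s}}$ or $\mathfrak k_{H_{\tau,\kappa}}$. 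The circle $g_\theta$ commutes with $\Ad_S$ and so only conjugates $L_Z$. The zero pulse belongs to $\mathcal F_{\rm vec}$.

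For the reverse inclusion, take $Z$ with $L_Z\ne\gtwo$ and apply \Cref{prop:maximal-reduction}, splitting into three cases.
\begin{enumerate}[label=\textnormal{(\roman*)},leftmargin=9mm]
\item If $L_Z\subseteq\gtwo^w$, then $Zw=(\Ad_SZ)w=0$, so $Z\in\mathcal F_{\rm vec}$.
\item If $\dim L_Z\le3$, \Cref{thm:low-complete} lists all cases, and its last sentence shows each has a common fixed space of dimension at least one, except the $\eta=-3$ family. The abelian plane has $Ze_1=0$. So $Z\in\mathcal F_{\rm vec}$ apart from $\eta=-3$.
\item If $L_Z\subseteq\mathfrak k_H$ with $S(H)=H$, we may assume $\ker Z\cap S\ker Z=0$ and $\dim L_Z>3$, since otherwise case (i) or (ii) applies. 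We then sort by rank, which is $6$ or $4$ for nonzero $Z$ (rank two being impossible). In rank six, $Z\notin\mathcal N_{e_1}$, and \Cref{thm:visible-contexts} places $Z$ in $\mathcal F_{\rm proj}$ when $a=0$, and in $\mathcal F_{\rm res}$ when $a\ne0$ via $nZ=\widehat Z_{a;c,s}$ with positive scaling absorbed. Negative scalings of $\widehat Z$ are obtained through $(a,c,s)\mapsto(-a,-c,-s)$ by cubic homogeneity. In rank four, \Cref{thm:rankfour-context-complete} gives $Z\in\mathcal F_{\rm rf}$.
\end{enumerate}

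The main obstacle is the residual $\eta=-3$ family, where $L_Z\cong\mathfrak{su}(2)$ has no common fixed vector. This family satisfies $a_1=b_1=0$, i.e.\ $x_2=x_3=0$, and $|q|^2p=-3q^3$. First, by \Cref{cor:no-principal} this $\mathfrak{su}(2)$ is not principal, so it acts reducibly on $V$. A reducible $\mathfrak{su}(2)$ action without fixed vector on $\RR^7$ must decompose as $3\oplus4$. We would then show that the three-dimensional summand is associative and $S$-invariant, which places $L_Z$ in an invariant context stabilizer. Concretely, we would check membership in $\mathcal F_{\rm proj}$ using \Cref{cor:quartic-cone}. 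With $q=-\omega$ and $p=3z$, the relation $|q|^2p=-3q^3$ gives $z=\omega^3/|\omega|^2$. Then $z^\ast\omega^3=|\omega|^{-2}\,|\omega|^6$ is real, so $\operatorname{Im}(z^\ast\omega^3)=0$ and $Z$ lies on the quartic cone. The same computation also shows $\eta=\pm1$ lie on that cone, which serves as a consistency check. This settles the last case and completes \eqref{eq:complete-failure-set}.
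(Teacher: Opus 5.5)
Your proof is correct and follows the paper's argument. You apply the maximal-container reduction, then use \Cref{thm:visible-contexts} and \Cref{thm:rankfour-context-complete} for closures of dimension greater than three, and \Cref{thm:low-complete} together with the quartic cone of \Cref{cor:quartic-cone} for the small closures. The differences are minor: the paper sends all three $\eta$-families to $\mathcal F_{\rm proj}$ via $\operatorname{Im}(z^\ast\omega^3)=0$, whereas you send $\eta=\pm1$ to $\mathcal F_{\rm vec}$ using the stated fixed-space dimensions, and your sketched $3\oplus4$ representation argument is not needed.
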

\begin{proof}
First consider the inclusion from right to left.  The common-vector criterion
\Cref{prop:failure} places $\mathcal F_{\rm vec}$ in a vector stabilizer.
\Cref{thm:visible-contexts} places $\mathcal F_{\rm proj}$ and the nonzero part of
$\mathcal F_{\rm res}$ in quaternionic stabilizers.  Finally,
\Cref{thm:rankfour-context-complete} does the same for $\mathcal F_{\rm rf}$.
All of these containing algebras are proper, and $Z=0$ plainly fails to generate.

For the reverse inclusion, let $L_Z\ne\mathfrak g_2$.  If
$\dim L_Z>3$, \Cref{prop:maximal-reduction} places it in a vector stabilizer or
an $S$-invariant quaternionic stabilizer.  The vector case gives
$Z\in\mathcal F_{\rm vec}$.  In the other case we may assume there is no common
fixed vector; in particular $Z\notin\mathcal N_{e_1}$.  The nonzero element $Z$
has rank four or six.  At rank six, \Cref{thm:visible-contexts} gives
$Z\in\mathcal F_{\rm proj}\cup\mathcal F_{\rm res}$; the rescaling in
\eqref{eq:resonant-pulse} does not change this conical set.
At rank four, \Cref{thm:rankfour-context-complete} gives
$Z\in\mathcal F_{\rm rf}$.

If $\dim L_Z\le3$, apply \Cref{thm:low-complete}.
Its abelian plane and its $q=0$ family fix $e_1$, so they lie in
$\mathcal F_{\rm vec}$.  In the three remaining normal-form families,
$a_1=b_1=0$ and $|q|^2p=\eta q^3$ with $\eta\in\{1,-1,-3\}$.
Thus $x_2=x_3=0$ and $\operatorname{Im}(p(q^\ast)^3)=0$.
Since $q=-\omega$ and $p=3z$,
\[
\operatorname{Im}\bigl(p(q^\ast)^3\bigr)
 =3\operatorname{Im}(z^\ast\omega^3)=0.
\]
\Cref{cor:quartic-cone} therefore places these cases in $\mathcal F_{\rm proj}$.
The two inclusions cover all possibilities.
\end{proof}

For the four-family description, four witnesses already used above separate the mechanisms:
\[
\begin{split}
Z_{\rm v}&=D_{23},\quad Z_{\rm p}=D_{13},\quad
Z_{\rm r}=-D_{13}+D_{23}+D_{45}+D_{46},\\
Z_{\rm f}&=-65D_{13}+23\sqrt{13}\,D_{23}+16\sqrt{13}\,D_{45}-26D_{46}.
\end{split}
\]
\begin{corollary}[Irredundance of the geometric families]\label{cor:geometric-irredundance}
None of the four families in \Cref{thm:complete-failure-set} is contained in the union of the other three.
\end{corollary}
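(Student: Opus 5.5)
The plan is to use the four displayed witnesses $Z_{\rm v},Z_{\rm p},Z_{\rm r},Z_{\rm f}$, one for each family. For each witness we show that it lies in its own family and avoids the other three. By \Cref{thm:complete-failure-set} every witness is nongenerating, so membership in the other families must be excluded using their explicit descriptions rather than by a generation argument. Four pieces of data do most of the work: the rank of $Z$ (from $\alpha,\beta$ via the characteristic polynomial in \Cref{prop:failure}), the vanishing of $\ker Z\cap S\ker Z$, the coordinates $x_2,x_3$, and the quartic $\operatorname{Im}(z^\ast\omega^3)$ from \Cref{cor:quartic-cone}.

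\textbf{Step 1: $Z_{\rm v}=D_{23}$.} It lies in $\mathcal F_{\rm vec}$ by \Cref{thm:low-complete}(i), since its closure is abelian and fixes $e_1$. It has $x_2\ne0$, so it is not in $\mathcal F_{\rm proj}$. Indeed, every plane $\mathfrak k_A\cap\mathfrak k_{H_{[c:s]}}=\Span\{\mathsf P_{c,s},\mathsf Q_{c,s}\}$ lies in $x_2=x_3=0$. It is not in $\mathcal F_{\rm res}$, because that family has $x_2=x_3=an$, while $D_{23}$ has $x_3=0\ne x_2$. It is not in $\mathcal F_{\rm rf}$, because every element of that family has trivial common kernel by \Cref{thm:rankfour-context-complete}, whereas $D_{23}e_1=0$.

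\textbf{Step 2: $Z_{\rm p}=D_{13}$.} This is $\mathsf Q_{1,0}$, so it lies in $\mathcal F_{\rm proj}$. It also coincides with $Z_{[1:0]}$ up to sign, so by \Cref{prop:uniform-so4} its closure is a full $\mathfrak k_{H_{246}}$. That $\mathfrak{so}(4)$ acts on $V$ as $3\oplus4$ with no fixed vector, so $D_{13}\notin\mathcal F_{\rm vec}$. It is not in $\mathcal F_{\rm res}$: that family needs $x_4+ix_5=\zeta^3$ and $x_0=2c^2s$. Here $x_4=x_5=0$ forces $\zeta=0$, hence $n=0$, which is excluded. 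For $\mathcal F_{\rm rf}$, compute $\alpha,\beta$ for $D_{13}$ using \eqref{eq:terminal-qa-qb}: $q_a=(0,-2,0)$ and $q_b=0$. Then $\beta\ne\alpha$, so the rank is six, while $\mathcal F_{\rm rf}$ consists of rank-four elements.

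\textbf{Step 3: $Z_{\rm r}$.} Take $a=c=s=1$, so $n=2$, $r=-2$, $t=2$. Then $\widehat Z_{1;1,1}=2D_{12}+0\cdot D_{13}+2(D_{23}+D_{45})-2D_{46}+2D_{47}$. This does not match $Z_{\rm r}$ directly, so the first task is to solve for parameters $(a,c,s)$ with $n\widehat Z_{a;c,s}\parallel Z_{\rm r}$.

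- Membership requires $x_0=0$, hence $c=0$ or $s=0$.
- The coefficient $x_1=-c(c^2-s^2)=-1$ then gives $c=1,s=0$.
- This yields $x_4=r=1$, $x_5=0$, $x_2=x_3=a=1$, so $Z_{\rm r}=\widehat Z_{1;1,0}$.

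It has $x_2\ne0$, so it is not in $\mathcal F_{\rm proj}$. By \Cref{thm:visible-contexts} it generates a full $\mathfrak{so}(4)$ with no fixed vector, so it is not in $\mathcal F_{\rm vec}$. It has rank six because $\beta=3\alpha\ne\alpha$, so it is not in $\mathcal F_{\rm rf}$.

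\textbf{Step 4: $Z_{\rm f}=Z_{\rm new}$.} By the example after \Cref{thm:rankfour-context-complete}, it lies in $\mathcal F_{\rm rf}$, has no common vector, and has rank four. Rank four excludes $\mathcal F_{\rm res}$, whose nonzero members have rank six. For $\mathcal F_{\rm proj}$, it has $x_2=23\sqrt{13}\ne0$, which excludes it.

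The main obstacle is making sure the families are tested through their exact defining sets rather than through their generic descriptions. In particular, elements of $\mathcal F_{\rm proj}$ may have rank four or lie on $\mathcal N_{e_1}$, so the $x_2=x_3=0$ criterion is the safe test for that family. Identifying $Z_{\rm r}$ inside the cubic parametrization also needs care. If the coefficients do not match exactly, the fallback is to show $Z_{\rm r}\in\mathcal F_{\rm res}$ by exhibiting an invariant resonant context directly and checking $\beta=3\alpha$.
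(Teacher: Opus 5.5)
Your proposal is correct and follows essentially the paper's argument. It uses the same four witnesses and separates them by rank, by the absence of a common kernel vector, and by the coordinates $x_2,x_3$; the paper phrases these through the axial coordinate $a_1=2x_2-x_3$, the transverse coordinate $q$, and the frequency relations $\beta=0$, $\beta=3\alpha$, $\alpha=\beta$. One small slip: $D_{13}$ is not $\mathsf Q_{1,0}=D_{13}+D_{46}$ but $\mathsf Q_{1,0}-\mathsf P_{1,0}=-Z_{[1:0]}$, so its membership in $\mathcal F_{\rm proj}$ rests on your next sentence via \Cref{prop:uniform-so4}, which you already supply.
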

\begin{proof}
The element $Z_{\rm v}$ fixes $e_1$, has rank six, and has zero transverse coordinate $q$ and nonzero axial coordinate $a_1$; it belongs only to $\mathcal F_{\rm vec}$.
For $Z_{\rm p}$, the kernel is $\RR e_2$ and its reflected kernel is $\RR e_4$, so there is no common fixed vector.  It lies in $\mathcal F_{\rm proj}$ by \Cref{prop:uniform-so4}, has rank six, and has $\beta=0$, excluding the other mechanisms.
The element $Z_{\rm r}$ is $\widehat Z_{1;1,0}$: \Cref{thm:visible-contexts} gives full quaternionic-stabilizer closure, $\beta=3\alpha$, and $a_1=1$.  Its rank is six, it has no common fixed vector, and the nonzero axial coordinate excludes the projective family.
Finally, $Z_{\rm f}$ is the rank-four witness following \Cref{thm:rankfour-context-complete}.  It has no common fixed vector and $a_1=30\sqrt{13}\ne0$, so it is neither vector nor projective.  Its $\alpha=\beta>0$ excludes the resonant family.  Thus each displayed witness belongs to its named family alone.
\end{proof}
The irredundance is a statement about these four sets, not a claim that they are disjoint or are irreducible algebraic components.

\section{An explicit polynomial criterion}
\label{sec:scalar}

The geometric families in \Cref{thm:complete-failure-set} can be eliminated without a search over vectors, contexts, or circle parameters.  The resulting criterion involves only the six real coordinates of the local derivation.

Keep the coordinates of \eqref{eq:terminal-qa-qb}, and abbreviate
\[
a=a_1,\qquad d=b_1,\qquad q=a_2+ia_3,\qquad p=b_2+ib_3.
\]
Thus $(a,d,q,p)\in\RR^2\times\CC^2$ are linear coordinates on $\mathfrak k_A$.  Define real polynomials
\begin{align}
u&=|q|^2, & v&=|p|^2, & R+iI&=p(q^\ast)^3,\label{eq:scalar-invariants}\\
\rho&=a^2+d^2+u+v, & \delta&=a^2+u-d^2-v,\nonumber\\
\Theta&=u\bigl((2v+3u)a-3ud\bigr)-(a+d)R.&&&&\label{eq:scalar-theta}
\end{align}
Here $q^\ast$ denotes the complex conjugate of $q$, so the convention in \eqref{eq:scalar-invariants}
conjugates $q$ and not $p$. Equivalently, $R-iI=p^\ast q^3$.
For an explicit expression in real coordinates, write
$q=q_R+iq_I$ and $p=p_R+ip_I$, with all four components real. Then
\begin{align*}
R&=p_R(q_R^3-3q_Rq_I^2)+p_I(3q_R^2q_I-q_I^3),\\
I&=p_I(q_R^3-3q_Rq_I^2)-p_R(3q_R^2q_I-q_I^3).
\end{align*}
Here $I$ denotes this real polynomial, not an identity operator.  Under the residual circle \eqref{eq:residual-circle}, $q$ and $p$ have weights one and three.  Consequently $u,v,R,I,\rho,\delta,\Theta$ are circle-invariant.  Their respective homogeneous degrees are $2,2,4,4,2,2,5$.

The remaining degree-five invariant can likewise be evaluated entirely in real coordinates:
\[
\begin{aligned}
\Theta={}&(q_R^2+q_I^2)\bigl[(2p_R^2+2p_I^2+3q_R^2+3q_I^2)a
                       -3(q_R^2+q_I^2)d\bigr]\\
 &-(a+d)\bigl[p_R(q_R^3-3q_Rq_I^2)+p_I(3q_R^2q_I-q_I^3)\bigr].
\end{aligned}
\]
\begin{lemma}[Common fixed vectors in scalar coordinates]
\label{lem:scalar-vectors}
The pair $(Z,\Ad_S Z)$ has a nonzero common fixed vector if and only if at least one of the following holds:
\begin{alignat}{2}
&u=0; &&\label{eq:vector-scalar-one}\\
&a=0,\qquad d^2+v=u; &&\label{eq:vector-scalar-two}\\
&d=-a,\qquad v=u; &&\label{eq:vector-scalar-three}\\
&d=a,\qquad up=q^3. &&\label{eq:vector-scalar-four}
\end{alignat}
An equality between complex expressions represents two real polynomial equations.
\end{lemma}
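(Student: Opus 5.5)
The plan is to reduce everything to \Cref{prop:failure}: a nonzero common fixed vector exists if and only if $\ker Z$ contains a nonzero eigenvector of $S$, lying either in $P_{H_S}$ or in $H_S^\perp$. Following the proof of \Cref{prop:rankfour-complete}, I will write such an eigenvector as $w=p_0+ye_4$ with $p_0\in P_A$ and $y\in H_A$. In the stabilizer coordinates $Z=Z(q_a,q_b)$ the condition $Zw=0$ splits into
\[
[q_a,p_0]=0,\qquad q_by=yq_a .
\]
I then need to rewrite these conditions in the scalar coordinates $(a,d,q,p)$ of \eqref{eq:terminal-qa-qb}, where $q_a=(a,\operatorname{Re}q,\operatorname{Im}q)$ and $q_b=(d,\operatorname{Re}p,\operatorname{Im}p)$. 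The Fano conventions give $e_1e_4=e_5$, $e_2e_4=e_6$ and $e_3e_4=e_7$, so $e_4,e_5,e_6,e_7$ correspond to $y=1,e_1,e_2,e_3$.

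\emph{Positive eigenvectors.} Here $w_+=ae_1+b(e_2+e_4)+c(e_3+e_5)$, so $p_0=ae_1+be_2+ce_3$ and $y=b+ce_1$.
\begin{itemize}
\item If $(b,c)=(0,0)$, then $w=e_1$ and the condition is $q_a\parallel e_1$, that is $u=0$. This is \eqref{eq:vector-scalar-one}, which also covers $q_a=0$.
\item If $(b,c)\ne(0,0)$, then $y\ne0$ and we may assume $u>0$. The first equation forces $q_a=\lambda p_0$ with $\lambda\ne0$, and the second forces $q_b=yq_ay^{-1}$.
\item Conjugation by $y=b+ce_1$ is a rotation about $e_1$. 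It fixes the $e_1$-coordinate, giving $d=a$, and multiplies the transverse complex coordinate by the unit phase $(y/|y|)^2$.
\item The constraint that $p_0$ have $e_2+ie_3$ component $b+ic$, which is proportional to $y$, fixes that phase in terms of $q$. The rotated transverse coordinate then becomes $q\cdot(q/|q|)^2$, giving $up=q^3$. This is \eqref{eq:vector-scalar-four}.
\item Conversely, given $d=a$ and $up=q^3$ with $u\ne0$, I take $p_0=q_a$ and $y$ equal to the transverse part of $q_a$, and check both equations directly.
\end{itemize}

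\emph{Negative eigenvectors.} Here $w_-=a'(e_2-e_4)+b'(e_3-e_5)+re_6+te_7$, so $p_0=a'e_2+b'e_3$ and $y=-a'-b'e_1+re_2+te_3$.
\begin{itemize}
\item If $p_0=0$, then $y=re_2+te_3$ is a nonzero imaginary unit orthogonal to $e_1$. Conjugation by it is the half-turn about $y$: it sends $e_1\mapsto-e_1$ and acts on the $(e_2,e_3)$-plane as a reflection. Existence of a suitable $y$ is therefore equivalent to $d=-a$ together with $|p|=|q|$, which is \eqref{eq:vector-scalar-three}.
\item If $p_0\ne0$, then $q_a=\lambda p_0$ lies in $\Span\{e_2,e_3\}$, so $a=0$.
\item The equation $q_b=yq_ay^{-1}$ then only requires $|q_b|=|q_a|$, that is $d^2+v=u$. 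Conversely, any $q_b$ with this norm is a conjugate $yq_ay^{-1}$ by some unit $y$, and I must check that a suitable $y$ can be chosen with the prescribed form $-a'-b'e_1+re_2+te_3$, where $(a',b')$ is proportional to the transverse part of $q_a$.
\item This free choice of $(r,t)$ should give \eqref{eq:vector-scalar-two} exactly.
\end{itemize}

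\emph{Main obstacle.} I expect the hardest step to be the exact bookkeeping of signs and phases in the two mixed cases, where both $p_0$ and $y$ are nonzero. This includes checking that the proportionality between $p_0$ and $y$ forced by the eigenvector shape produces exactly $up=q^3$ rather than a conjugate or a sign variant. It also includes checking that, in the negative mixed case, the constrained $y$ still realizes every $q_b$ of the correct norm. I would verify both points by direct quaternion multiplication in the basis $1,e_1,e_2,e_3$, and cross-check them against the witnesses already computed: $Z'$ in \Cref{prop:rankfour-failure}, and the limit $-8(D_{13}+D_{46})$ with $\eta=-1$.

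Finally, $u=0$ absorbs all degenerate subcases with $q_a\parallel e_1$, so the four alternatives together cover every case.
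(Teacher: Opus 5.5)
Your reduction to an $S$-eigenvector in $\ker Z$ via \Cref{prop:failure}, the split into $[q_a,p_0]=0$ and $q_by=yq_a$, and the three subcases are the same as in the paper's proof. The only difference is that you track phases directly instead of first normalizing to $q_a=(a,1,0)$ with the residual circle. Your positive case is correct in both directions. Since the transverse coordinate of $p_0$ is proportional to $y$, conjugation multiplies $q$ by $y^2/|y|^2=q^2/|q|^2$, which gives exactly $d=a$ and $up=q^3$, with no conjugate or sign ambiguity. (Rename the coefficients of $w_+$, since $a$ already denotes $a_1$.) Your purely normal case, giving $d=-a$ and $v=u$, is also correct, as is the necessity half of the mixed negative case.

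The gap is the converse in the mixed negative case, and the step you planned there fails. Conjugators of the prescribed form do \emph{not} realize every $q_b$ with $|q_b|=|q_a|$. Normalize $q_a=e_2$. Then $[q_a,p_0]=0$ forces $b'=0$, and rescaling $w$ gives $y=-1+re_2+te_3$. Direct multiplication yields
\[
y\,e_2\,y^{-1}=\frac{2t\,e_1+(1+r^2-t^2)\,e_2+2rt\,e_3}{1+r^2+t^2}.
\]
So $d=0$ forces $t=0$, and then $q_b=e_2$; the whole circle $\{d=0,\ |q_b|=|q_a|\}$ is missed except for one point.

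The witness you meant as a cross-check shows this concretely. The pair $q_a=e_2$, $q_b=e_3$ is $6Z=Z'=D_{12}-3D_{13}+2D_{47}$, which satisfies (ii). But $\ker Z'=\Span\{e_2,e_4+e_5,e_6+e_7\}$ meets $H_S^\perp$ only in the purely normal line $\RR(e_6+e_7)$, so $Z'$ has no mixed negative kernel vector.

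The lemma itself survives, because at $d=0$ condition (ii) reads $a=d=0$, $v=u$. This is a special case of (iii) and is realized by your purely normal subcase. You therefore have to split the converse as the paper does:
\begin{itemize}
\item For $d\ne0$, invert the formula above. Writing $q_b=de_1+be_2+ce_3$ with $d^2+b^2+c^2=1$, take $r=c/d$ and $t=(1-b)/d$. This gives the paper's kernel vector $e_2-e_4+\frac{c}{d}e_6+\frac{1-b}{d}e_7$.
\item For $d=0$, fall back on the purely normal eigenvector.
\end{itemize}
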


\begin{proof}
When $q=0$, $q_a=ae_1$ and $Ze_1=0$, so $e_1$ is a common fixed vector.  Suppose $q\ne0$.  The residual circle and positive rescaling reduce the coordinates to
\[
q_a=(a,1,0),\qquad q_b=(d,b,c).
\]
By \Cref{prop:failure}, it suffices to look for a kernel vector in one of the two eigenspaces of $S$.

A vector in the positive eigenspace is
\[
s e_1+t(e_2+e_4)+r(e_3+e_5).
\]
Its component in $P_A$ must be parallel to $(a,1,0)$.  A nonzero solution therefore has $(s,t,r)=\lambda(a,1,0)$ with $\lambda\ne0$, and its quaternionic normal coefficient is the real scalar $\lambda$.  The equation $q_by=yq_a$ then gives $d=a$, $b=1$, $c=0$.

A negative eigenvector has the form
\[
s(e_2-e_4)+t(e_3-e_5)+r e_6+z e_7.
\]
If its component in $P_A$ is zero, its quaternionic normal coefficient is $y=r e_2+z e_3\ne0$.  Conjugation $q_b=yq_ay^{-1}$ reverses the $e_1$ coordinate and reflects the transverse unit direction.  As $[r:z]$ varies, all transverse unit directions occur.  This gives exactly $d=-a$ and $b^2+c^2=1$.

If the component in $P_A$ is nonzero, parallelism with $(a,1,0)$ forces $a=0$, $t=0$, and $s\ne0$.  Rescale to $s=1$.  The normal coefficient is $y=-1+r e_2+z e_3$, and $q_b=y e_2y^{-1}$ implies $d^2+b^2+c^2=1$.  Conversely, if this equation holds and $d\ne0$, the explicit vector
\[
e_2-e_4+\frac{c}{d}e_6+\frac{1-b}{d}e_7
\]
is annihilated by $Z$.  If $d=0$, the purely normal case just treated supplies a kernel vector.  Thus the negative-eigenvector possibilities are exactly the two stated families.

Undoing the circle action and rescaling sends the positive condition $b=1,c=0$ to $up=q^3$, and the two negative conditions to \eqref{eq:vector-scalar-two} and \eqref{eq:vector-scalar-three}.  This proves both directions.
\end{proof}

\begin{theorem}[Six finite polynomial tests]
\label{thm:scalar-classifier}
For the fixed pair $(\mathfrak k_A,S)$, one has $L_Z\ne\mathfrak g_2$ if and only if at least one of the following six conditions holds:
\begin{alignat}{2}
\mathrm{(i)}\quad &u=0; &&\nonumber\\
\mathrm{(ii)}\quad &a=0,\qquad d^2+v=u; &&\nonumber\\
\mathrm{(iii)}\quad &d=-a,\qquad v=u; &&\nonumber\\
\mathrm{(iv)}\quad &a=d=0,\qquad I=0; &&\label{eq:six-tests}\\
\mathrm{(v)}\quad &d=3a,\qquad up=3q^3; &&\nonumber\\
\mathrm{(vi)}\quad &\delta=0,\qquad I=0,\qquad\Theta=0. &&\nonumber
\end{alignat}
All these equations have degree at most five in the six real coordinates of $Z$.  No circle parameter or quaternionic context has to be chosen to apply the criterion.
\end{theorem}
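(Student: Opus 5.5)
The plan is to translate \Cref{thm:complete-failure-set} into the scalar coordinates $(a,d,q,p)$, one family at a time, and then check that every condition on the list is itself a failure. Conditions (i)--(iii) together with the positive-eigenvector condition $d=a$, $up=q^3$ from \Cref{lem:scalar-vectors} describe $\mathcal F_{\rm vec}$. So the remaining work is to show three things: that $\mathcal F_{\rm proj}$ is covered by (i) and (iv), that $\mathcal F_{\rm res}$ is covered by (iv) and (v), and that $\mathcal F_{\rm rf}$ together with the leftover vector condition $d=a$, $up=q^3$ is covered by (vi) and the earlier tests. Throughout, the circle action \eqref{eq:residual-circle} and rescaling will reduce each verification to the normal form $q_a=(a,1,0)$, $q_b=(d,b,c)$ whenever $q\ne0$. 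The case $q=0$ is exactly (i).

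\emph{Projective family.} By \eqref{eq:terminal-qa-qb}, $x_2=x_3=0$ is the same as $a=d=0$. In that case $\operatorname{Im}(p(q^\ast)^3)=3\operatorname{Im}(z^\ast\omega^3)$, so \Cref{cor:quartic-cone} identifies $\mathcal F_{\rm proj}$ exactly with (iv); this also includes $\omega=0$, i.e.\ $u=0$.

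\emph{Resonant family.} Rewriting \eqref{eq:resonant-qb} in the coordinates of \eqref{eq:terminal-qa-qb} gives $d=b_1=3a_1=3a$ and $p=3\bar\zeta^{\,3}$-type data. After normalizing $q=1$, the condition should read $b=3$, $c=0$, i.e.\ $up=3q^3$. Conversely, any $Z$ satisfying (v) with $u>0$ is a rescaled $\widehat Z_{a;c,s}$. I will check the orientation and sign of $\zeta$ against the map $q=-\omega$ by direct substitution. The case $a=0$ of (v) already falls under (iv).

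\emph{Rank-four family and the vector condition.} In the normal form, \Cref{thm:rankfour-context-complete} gives $c=0$ and $b=-\tau$, with $a,d$ determined by $\kappa$. Its proof shows these points are exactly the real solutions with $c=0$ of
\[
d^2+b^2=a^2+1,\qquad (b^2+3)(a-d)+b(b-1)(a+d)=0,
\]
restricted to $-1<b<0$. In invariant form, $\delta=0$ is the first equation (since $u=1$, $v=b^2+c^2$), $I=0$ is $c=0$, and $\Theta=0$ should reduce (with $R=b$) to $(2b^2+3)a-3d-(a+d)b=0$, which is the second equation rearranged. I would verify this identity first, as it is the crux.

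Next I have to dispose of the spurious real solutions of (vi) outside $-1<b<0$. The proof of \Cref{thm:rankfour-context-complete} shows that $b=1$ forces $h=0$, i.e.\ $d=a$ and $up=q^3$, which is the positive vector failure. It also shows $b=-1$ is a vector failure, $b=0$ is impossible, and all other $b$ have no real $k$. Conversely, the vector condition $d=a$, $b=1$, $c=0$ satisfies $\delta=I=\Theta=0$, so (vi) absorbs \eqref{eq:vector-scalar-four}. That explains why the theorem lists six tests rather than seven. Thus (vi) is exactly $\mathcal F_{\rm rf}$ together with vector failures, and the theorem follows from \Cref{thm:complete-failure-set} and \Cref{lem:scalar-vectors}.

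The degree bound is read off from the degrees recorded after \eqref{eq:scalar-theta}. The main obstacle is the exact matching of $\Theta$ with the second rank-four equation, together with its sign conventions. If the identity fails as written, I would recompute $Bv\parallel m$ in the normal form and re-derive $\Theta$ by clearing the normalization.
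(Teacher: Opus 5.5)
Your proposal is correct and follows the paper's proof essentially step by step. You use the same family-by-family translation via \Cref{lem:scalar-vectors}, \Cref{cor:quartic-cone} and \eqref{eq:resonant-qb}, and the same normal-form case analysis of (vi) in $b$, with the positive vector case $d=a$, $up=q^3$ absorbed into (vi). The identity you flag as the crux does hold: with $u=1$, $R=b$ and $c=0$ one gets $\Theta=(2b^2-b+3)a-(b+3)d=(b^2+3)(a-d)+b(b-1)(a+d)$. In the resonant step no conjugation occurs, since $q=\zeta$ and $p=3\zeta^3/|\zeta|^2$, which gives exactly $up=3q^3$.
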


\begin{proof}
Conditions (i)--(iii) are common-vector failures by \Cref{lem:scalar-vectors}.  Condition (iv) is exactly the projective quartic family: by \eqref{eq:terminal-qa-qb}, $a=d=0$ is equivalent to $x_2=x_3=0$, while $q=-\omega$ and $p=3z$ give
\[
I=\operatorname{Im}\bigl(p(q^\ast)^3\bigr)
 =-3\operatorname{Im}\bigl(z(\omega^\ast)^3\bigr)
 =3\operatorname{Im}(z^\ast\omega^3).
\]
For $q\ne0$, condition (v) is exactly the resonant family \eqref{eq:resonant-qb}: writing $q=c+is$ gives $d=3a$ and $p=3(c+is)^3/(c^2+s^2)$.  Its points with $q=0$ already satisfy (i).

Consider (vi) with $q\ne0$.  Normalize again to $q_a=(a,1,0)$ and $q_b=(d,b,c)$.  Then $I=c$, so $c=0$.  Put $h=a-d$ and $k=a+d$.  The remaining two equations become
\[
hk=b^2-1,\qquad (b^2+3)h+b(b-1)k=0.
\]
If $b=1$, then $h=0$, yielding the positive common-vector family.  The value $b=0$ is impossible.  Otherwise,
\[
k^2=-\frac{(b+1)(b^2+3)}{b}.
\]
Over the reals, this forces $-1\le b<0$.  At $b=-1$, both $h$ and $k$ vanish, so the closure has a common fixed vector.  On $-1<b<0$, set $\tau=-b$.  These are precisely the real normal forms in \Cref{thm:rankfour-context-complete}, which generate quaternionic stabilizers.  Thus (vi) always implies failure; its $q=0$ points are already in (i).

Conversely, the common-vector locus is exhausted by \Cref{lem:scalar-vectors}.  Its last case, $d=a$ and $up=q^3$, belongs to (vi) when $u>0$: it gives $v=u$, $I=0$, $R=u^2$, and hence $\delta=\Theta=0$.  It belongs to (i) when $u=0$.  The projective and resonant families have already been identified with (iv) and (v), up to common-vector points.  Every rank-four invariant-context family satisfies (vi) by its normal form.  The geometric classification \Cref{thm:complete-failure-set} therefore proves exhaustiveness.
\end{proof}

\begin{proposition}[Irredundance of the six scalar conditions]\label{prop:irredundant-tests}
None of the six conditions in \Cref{thm:scalar-classifier} is contained in the union of the other five.
\end{proposition}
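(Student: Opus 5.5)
The plan is to exhibit, for each of the six conditions (i)--(vi), an explicit witness $Z\in\mathfrak k_A$ that satisfies that condition and violates the other five. Each witness should be generic within its own condition. Membership in each condition is a finite check of polynomial equations in the coordinates $(a,d,q,p)$, so the work reduces to choosing witnesses and evaluating $u,v,R,I,\delta,\Theta$ from \eqref{eq:scalar-invariants} and \eqref{eq:scalar-theta}. Because all six conditions are invariant under the residual circle \eqref{eq:residual-circle} and under real rescaling, I will take $q$ real and positive whenever $q\ne0$, say $q=1$. Then $u=1$, $R=\operatorname{Re}p$ and $I=\operatorname{Im}p$.

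I propose the following witnesses. For (i), take $q=0$, $a=1$, $d=0$, $p=0$. Then (ii)--(vi) all fail because each of them either forces $u>0$ or is incompatible with it. For (ii), $a=0$ and $v=\delta$ is false; (iii) needs $d=0$ and $v=1$, (iv) needs $I=0$, (v) needs $d=0$, and (vi) needs $\delta=0$, i.e.\ $-v=0$, which (ii) does not give. For (vi), the third test is $\Theta=0$. For (iii), take $a=1$, $d=-1$, $p=i$, so that $v=1$ and $I=1\ne0$. This kills (iv) and (vi), and (ii) and (v) fail on the $a,d$ relations. For (iv), take $a=d=0$, $q=1$, $p=2$. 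Then $I=0$, $u=1$, $v=4$; (ii) would need $v=u$, (iii) needs $v=u$, (v) needs $p=3$, and (vi) needs $\delta=1-4=0$. For (v), take $a=1$, $d=3$, $q=1$, $p=3$. Then $u=1$, $v=9$, $\delta=1+1-9-9\ne0$, and the $a,d$ relations exclude (ii)--(iv). Finally, for (vi), take the rank-four witness $Z_{\rm f}$, or more conveniently the normalized form in \Cref{thm:rankfour-context-complete} at $b=-\tau=-1/2$. There $c=0$, so $I=0$, and $a=\frac{3-\tau}{2D_\tau}\kappa$, $d=\frac{2\tau^2+\tau+3}{2D_\tau}\kappa$ with $\kappa\ne0$. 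Since $a\ne0$, $d\ne\pm a$ and $d\ne3a$, conditions (ii)--(v) all fail, and $u=1$ kills (i). It then remains to confirm $\delta=0$ and $\Theta=0$ at this point.

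The only nonroutine step is this last one: showing that the rank-four point actually satisfies $\delta=\Theta=0$. I expect it to follow from the proof of \Cref{thm:scalar-classifier}, where (vi) was reduced in the normal form to $hk=b^2-1$ and $(b^2+3)h+b(b-1)k=0$, exactly the equations solved in \Cref{thm:rankfour-context-complete}. I will still verify it directly with $u=1$, $v=b^2$ and $R=b$. This gives $\delta=a^2+1-d^2-b^2=hk+1-b^2=0$. For $\Theta$, I get $\Theta=(2b^2+3)a-3d-(a+d)b$, which should be a rearrangement of the second equation written in $h,k$. If the rewriting does not match exactly, I will instead substitute the explicit $a,d$ at $\tau=1/2$ and compute $\Theta$ numerically.

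Finally, for the other five witnesses I will double-check that none accidentally satisfies (vi). I will pay particular attention to the (v) witness, where I need $\delta\ne0$, and the (iii) witness, where I need $I\ne0$.
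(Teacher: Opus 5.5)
Your strategy is the paper's: one witness per condition, checked by substitution. Your points for (i), (iii), (iv), (v) and (vi) are valid. For (vi) no hedging is needed, because $(2b^2+3)a-3d-(a+d)b=(b^2+3)h+b(b-1)k$ identically, so $\delta=\Theta=0$ are exactly the two normal-form equations. (Up to circle and scale, the paper uses the lighter common-vector point $(a,d,q,p)=(1,1,1,1)$.)

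The genuine gap is condition (ii). No witness is actually specified there, and the separation from (vi) that you sketch cannot work, because (ii) forces $\delta=0$ identically: $a=0$ and $d^2+v=u$ give $\delta=u-(d^2+v)=0$. So (vi) must be excluded through $I$ or $\Theta$, as your stray remark about the third test hints. On (ii) one has $\Theta=-d(3u^2+R)$ with $|R|=\sqrt v\,u^{3/2}\le u^2$. Hence, for $u>0$, $\Theta\ne0$ precisely when $d\ne0$. The condition $d\ne0$ is also exactly what excludes (iii), (iv) and (v), since each of them forces $d=0$ once $a=0$; note that $d=0$ on (ii) gives $v=u$, which is a point of (iii). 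A concrete witness is $(a,d,q,p)=(0,1,1,0)$, with $u=1$, $v=0$, $\delta=I=0$ and $\Theta=-3$. It is the paper's row $(3,0,1,2,0,0)$, i.e.\ $(0,6,6i,0)$, after the residual circle and rescaling.

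A smaller defect: your blanket reason for (i) is false, since none of (ii)--(vi) forces $u>0$. For instance, $(1,-1,0,0)$, $(0,0,0,1)$, $(1,3,0,0)$ and $(1,1,0,0)$ satisfy (i) together with (iii), (iv), (v) and (vi) respectively. The paper itself notes that the $q=0$ points of (v) and (vi) satisfy (i). Your point $(1,0,0,0)$ is still a valid witness, but only by direct check:
$a\ne0$ excludes (ii) and (iv);
$d\ne-a$ excludes (iii);
$d\ne3a$ excludes (v);
$\delta=1\ne0$ excludes (vi).
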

\begin{proof}
The following six local coordinate vectors are witnesses.  Direct substitution in \eqref{eq:scalar-invariants}--\eqref{eq:scalar-theta} shows that row $j$ satisfies condition $j$ and no other condition in \eqref{eq:six-tests}.

\begin{center}
\begin{tabular}{cc}\toprule
Condition & $(x_0,x_1,x_2,x_3,x_4,x_5)$\\\midrule
1 & $(0,0,1,0,0,0)$ \\
2 & $(3,0,1,2,0,0)$ \\
3 & $(1,0,1,-1,0,-1)$ \\
4 & $(1,0,0,0,0,0)$ \\
5 & $(0,0,1,1,0,1)$ \\
6 & $(1,0,2,1,0,-1)$ \\
\bottomrule\end{tabular}
\end{center}
Thus omitting any condition would omit a genuine part of the nongenerating set.
\end{proof}

Define six nonnegative real homogeneous polynomials by
\begin{align}
F_1&=u,\nonumber\\
F_2&=a^2\rho+(d^2+v-u)^2,\nonumber\\
F_3&=(a+d)^2\rho+(v-u)^2,\nonumber\\
F_4&=(a^2+d^2)\rho^3+I^2,\label{eq:classifier-factors}\\
F_5&=(d-3a)^2\rho^2+|up-3q^3|^2,\nonumber\\
F_6&=\delta^2\rho^3+I^2\rho+\Theta^2,\nonumber
\end{align}
and put
\begin{equation}
\mathcal P_{34}(Z)=\prod_{j=1}^{6}F_j(Z).
\label{eq:exact-polynomial}
\end{equation}

\begin{corollary}[Homogeneous encoding of the six scalar tests]
\label{thm:polynomial-classifier}
$\mathcal P_{34}$ is circle-invariant and homogeneous of degree $34$, and
\[
\mathcal N=\{\mathcal P_{34}=0\},\qquad
\mathcal G=\{\mathcal P_{34}>0\}.
\]
In particular, the complete nongenerating set is real algebraic, and its complement is a nonempty real Zariski-open set.  The degree is neither claimed to be minimal nor intrinsic to the locus.
\end{corollary}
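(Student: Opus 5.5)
The argument reduces the corollary to \Cref{thm:scalar-classifier}. I will show that each $F_j$ is a nonnegative, circle-invariant, homogeneous polynomial whose real zero set is exactly condition $(j)$ of \eqref{eq:six-tests}. Then I will add up the degrees.

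\emph{Invariance and degrees.} The building blocks $u,v,R,I,\rho,\delta,\Theta$ were already noted to be circle-invariant, with degrees $2,2,4,4,2,2,5$. Only $F_5$ needs more care. Under \eqref{eq:residual-circle}, $up$ has weight three, since $u$ is invariant and $p$ has weight three. Also $q^3$ has weight three. So $up-3q^3$ is multiplied by $e^{3i\theta}$, and its squared modulus is invariant.

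The degrees come out homogeneous:
\begin{itemize}
\item $F_1$ has degree $2$.
\item $F_2$: $a^2\rho$ and $(d^2+v-u)^2$ both have degree $4$.
\item $F_3$ has degree $4$.
\item $F_4$: $(a^2+d^2)\rho^3$ and $I^2$ both have degree $8$.
\item $F_5$: $(d-3a)^2\rho^2$ has degree $6$, and $up-3q^3$ has degree $3$, so its squared modulus also has degree $6$.
\item $F_6$: $\delta^2\rho^3$, $I^2\rho$ and $\Theta^2$ all have degree $10$.
\end{itemize}
The total is $2+4+4+8+6+10=34$. Each $F_j$ is a sum of products of squares and of the nonnegative quantity $\rho$, so each $F_j$ is nonnegative.

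\emph{Zero sets.} The key observation is that $\rho$ is the squared Euclidean norm in the coordinates $(a,d,q,p)$, so $\rho>0$ whenever $Z\ne0$.

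For $Z\ne0$, each $F_j$ is a sum of nonnegative terms, and every coefficient factor of the form $\rho^k$ is positive. So $F_j=0$ holds exactly when every squared quantity vanishes:
\begin{itemize}
\item $F_2=0$ gives $a=0$ and $d^2+v=u$.
\item $F_3=0$ gives $a+d=0$ and $v=u$.
\item $F_4=0$ gives $a=d=0$ and $I=0$.
\item $F_5=0$ gives $d=3a$ and $up=3q^3$.
\item $F_6=0$ gives $\delta=I=\Theta=0$.
\end{itemize}
These are conditions (ii)--(vi). Condition (i) is literally $F_1=0$.

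At $Z=0$, every $F_j$ vanishes, and $Z=0$ also satisfies (i) and fails to generate, so it is consistent with both sides.

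Hence $\{\mathcal P_{34}=0\}$ is the union of the six conditions, which is $\mathcal N$ by \Cref{thm:scalar-classifier}. Nonnegativity then gives $\mathcal G=\{\mathcal P_{34}>0\}$.

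\emph{Remaining assertions.} $\mathcal N$ is the zero set of a single real polynomial, so it is real algebraic. Its complement is Zariski-open. It is nonempty because $X\in\mathcal G$ by \Cref{thm:single}; equivalently, \Cref{thm:generic} already supplies a generating element.

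\emph{Where the care is needed.} The only delicate point is the role of $\rho$: the weighting by powers of $\rho$ is what makes the degrees homogeneous, and it does so without changing the zero sets away from the origin. That check is elementary. Nothing beyond \Cref{thm:scalar-classifier} is needed, and no claim of minimal degree is made.
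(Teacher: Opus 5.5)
Your proposal is correct and follows essentially the same route as the paper. You use $\rho>0$ for $Z\ne0$ to read conditions (i)--(vi) of \Cref{thm:scalar-classifier} off the vanishing of the six factors, check the degrees $(2,4,4,8,6,10)$ and circle invariance (using the weight-three term $up-3q^3$), treat the origin separately, and obtain nonemptiness from the generating pulse $X$ of \Cref{thm:single}.
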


\begin{proof}
For $Z\ne0$ one has $\rho>0$.  Consequently a factor in
\eqref{eq:classifier-factors} vanishes if and only if all its nonnegative summands vanish.
In order, this gives exactly conditions (i)--(vi) of \Cref{thm:scalar-classifier}.
Thus $\mathcal P_{34}(Z)=0$ if and only if at least one of the six failure tests holds.
At the origin every factor vanishes and the generated algebra is zero, so the same
criterion remains valid there.

The factor degrees are $(2,4,4,8,6,10)$, whose sum is $34$.
The polynomials $u,v,R,I,\rho,\delta,\Theta$ are invariant under the residual circle;
$up-3q^3$ has weight three, so its squared modulus is invariant as well.
This proves homogeneity, nonnegativity, and invariance of the product.
By \Cref{thm:single} and the exact criterion just established,
$\mathcal P_{34}(X)>0$.  Hence its zero set is a proper real algebraic subset
and its complement is nonempty and real Zariski open.
\end{proof}

The factors in \eqref{eq:classifier-factors} express simultaneous real equations by sums of squares with positive powers of $\rho$.  This is the standard real sum-of-squares encoding.  The powers of $\rho$, positive away from the origin, merely equalize homogeneous degrees; other positive homogeneous multipliers give different degrees without changing the zero set.  Such a real equation may have real codimension greater than one.  This construction does not identify the irreducible components of a complex algebraic variety, and it does not extend the criterion from the compact real form to the complex Lie algebra by changing the ground field.

\begin{corollary}[Comparison with the uniform determinant]\label{cor:fixed-R30}
For the fixed local configuration $(\mathfrak k_A,S)$,
\begin{equation}
\mathcal N=\{Z\in\mathfrak k_A:\mathcal R_{30}(S,Z)=0\}
=\{Z\in\mathfrak k_A:\mathcal P_{34}(Z)=0\}.\label{eq:R30-P34}
\end{equation}
\end{corollary}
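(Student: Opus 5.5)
The second equality in \eqref{eq:R30-P34} is \Cref{thm:polynomial-classifier}, so only $\mathcal N=\{Z\in\mathfrak k_A:\mathcal R_{30}(S,Z)=0\}$ needs proof. The plan is to compare the factors of the uniform criterion \Cref{thm:universal-locus} with the fixed-fiber classification. For this we need that $S\in\mathscr C$. This holds because $S$ is $+I$ on the associative plane $P_{H_S}$ and $-I$ on $H_S^\perp$, as recorded before \Cref{prop:symmetric-switch}. Hence $S=\sigma_{H_S}$, and \Cref{thm:universal-locus} applies with $T=S$. It gives
\[
\mathcal N=\{Z\in\mathfrak k_A:\mathcal R_{30}(S,Z)\mathcal H_8(S,Z)=0\}.
\]
It therefore suffices to show that every $Z\in\mathfrak k_A$ with $\mathcal H_8(S,Z)=0$ already has $\mathcal R_{30}(S,Z)=0$.

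Take $Z\in\mathfrak k_A$ with $\mathcal H_8(S,Z)=0$. By \Cref{lem:H8}, $\dim L_Z\le3$. Outcome (ii) of \Cref{thm:universal-locus} would make $L_Z$ a principal $\mathfrak{su}(2)$. For $Z\ne0$ this is excluded by \Cref{cor:no-principal}. For $Z=0$ it is excluded trivially, since $L_Z=0$ and $\mathcal R_{30}(S,0)=0$ because the Gram matrix vanishes. Since the three outcomes of \Cref{thm:universal-locus} are exhaustive and mutually exclusive, $\mathcal R_{30}(S,Z)>0$ would force outcome (ii) when $\mathcal H_8=0$. Hence $\mathcal R_{30}(S,Z)=0$.

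We can also verify this directly from \Cref{thm:low-complete}, which avoids leaning on the outcome trichotomy. In case (i) the closure is abelian of dimension at most two in odd dimension seven, so it has a common fixed vector. In case (ii) the common fixed spaces have dimensions one, three, one, zero for $q=0$ and $\eta=1,-1,-3$. Only the case $\eta=-3$ needs further work, since there reducibility must come from a nonzero proper invariant subspace rather than a fixed vector. This is the step I expect to be the only real obstacle. A three-dimensional $\mathfrak{su}(2)$ acting on $V$ without fixed vectors is still reducible unless it is spin three. By \Cref{cor:no-principal} it is not principal, so it is reducible, and \Cref{lem:odd-commutant} with \Cref{prop:reducibility-polynomial} gives $\mathcal R_{30}=0$. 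Alternatively, one could read off the Casimir spectra recorded in Appendix~\ref{app:terminal-certificates}, where a non-scalar Casimir exhibits the invariant subspaces directly.

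Combining these, $\{\mathcal R_{30}\mathcal H_8=0\}\cap\mathfrak k_A=\{\mathcal R_{30}=0\}\cap\mathfrak k_A$, which gives the first equality. The second equality is \Cref{thm:polynomial-classifier}.
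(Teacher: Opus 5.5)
Your proposal is correct and is essentially the paper's argument. With $S=\sigma_{H_S}\in\mathscr C$, which you rightly check, \Cref{cor:no-principal} rules out outcome (ii) of \Cref{thm:universal-locus} on $\mathfrak k_A$, so $\mathcal U_{38}(S,\cdot)$ and $\mathcal R_{30}(S,\cdot)$ have the same zero set there, and \Cref{thm:polynomial-classifier} supplies the second equality; your supplementary case check via \Cref{thm:low-complete} is also valid but not needed.
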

\begin{proof}
\Cref{cor:no-principal} excludes the irreducible proper outcome for every local pulse with this $S$.
The factorization theorem \Cref{thm:universal-locus} therefore reduces generation to $\mathcal R_{30}(S,Z)>0$.
The scalar encoding in \Cref{thm:polynomial-classifier} has the same zero set.
\end{proof}
The determinant has degree thirty rather than thirty-four.  The six scalar systems are therefore not needed to prove the existence of a decision criterion for this fiber.  Their distinct role is the explicit elimination of auxiliary variables and the finer geometry established in \Cref{sec:failure}; equality of these zero sets is not equality of the polynomials or a comparison of evaluation times.

For the comparison of the sufficient and exact criteria, fix
\[
Z_0=-2D_{12}+2D_{13}+D_{23}+D_{45}+2D_{46}.
\]

\begin{proposition}[The fixed bracket certificate is strictly sufficient]
\label{prop:strict-certificate}
The degree-$58$ determinant in \Cref{thm:generic} satisfies
\[
\{\Delta_{58}\ne0\}\subsetneq\mathcal G,
\qquad \mathcal N\subsetneq\{\Delta_{58}=0\}.
\]
The element $Z_0$ belongs to $\mathcal G\cap\{\Delta_{58}=0\}$.
\end{proposition}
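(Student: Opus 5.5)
The statement has two parts, and the first inclusion $\{\Delta_{58}\ne0\}\subseteq\mathcal G$ is already proved in \Cref{thm:generic}. Taking complements gives $\mathcal N\subseteq\{\Delta_{58}=0\}$. Both strictness assertions therefore reduce to exhibiting a single element of $\mathcal G\cap\{\Delta_{58}=0\}$, and the proposition names $Z_0$ for this purpose. The plan is to verify two facts about $Z_0$: that $\Delta_{58}(Z_0)=0$, and that $L_{Z_0}=\mathfrak g_2$.

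For generation I would use the scalar classifier rather than computing a new bracket basis. Write $Z_0$ in the coordinates $(x_0,\dots,x_5)=(-2,2,1,1,2,0)$. Then \eqref{eq:terminal-qa-qb} gives
\[
q_a=(1,-6,-4),\qquad q_b=(3,6,0),
\]
so $a=1$, $d=3$, $q=-6-4i$ and $p=6$. I would then check the six tests of \Cref{thm:scalar-classifier} one at a time. Test (i) fails because $u=52\ne0$. Test (ii) fails because $a\ne0$. Test (iii) fails because $d\ne-a$. Test (iv) fails because $a\ne0$. Test (v) has $d=3a$, but $up=312$ while $3q^3$ has nonzero imaginary part, so it fails. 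Test (vi) needs $\delta=0$, but $\delta=1+52-9-36=8\ne0$, so it fails. Hence $\mathcal P_{34}(Z_0)>0$, and \Cref{thm:polynomial-classifier} gives $Z_0\in\mathcal G$. As an independent check, I could instead evaluate $\mathcal R_{30}(S,Z_0)>0$ and invoke \Cref{cor:fixed-R30}.

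For the determinant, I would compute the fourteen words $W_i(Z_0,\Ad_SZ_0)$ in the basis \eqref{eq:g2basis}. The cleanest certificate of $\Delta_{58}(Z_0)=0$ is an explicit linear dependency among the columns. Even better would be a structural identity, for instance a Jacobi relation that makes some word collapse into the span of earlier ones. A plausible mechanism is that for $Z_0$ the prefix $V_2$ or $V_3$ has smaller dimension than the generic filtration $2,3,5,11$. The full algebra is then still reached through brackets absent from the fixed word list. I would test the first few ranks: $\dim\Span\{W_1,\dots,W_5\}$, then $\dim\Span\{W_1,\dots,W_{11}\}$.

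The main obstacle is this last step. Showing exact vanishing of a degree-$58$ determinant at a specific point requires exact arithmetic, and ideally a conceptual reason for the degeneracy rather than a bare numerical zero. I would first look for a short linear relation among low-order words, since those are small matrices. If none appears, I would fall back on exact rational row reduction of the $14\times14$ coordinate matrix and record the resulting kernel vector as the certificate.
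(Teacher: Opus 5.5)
Your proposal is correct. The half proving $\Delta_{58}(Z_0)=0$ matches the paper: both use an explicit linear relation among the fourteen words at $(Z_0,\Ad_SZ_0)$. The paper records it as $-152064W_3+204W_6-24W_7+204W_8+W_{11}=0$. So your guessed mechanism is what happens: $V_3=\Span\{W_1,\ldots,W_{11}\}$ has dimension ten rather than eleven.

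You depart from the paper in the generation half. Your evaluation is right: $a=1$, $d=3$, $u=52$, $v=36$, $3q^3=216-1104i\ne312$, and $\delta=8$. So \Cref{thm:scalar-classifier} does give $L_{Z_0}=\mathfrak g_2$. The paper, however, uses $\mathcal P_{34}(Z_0)>0$ only as a consistency check. Its actual proof of generation replaces $W_{11}$ by the extra bracket $[W_1,W_{10}]\in V_4$. It then exhibits the nonzero determinant of $W_1,\ldots,W_{10},W_{12},W_{13},W_{14},[W_1,W_{10}]$.

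Your route is lighter, since it needs only six hand-checkable scalar tests. But it makes the strictness of the degree-$58$ certificate depend on the whole fixed-fiber classification (\Cref{thm:complete-failure-set,thm:scalar-classifier}), and hence on the imported compact maximal-subalgebra list. Your $\mathcal R_{30}$ alternative has the same dependence. The paper's bracket certificate is self-contained, so the proposition also serves as an independent cross-check of that classification. You must compute the fourteen words anyway to certify $\Delta_{58}(Z_0)=0$. One more bracket and one more $14\times14$ determinant therefore buys that independence at little extra cost.
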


\begin{proof}
Evaluate the words $W_1,\ldots,W_{14}$ from \Cref{thm:single} at $(Z_0,\Ad_S Z_0)$.  Reduction in the basis \eqref{eq:g2basis} gives the exact dependence
\[
-152064W_3+204W_6-24W_7+204W_8+W_{11}=0.
\]
Nevertheless, the ordered columns
\[
W_1,\ldots,W_{10},W_{12},W_{13},W_{14},[W_1,W_{10}]
\]
have determinant
\[
-2^{107}3^{23}7^4\cdot23\cdot37\cdot43^3\cdot79\ne0.
\]
Both equalities are obtained from the same Fano multiplication rule and fixed bracket words as the original certificate.  Hence $\Delta_{58}(Z_0)=0$ but $L_{Z_0}=\mathfrak g_2$.  Independently, substitution into \eqref{eq:classifier-factors} gives
\[
\mathcal P_{34}(Z_0)=180993418927422213570661711872>0.
\]
The nonvanishing bracket determinant proves strictness without using the classification theorem, while the polynomial value confirms consistency with that classification.
\end{proof}

A semialgebraic set is a finite union of sets specified by real polynomial equalities and inequalities.  Its dimension is the maximum dimension of its smooth strata.  Polygonal path connectedness means that any two points can be joined by finitely many line segments contained in the set; the next result gives the stronger bound of two segments.  Its codimension statement is relative to the six-dimensional space $\mathfrak k_A$.

\begin{corollary}[Codimension and connectivity]
\label{cor:dimension-connectivity}
The real algebraic set $\mathcal N\subset\mathfrak k_A\cong\RR^6$ has dimension four.  Its complement $\mathcal G$ has full Lebesgue measure, and any two elements of $\mathcal G$ can be joined by two line segments lying entirely in $\mathcal G$.  In particular, $\mathcal G$ and its intersection with the unit sphere are path connected.
\end{corollary}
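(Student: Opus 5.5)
The argument splits into three parts: the dimension of $\mathcal N$, full measure, and the two-segment connectivity of $\mathcal G$.

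\textbf{Dimension four.} By \Cref{thm:complete-failure-set} (or equivalently \Cref{thm:scalar-classifier}), $\mathcal N$ is the union of four explicit families, each parametrized semialgebraically. I would bound and attain the dimension family by family.
\begin{itemize}
\item $\mathcal F_{\rm vec}$: \Cref{prop:failure} gives the four-plane $\mathcal N_{e_1}$, so $\dim\ge4$. On rank six, containment in $\mathcal F_{\rm vec}$ forces $Z\in\mathcal N_{e_1}$. On rank four, \Cref{prop:rankfour-complete} covers it by images of incidence spaces of dimension at most four: the four-plane, the lines over $\RR P^2$ and $\RR P^3$ giving $1+2$ and $1+3$, and the ruled cone of dimension four.
\item $\mathcal F_{\rm proj}$: this is the quartic cone of dimension three in \Cref{cor:quartic-cone}.
\item $\mathcal F_{\rm res}$: this has dimension three by \Cref{cor:resonant-cone}.
\item $\mathcal F_{\rm rf}$: this is the image of $(\lambda,\theta,\tau)$, of dimension at most three.
\end{itemize}
Since a finite union of semialgebraic images of smooth manifolds of dimension at most four has dimension at most four, $\dim\mathcal N=4$. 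Alternatively, one can read the dimensions directly from the six scalar systems. For example, (i) $u=0$ is a four-dimensional linear subspace, and (ii), (iii) are codimension two.

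\textbf{Full measure.} This follows from \Cref{thm:generic}, or directly from $\dim\mathcal N=4<6$.

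\textbf{Connectivity, the main step.} For $Z_1,Z_2\in\mathcal G$, I would look for a pivot $Y\in\mathcal G$ such that both segments $[Z_1,Y]$ and $[Y,Z_2]$ avoid $\mathcal N$. Fix $Z_1$ and consider the set $B(Z_1)$ of points $Y$ for which the segment $[Z_1,Y]$ meets $\mathcal N$. This set is contained in the cone over $\mathcal N$ with vertex $Z_1$, namely the union of lines through $Z_1$ and points of $\mathcal N$. That union is the image of $\mathcal N\times\RR$, so it is semialgebraic of dimension at most $5<6$ and hence has measure zero. The same holds for $B(Z_2)$, and $\mathcal N$ itself has measure zero. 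Therefore a generic $Y$ lies outside $\mathcal N\cup B(Z_1)\cup B(Z_2)$, and such a $Y$ works as the pivot.

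\textbf{The sphere.} Because $\mathcal N$ is a cone (homogeneous), $\mathcal G$ is invariant under nonzero scaling, in particular $Z\mapsto-Z$. Radial projection of the two-segment path therefore lands in $\mathcal G\cap S^5$, provided the path avoids the origin. The origin lies in $\mathcal N$, so the path automatically avoids it.

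The only delicate point I expect is the dimension count of the rank-four incidence parametrizations. I would rely on the explicit fiber dimensions stated in \Cref{prop:rankfour-complete} for this.
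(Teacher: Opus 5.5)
Your proposal is correct and follows essentially the same route as the paper. In both, the dimension is counted from the explicit parametrizations of the four families together with \Cref{thm:complete-failure-set}, and connectivity comes from choosing the pivot outside the (at most five-dimensional) cones over $\mathcal N$ with vertices $z_1,z_2$, followed by radial projection. The only variation is that you bound the common-vector family through the rank-stratified incidence descriptions of \Cref{prop:failure,prop:rankfour-complete}, whereas the paper reads its dimension off the four scalar systems of \Cref{lem:scalar-vectors}; both give the bound four, attained by the four-plane $\mathcal N_{e_1}=\{u=0\}$.
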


\begin{proof}
The common-vector set in \Cref{lem:scalar-vectors} has dimension at most four.  Indeed, its first family is a four-plane.  Its second and third families are respectively defined, after one linear restriction, by a nonzero quadratic equation in five real variables.  In the fourth, $q\ne0$ determines $p=q^3/|q|^2$, leaving the three parameters $(a,q)$; its $q=0$ part is also at most three-dimensional.  The four-plane $u=0$ gives the matching lower bound.

The projective family is a quartic hypersurface in a four-plane and has dimension three by \Cref{cor:quartic-cone}.  The resonant and rank-four families have dimension at most three by their explicit parametrizations.  \Cref{thm:complete-failure-set} therefore gives $\dim_\RR\mathcal N=4$.  These dimension statements use the usual dimension of semialgebraic sets; the parameter descriptions also provide finite smooth stratifications, so a set of dimension at most five in $\RR^6$ has Lebesgue measure zero.

The remaining path-connectivity assertion is a general consequence for complements of closed semialgebraic sets of codimension at least two, applied here to the dimension just computed.  Fix $z\in\mathcal G$.  The intermediate endpoints $w$ for which the segment $[z,w]$ meets $\mathcal N$ are contained in the semialgebraic image
\[
B_z=\{z+t(n-z):n\in\mathcal N,\ t\ge1\}.
\]
This set has dimension at most five: the dimension of a semialgebraic image does not exceed that of its parameter space \cite[\S2.8]{BochnakCosteRoy1998}, here $\dim\mathcal N+1=5$.  Given $z_1,z_2\in\mathcal G$, choose $w\notin B_{z_1}\cup B_{z_2}$.  The two segments $[z_1,w]$ and $[w,z_2]$ avoid $\mathcal N$, as required.  The unit-sphere assertion follows by radial projection of this path: the origin lies in $\mathcal N$, and $\mathcal G$ is invariant under nonzero rescaling by homogeneity of $\mathcal P_{34}$.
\end{proof}

\section{Every context-moving reflection admits a generating pulse}
\label{sec:varying-switch}

The fixed involution admits a complete scalar classification.  A different question has an answer uniform over all quaternionic reflections: exactly which reflections admit at least one generating pulse from the local stabilizer?  The answer is simply that the reflection must move the local context.  Moreover, a fixed list of six pulses suffices to test every reflection.

In the notation $Z(\mathbf a,\mathbf b)$ used next, the bold vectors are precisely the quaternionic velocity coordinates $q_a,q_b$ from \eqref{eq:gamma}.
For $T\in\mathscr C$ and $\mathbf a\in P_A$ with $T\mathbf a\notin P_A$, define
\[
H(\mathbf a,T)=\RR1\oplus\Span\{\mathbf a,T\mathbf a,\mathbf a\times T\mathbf a\}.
\]

\begin{lemma}[A forced context and an affine obstruction]
\label{lem:universal-affine}
Let $T\in\mathscr C$ and let $0\ne \mathbf a\in P_A$ satisfy $T\mathbf a\notin P_A$.  The subspace $H(\mathbf a,T)$ is a $T$-invariant quaternionic context distinct from $H_A$.  Suppose that $Z=Z(\mathbf a,\mathbf b)\in\mathfrak k_A$ has three pairwise distinct positive frequencies which are not in the ratio $1:2:3$.  Then
\[
\Lie\{Z,TZT\}\ne\mathfrak g_2
\quad\Longleftrightarrow\quad Z\in\mathfrak k_{H(\mathbf a,T)}.
\]
For fixed $\mathbf a,T$, the set of $\mathbf b\in\operatorname{Im}\HH$ on the right is either empty or an affine subspace of dimension at most one.
\end{lemma}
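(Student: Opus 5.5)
We will proceed in three steps: context properties, the generation equivalence, and the affine bound.

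\emph{Context properties.} Write $\mathbf a'=T\mathbf a$. Since $T\mathbf a\notin P_A$, the vectors $\mathbf a$ and $\mathbf a'$ are independent. The subalgebra generated by two imaginary octonions is associative by Artin's theorem, so it is a quaternionic subalgebra. Its imaginary part is $\Span\{\mathbf a,\mathbf a',\mathbf a\times\mathbf a'\}$, so $H(\mathbf a,T)$ is a context. $T$ is an automorphism with $T^2=I$, so it exchanges $\mathbf a$ and $\mathbf a'$ and fixes $\mathbf a\times\mathbf a'$; hence $T$ preserves the context. It contains $\mathbf a'\notin P_A$, so it differs from $H_A$.

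\emph{The equivalence.} For the backward direction, suppose $Z\in\mathfrak k_{H(\mathbf a,T)}$. Then $TZT\in\mathfrak k_{T(H)}=\mathfrak k_H$, so the closure lies in the proper subalgebra $\mathfrak k_H$.

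For the forward direction, three distinct positive frequencies make $Z$ rank six with $\ker Z=\RR\mathbf a$ (Proposition~\ref{prop:failure}). Since the ratio is not $1:2:3$, Corollary~\ref{cor:universal-spectral} excludes a principal closure. A closure of dimension $\le3$ containing $Z$ would be abelian or $\mathfrak{su}(2)$. The $\mathfrak{su}(2)$ case is ruled out as follows: a nonprincipal $\mathfrak{su}(2)$ acts reducibly on $V$ through spins of dimension $\le 4$, and spectrally regular weights cannot fit, because the frequency triple would then be forced to contain a coincidence or a zero. This is the step I expect to be the main obstacle. I would check it through the branchings $3\oplus4$, $1\oplus3\oplus3$, $1\oplus 2\oplus 2\oplus\ldots$ of non-principal $A_1$'s in $\mathfrak g_2$, or more simply use that $TZT$ has the same spectrum. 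I would also invoke Theorem~\ref{thm:universal-locus}, which reduces the irreducible-proper outcome to the principal case only. So the closure $L$ is reducible and lies in some $\mathfrak g_2^w$ or $\mathfrak k_{H'}$, by the list of compact maximal subalgebras in \S\ref{subsec:compact-maximals}.

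\emph{Vector case.} $L\subseteq\mathfrak g_2^w$ with $L$ being $\Ad_T$-stable. Then $w\in\ker Z\cap T\ker Z=\RR\mathbf a\cap\RR\mathbf a'=0$, a contradiction.

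\emph{Context case.} If $L\subseteq\mathfrak k_{H'}$, then $L\subseteq\mathfrak k_{H'}\cap\mathfrak k_{T(H')}$. If $T(H')\ne H'$, the dimension is $\le3$ by Lemma~\ref{lem:nonfixed-context-bound}. An algebra of dimension $\le3$ is handled as above: the abelian case fails because $Z$ and $TZT$ are regular, and a commuting regular pair shares its kernel line, so $\RR\mathbf a=\RR\mathbf a'$, a contradiction. Hence $T(H')=H'$. As in the proof of Theorem~\ref{thm:visible-contexts}, the restriction of $Z$ to the odd-dimensional invariant plane $P_{H'}$ has a kernel, so $\mathbf a\in P_{H'}$. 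Applying $T$ gives $\mathbf a'\in P_{H'}$, which forces $H'=H(\mathbf a,T)$.

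\emph{Affine bound.} With $\mathbf a$ fixed, the condition $Z(\mathbf a,\mathbf b)\in\mathfrak k_H$ is linear in $\mathbf b$, namely $Z(\mathbf a,0)+Z(0,\mathbf b)\in\mathfrak k_H$. So the solution set is empty or a translate of $\{\mathbf b: Z(0,\mathbf b)\in\mathfrak k_H\}=\mathfrak k_A^{\mathrm{pt}}\cap\mathfrak k_H$. This space has dimension at most one: $\mathfrak k_A^{\mathrm{pt}}\cong\mathfrak{su}(2)$ acts irreducibly on $H_A^\perp$ and trivially on $P_A$. A subalgebra of $\mathfrak{su}(2)$ of dimension $\ge2$ is all of $\mathfrak k_A^{\mathrm{pt}}$. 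That would preserve $P_H$, whose projection to $H_A^\perp$ is nonzero (it contains $\mathbf a'$) but has dimension $\le 2<4$. This contradicts irreducibility, because the invariant subspace decomposes into $P_H\cap P_A$ plus an invariant piece of $H_A^\perp$.
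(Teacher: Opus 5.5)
Your proof has the same architecture as the paper's: the compact maximal list, the distinct kernel lines $\RR\mathbf a\ne\RR T\mathbf a$ excluding a vector stabilizer, the intersection bound of \Cref{lem:nonfixed-context-bound} forcing $T(H')=H'$, the odd-dimensional kernel argument forcing $H'=H(\mathbf a,T)$, and the bound on $\mathfrak k_A^{\mathrm{pt}}\cap\mathfrak k_{H(\mathbf a,T)}$ from irreducibility of the four-dimensional block. Your abelian exclusion and your affine bound are correct as written. The one step that is not yet an argument is the one you flagged: that no three-dimensional $\mathfrak{su}(2)$ closure can contain $Z$. The remarks about ``spins of dimension $\le4$'' and about $TZT$ having the same spectrum do not prove it. Any weight count must also use the real structure of $V$, as explained below.

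The step closes quickly in either of two ways. The paper's way uses that you only need it when $L\subseteq\mathfrak k_{H'}\cong\mathfrak{sp}(1)_a\oplus\mathfrak{sp}(1)_b$. Each projection of a simple $L$ onto a factor is zero or an isomorphism. So $L$ is either a factor, with frequencies $(2\alpha,\alpha,\alpha)$ or $(0,\beta,\beta)$, or the graph of an automorphism of $\mathfrak{su}(2)$. Such an automorphism preserves the quaternionic norm, so $\beta=\alpha$ and the frequencies are $(2\alpha,0,2\alpha)$. None of these has three distinct positive frequencies.

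The global version you were aiming at also works. Write $V_\CC=\bigoplus_n m_nV_n$ under $L\cong\mathfrak{su}(2)$. A nonzero element of $L$ has kernel of dimension $\sum_{n\ \mathrm{even}}m_n$, so a one-dimensional kernel leaves exactly one summand with $n$ even. Because $V$ is real, every $V_n$ with $n$ odd occurs with even multiplicity. Dimension seven then leaves only $V_6$, which is principal and excluded by the ratio hypothesis, and $V_2\oplus2V_1$, whose frequencies are $2\lambda,\lambda,\lambda$. The reality constraint is essential: without it, $V_4\oplus V_1$ would give the regular, non-principal triple $4\lambda,2\lambda,\lambda$.

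Finally, there is a small slip in your first step. Since $T(\mathbf a\times T\mathbf a)=T\mathbf a\times\mathbf a=-\mathbf a\times T\mathbf a$, the involution negates the cross product rather than fixing it. The $T$-invariance of $H(\mathbf a,T)$ is unaffected.
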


\begin{proof}
The independent imaginary vectors $\mathbf a$ and $T\mathbf a$ generate the indicated quaternionic subalgebra by alternativity.  The involution exchanges them and sends their cross product to its negative, so the context is $T$-invariant.  Since $T\mathbf a\notin P_A$, it is different from $H_A$.

The frequency hypothesis gives $\operatorname{rank}Z=6$ and $\ker Z=\RR \mathbf a$.  The conjugate has kernel $\RR T\mathbf a$, so the pair has no common fixed vector.  If its Lie algebra $L$ is proper, the compact maximal-subalgebra list recalled in \Cref{subsec:compact-maximals} leaves a vector stabilizer, a quaternionic stabilizer, or a principal $\mathfrak{su}(2)$.  The first is excluded by the common kernels; the last is excluded by the frequency ratios.  Thus $L\subset\mathfrak k_H$ for some context $H$.

In this situation $\dim L>3$.  Indeed, an abelian $L$ would preserve the one-dimensional kernel of $Z$, and skew symmetry would force its elements to kill that line, contrary to the absence of a common fixed vector.  A nonabelian compact Lie algebra of dimension at most three is $\mathfrak{su}(2)$.  In $\mathfrak k_H\cong\mathfrak{su}(2)\oplus\mathfrak{su}(2)$, its projections onto the two factors are either zero or isomorphisms.  A factor embedding has either rank-four elements or positive frequencies in the ratio $1:1:2$; a graph embedding has equal quaternionic norms in the two factors and hence rank-four elements.  Indeed, in the coordinates $(q_a,q_b)$ both factors use the same quaternionic commutator, and a graph is the graph of an automorphism of $\mathfrak{su}(2)$.  Since $\Aut(\mathfrak{su}(2))\cong\SO(3)$, that automorphism preserves the quaternionic norm.  This concerns the coordinate norms, not the differently weighted restrictions of the ambient trace metric.  These assertions follow directly from the stabilizer frequencies $2\alpha,|\beta-\alpha|,\beta+\alpha$ in \Cref{prop:failure}.  None is compatible with the assumed spectrum.

The generated algebra is $\Ad_T$-stable, so it lies in $\mathfrak k_H\cap\mathfrak k_{T(H)}$.  By \Cref{lem:nonfixed-context-bound}, dimension greater than three forces $T(H)=H$.  The skew restriction of $Z$ to the odd-dimensional plane $P_H$ has a kernel, so its unique kernel line $\RR \mathbf a$ lies in $P_H$.  Invariance gives $T\mathbf a\in P_H$ and therefore $H=H(\mathbf a,T)$.  This proves necessity.  Conversely, $Z\in\mathfrak k_{H(\mathbf a,T)}$ and $T(H(\mathbf a,T))=H(\mathbf a,T)$ place both generators in the same proper stabilizer.

If the affine set of $\mathbf b$ is nonempty, its direction space identifies with
\[
\mathfrak k_A^{\mathrm{pt}}\cap\mathfrak k_{H(\mathbf a,T)}.
\]
This is a subalgebra of $\mathfrak k_A^{\mathrm{pt}}\cong\mathfrak{su}(2)$, so it has dimension zero, one, or three.  Dimension three would mean that the entire pointwise stabilizer preserves $P_{H(\mathbf a,T)}$.  Its representation on $V$ is the direct sum of the trivial three-space $P_A$ and an irreducible real four-space.  Thus $P_A$ is its only invariant three-plane, forcing $H(\mathbf a,T)=H_A$, a contradiction.  The dimension is therefore at most one.
\end{proof}

Choose the following fixed local coordinate vectors:
\[
\mathbf a^{(1)}=e_1,\qquad \mathbf a^{(2)}=e_2,\qquad
\mathbf b^{(1)}=e_1+e_2,\quad \mathbf b^{(2)}=e_1+e_3,\quad \mathbf b^{(3)}=e_2+e_3,
\]
and define six fixed derivations with integer coefficients in the local derivation basis below
\[
U_{ij}=6Z(\mathbf a^{(i)},\mathbf b^{(j)}),\qquad i\in\{1,2\},\quad j\in\{1,2,3\}.
\]
Their coefficients in the ordered local basis $(D_{12},D_{13},D_{23},D_{45},D_{46},D_{47})$ are the rows
\begin{equation}
\begin{pmatrix}
0&-1&4&2&2&0\\
1&0&4&2&0&2\\
1&-1&3&0&2&2\\
0&-4&1&2&2&0\\
1&-3&1&2&0&2\\
1&-4&0&0&2&2
\end{pmatrix},
\label{eq:six-universal-pulses}
\end{equation}
in the order $(11,12,13,21,22,23)$.  

\Needspace{10\baselineskip}
\begin{theorem}[A six-pulse test for all quaternionic reflections]
\label{thm:universal-reflections}
For every $T\in\mathscr C$, the following are equivalent:
\begin{enumerate}[label=\textnormal{(\roman*)},leftmargin=9mm]
\item $T(H_A)\ne H_A$;
\item some $Z\in\mathfrak k_A$ satisfies $\Lie\{Z,TZT\}=\mathfrak g_2$;
\item at least one of the six $U_{ij}$ satisfies $\Lie\{U_{ij},TU_{ij}T\}=\mathfrak g_2$.
\end{enumerate}
If these conditions hold, the generating elements form a nonempty real Zariski-open subset of $\mathfrak k_A$ and have full Lebesgue measure.  For any other local context, conjugating these six pulses gives the same assertion.
\end{theorem}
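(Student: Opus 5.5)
The implications (iii)$\Rightarrow$(ii) and (ii)$\Rightarrow$(i) are immediate. For (iii)$\Rightarrow$(ii), each $U_{ij}$ lies in $\mathfrak k_A$. For (ii)$\Rightarrow$(i), suppose $T(H_A)=H_A$. Then $\Ad_T\mathfrak k_A=\mathfrak k_{T(H_A)}=\mathfrak k_A$, so for every $Z\in\mathfrak k_A$ both $Z$ and $TZT$ lie in the proper subalgebra $\mathfrak k_A$. Hence no local pulse generates. The substance is therefore (i)$\Rightarrow$(iii), which I would derive from \Cref{lem:universal-affine}.

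First I would check that every $U_{ij}$ satisfies the spectral hypothesis of that lemma. Scaling by $6$ is harmless, so take $Z(\mathbf a^{(i)},\mathbf b^{(j)})$. Here $\alpha=1$ and $\beta=\sqrt2$, so by \Cref{prop:failure} the frequencies are $2$, $\sqrt2-1$ and $\sqrt2+1$. These are positive and pairwise distinct. They are not in the ratio $1:2:3$, since no ratio between them can be $2$, $3$ or $3/2$; for instance $(\sqrt2+1)/(\sqrt2-1)=3+2\sqrt2$ is irrational.

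Next, assume $T(H_A)\ne H_A$, so $T(P_A)\ne P_A$. Consider the subspace $E=\{\mathbf a\in P_A:T\mathbf a\in P_A\}=P_A\cap T(P_A)$, using $T=T^{-1}$. I claim $\dim E\le1$. If $E$ contained two independent vectors $x,y$, then $T$ would preserve the cross product, giving $T(x\times y)=Tx\times Ty\in P_A$. Since $x,y,x\times y$ span the associative plane $P_A$, this would force $T(P_A)=P_A$, a contradiction. Because $e_1,e_2$ are independent, at least one $\mathbf a^{(i)}$ lies outside $E$. For that $i$, \Cref{lem:universal-affine} applies: with $\mathbf a=\mathbf a^{(i)}$, the failing pulses $Z(\mathbf a,\mathbf b)$ of our spectral type are exactly those with $\mathbf b$ in an affine set of dimension at most one, that is, at most a line. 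The three points $\mathbf b^{(1)},\mathbf b^{(2)},\mathbf b^{(3)}$ are not collinear, because the differences $e_3-e_2$ and $e_2-e_1$ are independent. So some $\mathbf b^{(j)}$ lies off that line, and $U_{ij}$ generates. I expect the dimension bound on $E$ to be the only delicate point; the cross-product argument above is the route I would take.

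For the final assertions, fix $T$ with $T(H_A)\ne H_A$. By \Cref{thm:universal-locus}, the generating set in $\mathfrak k_A$ is $\{\mathcal U_{38}(T,\cdot)>0\}$. The function $\mathcal U_{38}(T,\cdot)$ restricted to $\mathfrak k_A$ is a real polynomial, and it is nonzero because it is positive at the generating $U_{ij}$. Its zero set is therefore a proper real algebraic subset of measure zero, and its complement is nonempty and Zariski open. For another local context $H'=g(H_A)$ with $g\in G_2$, replace each $U_{ij}$ by $\Ad_gU_{ij}$ and $T$ by $g^{-1}Tg$. Generation is invariant under simultaneous conjugation, and $g^{-1}Tg$ moves $H_A$ exactly when $T$ moves $H'$. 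So the same equivalences hold.
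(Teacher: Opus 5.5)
Your proposal is correct and follows the paper's proof. It has the same two easy implications, the same cross-product argument forcing some $\mathbf a^{(i)}$ with $T\mathbf a^{(i)}\notin P_A$, and the same appeal to the forced-context/affine-obstruction lemma for the spectrally admissible pulses with $\alpha=1$, $\beta=\sqrt2$.

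There are two minor differences. First, you exclude a single failing line by the non-collinearity of the three $\mathbf b^{(j)}$, whereas the paper uses that they lie on the sphere of radius $\sqrt2$. Second, you obtain Zariski-openness from the nonzero polynomial $\mathcal U_{38}(T,\cdot)|_{\mathfrak k_A}$, as the paper itself remarks after the theorem. The paper's proof instead uses finitely many $14\times14$ bracket minors, which avoids relying on the maximal-subalgebra classification for that step.
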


\begin{proof}
If $T$ preserves $H_A$, then $T\mathfrak k_AT=\mathfrak k_A$, so no such pair can generate $\mathfrak g_2$.  This proves (ii)$\Rightarrow$(i), while (iii)$\Rightarrow$(ii) is immediate.

Suppose $T(H_A)\ne H_A$.  At least one of $Te_1,Te_2$ is outside $P_A$: otherwise their cross product $Te_3=T(e_1\times e_2)$ also lies in $P_A$, forcing $T(P_A)=P_A$.  Fix such an $\mathbf a^{(i)}$.  Each $Z(\mathbf a^{(i)},\mathbf b^{(j)})$ has $\alpha=1$ and $\beta=\sqrt2$, hence positive frequencies
\[
2,\quad \sqrt2-1,\quad \sqrt2+1.
\]
They are pairwise distinct and not in the ratio $1:2:3$.  By \Cref{lem:universal-affine}, the failing $\mathbf b^{(j)}$ must lie in an affine subspace of dimension at most one.  The three distinct $\mathbf b^{(j)}$ lie on the sphere of radius $\sqrt2$; a line meets that sphere in at most two points.  Therefore at least one of them generates.  Scaling by six does not change Lie generation.  Inverting \eqref{eq:terminal-qa-qb} gives the integer coefficients in \eqref{eq:six-universal-pulses}.

To prove the assertion about the set of pulses, fix $T$ and set
\[
E_1=\Span\{Z,TZT\},\qquad E_{r+1}=E_r+[Z,E_r]+[TZT,E_r].
\]
Inductively, $E_r$ is spanned by iterated brackets of length at most $r$, with
successive brackets taken with $Z$ or $TZT$.  These words span the generated
Lie algebra: the Jacobi identity rewrites an arbitrary bracket in this form.
If $E_{r+1}=E_r$, the space is invariant under both adjoint generators, so all
later words lie in $E_r$ and $E_r=\Lie\{Z,TZT\}$.  Otherwise its dimension
strictly increases.  Since $\dim\mathfrak g_2=14$, words of length at most
fourteen suffice for every pulse.
Their coordinate columns are polynomial in $Z$.  Full generation is thus the
nonvanishing of at least one of finitely many $14\times14$ minors, a real
Zariski-open condition.  The generating pulse proved above makes one such
minor a nonzero polynomial.  The nongenerating set is contained in its zero
set and therefore has Lebesgue measure zero.  Finally, conjugation transports
the local stabilizer, the reflection, and the six candidates to any other
quaternionic context.
\end{proof}

The six pulses are a fixed menu from which one may select a successful pulse after $T$ is specified.  The theorem does not require six simultaneous continuous controls, nor does it assert that six is the smallest possible menu.  The reflections for which every local pulse fails are exactly those commuting with $\sigma_{H_A}$: preservation of $H_A$ is equivalent to preservation of its orthogonal splitting.  \Cref{thm:invariant-contexts}, applied to $H_A$, describes this exceptional reflection family as the single reflection $\sigma_{H_A}$ together with the four-dimensional family of other invariant contexts.

\begin{remark}[The original pulse works for generic reflections]
\label{thm:generic-reflections}
Keep the original pulse $X=D_{46}+D_{47}$ fixed.  There is a nonempty real Zariski-open subset $\mathscr U\subset\mathscr C$, dense and of full invariant measure, such that
\[
\Lie\{X,TXT\}=\mathfrak g_2\qquad(T\in\mathscr U).
\]
\end{remark}

\begin{proof}
Let $D(T,Z)$ be the determinant of the fourteen words in \Cref{thm:single}, evaluated at $(Z,TZT)$.  It is polynomial in the entries of $T$ and the coordinates of $Z$.  Since $D(S,X)=2^{88}3^9\ne0$, the function $T\mapsto D(T,X)$ is nonzero real analytic on the connected manifold $\mathscr C$.  Its zero set has empty interior and measure zero.  Its nonvanishing locus supplies $\mathscr U$.
\end{proof}

Combining this local existence theorem with \Cref{thm:universal-locus}, the restriction $\mathcal U_{38}(T,\cdot)|_{\mathfrak k_A}$ is nonzero exactly when $T$ moves $H_A$.
If $T$ preserves $H_A$, its orthogonal projector onto $P_A$, minus $3I/7$, is a nonzero symmetric traceless operator commuting with $T$ and every local $Z$; hence $\mathcal R_{30}(T,\cdot)|_{\mathfrak k_A}=0$.
Otherwise the menu supplies a point with $\mathcal U_{38}>0$.
This is the local nonvacuity statement, separate from the full-domain factorization.  The next section concerns a different use of an explicit generating pair: local coordinates from a finite-word alphabet.

\section{Restricted-alphabet local coordinates}
\label{sec:charts}
The failure analysis above is independent of the local chart construction.  We now return to the successful pulse $X$ and use its fixed orbit under the same switch to obtain explicit coordinates.

Set
\[
A=e^{\pi X/(8\sqrt2)},\qquad a=A^{-1}.
\]

The lower-case letter $a$ in this section is the inverse group element, not a scalar quaternionic coordinate.  A word in $\{A,a,S\}$ is the ordered matrix product of its symbols, acting on vectors from right to left; $I$ is the empty word.  The product defining $\Psi_{\mathcal W}$ is ordered by increasing $j$ from left to right.  A local chart here means an analytic diffeomorphism between a neighborhood of $0$ in $\RR^{14}$ and a neighborhood of the identity in $G_2$.
By \Cref{prop:pulse}, $A^8=I$ and $A^4\ne I$; hence both $A$ and $a=A^7$ are obtained by positive pulse time.  For a list $\mathcal W=(g_1,\ldots,g_{14})$ of fixed words in $\{A,a,S\}$, define
\[
\Psi_{\mathcal W}(t_1,\ldots,t_{14})
=\prod_{j=1}^{14}g_j e^{t_jX}g_j^{-1}.
\]
Its left-trivialized differential at the origin has columns $\Ad_{g_j}X$.

Use the following two fixed lists of conjugator words:
\begin{align*}
\mathcal W_{\mathrm{lean}}=(&I,S,AS,aS,AAS,aaS,SAS,SaS,AAAS,\\
&ASAS,aSAS,SAAS,ASAAS,SASAS)
\end{align*}
\begin{align*}
\mathcal W_{\mathrm{bal}}=(&I,AAS,aaS,AASAS,AASaS,aaSAS,aaSaS,\\
&ASAAAS,aSAAAS,SAAAAS,SAASAS,SAASaS,SaaSAS,SaaSaS)
\end{align*}
For either list, define its coordinate matrix $C_{\mathcal W}$ by
\[
\Ad_{g_j}X=\sum_{i=1}^{14}(C_{\mathcal W})_{ij}D_i,
\]
where $D_i$ is the $i$th element of the ordered basis \eqref{eq:g2basis}.
Write $C_{\rm lean}$ and $C_{\rm bal}$ for the two resulting matrices.

\begin{proposition}[Two explicit local charts]
\label{thm:charts}
The coordinate matrices of the two fixed word lists satisfy
\begin{equation}
\det C_{\mathrm{lean}}=\frac{10-7\sqrt2}{32}\ne0,
\label{eq:chartdet-lean}
\end{equation}
\begin{equation}
\det C_{\mathrm{bal}}=-\frac{140+91\sqrt2}{16}\ne0.
\label{eq:chartdet-balanced}
\end{equation}
Consequently $\Psi_{\mathcal W_{\mathrm{lean}}}$ and
$\Psi_{\mathcal W_{\mathrm{bal}}}$ are analytic local diffeomorphisms at the origin.
\end{proposition}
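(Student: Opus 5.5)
The proof is a finite exact computation followed by the inverse function theorem. The plan is to write each conjugator word $g_j$ explicitly as a product of the three matrices $A$, $A^{-1}$ and $S$ acting on $V$. Here $S$ is the signed permutation \eqref{eq:switch}. The matrix $A$ comes from the closed form $e^{tX}=\gamma_{a(t),b(t)}$ in \Cref{prop:pulse}, evaluated at $t=\pi/(8\sqrt2)$. Then
\[
\sqrt2\,t=\pi/8,\qquad 3\sqrt2\,t=3\pi/8,
\]
so $A=\gamma_{a,b}$ with $a=\cos(\pi/8)-n\sin(\pi/8)$ and $b=\cos(3\pi/8)+n\sin(3\pi/8)$, where $n=(e_2+e_3)/\sqrt2$. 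Its entries therefore lie in the field $\mathbb Q(\cos(\pi/8),\sin(\pi/8),\sqrt2)$, and so do all entries of the products $g_j$.

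The key simplification is that we never need to form $\Ad_{g_j}X$ by matrix conjugation of $X$ in isolation. Instead, compute it recursively as $\Ad_{g}\Ad_{g'}X$ along the word, using two facts:
\[
\Ad_A X=X,\qquad \Ad_S D_{ij}=\pm D_{S(i)S(j)}.
\]
The second fact holds because $S\in G_2$ and $D_{u,v}$ is natural, so $gD_{u,v}g^{-1}=D_{gu,gv}$. The first holds because $A$ commutes with its own generator, so a trailing power of $A$ contributes nothing. Each column then reduces to repeated applications of two linear maps on $\mathfrak g_2$: $\Ad_S$, which is a signed permutation in the basis \eqref{eq:g2basis} up to re-expressing $D_{ji}=-D_{ij}$ and the relations among the $D_{ij}$, and $\Ad_A=\exp\bigl(\tfrac{\pi}{8\sqrt2}\operatorname{ad}_X\bigr)$. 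We compute $\Ad_A$ once as a $14\times14$ matrix, either by diagonalizing $\operatorname{ad}_X$ or by conjugating $D_{u,v}$ to $D_{Au,Av}$ and re-expanding in the basis.

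With these two $14\times14$ matrices in hand, each column of $C_{\mathcal W}$ is obtained by applying the letters of $g_j$ to the coordinate vector of $X$, working from the rightmost letter. The determinants are then evaluated exactly over $\mathbb Q(\sqrt{2+\sqrt2})$. Galois considerations predict the answers lie in $\mathbb Q(\sqrt2)$: replacing the half-angle by its conjugates corresponds to symmetries of the word lists. The computed values should match \eqref{eq:chartdet-lean} and \eqref{eq:chartdet-balanced}.

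Once both determinants are shown nonzero, the argument concludes as follows. The left-trivialized differential of $\Psi_{\mathcal W}$ at $0$ has columns $\Ad_{g_j}X$, by the product rule applied to $\prod_j g_je^{t_jX}g_j^{-1}$ at $t=0$. A nonzero determinant makes this differential invertible. Since $\Psi_{\mathcal W}$ is analytic, being a product of matrix exponentials, the analytic inverse function theorem yields a local analytic diffeomorphism, exactly as in the proofs of \Cref{thm:two-context,thm:single}.

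The main obstacle is the exact arithmetic. Entries of $\Ad_A$ involve $\cos(k\pi/8)$ and $\sin(k\pi/8)$ for frequencies up to $6\sqrt2\,t$, and words such as $SAAAAS$ compound these. I would control this by working in the basis $\{1,\cos(\pi/8),\sin(\pi/8),\sqrt2,\dots\}$ of the degree-four field and reducing with $\cos^2(\pi/8)=(2+\sqrt2)/4$. As an independent check, I would also verify the determinants to high numerical precision.
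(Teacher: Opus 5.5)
Your plan is correct and is essentially the paper's argument: compute the columns $\Ad_{g_j}X$ exactly, evaluate the two determinants exactly (the paper records its elimination pivots and row swaps as the certificate), and conclude with the analytic inverse function theorem. One simplification is available. The entries of $A$, and hence of $\Ad_A$, already lie in $\mathbb Q(\sqrt2)$: the conjugation of $\mathbb Q(\sqrt{2+\sqrt2})$ over $\mathbb Q(\sqrt2)$ sends $(a,b)$ to $(-a,-b)$, which lies in the kernel of $\gamma$, so it is not a symmetry of the word lists (equivalently, the rotation angles of $A$ on $V$ are only $\pi/4$ and $\pi/2$). The paper exploits exactly this through its spectral-projector formula $A=P_0+\tfrac{\sqrt2}{2}P_8+\tfrac14XP_8+\tfrac{\sqrt2}{8}XP_{32}$, so neither the degree-four field nor the Galois prediction is needed.
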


\begin{proof}
Every factor $g_j e^{t_jX}g_j^{-1}$ equals the identity at $t_j=0$.
Differentiating the product at the origin therefore gives
\[
(d\Psi_{\mathcal W})_0(t_1,\ldots,t_{14})
=\sum_{j=1}^{14}t_j\Ad_{g_j}X.
\]
In the basis \eqref{eq:g2basis}, its matrix is $C_{\mathcal W}$.
Appendix~\ref{app:chartcertificates} expresses $A$ by spectral projectors of
$X^2$, constructs all conjugator columns over $\mathbb Q(\sqrt2)$, and lists
the elimination pivots and row interchanges for both matrices.  Multiplying
those pivots with the corresponding permutation signs gives
\eqref{eq:chartdet-lean} and \eqref{eq:chartdet-balanced}.
Both differentials are therefore invertible.  The analytic inverse-function
theorem supplies neighborhoods of the origin and the identity on which each
map is an analytic diffeomorphism.
\end{proof}

For the conditioning comparison, set $G_{ij}=-\operatorname{tr}(D_iD_j)$.
The Gram matrix of the column operator is $C^{\mathsf T}GC$; its condition
number is the square root of the ratio of the largest to the smallest
eigenvalue.  If $\widehat G$ is the Gram matrix after normalizing every
column, the corresponding volume is $\sqrt{\det\widehat G}$.
With the invariant trace metric $\langle U,V\rangle_{\mathrm{tr}}=-\operatorname{tr}(UV)$, the corresponding column operators have condition numbers
\[
\operatorname{cond}_{\rm tr}(C_{\rm lean})\approx130.58,
\qquad
\operatorname{cond}_{\rm tr}(C_{\rm bal})\approx4.78.
\]
After normalizing each column, the parallelotope volumes are approximately $8.96\times10^{-6}$ and $4.79\times10^{-2}$, respectively.  The balanced list is therefore about $27$ times better conditioned and more than $5.3\times10^3$ times larger in normalized volume, at the cost of $28$ rather than $21$ raw occurrences of the switch symbol in the displayed conjugator words.  These raw symbol counts are a transparent word-level proxy, not an optimized physical switch count after cancellations between consecutive factors.

The number fourteen is the dimension of $G_2$, as for any local coordinate chart.  The specific assertion is the nonsingularity of the two prescribed finite-word differentials, not a separate parameter-optimality result.

Products of exponentials and their local differential equations are classical \cite{WeiNorman1964,Altafini2003}.  Here every variable direction is a conjugate of one local pulse by a word in the fixed switch.  The two charts show that the same restricted alphabet can have markedly different local conditioning without adding a continuous control direction.

\subsection{Density of the two finite-time letters}
The fixed finite-time pulse and the reflection have a stronger property than nondiscreteness.
Define $\Gamma=\langle A,S\rangle$, the subgroup of finite words in $A,A^{-1},S$.
The following proof uses an exact adjoint-commutant certificate in Appendix~\ref{app:density-certificate}.

\begin{proposition}[A dense finite alphabet]\label{prop:dense-alphabet}
The subgroup $\Gamma$ is dense in $G_2$:
\[
\overline{\langle A,S\rangle}=G_2.
\]
For every nonzero $U\in\mathfrak g_2$, the real span of $\{\Ad_wU:w\in\Gamma\}$ is $\mathfrak g_2$.
In particular, some list of fourteen finite words gives an invertible differential when used as conjugators of $X$.
\end{proposition}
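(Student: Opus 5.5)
The plan is to prove density first and then deduce the spanning and differential statements from it. Let $\overline\Gamma$ be the closure of $\Gamma=\langle A,S\rangle$. It is a closed subgroup of the compact group $G_2$, hence a compact Lie group with Lie algebra $\mathfrak h=\Lie(\overline\Gamma)$, and $\mathfrak h$ is $\Ad_\Gamma$-invariant.

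The key step is to rule out that $\overline\Gamma$ is finite, or that its identity component is a proper subgroup. Since $A=e^{\pi X/(8\sqrt2)}$ has order eight, the torus generated by $X$ is not automatically contained in $\overline\Gamma$, so I will not argue through one-parameter subgroups. Instead I will use the exact adjoint-commutant certificate in Appendix~\ref{app:density-certificate}. That certificate shows that the associative algebra generated by $\Ad_A$ and $\Ad_S$ on $\mathfrak g_2$, or on $\mathfrak g_2\otimes\mathfrak g_2$, has the commutant it would have for all of $G_2$.

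Concretely, I would first show that $\Ad_\Gamma$ acts irreducibly on $\mathfrak g_2$, meaning its commutant is the scalars. This gives the second assertion directly. For nonzero $U$, the span of $\{\Ad_wU:w\in\Gamma\}$ is a nonzero $\Ad_\Gamma$-invariant subspace, so it is all of $\mathfrak g_2$.

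The third assertion then follows by taking $U=X$ and choosing fourteen conjugates that form a basis. As in \Cref{thm:charts}, the differential of $\Psi_{\mathcal W}$ at $0$ has columns $\Ad_{g_j}X$, so it is invertible.

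For density, irreducibility of $\Ad_\Gamma$ on $\mathfrak g_2$ makes $\mathfrak h$ either $0$ or $\mathfrak g_2$. If $\mathfrak h=\mathfrak g_2$, then $\overline\Gamma$ is open, hence equal to the connected group $G_2$. The main obstacle is excluding $\mathfrak h=0$, that is, $\Gamma$ finite. This is where I expect the certificate to do the work, with a few possible routes:
\begin{enumerate}[label=\textnormal{(\alph*)},leftmargin=9mm]
\item Exhibit a word $w$ whose eigenvalue on $V$ is not a root of unity. An algebraic $e^{i\phi}$ with $\phi/\pi$ irrational, detected by a characteristic polynomial over $\mathbb Q(\sqrt2)$ that is not a product of cyclotomic factors, forces an infinite closed cyclic subgroup and hence $\dim\mathfrak h>0$.
\item Alternatively, show that the commutant of $\Gamma$ on $\operatorname{Sym}^2$ of the $7$-dimensional module has the $G_2$-value $2$. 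A finite subgroup of $G_2$ acting irreducibly on $V$ with the same invariants as $G_2$ in low degree is excluded by the classification of finite subgroups; the known candidates are $\mathrm{PSL}_2(13)$, $\mathrm{PSL}_2(8)$ and $G_2(2)$-type groups.
\item Most simply, the trace of $\Ad_A$ on $\mathfrak g_2$ involves $\sqrt2$ in a way that, combined with traces of short words, could give a trace value incompatible with those finite groups.
\end{enumerate}
I would try (a) first, searching short words such as $AS$ and $A^2S$.
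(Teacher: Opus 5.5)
Your outline matches the paper's. The scalar commutant of $\Ad_A,\Ad_S$ (both orthogonal for the trace metric) gives irreducibility of $\Ad_\Gamma$. That yields the orbit-span assertion and the fourteen-word differential, and reduces density to showing $\Lie(\overline\Gamma)\neq0$, i.e.\ that $\Gamma$ is infinite.

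That last step is the only one needing an input beyond the commutant certificate, and your proposal does not supply it. You offer three routes and a search, but no word is actually shown to have infinite order, so density remains unproved.

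You also expect the appendix certificate to do this work, and it cannot. It is a single commutant computation on $\mathfrak g_2$ (there is no $\mathfrak g_2\otimes\mathfrak g_2$ certificate in the paper), i.e.\ an irreducibility statement. Irreducibility does not by itself exclude a finite $\Gamma$. Route (b) has the same defect: matching low-degree invariants only turns the question into a case check against the classification of finite subgroups of $G_2$. You do not carry that out, and you do not need it. Route (c) is not yet an argument.

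The paper closes the gap with route (a) in its simplest form: $\operatorname{tr}(A^2SAS)=-\tfrac12$. A finite-order matrix has roots of unity as eigenvalues. Hence its trace, and every coefficient of its characteristic polynomial, is an algebraic integer. A rational algebraic integer is an integer, so $A^2SAS$ has infinite order, $\overline\Gamma$ is infinite, and $\Lie(\overline\Gamma)\neq0$. No cyclotomic factorization is needed, only one invariant lying outside $\mathbb Z[\sqrt2]$.

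Your two candidates also work, but the trace on $V$ does not show it. By my own computation (not in the paper):
\begin{itemize}
\item $\operatorname{tr}(AS)=0$ and $\operatorname{tr}(A^2S)=1$ are integers, so they prove nothing.
\item $\operatorname{tr}\bigl((AS)^2\bigr)=\tfrac{3+\sqrt2}{2}$ is not an algebraic integer, so $AS$ has infinite order.
\item $\operatorname{tr}\bigl((A^2S)^2\bigr)=0$ gives $\operatorname{tr}\Lambda^2(A^2S)=\tfrac12$, so $A^2S$ has infinite order.
\end{itemize}
With any such witness inserted, the rest of your argument goes through as in the paper.
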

\begin{proof}
Exact multiplication of the matrices in Appendix~\ref{app:chartcertificates} gives
\[
\operatorname{tr}(A^2SAS)=-\frac12.
\]
A finite-order complex matrix has roots of unity as eigenvalues, so its trace is an algebraic integer.
A rational algebraic integer is an integer; hence $A^2SAS$ has infinite order and $\Gamma$ is infinite.

Let $\mathsf A=\Ad_A$ and $\mathsf S=\Ad_S$ on $\mathfrak g_2$.
Appendix~\ref{app:density-certificate} gives an explicit nonzero $99\times99$ minor over $\mathbb Q(\sqrt2)$ which proves
\[
\{M\in\operatorname{End}_{\RR}(\mathfrak g_2):M\mathsf A=\mathsf AM,
 M\mathsf S=\mathsf SM\}=\RR I_{\mathfrak g_2}.
\]
Both adjoint operators preserve the positive trace metric.  If a proper nonzero invariant real subspace existed, its orthogonal projector would commute with both operators and would not be scalar.
Thus the adjoint action of $\Gamma$ is irreducible over $\RR$.

The topological closure $H=\overline\Gamma$ is a compact Lie subgroup by the closed-subgroup theorem; see \cite{Hall2000} for the matrix-group theory.
If $\Lie(H)$ were zero, $H$ would be a compact discrete Lie group and hence finite, contrary to the infinite order just proved.
The nonzero subspace $\Lie(H)$ is invariant under $\Ad_\Gamma$.
Irreducibility therefore gives $\Lie(H)=\mathfrak g_2$; consequently $H$ is an open subgroup of the connected group $G_2$, so $H=G_2$.

For nonzero $U$, the span of its $\Gamma$-orbit is a nonzero invariant subspace and is therefore all of $\mathfrak g_2$.
Choose fourteen independent orbit elements.  Their product of conjugated pulses has those elements as its differential at the origin, proving the final assertion.
\end{proof}

Density is an approximation statement for finite words in the two fixed letters, not equality of this countable subgroup with $G_2$.
It differs from the exact finite-product generation in \Cref{thm:single}, where a continuous pulse time is available.
The existence of some nonsingular word list follows from \Cref{prop:dense-alphabet}; the specific content of \Cref{thm:charts} is the two printed short lists, their length bounds, exact determinants, and conditioning data.  No optimality or convergence rate is asserted.

\section{Relation to classical generation problems}
Classical two-generation and generic generation provide the background \cite{Kuranishi1951,AlbuquerqueSilvaLeite1989,Bois2009,DetinkoDeGraaf2020,Chirvasitu2021}.
The reflected-pair formulation itself has an antecedent in the swap construction of Bauer, Levaillant, and Freedman \cite{BauerLevaillantFreedman2014}.
On the full algebra, the parity bijection makes the present problem exactly generation by an independent even and odd element of the compact symmetric pair $(\mathfrak g_2,\mathfrak{so}(4))$.
The additional local restriction is the six-dimensional subspace $\mathcal W_A(T)$, not the mere writing of a pair as $(Z,TZT)$.

The compact maximal-subalgebra classification and associative-plane geometry are imported structure \cite{DraperMartin2025,Dynkin1952b,Cacciatori2005,KnarrStroppel2025}; commutant tests for irreducibility are classical \cite{ZeierSchulteHerbrueggen2011}.
The explicit construction here combines odd-dimensional averaging with a Gram determinant on fifteen symmetric unknowns, and a degree-eight identity for the small-algebra branch.
The maximal list proves that no further proper type remains.
This is more informative than the automatic sum of squared bracket minors, but does not establish minimal polynomial degree or an algorithmic advantage.
The principal spectral realization in \Cref{cor:universal-spectral} uses the classical $\SU(3)$ description of a vector stabilizer and the displayed principal triple.  The normalizer and centralizer argument in \Cref{prop:principal-reflection-fiber} then gives all realizing reflections for each such pulse, without classifying the reducible branches of arbitrary fibers.

The fixed-fiber geometry is logically different from uniform decision.
\Cref{thm:visible-contexts,thm:low-complete,thm:rankfour-context-complete} identify the invariant-context and low-dimensional mechanisms.
Their irredundant geometric union (\Cref{cor:geometric-irredundance}), its dimension, and the six irredundant scalar systems are not obtained merely by writing $\mathcal U_{38}=0$.
The homogeneous polynomial $\mathcal P_{34}$ is only an encoding of those systems.
The path-connectivity conclusion uses a general codimension-two argument \cite{BochnakCosteRoy1998}; the specific input is the dimension calculation for this fiber.
The genericity statements follow from the explicit witnesses and standard polynomial or analytic zero-set arguments, and are recorded as consequences rather than separate novelty claims.

The local extension formulas are orthogonal-splitting calculations with the octonionic normalization \cite{ChemtovKarigiannis2022}.
Products of exponentials are classical \cite{WeiNorman1964,Altafini2003}; the content of \Cref{thm:charts} is the specified finite-time word lists and their exact invertibility certificates.
The two finite-time letters generate a dense subgroup by \Cref{prop:dense-alphabet}, so the existence of some nonsingular finite-word list is automatic.  The prescribed short lists and their conditioning are the specific content.
The charts do not claim optimal word length, switching cost, or physical realization.

\section{Conclusion}
For a quaternionic reflection, generation by an even and an odd element of the associated compact symmetric pair is determined by an explicit product $\mathcal R_{30}\mathcal H_8$.
The factors detect reducibility and generated dimension at most three.  The principal spectral cone is exactly the set of nonzero pulses for which a reflected principal closure can be realized.
These are uniform statements on the full algebra, without the additional local constraint.

When the pulse is required to lie in one quaternionic stabilizer, the six-pulse theorem gives a fixed successful menu for every reflection moving that context.
For the distinguished reflection $S$, the four geometric families and six irredundant systems describe the failure set more finely than a uniform determinant alone.
The sum-of-squares polynomial $\mathcal P_{34}$, genericity observations, and codimension consequences organize that classification but do not constitute independent classification mechanisms.
The printed certificates also give a concrete generating pulse and two finite-word local coordinate maps.  The fixed finite-time letters generate a dense subgroup; the chart certificates provide explicit short lists rather than merely asserting their existence.

\section*{Exact reconstruction}
\addcontentsline{toc}{section}{Exact reconstruction}
\label{sec:reproducibility}

The Fano multiplication convention, the ordered derivation basis, and the matrices and elimination certificates in the appendices determine all finite calculations used in the proofs.  Matrices act on column vectors; the $j$th column of a matrix is the image of the $j$th basis vector.  Matrix commutators are $[A,B]=AB-BA$.  Starting from the oriented Fano triples in \eqref{eq:fano}, form the $8\times8$ left- and right-multiplication matrices $L_{e_i},R_{e_i}$ in the basis $(1,e_1,\ldots,e_7)$ and then
\[
D_{ij}=[L_{e_i},L_{e_j}]+[L_{e_i},R_{e_j}]+[R_{e_i},R_{e_j}].
\]
Delete the scalar row and column to obtain the action on $V=\operatorname{Im}\OO$, and express every subsequent derivation in the ordered basis \eqref{eq:g2basis}.  These instructions determine every integer matrix used below without an unprinted sign convention.

The octonion multiplication, derivation matrices, and coordinate bracket certificates have rational coefficients.  Parametrized identities and the scalar classifier are taken in the corresponding polynomial or rational-function rings over $\mathbb Q$.  The chart matrices lie in $\mathbb Q(\sqrt2)$, the sharp intersection witness in $\mathbb Q(\sqrt3)$, and the principal construction in $\mathbb Q(\sqrt2,\sqrt3,\sqrt5)$.  The rank-four family is evaluated in
\[
\mathbb Q(\tau)[\kappa]\Big/\left(\kappa^2-\frac{(1-\tau)(\tau^2+3)}{\tau}\right).
\]
All determinant and rank identities are taken over the indicated exact coefficient domains.  The decimal condition numbers and normalized volumes are approximations used only to compare the two already nonsingular chart matrices.

The finite claims can be checked in the following order.
\begin{center}\small
\begin{tabular}{@{}>{\raggedright\arraybackslash}p{3.1cm}>{\raggedright\arraybackslash}p{5.0cm}>{\raggedright\arraybackslash}p{7.2cm}@{}}\toprule
Claim & Input to reconstruct & Printed certificate \\\midrule
\Cref{thm:single} & $S,X,Y$ and the fourteen bracket words & Appendix~\ref{app:single-brackets}: all columns and determinant $2^{88}3^9$ \\
Vector failures & local basis, $Z_*$ and $Z'$ & Appendices~\ref{app:failure-certificate} and \ref{app:projective-failure-certificates}: exact eight-word matrices \\
Projective/resonant $\mathfrak{so}(4)$ families & equations \eqref{eq:projective-context-family}--\eqref{eq:resonant-pulse} & Appendix~\ref{app:projective-failure-certificates}: constraint matrix, Schur pivots and Gram determinants \\
Low-dimensional and rank-four terminal strata & coordinates \eqref{eq:terminal-qa-qb} and \eqref{eq:rankfour-parameter}--\eqref{eq:rankfour-new-frame} & Appendix~\ref{app:terminal-certificates}: normalized matrices, Casimir data and exact Gram blocks \\
$\mathcal P_{34}$ & equations \eqref{eq:scalar-invariants}--\eqref{eq:classifier-factors} & \Cref{thm:scalar-classifier,thm:polynomial-classifier}; direct sums-of-squares evaluation \\
Six-pulse theorem & the six rows in \eqref{eq:six-universal-pulses} & proof of \Cref{thm:universal-reflections}; the forced-context and affine-obstruction arguments \\
$\mathcal R_{30},\mathcal H_8,\mathcal U_{38}$ & equations \eqref{eq:R30-gram}, \eqref{eq:H8}, \eqref{eq:U38} & Appendix~\ref{app:universal-certificates}: canonical symmetric basis, normalization and principal example \\
Finite-alphabet density & $A,S$ and their adjoint matrices & Appendix~\ref{app:density-certificate}: exact commutant minor and nonintegral trace \\
Local charts & the two word lists of \Cref{thm:charts} & Appendix~\ref{app:chartcertificates}: spectral-projector formula and exact elimination pivots \\
\bottomrule
\end{tabular}
\end{center}

Exhaustiveness of the universal criterion follows from the compact maximal-subalgebra alternatives stated in \Cref{subsec:compact-maximals}, together with the proved reducibility and small-closure criteria.  Likewise, the dimension argument in \Cref{cor:dimension-connectivity} uses the cited standard theorem on dimensions of semialgebraic images.  These are the principal external structural inputs; the paper-specific coordinate calculations are explicitly reconstructible from the preceding conventions.

\appendix
\section{Exact single-switch iterated-bracket certificate}
\label{app:single-brackets}

The following coordinate data give a finite certificate for \Cref{thm:single}.  In the ordered basis $(e_1,\ldots,e_7)$,
\[
S=\begin{pmatrix}
1&0&0&0&0&0&0\\
0&0&0&1&0&0&0\\
0&0&0&0&1&0&0\\
0&1&0&0&0&0&0\\
0&0&1&0&0&0&0\\
0&0&0&0&0&-1&0\\
0&0&0&0&0&0&-1
\end{pmatrix},
\]
\[
X=\begin{pmatrix}
0&2&-2&0&0&0&0\\
-2&0&0&0&0&0&0\\
2&0&0&0&0&0&0\\
0&0&0&0&0&-4&-4\\
0&0&0&0&0&-2&2\\
0&0&0&4&2&0&0\\
0&0&0&4&-2&0&0
\end{pmatrix},\qquad
Y=\begin{pmatrix}
0&0&0&2&-2&0&0\\
0&0&0&0&0&4&4\\
0&0&0&0&0&2&-2\\
-2&0&0&0&0&0&0\\
2&0&0&0&0&0&0\\
0&-4&-2&0&0&0&0\\
0&-4&2&0&0&0&0
\end{pmatrix}.
\]
The identity $Y=SXS^{-1}$ is immediate from these matrices.

\footnotesize
\Needspace{18\baselineskip}
\begin{longtable}{@{}r p{4.0cm} p{6.8cm}@{}}
\caption{Exact coordinates of the fourteen iterated brackets in the basis \eqref{eq:g2basis}.}\label{tab:brackets}\\
\toprule $i$&Word&Coordinates\\\midrule\endfirsthead
\toprule $i$&Word&Coordinates\\\midrule\endhead
1 & $X$ & $D_{46}+D_{47}$ \\
2 & $Y$ & $-D_{26}-D_{27}$ \\
3 & $[X,Y]$ & $-2D_{16}-2D_{17}+8D_{24}$ \\
4 & $[X,[X,Y]]$ & $36D_{14}+4D_{15}+40D_{26}+32D_{27}$ \\
5 & $[Y,[X,Y]]$ & $-36D_{12}-4D_{13}+40D_{46}+32D_{47}$ \\
6 & $[X,[X,[X,Y]]]$ & $304D_{16}+208D_{17}-352D_{24}-96D_{25}$ \\
7 & $[X,[Y,[X,Y]]]$ & $48D_{23}-48D_{45}$ \\
8 & $[Y,[Y,[X,Y]]]$ & $208D_{16}+208D_{17}-352D_{24}+96D_{25}$ \\
9 & $[[X,Y],[X,[X,Y]]]$ & $-2496D_{12}+64D_{13}+1760D_{46}+1888D_{47}$ \\
10 & $[[X,Y],[Y,[X,Y]]]$ & $-2496D_{14}+64D_{15}-1760D_{26}-1888D_{27}$ \\
11 & $[[X,[X,Y]],[Y,[X,Y]]]$ & $-16064D_{16}-13760D_{17}+19712D_{24}$ \\
12 & $[X,[[X,Y],[X,[X,Y]]]]$ & $5376D_{23}+768D_{45}$ \\
13 & $[[X,Y],[[X,Y],[X,[X,Y]]]]$ & $-157056D_{14}+1664D_{15}-92416D_{26}-120320D_{27}$ \\
14 & $[[X,Y],[[X,Y],[Y,[X,Y]]]]$ & $157056D_{12}-1664D_{13}-92416D_{46}-120320D_{47}$ \\
\bottomrule
\end{longtable}
\normalsize

The determinant of the coordinate columns is $2^{88}3^9$, as stated in \eqref{eq:bracketdet}.

\section{\texorpdfstring{Exact certificate for the $\mathfrak{su}(3)$ failure family}{Exact certificate for the su(3) failure family}}
\label{app:failure-certificate}

For the fixed vector $e_1$, use the ordered basis
\begin{align*}
\mathcal B_{e_1}=(&D_{23},\tfrac12D_{17}+D_{24},-\tfrac12D_{16}+D_{25},
\tfrac12D_{15}+D_{26},\\
&-\tfrac12D_{14}+D_{27},D_{45},-\tfrac12D_{13}+D_{46},
\tfrac12D_{12}+D_{47})
\end{align*}
of $\mathfrak g_2^{e_1}$.  Take the explicit pulse $Z=Z_*$ from \eqref{eq:su3-witness}, put $Y=\Ad_SZ$, and define
\[
F_1=Z,\ F_2=Y,\ F_3=[Z,Y],\ F_4=[Z,F_3],\ F_5=[Y,F_3],
\]
\[
F_6=[Z,F_4],\qquad F_7=[Z,F_5],\qquad F_8=[F_3,F_4].
\]
Their coordinate columns in $\mathcal B_{e_1}$ are
\[
\begin{pmatrix}
1&-4&0&-288&576&0&0&311040\\
0&0&48&0&0&-60480&58752&0\\
0&0&0&-1728&1728&0&0&93312\\
0&4&0&-720&576&0&0&-248832\\
0&0&24&0&0&-25056&24192&0\\
-4&1&0&-576&288&0&0&-279936\\
-4&0&0&576&-720&0&0&-311040\\
0&0&24&0&0&-27648&24192&0
\end{pmatrix}.
\]
Its determinant is
\[
41182968668869361664=2^{30}3^{20}\cdot11\ne0,
\]
which proves that the closure has dimension eight.  For comparison, the pulse $-3D_{12}+4D_{47}$ has exact closure dimensions $2\to3\to5\to6\to6$, showing that the vector-stabilizer criterion captures a substantial but not exhaustive component of the nongenerating locus.

\section{Projective-context and common-vector certificates}
\label{app:projective-failure-certificates}

The constraint matrix and uniform Gram determinants below support
\Cref{thm:visible-contexts,cor:quartic-cone,prop:uniform-so4}.
The subsequent eight-word matrix certifies the common-vector witness in
\Cref{prop:rankfour-failure}, and Appendix~\ref{app:rankfour-parametrization}
gives the cubic generators used in \Cref{prop:rankfour-complete}.
For the family in \eqref{eq:projective-context-family}, put
\[
\xi=ce_2+se_3,
\qquad \eta=ce_4+se_5,
\qquad \nu=(c^2-s^2)e_6+2cs\,e_7,
\qquad n=c^2+s^2.
\]
Direct multiplication gives
\[
\xi\times\eta=\nu,
\qquad \eta\times\nu=n\xi,
\qquad \nu\times\xi=n\eta,
\]
and the switch satisfies $S(\xi)=\eta$, $S(\eta)=\xi$, $S(\nu)=-\nu$.

For
\[
Z=x_0D_{12}+x_1D_{13}+x_2D_{23}+x_3D_{45}+x_4D_{46}+x_5D_{47},
\]
the twelve normal-block constraints expressing $Z(P_{H_{[c:s]}})\subseteq P_{H_{[c:s]}}$ are represented by
\[
\renewcommand{\arraystretch}{1.05}
\begin{pmatrix}
-4c&-4s&0&0&-2s&2c\\
0&0&0&0&0&0\\
0&0&0&0&0&0\\
0&0&4n&-2n&0&0\\
0&0&0&0&0&0\\
0&0&0&0&0&0\\
0&0&0&0&0&0\\
0&0&-2n&4n&0&0\\
-2cn&-2sn&0&0&4s(2c^2-s^2)&2c(-c^2+5s^2)\\
0&0&0&0&0&0\\
-2cn&-2sn&0&0&2s(-5c^2+s^2)&4c(c^2-2s^2)\\
0&0&2n^2&2n^2&0&0
\end{pmatrix}.
\]
Write $M_i$ for row $i$ of this matrix.  Only rows $1,4,8,9,11,12$
can be nonzero, and they satisfy
\[
M_{11}=nM_1-M_9,\qquad M_{12}=n(M_4+M_8).
\]
Rows $4,8$ are supported on columns $x_2,x_3$, with determinant $12n^2$.
On the other four columns, $M_1$ is nonzero in $x_0,x_1$, whereas
\[
M_9-\frac n2M_1
=(0,0,0,0,\,3s(3c^2-s^2),\,-3c(c^2-3s^2)).
\]
The sum of squares of the last two entries is $9n^3>0$.
Thus rows $1,4,8,9$ are independent and all other rows lie in their span,
proving rank four for every $n>0$.
Direct substitution annihilates the two coefficient columns of
$\mathsf P_{c,s},\mathsf Q_{c,s}$, and their positive Gram determinant below
proves independence.  Since the kernel has dimension two, those columns
form its complete basis.  The coordinate Gram determinant in this nonorthonormal coefficient basis is
\[
n^4(c^4+3c^2s^2+s^4),
\]
whereas the invariant trace-Gram determinant is
\[
2^6 3^3n^6.
\]
At $[1:0]$, $[0:1]$, and $[1:1]$ these specialize to the $H_{246}$, $H_{365}$, and $H_\star$ intersections, respectively.

In the complex coordinates of \Cref{cor:quartic-cone}, the expanded quartic is
\begin{align*}
\operatorname{Im}(z^\ast\omega^3)={}&8x_0^3x_4+24x_0^2x_1x_5-24x_0x_1^2x_4
-24x_0x_1x_4^2-24x_0x_1x_5^2\\
&-6x_0x_4^3-6x_0x_4x_5^2-8x_1^3x_5
+6x_1x_4^2x_5+6x_1x_5^3\\
&+2x_4^3x_5+2x_4x_5^3.
\end{align*}
The singular-locus calculation must account for the dependence $\omega=z-2iw$.  If $F=\operatorname{Im}(z^\ast\omega^3)$, then
\[
F_{x_0}=-6\operatorname{Re}(z^\ast\omega^2),
\qquad
F_{x_1}=6\operatorname{Im}(z^\ast\omega^2).
\]
Thus a singular point has $z^\ast\omega^2=0$.  When $z=0$ but $\omega\ne0$, the remaining derivatives are $F_{x_4}=\operatorname{Im}(\omega^3)$ and $F_{x_5}=-\operatorname{Re}(\omega^3)$, so they cannot both vanish.  Hence the singular locus is exactly $\omega=0$.

\paragraph{Uniform trace-Gram calculations.}
The three determinant identities for the simple, regular, and resonant pulse
families can be checked in one normalized slice.  Let
$T_0=(D_{23}+D_{45})/2$ and $g_\theta=\exp(\theta T_0)$, as in
\eqref{eq:residual-circle}.  For each of the three pulse polynomials $U$,
substitution of its coefficients gives
\[
[T_0,U]=-s\frac{\partial U}{\partial c}
       +c\frac{\partial U}{\partial s}.
\]
Consequently, writing $c=\varrho\cos\theta$, $s=\varrho\sin\theta$ with
$\varrho>0$, the simple and regular pulses are respectively
$\varrho\Ad_{g_\theta}U(1,0)$ and
$\varrho^5\Ad_{g_\theta}U(1,0)$.  For the resonant pulse,
\[
\widehat Z_{a;c,s}
=\varrho^3\Ad_{g_\theta}\widehat Z_{a/\varrho;1,0}.
\]
The circle commutes with $S$, and conjugation preserves the trace metric.
It remains to compute the six-word Gram matrices at $c=1,s=0$.

For clarity, the scalar pivots of symmetric elimination without row
interchanges are listed below.  Starting from a symmetric matrix
$\left(\begin{smallmatrix}\pi&r^{\mathsf T}\\r&M\end{smallmatrix}\right)$,
each elimination step retains the Schur complement $M-rr^{\mathsf T}/\pi$.
Set $h=a^2+1$ and $k=2a^2+1$ for the normalized resonant family.
\begin{center}
\renewcommand{\arraystretch}{1.3}
\begin{tabular}{c r r c}
\toprule
Pivot & Simple pulse & Regular pulse & Resonant pulse\\
\midrule
$1$ & $48$ & $1008$ & $48h$\\
$2$ & $48$ & $1008$ & $48k/h$\\
$3$ & $1344$ & $316224$ & $1344k$\\
$4$ & $9216$ & $20155392/7$ & $9216hk$\\
$5$ & $9216$ & $20155392/7$ & $9216k^2/h$\\
$6$ & $331776/7$ & $6530347008/61$ & $331776h^2k/7$\\
\bottomrule
\end{tabular}
\end{center}
Their products are, respectively,
\[
2^{46}3^{11},\qquad 2^{46}3^{39},\qquad
2^{46}3^{11}(a^2+1)^2(2a^2+1)^6.
\]
The six words have degrees $1,1,2,3,3,4$ in the pulse, so their Gram
determinant scales with the twenty-eighth power of its scalar multiplier.
The preceding equivariance and homogeneity therefore give
\eqref{eq:uniform-so4-gram}, \eqref{eq:uniform-regular-gram}, and
\eqref{eq:resonant-so4-gram} for all their parameters.

For the uniform pulse of \Cref{prop:uniform-so4}, the principal trace-Gram determinants along the bracket ladder are
\[
2^8 3^2n^2,
\qquad 2^{14}3^3\cdot7\,n^4,
\qquad 2^{34}3^7\cdot7\,n^{10},
\qquad 2^{46}3^{11}n^{14}.
\]
They certify ranks $2,3,5,6$ uniformly.  For the regular pulse \eqref{eq:uniform-regular-pulse}, the final six-word trace-Gram determinant is
\[
2^{46}3^{39}n^{70},
\]
which is nonzero at every projective point.  For the rank-four vector witness
\[
Z'=D_{12}-3D_{13}+2D_{47},\qquad Y'=\Ad_S Z',
\]
use the primitive integral basis
\begin{align*}
\mathcal B_{e_6+e_7}=(&D_{16}-D_{17},\ D_{16}+2D_{24},\ D_{16}-2D_{25},\\
&3D_{14}-D_{15}-2D_{26},\ D_{14}-3D_{15}-2D_{27},\ D_{23}-D_{45},\\
&3D_{12}-D_{13}+2D_{46},\ D_{12}-3D_{13}+2D_{47})
\end{align*}
of $\gtwo^{e_6+e_7}$.  Define
\[
F_1=Z',\quad F_2=Y',\quad F_3=[F_1,F_2],\quad F_4=[F_1,F_3],\quad F_5=[F_2,F_3],
\]
\[
F_6=[F_1,F_4],\qquad F_7=[F_1,F_5],\qquad F_8=[F_2,F_5].
\]
Their coordinate columns in $\mathcal B_{e_6+e_7}$ are
\[
\begin{pmatrix}
0&0&-60&0&0&29376&0&29376\\
0&0&24&0&0&-8640&0&-8640\\
0&0&0&0&0&5184&0&-5184\\
0&0&0&-216&0&0&0&0\\
0&1&0&-360&0&0&0&0\\
0&0&0&0&0&0&5184&0\\
0&0&0&0&216&0&0&0\\
1&0&0&0&360&0&0&0
\end{pmatrix}.
\]
Its determinant is
\[
2^{27}3^{20}=467988280328060928\ne0,
\]
so the closure is the full vector stabilizer.  Thus the closure is certified directly by the displayed matrix.

\subsection{Complete cubic parametrization of rank-four common-vector fibers}\label{app:rankfour-parametrization}
For a positive $S$-eigenvector
\[
w_+(a,b,c)=ae_1+b(e_2+e_4)+c(e_3+e_5),
\qquad (b,c)\ne(0,0),
\]
the unique projective annihilator in $\mathfrak k_A$ is generated by
\begin{align*}
Z_+(a,b,c)={}&-c(3b^2+c^2)D_{12}+2b^3D_{13}
-2a(b^2+c^2)D_{23}-a(b^2+c^2)D_{45}\\
&+b(3c^2-b^2)D_{46}+c(c^2-3b^2)D_{47}.
\end{align*}
Its normal-form vector satisfies
\[
q_a=-3(b^2+c^2)(ae_1+be_2+ce_3),
\qquad \lVert q_a\rVert=\lVert q_b\rVert,
\]
and
\[
\sum_{j=0}^{5}x_j(Z_+)^2
=(b^2+c^2)^2(5a^2+5b^2+2c^2),
\]
so the generator is nonzero away from the exceptional line $[e_1]$.

For a negative $S$-eigenvector
\[
w_-(a,b,r,t)=a(e_2-e_4)+b(e_3-e_5)+re_6+te_7,
\qquad (a,b)\ne(0,0),
\]
put $\upsilon=a^2t-2abr-b^2t$.  The unique projective annihilator is generated by
\begin{align*}
Z_-(a,b,r,t)={}&(3a^2b+art+b^3+br^2+2bt^2)D_{12}\\
&-(2a^3+2ar^2+at^2+brt)D_{13}+\upsilon D_{23}+2\upsilon D_{45}\\
&+(a^3-3ab^2+ar^2-at^2+2brt)D_{46}\\
&+(3a^2b+2art-b^3-br^2+bt^2)D_{47}.
\end{align*}
Here
\[
q_a=3(a^2+b^2+r^2+t^2)(ae_2+be_3),
\qquad \lVert q_a\rVert=\lVert q_b\rVert,
\]
so the generator is nonzero whenever $(a,b)\ne(0,0)$.  When $a=b=0$, the annihilator jumps from a line to the three-plane $\mathcal N_{re_6+te_7}$ of \Cref{prop:rankfour-failure}.  These formulas, together with the $e_1$ fiber $\mathcal N_{e_1}$, give the exhaustive incidence description in \Cref{prop:rankfour-complete}.

\section{Exact local-coordinate certificates and conditioning}
\label{app:chartcertificates}
For either word list in \Cref{thm:charts}, the coordinate matrix introduced in \Cref{sec:charts} satisfies
\[
\Ad_{g_j}X=\sum_{i=1}^{14}C_{ij}D_i
\]
in the ordered basis \eqref{eq:g2basis}, with $D_i$ denoting its $i$th element.  The exact determinants and the trace-metric diagnostics are summarized in the following table.

\begin{table}[ht]
\centering\small
\begin{tabular}{@{}lccccc@{}}
\toprule
Chart & $\det C$ & $\operatorname{cond}_{\rm tr}$ & Norm. volume & raw $S$ & max. length\\
\midrule
Lean & $(10-7\sqrt2)/32$ & $130.5778$ & $8.9648\times10^{-6}$ & 21 & 5\\
Balanced & $-(140+91\sqrt2)/16$ & $4.7828$ & $4.7934\times10^{-2}$ & 28 & 6\\
\bottomrule
\end{tabular}
\caption{Exact determinant and invariant trace-metric diagnostics for the two charts.  The raw $S$ count is taken before cancellations between consecutive factors.}
\end{table}

For example, the column corresponding to the word $AS$ is
\[
\Ad_{AS}X=\frac{\sqrt2}{2}D_{14}+\frac{\sqrt2-1}{2}D_{16}+\frac{1+\sqrt2}{2}D_{24}+\frac{1-\sqrt2}{2}D_{25}.
\]
The condition numbers are obtained from $C^TGC$, where $G_{ij}=-\operatorname{tr}(D_iD_j)$.  The coordinate determinant values depend on the chosen non-orthonormal derivation basis; nonsingularity and the condition numbers computed from the invariant trace metric do not.

To reconstruct the matrices without evaluating a transcendental exponential, set $Q=X^2$ and let $I_7$ be the identity on $V$.  The spectral projectors are
\[
P_0=\frac{(Q+8I_7)(Q+32I_7)}{256},\quad
P_8=-\frac{Q(Q+32I_7)}{192},\quad
P_{32}=\frac{Q(Q+8I_7)}{768}.
\]
Then the fixed pulse in \Cref{sec:charts} is exactly
\[
A=P_0+\frac{\sqrt2}{2}P_8+\frac14XP_8+\frac{\sqrt2}{8}XP_{32}.
\]
Multiplying the displayed conjugator words and expressing $gXg^{-1}$ in \eqref{eq:g2basis} determines every column over $\mathbb Q(\sqrt2)$.

\begin{samepage}
Here is a compact elimination certificate for the determinants.  In each column, choose the first nonzero pivot at or below the diagonal, swap its row with the diagonal row, and eliminate below without rescaling the pivot row.  Put $r=\sqrt2$.  The fourteen pivots, in order and split after the seventh entry, are
\begingroup\interdisplaylinepenalty=10000
\begin{align*}
\text{lean: }&\left(1,-1,\frac r2,1-r,-\frac12,-r,-\frac{r+1}{2};\right.\\[-1mm]
&\hspace{10mm}\left.r,2(r-2),\frac{1-r}{4},\frac r2,-1,-\frac12,\frac{1-r}{2}\right),\\
\text{balanced: }&\left(1,-\frac12,-r,-1,-\frac r2,\frac{r-3}{2},2;\right.\\[-1mm]
&\hspace{10mm}\left.-\frac{r+1}{4},-\frac{7r}{8},2,-\frac r2,
\frac{32+27r}{14},1,-4\right).
\end{align*}
\endgroup
\end{samepage}
The row swaps for the lean matrix are
\[
(1,13),(2,10),(4,5),(7,8),(8,13),(11,13),(12,14),(13,14),
\]
and those for the balanced matrix are
\[
(1,13),(2,3),(3,5),(9,10),(10,13),(12,13).
\]
Both permutations are even.  The products of the corresponding pivots are therefore the determinants in \eqref{eq:chartdet-lean} and \eqref{eq:chartdet-balanced}.  No numerical rank decision enters the certificate.

\subsection{Exact adjoint-commutant certificate}\label{app:density-certificate}
This certificate concerns \Cref{prop:dense-alphabet} and is independent of the two chart determinants.
Use the following ordered parity basis of $\mathfrak g_2$, with a semicolon separating the six positive and eight negative vectors for $\Ad_S$:
\[
\begin{split}
\mathcal E=(&D_{12}+D_{14},\ D_{13}+D_{15},\ -\tfrac12D_{16}+D_{25},\ D_{23}+D_{45},\\
 &-D_{26}+D_{46},\ -D_{27}+D_{47};\\
 &-D_{12}+D_{14},\ -D_{13}+D_{15},\ D_{16},\ D_{17},\ D_{24},\\
 &-D_{23}+D_{45},\ D_{26}+D_{46},\ D_{27}+D_{47}).
\end{split}
\]
If $P$ is the matrix of these columns in the ordered basis \eqref{eq:g2basis}, set
$\mathsf B=P^{-1}\mathsf A P$, where column $j$ of $\mathsf A$ is the coordinate vector of $AD_jA^{-1}$.
The exact spectral formula for $A$ above determines $\mathsf B$ over $\mathbb Q(\sqrt2)$; its entries lie in $\mathbb Z[1/2,\sqrt2]$.
In this basis $\mathsf S=\operatorname{diag}(I_6,-I_8)$, so every commuting endomorphism is block diagonal with $6\times6$ and $8\times8$ blocks.
Subtracting a scalar multiple of the identity, we may impose $M_{11}=0$, leaving $99$ unknowns.
Order their positions $(i,j)$ lexicographically within
\[
\mathcal J=\{(i,j):1\le i,j\le6\ \text{or}\ 7\le i,j\le14\}\setminus\{(1,1)\}.
\]
The $196\times99$ coefficient matrix $N$ of $[\mathsf B,M]=0$, with output entries listed row by row, is explicitly
\[
N_{14(r-1)+s,(i,j)}=\mathsf B_{ri}\delta_{js}-\delta_{ri}\mathsf B_{js},
\quad 1\le r,s\le14,\quad (i,j)\in\mathcal J.
\]
Select the following $99$ rows, in increasing order:
\[
\mathcal I=\{1,\ldots,70\}\cup\{77,\ldots,96\}\cup\{99\}
 \cup\{113,\ldots,119\}\cup\{125\}.
\]
Exact elimination gives
\[
\det N_{\mathcal I,:}
=\frac{9\bigl(981996834544536330001\sqrt2-1388752894942616432006\bigr)}{2^{166}}
\ne0.
\]
Nonvanishing also has a smaller independent arithmetic certificate: reduce in $\mathbb F_7$ under $\sqrt2\mapsto3$, which is valid since $3^2=2\pmod7$ and the denominators are powers of two.  The same minor has determinant $3\pmod7$.
Thus $N$ has column rank $99$ over $\mathbb Q(\sqrt2)$ and over its real embedding.  The only block-diagonal solution with $M_{11}=0$ is zero, proving that the full real commutant is scalar.
The trace $\operatorname{tr}(A^2SAS)=-1/2$ is obtained from the same seven-dimensional matrices.  Together these finite certificates supply both inputs to the density argument.

\section{Exact certificates for the terminal strata}
\label{app:terminal-certificates}
The normalized matrix below is the input to the elimination in
\Cref{thm:low-complete}; the following norms and Casimir spectra verify its
fixed-space assertions.  The final three Gram blocks certify the full
six-dimensional closure in \Cref{thm:rankfour-context-complete}.
All formulas use the Fano convention and the local basis fixed in the main text.

For $q_a=(a,1,0)$ and $q_b=(d,b,c)$, the local derivation is
\[
 Z=\begin{pmatrix}
 0&0&2&0&0&0&0\\
 0&0&-2a&0&0&0&0\\
 -2&2a&0&0&0&0&0\\
 0&0&0&0&a-d&1-b&-c\\
 0&0&0&-a+d&0&-c&b+1\\
 0&0&0&b-1&c&0&-a-d\\
 0&0&0&c&-b-1&a+d&0
 \end{pmatrix}.
\]
For the three low-dimensional normal forms $(a,d,b,c)=(0,0,\eta,0)$, the values
\[
 (\|A\|_{\rm tr}^2,\|B\|_{\rm tr}^2,\|[A,B]\|_{\rm tr}^2)
 =\begin{cases}(8,8,64),&\eta=1,\\(8,8,16),&\eta=-1,\\(24,24,192),&\eta=-3\end{cases}
\]
and the double-bracket equations prove three-dimensional closure exactly.  The axial form $q_a=0$, $q_b=(0,1,0)$ has corresponding norms $(2,2,1)$.  The Casimir of the trace-orthonormal basis $(A/\|A\|,B/\|B\|,[A,B]/\|[A,B]\|)$, with an overall minus sign, has eigenvalues
\[
\begin{array}{c|c}
\eta=1&0\text{ (multiplicity 3)},\;3/4\text{ (multiplicity 4)}\\
\eta=-1&0\text{ (multiplicity 1)},\;1/2\text{ (multiplicity 6)}\\
\eta=-3&2/3\text{ (multiplicity 3)},\;1/4\text{ (multiplicity 4)}\\
\text{axial}&0\text{ (multiplicity 1)},\;1/2\text{ (multiplicity 6)}.
\end{array}
\]
These give the representation and fixed-space distinctions used in \Cref{thm:low-complete}.
Equivalently, the four nonabelian low-dimensional families are obtained by residual-circle conjugation and nonzero rescaling of the following simple pulses:
\[
\begin{array}{c|c|c}
\text{family}&\text{representative}&\operatorname{rank}Z\\\hline
\eta=1&-2D_{13}+D_{46}&4\\
\eta=-1&-D_{13}-D_{46}&4\\
\eta=-3&-D_{46}&6\\
\text{axial}&-D_{13}+2D_{46}&4.
\end{array}
\]

For the six words $F$ in \Cref{thm:rankfour-context-complete}, trace invariance and symmetric parity split the Gram matrix into three $2\times2$ blocks, on the index pairs $(1,5)$, $(2,4)$, and $(3,6)$.  Set $D=\tau^2+3$ and $R=\tau^2-2\tau+3$.  The three determinants are
\begin{align*}
 B_{15}&=\frac{2^8 3^9(3-\tau)^4(1-\tau)^2(1+\tau)^2D^8}{\tau^2},\\
 B_{24}&=\frac{2^{10}3^9(3-\tau)^2(1-\tau)^2(1+\tau)^4D^6R^2}{\tau^2},\\
 B_{36}&=\frac{2^{14}3^{13}(3-\tau)^4(1-\tau)^2(1+\tau)^4D^{10}R^2}{\tau^2}.
\end{align*}
Multiplying them gives \eqref{eq:rankfour-new-gram}.  Each is positive for $0<\tau<1$.  The computation takes place exactly in
\[
 \mathbb Q(\tau)[\kappa]\big/
 \left(\kappa^2-\frac{(1-\tau)(\tau^2+3)}\tau\right),
\]
so neither the determinant nor its nonvanishing is inferred from sampled parameter values.

Every determinant and rank statement used above is an exact identity over the indicated rational, quadratic, or rational-function field.  Numerical rank tests are not used in the proofs.

\section{Matrix formulas for the universal criterion}
\label{app:universal-certificates}

Choose an orthogonal matrix $R$ whose first three columns span the positive eigenspace of $T$, and whose last four columns span its negative eigenspace.  In that frame,
\[
R^{\mathsf T}TR=\operatorname{diag}(I_3,-I_4),\qquad
Z'=R^{\mathsf T}ZR=\begin{pmatrix}A&B\\-B^{\mathsf T}&D\end{pmatrix},
\]
where $A\in\mathfrak{so}(3)$, $D\in\mathfrak{so}(4)$ and $B\in\mathbb R^{3\times4}$.  The symmetric commutant condition is the following linear system in the entries of two symmetric matrices $U\in\mathbb R^{3\times3}$, $W\in\mathbb R^{4\times4}$:
\begin{equation}
AU=UA,\qquad DW=WD,\qquad BW=UB,
\qquad \operatorname{tr}U+\operatorname{tr}W=0.
\label{eq:block-commutant}
\end{equation}
There are fifteen independent unknowns after the trace constraint.  A nonzero solution is equivalent to $\mathcal R_{30}(T,Z)=0$.

For a fixed basis of these unknowns, let $E_{ij}$ be the matrix units in this orthonormal frame.  Use the six diagonal matrices $E_{ii}-E_{77}$, $1\le i\le6$, followed by the nine matrices $E_{ij}+E_{ji}$ with $i<j$ in either of the blocks $\{1,2,3\}$ or $\{4,5,6,7\}$.  Call the resulting list $F_1,\ldots,F_{15}$.  Its Frobenius Gram determinant is $7\cdot2^9$: the diagonal part has Gram matrix $I_6+\boldsymbol1\boldsymbol1^{\mathsf T}$, where $\boldsymbol1=(1,1,1,1,1,1)^{\mathsf T}\in\RR^6$, and the other nine squared norms are two.  Thus
\begin{equation}
\mathcal R_{30}(T,Z)
=\frac{1}{7\cdot2^9}\det\bigl(\operatorname{tr}([Z',F_i][Z',F_j])\bigr)_{i,j=1}^{15}.
\label{eq:explicit15}
\end{equation}
The commutators in this formula are symmetric.  Formula \eqref{eq:R30-framefree} gives the same value in any fixed frame without constructing $R$.

For the original generating pair $(S,X)$, exact substitution gives
\[
\mathcal R_{30}(S,X)=2^{36}3^4\cdot7\cdot13\cdot265747,
\qquad
\mathcal H_8(S,X)=2^{11}3^3\cdot11.
\]
These provide a normalization check for both formulas.  At the rank-four and vector-stabilizer failures printed earlier, the first determinant vanishes; at the low-dimensional forms, the degree-eight factor vanishes as well.  Their interpretation is proved in \Cref{thm:universal-locus}, independently of these examples.

For \Cref{prop:principal-moving}, the reflection has the sparse matrix
\[
T_*=\begin{pmatrix}
0&0&0&0&-\sqrt6/4&-\sqrt{10}/4&0\\
0&0&0&0&\sqrt{10}/4&-\sqrt6/4&0\\
0&0&-1&0&0&0&0\\
0&0&0&1/4&0&0&-\sqrt{15}/4\\
-\sqrt6/4&\sqrt{10}/4&0&0&0&0&0\\
-\sqrt{10}/4&-\sqrt6/4&0&0&0&0&0\\
0&0&0&-\sqrt{15}/4&0&0&-1/4
\end{pmatrix}.
\]
It is symmetric, its square is the identity, its trace is $-1$, and multiplication gives $T_*K_{\rm p}T_*=P_{\rm p}$.  Its membership in $G_2$ follows from the polynomial-exponential identity in \eqref{eq:principal-reflection}; it can also be checked directly on the Fano products.  The identities in \eqref{eq:principal-triple} and the scalar Casimir certify the principal closure, without a numerical irreducibility test.

\paragraph{An azimuthal realizing reflection.}
For the example after \Cref{prop:principal-reflection-fiber}, set
\[
u_0=\exp(\pi K_{\rm p}/2)=\operatorname{diag}(1,-I_2,J,-J),
\qquad J=\begin{pmatrix}0&-1\\1&0\end{pmatrix}.
\]
The blocks use the ordered basis $(e_1; e_2,e_3; e_4,e_5; e_6,e_7)$.
Thus $u_0\in C_{\rm p}$, and direct conjugation of the displayed $T_*$ gives
\[
T_{\rm az}:=u_0T_*u_0^{-1}
=\frac14\begin{pmatrix}
0&0&0&\sqrt6&0&0&\sqrt{10}\\
0&0&0&\sqrt{10}&0&0&-\sqrt6\\
0&0&-4&0&0&0&0\\
\sqrt6&\sqrt{10}&0&0&0&0&0\\
0&0&0&0&1&-\sqrt{15}&0\\
0&0&0&0&-\sqrt{15}&-1&0\\
\sqrt{10}&-\sqrt6&0&0&0&0&0
\end{pmatrix}.
\]
This matrix is symmetric, has square $I_7$ and trace $-1$, and satisfies
\[
u_0P_{\rm p}u_0^{-1}=Q_{\rm p},\qquad
T_{\rm az}K_{\rm p}T_{\rm az}=Q_{\rm p}.
\]
Its membership in $G_2$ follows from $u_0,T_*\in G_2$, independently of the matrix check.
Since $K_{\rm p}$ and $Q_{\rm p}$ generate $\mathfrak s_{\rm p}$, this is another realizing quaternionic reflection, but it is not a member of the un-conjugated arc $T_\theta$.


\begin{thebibliography}{99}\small
\bibitem{Kuranishi1951} M. Kuranishi, \textit{On Everywhere Dense Imbedding of Free Groups in Lie Groups}, Nagoya Math. J. \textbf{2}, 63--71 (1951).
\bibitem{AlbuquerqueSilvaLeite1989} H. Albuquerque and F. Silva Leite, \textit{On the Generators of Semisimple Lie Algebras}, Linear Algebra Appl. \textbf{119}, 51--56 (1989).
\bibitem{Bois2009} J.-M. Bois, \textit{Generators of Simple Lie Algebras in Arbitrary Characteristics}, Math. Z. \textbf{262}, 715--741 (2009); arXiv:0708.1711.
\bibitem{DetinkoDeGraaf2020} A. S. Detinko and W. A. de Graaf, \textit{2-Generation of Simple Lie Algebras and Free Dense Subgroups of Algebraic Groups}, J. Algebra \textbf{545}, 159--173 (2020); doi:10.1016/j.jalgebra.2019.06.012.
\bibitem{BauerLevaillantFreedman2014} B. Bauer, C. Levaillant, and M. Freedman, \textit{Universality of Single Quantum Gates}, arXiv:1404.7822 (2014).
\bibitem{Chirvasitu2021} A. Chirvasitu, \textit{Large Sets of Generating Tuples for Lie Groups}, arXiv:2106.11955 (2021).
\bibitem{HarveyLawson1982} R. Harvey and H. B. Lawson, Jr., \textit{Calibrated Geometries}, Acta Math. \textbf{148}, 47--157 (1982).
\bibitem{Cacciatori2005} S. L. Cacciatori, B. L. Cerchiai, A. Della Vedova, G. Ortenzi, and A. Scotti, \textit{Euler Angles for $G_2$}, J. Math. Phys. \textbf{46}, 083512 (2005); arXiv:hep-th/0503106.
\bibitem{KnarrStroppel2025} N. Knarr and M. J. Stroppel, \textit{Subalgebras of Octonion Algebras}, J. Algebra \textbf{664}, 42--74 (2025); doi:10.1016/j.jalgebra.2024.10.004; arXiv:2303.00335.
\bibitem{DraperMartin2025} C. Draper and C. Mart\'in-Gonz\'alez, \textit{A Perspective on Totally Geodesic Submanifolds of the Symmetric Space $G_2/\SO(4)$}, Trans. Amer. Math. Soc. \textbf{378}(12), 8689--8721 (2025); doi:10.1090/tran/9479; arXiv:2504.07586.
\bibitem{Baez2002} J. C. Baez, \textit{The Octonions}, Bull. Amer. Math. Soc. \textbf{39}, 145--205 (2002); arXiv:math/0105155.
\bibitem{SpringerVeldkamp2000} T. A. Springer and F. D. Veldkamp, \textit{Octonions, Jordan Algebras and Exceptional Groups}, Springer, Berlin (2000).
\bibitem{ChemtovKarigiannis2022} M. Chemtov and S. Karigiannis, \textit{Observations about the Lie Algebra $\mathfrak g_2\subset\mathfrak{so}(7)$, Associative $3$-Planes, and $\mathfrak{so}(4)$ Subalgebras}, Expo. Math. \textbf{40}, 845--869 (2022); arXiv:2209.10613.
\bibitem{RauschSlupinski2022} M. Rausch de Traubenberg and M. J. Slupinski, \textit{Commutation Relations of $\mathfrak g_2$ and the Incidence Geometry of the Fano Plane}, arXiv:2207.13946 (2022).
\bibitem{Dynkin1952b} E. B. Dynkin, \textit{Semisimple Subalgebras of Semisimple Lie Algebras}, Mat. Sbornik N.S. \textbf{30(72)}, 349--462 (1952); English transl., Amer. Math. Soc. Transl. Ser. 2 \textbf{6}, 111--244 (1957).
\bibitem{JurdjevicSussmann1972} V. Jurdjevic and H. J. Sussmann, \textit{Control Systems on Lie Groups}, J. Differential Equations \textbf{12}, 313--329 (1972).
\bibitem{SilvaLeiteCrouch1988} F. Silva Leite and P. E. Crouch, \textit{Controllability on Classical Lie Groups}, Math. Control Signals Systems \textbf{1}, 31--42 (1988).
\bibitem{AgrachevSachkov2004} A. A. Agrachev and Y. L. Sachkov, \textit{Control Theory from the Geometric Viewpoint}, Springer, Berlin (2004).
\bibitem{ZeierSchulteHerbrueggen2011} R. Zeier and T. Schulte-Herbr\"uggen, \textit{Symmetry Principles in Quantum Systems Theory}, J. Math. Phys. \textbf{52}, 113510 (2011); doi:10.1063/1.3657939; arXiv:1012.5256.
\bibitem{BochnakCosteRoy1998} J. Bochnak, M. Coste, and M.-F. Roy, \textit{Real Algebraic Geometry}, Ergebnisse der Mathematik und ihrer Grenzgebiete (3), vol. 36, Springer, Berlin (1998); doi:10.1007/978-3-662-03718-8.
\bibitem{WeiNorman1964} J. Wei and E. Norman, \textit{On Global Representations of the Solutions of Linear Differential Equations as a Product of Exponentials}, Proc. Amer. Math. Soc. \textbf{15}, 327--334 (1964).
\bibitem{Altafini2003} C. Altafini, \textit{Parameter Differentiation and Quantum State Decomposition for Time Varying Schr\"odinger Equations}, Rep. Math. Phys. \textbf{52}(3), 381--400 (2003); doi:10.1016/S0034-4877(03)80037-X; arXiv:quant-ph/0201034.
\bibitem{Hall2000} B. C. Hall, \textit{An Elementary Introduction to Groups and Representations}, arXiv:math-ph/0005032 (2000).
\end{thebibliography}
\end{document}